\let\oldproof\proof
\let\endoldproof\endproof
\let\proof\proofr
\let\endproof\endproofr
\def\endof{\vphantom{i}$ \hfill $\blacktriangle} 
\newtheorem{THEOREM}{Theorem}[section]
\newtheorem{Conclusion}[THEOREM]{Conclusion}
\newtheorem{Theorem}[THEOREM]{Theorem}
\newenvironment{theorem}{\begin{Theorem}}{\end{Theorem}}
\newtheorem{Lemma}[THEOREM]{Lemma}
\newenvironment{lemma}{\begin{Lemma}}{\end{Lemma}}
\theoremstyle{definition}
\newtheorem{notation}[THEOREM]{Notation}
\newtheorem{definition}[THEOREM]{Definition}
\newtheorem{observation}[THEOREM]{Observation}
\newtheorem{example}[THEOREM]{Example}
\newtheorem{setting}[THEOREM]{Setting}
\newtheorem{Claim}[THEOREM]{Claim}
\newenvironment{claim}{\begin{Claim}}{\end{Claim}}
\newtheorem{Subclaim}[THEOREM]{Subclaim}
\newtheorem{Corollary}[THEOREM]{Corollary}
\newenvironment{corollary}{\begin{Corollary}}{\end{Corollary}}
\newtheorem{Corollary to the proof}[THEOREM]{Corollary to the proof}
\newenvironment{coroll-to-proof}{\begin{Corollary to the proof}}{\end{Corollary to the proof}}
\newtheorem{Proposition}[THEOREM]{Proposition}
\newenvironment{proposition}{\begin{Proposition}}{\end{Proposition}}
\newcommand{\sss}{\hskip2pt}
\newcommand{\ssss}{\hskip1pt}
\newcommand{\tupof}[2]{\langle\ssss #1\sss|\sss #2\ssss\rangle}
\newcommand{\tup}[1]{\langle\ssss #1\ssss\rangle}
\newcommand{\setof}[2]{\{\ssss #1\sss|\sss #2\ssss\}}
\newcommand{\set}[1]{\{\ssss #1\ssss\}}
\newcommand{\name}[1]{\dot{#1}}   
\newcommand{\on}{\upharpoonright}  
\newcommand{\forces}{\Vdash}
\newcommand{\forcesq}[2]{\Vdash_{#1}\hbox{`` }#2\hbox{ ''}}
\newcommand{\decides}{\parallel}
\newcommand{\thinks}{\models}
\font\sevenrm = cmr7
\newcommand{\cl}{\mathop{\rm cl}}
\newcommand{\Lbb}{\mathbb{L}}         
\newcommand{\Pbb}{\mathbb{P}}         
\newcommand{\Qbb}{\mathbb{Q}}         
\newcommand{\Rbb}{\mathbb{R}}         
\newcommand{\Rad}[1]{\Rbb_{#1}}   
\newcommand{\nameRad}[1]{\name{\Rbb}_{#1}} 
\newcommand{\onebb}{\mathbb{1}}
\newcommand{\noughtbb}{\mathbb{0}}
\newcommand{\lh}{\mathop{\rm lh}}   
\newcommand{\cf}{\mathop{\rm cf}}   
\newcommand{\rge}{\mathop{\rm rge}} 
\newcommand{\dom}{\mathop{\rm dom}} 
\renewcommand{\lim}{\mathop{\rm lim}} 
\newcommand{\otp}{\mathop{\rm otp}} 
\newcommand{\ssup}{\mathop{\rm ssup}} 
\renewcommand{\ni}{\notin}            
\newcommand{\concat}{\kern-.25pt\raise4pt\hbox{$\frown$}\kern-.25pt}
\newcommand{\Card}{\mathop{\rm Card}} 
\newcommand{\On}{\mathop{\rm On}} 
\newcommand{\lk}{\lower.04em\hbox{$<$}\kappa}
\newcommand{\lkplus}{\lower.04em\hbox{$<$}\kappa^+}
\newcommand{\lomegaone}{\lower.04em\hbox{$<$}\omega_1}
\newcommand{\supp}{\mathop{\rm supp}}
\newcommand{\height}{{\rm ht}}
\newcommand{\cardrule}{\hrule height.2pt}
\newcommand{\cardrulefill}{\cleaders\cardrule\hfill}
\newcommand{\cardchar}[1]{\vbox{\ialign{##\crcr
    \cardrulefill\crcr\noalign{\kern1pt\nointerlineskip}
    $\hfil\displaystyle{#1}\hfil$\crcr}}}
\newcommand{\card}[1]{\cardchar{\cardchar{#1}}}
\newcommand{\obar}[1]{\cardchar{#1}}
\renewcommand{\bar}[1]{\obar{#1}}
\font\ninerm=cmr9  \font\eightrm=cmr8  \font\sixrm=cmr6
\font\ninebf=cmbx9 \font\eightbf=cmbx8 \font\sixbf=cmbx6
\font\ninett=cmtt9 \font\eighttt=cmti8
\font\nineit=cmmi9 \font\eightit=cmmi8 \font\sixit=cmmi6
\font\fiveit=cmmi5
\font\ninesl=cmsl9 \font\eightsl=cmsl8
\font\ninesy=cmsy9 \font\eightsy=cmsy8 \font\sixsy=cmsy6
\font\nineex=cmex9 \font\eightex=cmex8 
\newskip\ttglue
\newcommand{\ninepoint}{\def\rm{\fam0\ninerm}
  \textfont0=\ninerm \scriptfont0=\sixrm \scriptscriptfont0=\fiverm
  \textfont1=\nineit  \scriptfont1=\sixit  \scriptscriptfont1=\fiveit
  \textfont2=\ninesy \scriptfont2=\sixsy \scriptscriptfont2=\fivesy
  \textfont3=\nineex  \scriptfont3=\sixex \scriptscriptfont3=\tenex
  \textfont\itfam=\nineit \def\it{\fam\itfam\nineit}%
  \textfont\slfam=\ninesl \def\sl{\fam\slfam\ninesl}%
  \textfont\ttfam=\ninett \def\tt{\fam\ttfam\ninett}%
  \textfont\bffam=\ninebf \scriptfont\bffam=\sixbf
   \scriptscriptfont\bffam=\fivebf \def\bf{\fam\bffam\ninebf}%
  \tt \ttglue=.5em plus.25em minus.15em
  \normalbaselineskip=11pt
  \setbox\strutbox=\hbox{\vrule height8pt depth3pt width0pt}%
  \let\sc=\sevenrm \let\big=\ninebig \normalbaselines\rm}
\newcommand{\nar}{\advance\leftskip by 40pt \advance\rightskip by 40pt}
\newcommand{\eightpoint}{\def\rm{\fam0\eightrm}
  \textfont0=\eightrm \scriptfont0=\sixrm \scriptscriptfont0=\fiverm
  \textfont1=\eightit  \scriptfont1=\sixit  \scriptscriptfont1=\fiveit
  \textfont2=\eightsy \scriptfont2=\sixsy \scriptscriptfont2=\fivesy
  \textfont3=\eightex  \scriptfont3=\tenex \scriptscriptfont3=\tenex
  \textfont\itfam=\eightit \def\it{\fam\itfam\eightit}%
  \textfont\slfam=\tensl \def\sl{\fam\slfam\eightsl}%
  \textfont\ttfam=\eighttt \def\tt{\fam\ttfam\eighttt}%
  \textfont\bffam=\eightbf \scriptfont\bffam=\sixbf
   \scriptscriptfont\bffam=\fivebf \def\bf{\fam\bffam\eightbf}%
  \tt \ttglue=.5em plus.25em minus.15em
  \normalbaselineskip=10pt
  \setbox\strutbox=\hbox{\vrule height8pt depth3pt width0pt}%
  \let\sc=\sevenrm \let\big=\eightbig \normalbaselines\rm}
\title[Forcing constructions]{A framework for forcing constructions at successors of singular cardinals}
\begin{document}

\author[Cummings]{James Cummings}
\address{Department of Mathematical Sciences,
Carnegie Mellon University,
Pittsburgh, PA 15213, USA}

\email{jcumming@andrew.cmu.edu}

\thanks{Cummings thanks the National Science Foundation for their support through grant  DMS-1101156}

\thanks{Cummings, D{\v z}amonja and Morgan thank the Institut Henri Poincar{\' e} for their
support through the ``Research in Paris'' program during the period 24-29 June 2013.}  

\author[D{\v z}amonja]{Mirna D\v zamonja}

\address{School of Mathematics,
University of East Anglia,
Norwich, NR4 7TJ, UK}
\email{M.Dzamonja@uea.ac.uk}

\thanks{D{\v z}amonja, Magidor and Shelah thank the Mittag-Leffler Institute for their support
during the month of September 2009. 
Mirna D{\v z}amonja thanks EPSRC for their support through their grants EP/G068720  and EP/I00498.}

\author[Magidor]{Menachem Magidor}

\address{Institute of Mathematics,
Hebrew University of Jerusalem,
91904 Givat Ram, Israel}

\email{mensara@savion.huji.ac.il}

\author[Morgan]{Charles Morgan}

\thanks{Morgan thanks EPSRC for their support through grant EP/I00498.}

\address{Department of Mathematics,
University College London, Gower St.,
London, WC1E 6BT, UK
and
School of Mathematics,
University of Edinburgh,
Edinburgh, EH9 3FD, UK}
\email{charles.morgan@ucl.ac.uk}

\author[Shelah]{Saharon Shelah}

\address{Institute of Mathematics,
Hebrew University of Jerusalem,
91904 Givat Ram, Israel
and
Department of Mathematics,
Rutgers University,
New Brunswick, NJ 08854, USA}

\email{shelah@math.huji.ac.il}

\thanks{This publication is denoted [Sh963] in Saharon Shelah's list of
publications. Shelah thanks the United States-Israel Binational Science 
Foundation (grant no.~2006108), which partially supported this research.}

\thanks{We thank  Jacob Davis for his attentive reading of and comments on this paper.}

\thanks{We thank the anonymous referee for their meticulous reading of the originally submitted version of this paper.}

\begin{abstract}

We describe a framework for proving consistency results 
about singular cardinals of arbitrary cofinality and their successors.
This framework allows the construction of models in which
the Singular Cardinals Hypothesis fails at a singular cardinal $\kappa$
of uncountable cofinality, while $\kappa^+$ enjoys various  combinatorial
properties.

As a sample application, we prove the consistency (relative to
that of ZFC plus a supercompact cardinal) of there being a strong limit singular
cardinal $\kappa$ of uncountable cofinality where SCH fails and 
such that there is a collection of size
less than $2^{\kappa^+}$ of graphs on $\kappa^+$ such that any graph
on $\kappa^+$ embeds into one of the graphs in the collection.
\end{abstract}

\keywords{successor of singular cardinal, iterated forcing, strong chain condition, Radin forcing, forcing axiom, universal graph, 
indestructible supercompact cardinal}

\subjclass[2010]{Primary: 03E35, 03E55, 03E75.}

\maketitle

\section*{Introduction}

The class of uncountable regular cardinals is naturally divided into
three disjoint classes: the successors of regular cardinals, the
successors of singular cardinals and the weakly inaccessible
cardinals.  When we consider a combinatorial question about
uncountable regular cardinals, typically these classes require
separate treatment and very frequently the successors of singular
cardinals present the hardest problems. In particular there are subtle
constraints (for example in cardinal arithmetic) on the combinatorics
of successors of singular cardinals, and consistency results in this
area often involve large cardinals.

To give some context for our work, we review a standard strategy for
proving consistency results about the successors of regular
cardinals. This strategy involves iterating $\lk$-closed $\kappa^+$-cc
forcing with $\lk$-supports for some regular cardinal $\kappa$, with
the plan that the whole iteration will also enjoy the $\kappa^+$-chain
condition. The $\kappa^+$-chain condition of the iteration will of
course ensure that all cardinals are preserved, and is also very
helpful in the ``catch your tail'' arguments which frequently appear
in iterated forcing constructions.

When $\kappa = \omega$ this proof strategy is completely
straightforward as any finite support iteration of ccc forcing is ccc;
for regular $\kappa > \omega$ we need to assume that $\kappa^{\lk} =
\kappa$ and that the iterands have some strong form of $\kappa^+$-cc
and some other properties in order to ensure $\kappa^+$-cc for the
iteration (this issue is extensively discussed in
\S\ref{iteration-theory} below).  When $\kappa$ is singular the
strategy is no longer available, and this is one difficulty among many
in proving consistency results involving singular cardinals and their
successors.

D{\v z}amonja and Shelah \cite{Dzamonja-Shelah-univ-models} introduced
a new idea, which we briefly describe.  Initially $\kappa$ is a
supercompact cardinal whose supercompactness is indestructible under
$\lk$-directed closed forcing.  The final model is obtained by a
two-step forcing. The first step, $\Pbb$, is a $\lk$-directed closed
and $\kappa^+$-cc iteration, whilst the second step is Prikry forcing,
$Pr_U$, defined from a normal measure $U$ on $\kappa$ in $V[G]$, where
$G$ is a $\Pbb$-generic filter over $V$.  The iteration $\mathbb P$ is
designed to anticipate and deal with $Pr_U$-names for subsets of
$\kappa^+$, so that after forcing over $V[G]$ with $Pr_U$ we obtain
the desired consistency result.  D{\v z}amonja and Shelah used this
method to obtain the consistency, relative to that of a supercompact cardinal,
 of the existence for $\kappa$ singular strong limit
of cofinality $\omega$, of a family of
${\kappa^{++}}$ many graphs on $\kappa^+$ which are jointly universal
for all graphs on $\kappa^+$. In this model $2^\kappa = 2^{\kappa^+}$
 and this value can be made arbitrarily large. 

A well known early interaction between 
model theory and set theory gives one that if $2^\kappa=\kappa^+$ there is
a saturated graph ({\em i.e.}, model of the theory of graphs) on $\kappa^+$
and any such graph is universal. So the point of results such as that
of D{\v z}amonja and Shelah is to address the possibility of having small 
universal families of graphs on $\kappa^+$ when $2^\kappa>\kappa^+$.

In this paper we build a similar framework in which the final forcing
step is a version of Radin forcing, and changes the cofinality of
$\kappa$ to become some uncountable cardinal less than $\kappa$.
After building the framework, we prove a version of the result on
universal graphs mentioned in the last paragraph.

{
\renewcommand{\theTHEOREM}{\ref{universalg}}
\begin{theorem}
  Suppose $\kappa$ is a supercompact cardinal,
$\lambda<\kappa$ is a regular cardinal and 
$\Theta$ is a cardinal with $\cf(\Theta)\ge\kappa^{++}$ and $\kappa^{+3}\le\Theta$.
There is a forcing extension in which cofinally many cardinals below $\kappa$, $\kappa$ itself and 
all cardinals greater than $\kappa$ are preserved, $\cf(\kappa)=\lambda$,  $2^{\kappa}=2^{\kappa^+} =
\Theta$ and there is a universal family of graphs on $\kappa^+$ of size $\kappa^{++}$.
\end{theorem}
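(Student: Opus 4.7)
My plan is to adapt the D{\v z}amonja--Shelah strategy described in the introduction, substituting Radin forcing for Prikry forcing to produce $\cf(\kappa) = \lambda$ for the prescribed uncountable regular $\lambda$. Begin with $\kappa$ supercompact and apply a Laver preparation so that supercompactness becomes indestructible under $\lk$-directed closed forcing; call the result $V$. Over $V$ perform a two-step forcing $\Pbb \ast \nameRad{u}$, where $\Pbb$ is a $\lk$-support iteration of length $\Theta$ whose iterands are $\lk$-directed closed and satisfy the strong form of $\kappa^+$-chain condition developed in the iteration-theory sections of this paper, and $\nameRad{u}$ is a $\Pbb$-name for Radin forcing on $\kappa$ built from a measure sequence $u$ chosen in $V[G]$ so that $\Rad{u}$ changes $\cf(\kappa)$ to $\lambda$ while preserving $\kappa$ and $\kappa^+$ and all cardinals above $\kappa$.

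Fix in advance $\kappa^{++}$ ``candidate universal'' graphs $\setof{G_\beta}{\beta < \kappa^{++}}$ on $\kappa^+$ (to be added generically along the iteration), and fix a bookkeeping function enumerating triples $(\alpha, \dtau, \beta)$ with $\dtau$ a $(\Pbb_\alpha \ast \nameRad{u})$-name for a graph on $\kappa^+$ and $\beta < \kappa^{++}$. At each stage $\alpha < \Theta$ the iterand of $\Pbb$ is a $\lk$-directed closed poset of approximations to a graph embedding from the interpretation of $\dtau_\alpha$ into $G_{\beta(\alpha)}$, provided this is consistent, and the trivial poset otherwise; this iterand must be designed so that the strong $\kappa^+$-chain condition is preserved through the iteration.

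The principal verifications are: (i) $\Pbb$ is $\lk$-directed closed and $\kappa^+$-cc, hence $\kappa$ remains supercompact in $V[G]$ by indestructibility and $\kappa^+$ is preserved; (ii) a nice-name count gives $2^\kappa = 2^{\kappa^+} = \Theta$ in $V[G]$, and the subsequent Radin step does not alter these cardinal arithmetic values; (iii) $\Rad{u}$ in $V[G]$ preserves $\kappa$, $\kappa^+$, and all cardinals greater than $\kappa$, while cofinally many cardinals below $\kappa$ survive as members of the Radin club and $\cf(\kappa) = \lambda$; and (iv) every graph on $\kappa^+$ in $V[G][H]$ embeds into some $G_\beta$, yielding the universal family of size $\kappa^{++}$.

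The main obstacle is (iv), because $\Rad{u}$ is not $\kappa^+$-cc in $V[G]$, so a priori a $\Rad{u}$-name for a graph on $\kappa^+$ can be a very large object. The resolution is a nice-name reduction tailored to the Radin setting: combining the strong $\kappa^+$-chain condition of $\Pbb$ with the Prikry-like decision property for $\Rad{u}$ below a fixed stem (that sufficiently closed dense sets decide subsets of $\kappa^+$ of bounded size), one shows that every graph in $V[G][H]$ admits a two-step name whose $\Pbb$-component is absorbed into some proper initial segment $\Pbb_\alpha$ with $\alpha < \Theta$. Any such name is then hit by the bookkeeping paired with each $\beta < \kappa^{++}$, and a genericity argument -- the Radin analogue of the D{\v z}amonja--Shelah argument for Prikry forcing -- shows that for suitable $\beta$ the partial embedding added at the corresponding stage is forced to be total in $V[G][H]$. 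Carrying out this reduction cleanly is precisely what the framework developed earlier in the paper is engineered to permit.
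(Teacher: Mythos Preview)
Your outline has a genuine circularity that the paper's construction is specifically designed to avoid. You propose to enumerate at stage $\alpha$ a $(\Pbb_\alpha \ast \nameRad{u})$-name for a graph, but $u$ is the measure sequence \emph{chosen in $V[G]$ after the entire iteration $\Pbb$ is complete}. There is no meaningful notion of a $\nameRad{u}$-name available during the construction of $\Pbb$, because $u$ does not yet exist and indeed depends on the generic for the very iteration you are defining. In the Prikry setting D{\v z}amonja--Shelah handle this with an elaborate iteration scheme that builds the measure alongside the embeddings; the introduction of the present paper explicitly flags that doing the analogue for Radin forcing would be worse, and that diamond is used instead.

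Concretely, the paper works as follows. The iteration has length $\chi=\Theta^+$ (not $\Theta$), and a $\diamondsuit_\chi(S^\chi_{\kappa^+})$-sequence is used so that at each active stage $\xi$ one has a \emph{guessed} ultrafilter sequence $u^\xi$ living in $V[G][H_\xi]$. The iterand at stage $\xi$ is then the poset $Q(u^\xi)$, which simultaneously diagonalises ${\mathcal F}(u^\xi)$ and builds the partial embeddings; its definition depends on a $\kappa^+$-Kurepa tree added at stage $0$ (this is essential for the $\kappa^+$-stationary chain condition proof). After the iteration a master-condition argument produces a single sequence $u$ in $V[G][H]$ and a stationary set $S\subseteq S^\chi_{\kappa^+}$ on which $u^\xi$ is exactly $u$ restricted to $V[G][H_\xi]$. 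One then picks a limit point $\xi$ of $S$ of cofinality $\kappa^{++}$, forms $v$ from the $u^\varepsilon$ for $\varepsilon\in S\cap\xi$, and does the Radin forcing $\Rad{v}$ over $V[G][H_\xi]$. The universal family is $\setof{{\mathcal E}^\varepsilon}{\varepsilon\in S\cap\xi}$, one graph per guessed stage. None of this machinery---the diamond anticipation, the Kurepa tree, the forcing $Q(w)$ with its ``harmonious lower part'' clause (5), or the master-condition lift---appears in your sketch, and the hand-wave ``the Radin analogue of the D{\v z}amonja--Shelah argument'' is precisely where all the work lies.
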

\addtocounter{THEOREM}{-1}
}

The need for large cardinals in the broad context of combinatorics
at singular cardinals and their successors is at least partially explained by the
theory of core models and covering lemmas.  If there is no inner model
with a Woodin cardinal then there is an inner model $K$ with many
strong combinatorial properties (for example GCH and square hold), and
such that $\kappa^+ = (\kappa^+)^K$ for every singular cardinal
$\kappa$. This resemblance between $V$ and $K$ in the absence of inner
models with large enough cardinals exerts a strong influence on the
combinatorics of $\kappa^+$ in $V$, for example it implies that
$\square_\kappa$ holds in $V$ under this hypothesis.

In the instance of the results on universal graphs we obtain
a model in which $\kappa$ is a singular strong limit cardinal where 
$2^\kappa>\kappa^+$, {i.e.} the singular cardinal hypothesis fails. It is
known through work of Gitik, Mitchell, Shelah and Woodin that the consistency strength of the 
singular cardinal hypothesis failing alone is exactly that of a measurable 
cardinal $\kappa$ of Mitchell order $o(\kappa)=\kappa^{++}$. Specifically, in one direction Woodin gave 
a forcing construction of a model in which the singular cardinal hypothesis fails from a certain
large cardinal hypothesis and Gitik showed this hypothesis follows from that of the existence
of a measurable cardinal $\kappa$ of Mitchell order $o(\kappa)=\kappa^{++}$ (\cite{Gitik-negation-of-SCH}).
In the other direction Gitik showed (\cite{Gitik-strength-of-SCH}), building on
Mitchell's Covering Lemma (\cite{Mitchell-core-model-for-seqs-of-measures}) and ideas of Shelah
developed in the course of his pcf theory (\cite{Shelah-card-arith}),
that if the singular cardinal hypothesis fails there is a model in which
there is a measurable cardinal $\kappa$ of Mitchell order $o(\kappa)=\kappa^{++}$.
Thus some large cardinal hypotheses are necessary for these results on universal graphs that we prove.

This paper is in some sense a sequel to the work by D{\v z}amonja and 
Shelah \cite{Dzamonja-Shelah-univ-models},
and so we briefly record some of the  innovations introduced here.
\begin{itemize}
\item  Prikry forcing at $\kappa$ is homogeneous and adds no bounded subsets of $\kappa$,
while Radin forcing has neither of these features. This entails major changes
in the analysis of names for $\kappa^+$ in the final model, the proof of
$\kappa^+$-cc for the main iteration, and the proof that the final model
has a small jointly universal family.
\item The arguments of  \cite{Dzamonja-Shelah-univ-models} involve a complex
iteration scheme which is used to build a Prikry forcing
$Pr_U$,  and would have become even more complex if we had used it to build 
a suitable Radin forcing.  In this paper we use diamond sequences
to achieve similar goals.
\item One of the central points is that our main iteration enjoys a strong form
of the $\kappa^+$-chain condition. None of the standard preservation theorems
were quite suitable to show this, so we took a detour into iteration theory
to formulate and prove a suitable preservation theorem. 
\end{itemize} 

The paper is organized as follows:
\S\ref{iteration-theory} shows that cardinals are preserved in $\lk$-support 
iterations of certain types of $\lk$-closed,  $\kappa^+$-stationary chain condition 
forcings (and as a spin-off we give a generalized Martin's axiom for these forcings). 
\S\ref{preservation} is on preservation of diamond through forcing iterations. 
\S\ref{Radinmat} collects relevant material on Radin forcing.
\S\ref{Pm} describes the long Mathias forcing and its variant for adding 
Radin names for universal graphs. 
\S\ref{chain} contains the proof of the 
stationary $\kappa^+$-chain condition for the forcings of \S\ref{Pm}. 
\S\ref{maini} gives the main iteration. 
\S\ref{small} gets a small family of universal
graphs at a cardinal of uncountable singular cofinality.

The problem of the existence of a small family of universal graphs has some 
independent interest, and is also a natural test question for our forcing framework. 
In \cite{Cummings-Dzamonja-Morgan} the methods of this paper are applied to prove a result parallel to
  Theorem \ref{universalg} concerning graphs on $\aleph_{\omega + 1}$. As part of his thesis work in progress, Jacob Davis has
  also obtained a parallel result for graphs on  $\aleph_{\omega_1 + 1}$.

\subsection*{Notation}

$\Card$ and $\On$ are the classes of cardinals and ordinals
respectively.  The size of a set $A$ is denoted by either $\card{A}$
or $|A|$.  If $X$ is a set and $\kappa$ is a cardinal then 
${\mathcal P}(X)$ is the set of all subsets of $X$ and $[X]^\kappa$ is the set
of all subsets of $X$ of size $\kappa$. We define $[X]^{\lk}$, etc.,
in the obvious way.  If $X$, $Y$ are sets then ${}^X Y$ is the set of
functions from $X$ to $Y$. As in \cite{Kunen}, Definition (VII.6.1), if 
$I$ and $J$ are sets and $\kappa$ is a cardinal we write $\hbox{Fn}(I,J,\kappa)$
for the set of partial functions from $I$ to $J$ of size less than $\kappa$,
$\setof{p}{p\hbox{ is a function } \sss\sss\&\sss\sss 
\card{p}<\kappa \sss\sss\&\sss\sss
\dom(p)\subseteq I \sss\sss\&\sss\sss
\rge(p)\subseteq J}$.

We designate names for the projection functions on cartesian products. 
If $X$, $Y$ are sets we define $\pi_0:X\times Y \longrightarrow X$ and 
$\pi_1:X\times Y \longrightarrow Y$ by, if $x\in X$ and $y\in Y$ then 
$\pi_0(x,y)=x$ and $\pi_1(x,y)=y$.

We write $f \cdot g$ for the composition of the functions $f$ and $g$. 
When $f$ and $g$ agree on $\dom(f) \cap \dom(g)$ we write $f + g$ for the
unique function $h$ such that $\dom(h) = \dom(f) \cup \dom(g)$,
$h \restriction \dom(f) = f$ and  $h \restriction \dom(g) = g$.

If $f:A\times B \longrightarrow C$ is a function, $a\in A$ and $b\in B$ we write
$f(a,\ssss .\ssss)$ for the function $g:B\longrightarrow C$ such that $g(d)=f(a,d)$ for all $d\in B$ and
$f(\ssss .\ssss,b)$ for the function $h:A\longrightarrow C$ such that $h(c)=f(c,b)$ for all $c\in A$.

If $X$ is a set of ordinals we write $\ssup(X)$ for the 
{\em strong supremum of $X$,} that is the least ordinal $\alpha$ such that
$X\subseteq \alpha$, and $\cl(X)$ for the closure of $X$ in the order
topology. If $\alpha$, $\beta$ are ordinals we write $\alpha \beta$ for their 
ordinal multiplication product.

One \emph{non-standard} piece of notation which will be useful and
which we define here is the following.  If
$y=\tupof{y_{\tau}}{\tau<\lh(y)}$ is a sequence of sets and
$y_0\in\Card$ we often write $\kappa_y$ for $y_0$, the first entry in $y$.

If $\mu$, $\kappa$ are regular cardinals with $\mu<\kappa$ we write,
on occasion, $S^{\kappa}_\mu$ for $\setof{\xi<\kappa}{\cf(\xi)=\mu}$,
and similarly for $S^{\kappa}_{<\mu}$, $S^\kappa_{\ge\mu}$ and so
on. If $S$ is a stationary subset of $\kappa$ we say that $T\subseteq
S$ is a \emph{club relative to} $S$ if there is some club
$C\subseteq\kappa$ such that $T=S\cap C$.

A function $g:\kappa\longrightarrow\kappa$ is \emph{regressive} on a
set $S \subseteq \kappa$ if for every $\varepsilon \in S$ we have
$g(\varepsilon)<\varepsilon$.

A subset $X$ of a partial order $(\mathbb{P},\le)$ is 
\emph{linearly ordered} if for all $p$, $q\in X$ either $p\le q$ or $q\le p$,
\emph{directed} if every finite subset of $X$ has a lower bound in
$X$, and \emph{centred} if every finite subset of $X$ has a lower
bound in $\Pbb$.  A partial order $(\mathbb{P},\le)$ is
$\lk$-\emph{closed} if every linearly ordered subset of size $\lk$ of
$\mathbb{P}$ has a lower bound, $\lk$-\emph{directed closed} if every
directed subset of size $\lk$ of $\Pbb$ has a lower bound, and
$\lk$-\emph{compact} if every centred subset of size $\lk$ of $\Pbb$
has a lower bound.  $\Pbb$ is {\em countably compact} if and only if
it is $\lomegaone$-compact.  Clearly any $\lk$-compact partial order
is $\lk$-directed closed, and any $\lk$-directed closed partial order
is $\lk$-closed.

If $(\Pbb,\le)$ is a partial order and $q$, $r\in \Pbb$ we write
$q\parallel r$ to mean that $q$ and $r$ are compatible in $\Pbb$, that
is there is some $p\in \Pbb$ such that $p\le q$, $r$.  If $(\Qbb,\le)$
is a sub-partial order of $(\Pbb,\le)$ and $q$, $r\in \Qbb$ we write
$q\parallel_\Qbb r$ to mean $q$ and $r $ are compatible 
\emph{in $\Qbb$}, that is there is some $p\in \Qbb$ such that $p\le q$, $r$. A
partial order is {\em splitting} if every element has two incompatible
extensions.

A partial order $(\Pbb,\le)$ is \emph{well-met} if every compatible
pair of elements has a greatest lower bound: \emph{i.e.}, for all $p$, $q\in \Pbb$
if $p\parallel q$ then there is some $r\in \Pbb$ such that $r\le p$,
$q$ and for all $s\in \Pbb$ with $s\le p$, $q$ we have $s\le r$.

For a regular cardinal $\kappa$, a partial order $\Pbb$ has the
$\kappa^+$-\emph{stationary chain condition} (abbreviated later as the $\kappa^+$-\emph{stationary cc})
  if and only if for every
sequence $\tupof{ p_i }{ i < \kappa^+ }$ of conditions in $\Pbb$ there
is a club set $C \subseteq \kappa^+$ and there is a regressive function $f$
on $C \cap S^{\kappa^+}_\kappa$ such that for all $\alpha,\, \beta \in
C \cap S^{\kappa^+}_\kappa$ with $f(\alpha) = f(\beta)$ the conditions
$p_\alpha$ and $p_\beta$ are compatible. We note that by an easy
application of Fodor's lemma, this property implies that $\Pbb$ enjoys
the strengthened form of the $\kappa^+$-Knaster property in which any
$\kappa^+$-sequence of conditions has a stationary subsequence of
pairwise compatible conditions.
A stronger notion (see Lemma (\ref{sh-conds-imply-double-star}) and Example 
(\ref{Cohen-forcing-is-statcc-but-not-kappa-linked}) below) is that 
$\Pbb$ is $\kappa$-\emph{linked} if $\Pbb$ is the union of 
$\kappa$ many sets of pairwise compatible elements.

When forcing with a partial order $(\Pbb,\le)$ over a model $V$ we
take the notions of \emph{names} and \emph{canonical names} to be as
in \cite{Kunen}, writing $\name{x}$ for a $\Pbb$-name in $V$ and
$\hat{y}$ for a standard $\Pbb$-name for $y\in V$. We will freely use
the well-known {\em Maximum Principle}, which states that if $\forces
\exists x \phi(x, \hat{y}, \dot G)$ then there is a name $\name{x}$
such that $\forces \phi(\name{x}, \hat{y}, \dot G)$.  Usually our
usage should be clear: as is customary we take the Boolean truth
values $\noughtbb$, $\onebb$ to be identified with the (ordinal)
elements of $\set{0,1}$. Thus forcing names for truth values are names
for ordinals $<2$. In the context of forcing, if $p$, $q\in \Pbb$ then
$p\le q$ means that $p$ is stronger than $q$.

Cohen forcing to add $\lambda$ many subsets of a regular cardinal $\kappa$, which we denote by
$\hbox{Add}(\kappa,\lambda)$, is the collection of partial functions of size less than $\kappa$ 
from $\lambda$ to $2$, $\hbox{Fn}(\lambda,2,\kappa)$, ordered by 
reverse inclusion.

The Laver iteration $\Lbb$ for a supercompact cardinal $\kappa$ is a
$\kappa$-cc forcing poset of cardinality $\kappa$ which make the
supercompactness of $\kappa$ indestructible under $\lk$-directed
closed forcing \cite{Laver}.

If $\kappa^+$ is a successor cardinal a $\kappa^+$-{\em tree} is a tree of
size $\kappa^+$, height $\kappa^+$ and each level of size at most $\kappa$.  
A \emph{binary $\kappa^+$-tree} is a $\kappa^+$-tree 
such that each point in the tree has exactly two successors in the tree order.
If $\Upsilon$ is a cardinal greater than $\kappa^+$ a 
{\em $\kappa^+$-Kurepa tree with $\Upsilon$-many branches} is a
$\kappa^+$-tree with $\Upsilon$-many branches of
length $\kappa^+$. If there is such a tree there is one which is binary and a subtree
of the complete binary tree $^{<\kappa^+} 2$ (see, for example, \cite{Kunen},
Chapter (2)).  In a slight abuse of terminology, we refer here to binary
$\kappa^+$-Kurepa subtrees of $^{<\kappa^+} 2$ as binary $\kappa^+$-Kurepa trees.

The {\em usual forcing to add a $\kappa^+$-Kurepa tree with $\Upsilon$-many branches}, due to Stewart \cite{Stewart}, 
is the forcing notion in which conditions are pairs $(t,f)$ consisting of a binary sub-tree, $t$, of $^{<\kappa^+} 2$
of successor height, say $\gamma+1 < \kappa^+$, each level of size at most $\kappa$ and 
each point in the tree having a successor of height $\gamma$, and a bijection $f$ between a subset of $\Upsilon$ and the set of 
points of height $\gamma$. The ordering is that $p = (t^p,f^p) \le q = (t^q,f^q)$ 
if $t^q$ is obtained by,  in Kunen's \cite{Kunen} vivid pr\'ecis, `sawing off $t^p$ parallel to the ground': \emph{i.e.}
there is some $\alpha < \kappa^+$ such that $t^q = t^p \cap {}^\alpha 2$,
$\dom(f^q)\subseteq \dom(f^p)$ and
$f^q(\xi) <^{p} f^p(\xi)$, where $<^p$ is the tree order on $t^p$.
This forcing has the $\kappa^{++}$-chain condition and is $\lkplus$-directed closed with greatest lower bounds:
every directed set of conditions of size at most $\kappa$ has a greatest lower bound.

We recall here in one place, for the reader's convenience, the definitions of various variants of the $\diamondsuit$ principle.
The reader can also consult, for example, \cite{Kunen} and Rinot's article \cite{Rinot-diamond}
for further information about these principles.
Let $\kappa$ be a regular cardinal and $B$ a stationary subset of $\kappa$. 

A sequence $\tupof{A_\alpha}{\alpha\in B}$ is a
$\diamondsuit_\kappa(B)$-\emph{sequence} if $A_\alpha\subseteq \alpha$ for each $\alpha\in B$ and whenever
$A\subseteq \kappa$ the sequence frequently predicts $A$ correctly: $\setof{\alpha\in B}{A\cap \alpha = A_\alpha}$ is a stationary subset of $\kappa$.

A sequence $\tupof{{\mathcal A}_{\alpha}}{\alpha\in B}$ is a $\diamondsuit^*(B)$-\emph{sequence} if 
${\mathcal A}_\alpha\in [{\mathcal P}(\alpha)]^{\le \card{\alpha}}$ for each $\alpha<\kappa$ and whenever
$A\subseteq \kappa$ there is a closed unbounded subset $C$ of $\kappa$ such that
$C\cap B\subseteq \setof{\alpha\in B}{A\cap \alpha\in {\mathcal A}_\alpha}$.

A sequence $\tupof{{\mathcal A}_{\alpha}}{\alpha\in B}$ is a $\diamondsuit^+(B)$-\emph{sequence} if 
${\mathcal A}_\alpha\in [{\mathcal P}(\alpha)]^{\le \card{\alpha}}$ for each $\alpha<\kappa$ and whenever
$A\subseteq \kappa$ there is a closed unbounded subset $C$ of $\kappa$ such that
$C\cap B\subseteq \setof{\alpha\in B}{A\cap \alpha,\sss\sss C\cap\alpha \in {\mathcal A}_\alpha}$.

We say that $\diamondsuit_\kappa(B)$ \emph{holds} (\emph{resp.} $\diamondsuit^*_\kappa(B)$ \emph{holds}, 
$\diamondsuit^+_\kappa(B)$ \emph{holds}) if there is a $\diamondsuit_\kappa(B)$-sequence 
(\emph{resp.} a $\diamondsuit^*_\kappa(B)$-sequence, a $\diamondsuit^+_\kappa(B)$-sequence). 

When $B$ is the whole of $\kappa$, \emph{i.e.}, $B=\setof{\alpha}{\alpha<\kappa}$, we omit mention of it. 
So, for example, a $\diamondsuit_\kappa$-sequence is a $\diamondsuit_\kappa(\setof{\alpha}{\alpha<\kappa})$-sequence and so on.

A $\diamondsuit_\kappa(B)$-sequence \emph{on} $\kappa\times\kappa$ is a sequence $\tupof{A_\alpha}{\alpha\in B}$ such that
$A_\alpha\subseteq \alpha\times\alpha$ for each $\alpha\in B$ and whenever $A\subseteq \kappa\times\kappa$ one has that
$\setof{\alpha\in B}{A\cap \alpha\times\alpha = A_\alpha}$ is stationary in $\kappa$. 

Clearly, if there is a $\diamondsuit_\kappa(B)$-sequence then there is
a $\diamondsuit_\kappa(B)$-sequence on $\kappa\times\kappa$ -- the
closure points of any enumeration of $\kappa\times\kappa$ in order
type $\kappa$ form a closed unbounded subset of $\kappa$. We talk
similarly of $\diamondsuit^*_\kappa(B)$-sequences and
$\diamondsuit^+_\kappa(B)$-sequences \emph{on} $\kappa\times\kappa$.

If ${\mathcal G}=(X,E)$ and ${\mathcal H}=(Y,F)$ are graphs then
$f:{\mathcal G}\longrightarrow {\mathcal H}$ is an \emph{embedding of
${\mathcal G}$ into ${\mathcal H}$ as an induced subgraph} if $\forall x_0\in
X\sss\sss\forall x_1\in X\sss (\sss x_0\mathrel{E} x_1
\longleftrightarrow f(x_0)\mathrel{F} f(x_1) \sss)$.  Since this is
the only kind of graph embedding which concerns us we will simply call
them {\em embeddings}.

Let $\kappa$ be a regular cardinal. We say that a family ${\mathcal F}$ 
of graphs on $\kappa$ is {\em jointly universal} for graphs of size 
$\kappa$ if for every graph ${\mathcal G}=(\kappa,E)$ there is some 
${\mathcal H}=(\kappa,F)\in {\mathcal F}$ and some embedding 
$f:{\mathcal G}\longrightarrow {\mathcal H}$ of ${\mathcal G}$ into ${\mathcal H}$. 
We say that ${\mathcal F}$ is a {\em small} universal
family if $\card{{\mathcal F}} < 2^\kappa$.

\section{Some iterated forcing theory}\label{iteration-theory} 

In the classical exposition of iterated forcing Baumgartner
\cite[\S{}4]{Baumgartner} wrote
\begin{quote}
The search for extensions of MA for larger cardinals has proved to be
rather difficult.
\end{quote}
One reason for this is that for $\kappa$ regular and uncountable, an
iteration of $\lk$-closed and $\kappa^+$-cc forcing posets with
supports of size less than $\kappa$ does not in general have the
$\kappa^+$-cc.  For example, a construction due to Mitchell described
in a paper of Laver and Shelah \cite{Laver-Shelah} shows that in $L$
there is an iteration of length $\omega$ of countably closed
$\aleph_2$-cc~forcing posets such that the inverse limit at stage
$\omega$ does not have the $\aleph_2$-cc.

The literature contains several preservation theorems for iterations
involving strengthened forms of closure and chain condition, along
with corresponding forcing axioms. The first results in this direction
are in unpublished work by Laver \cite[\S{4}]{Baumgartner}.
Baumgartner \cite{Baumgartner} proved that under CH an iteration with
countable supports of countably compact $\aleph_1$-linked forcing
posets is $\aleph_2$-cc, and proved the consistency of some related
forcing axioms.  Shelah \cite{Shelah-GMA} proved that under CH an
iteration with countable supports of posets which are countably closed
and well-met and which enjoy the $\aleph_2$-stationary cc also enjoys
the $\aleph_2$-stationary cc.  Shelah also proved more general results
for certain iterations of $\kappa^+$-stationary cc~posets with
supports of size less than $\kappa$, and proved the consistency of a
number of related forcing axioms.

The main result of this section is a common generalisation of the
results of Baumgartner and Shelah quoted above.  In order to state the
theorem we require a definition.

\begin{definition}\label{compat-omega-seqs} 
Let $(\Pbb,\le)$ be a partial order.
\begin{itemize}
\item Two descending sequences $\tupof{ q_i }{ i < \omega }$ and
$\tupof{ r_i }{ i < \omega }$ from $\Pbb$ are \emph{pointwise
compatible} if for each $i < \omega$ we have $q_i \parallel r_i$.
\item $(\Pbb,\le)$ is {\em countably parallel-closed} if each pair of
pointwise compatible descending $\omega$-sequences has a common
lower bound.
\end{itemize}
\end{definition}

\begin{theorem}\label{double-star-and-statcc-itn-thm}
Let $\kappa$ be an uncountable regular cardinal with $\kappa^{<\kappa} =
\kappa$. Every iteration of countably parallel-closed, $\lk$-closed, 
$\kappa^+$-stationary cc forcing with supports of size less than
$\kappa$ has the $\kappa^+$-stationary cc.
\end{theorem}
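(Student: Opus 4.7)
The plan is to proceed by induction on the length $\alpha$ of the iteration, establishing simultaneously that $\Pbb_\alpha$ is $\lk$-closed, countably parallel-closed, and has the $\kappa^+$-stationary cc. The two closure properties are inherited coordinate-wise at every stage using $\kappa^{<\kappa} = \kappa$ (so that unions of fewer than $\kappa$ supports of size $<\kappa$ remain of size $<\kappa$). For the chain condition, the successor step $\alpha + 1$ is a standard two-step argument: given a sequence $\langle (p_i, \dot{r}_i) : i < \kappa^+ \rangle$, apply the inductive $\kappa^+$-stationary cc to $\langle p_i \rangle$ in $\Pbb_\alpha$, then in $V^{\Pbb_\alpha}$ apply the forced $\kappa^+$-stationary cc of $\dot{\Qbb}_\alpha$ to $\langle \dot{r}_i \rangle$, and combine the two regressive functions after intersecting the two clubs. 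At a limit $\alpha$ of cofinality at least $\kappa^+$, each condition has support bounded strictly below $\alpha$, so by pressing down I may assume a single $\alpha' < \alpha$ bounds all supports along a stationary set, reducing to the inductive hypothesis for $\Pbb_{\alpha'}$.

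The main work is at a limit $\alpha$ of cofinality at most $\kappa$. Given $\langle p_i \rangle$, I would fix a sufficiently large regular $\theta$ and a continuous $\in$-chain $\langle N_j : j < \kappa^+ \rangle$ of elementary submodels of $(H_\theta, \in)$ of cardinality $\kappa$, each closed under sequences of length $<\kappa$ and containing the iteration and $\langle p_i \rangle$ as elements. Writing $\delta_j = N_j \cap \kappa^+$, the $\delta_j$ form a club, and I restrict to $j \in S^{\kappa^+}_\kappa$ with $\delta_j = j$. To each such $p_j$ I associate an ``$N_j$-reduction'' $q_j \in \Pbb_\alpha \cap N_j$ with the property that any extension of $q_j$ inside $N_j$ is compatible in $\Pbb_\alpha$ with $p_j$; such $q_j$ is built by recursion along the support of $p_j$, at each coordinate $\beta < \alpha$ invoking the inductive $\kappa^+$-stationary cc of $\Pbb_\beta$ together with elementarity of $N_j$. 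Since $|N_j| = \kappa$ and $N_j \cap \kappa^+ = j$, a bookkeeping argument codes $q_j$ by an ordinal $f(j) < j$, yielding a regressive function on a club.

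For two indices $i < j$ in $S^{\kappa^+}_\kappa$ with $f(i) = f(j)$, the reductions $q_i$ and $q_j$ are corresponding conditions under the natural isomorphism matching the coded data in $N_i$ and $N_j$; the goal is to manufacture a common extension of $p_i$ and $p_j$ in $\Pbb_\alpha$. I would construct it by recursion on the iteration length, using the defining property of the reductions at coordinates in $\supp(p_i) \cap N_j$ (and symmetrically), and copying $p_i(\beta)$ or $p_j(\beta)$ verbatim at coordinates lying in only one support. At an iteration stage $\beta < \alpha$ that is itself a limit of uncountable cofinality below $\kappa$, the inductive $\lk$-closure of $\Pbb_\beta$ gives lower bounds of the partial construction; at a stage $\beta$ of cofinality $\omega$, however, the recursion produces two descending and pointwise compatible $\omega$-sequences of partial conditions, one tracking $p_i$ and the other tracking $p_j$, and a simultaneous lower bound is obtained from the inductive \emph{countable parallel-closure} of $\Pbb_\beta$.

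The main obstacle I expect is precisely this cofinality $\omega$ coordination: unlike the well-met setting of Shelah \cite{Shelah-GMA}, in which one can take coordinate-wise meets, or the countably compact setting of Baumgartner \cite{Baumgartner}, in which a countable pairwise compatible collection always has a lower bound, here I have only two pointwise compatible $\omega$-sequences, and the countably parallel-closed hypothesis is the new axiom tailored exactly to glue them. Closing the induction finally requires verifying that $\Pbb_\alpha$ itself inherits countable parallel-closure, which is an analogous but simpler coordinate-wise construction applied to two given pointwise compatible $\omega$-sequences of full conditions in $\Pbb_\alpha$, again relying at the $\omega$-cofinality coordinates on the inductive hypothesis one level down.
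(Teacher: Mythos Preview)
Your proposal has a genuine gap: you have misidentified where and how the countable parallel-closure hypothesis enters, and as a consequence you are missing the central construction.

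The $\omega$ in ``countably parallel-closed'' has nothing to do with iteration stages of cofinality $\omega$. The paper's argument (following Shelah \cite{Shelah-GMA}) is uniform in the length and does no cofinality case split. Given $\langle p_i : i < \kappa^+\rangle$ in $\Pbb_\xi$, one builds for \emph{each} $i$ a descending $\omega$-sequence $\langle p^n_i : n < \omega\rangle$ below $p_i$. At step $n+1$: for each coordinate $\varepsilon$, apply the forced $\kappa^+$-stationary cc of $\dot{\Qbb}_\varepsilon$ to the sequence $\langle p^n_i(\varepsilon) : i < \kappa^+\rangle$, pull the name-club back to a real club using the inductive $\kappa^+$-cc of $\Pbb_\varepsilon$, and then extend $p^n_i$ to $p^{n+1}_i$ so as to \emph{decide} the value $\dot g^n_\varepsilon(i) = \rho^n_\varepsilon(i)$ for every $\varepsilon$ in the support. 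After $\omega$ rounds, a pressing-down argument encoding the supports and the ordinals $\rho^n_\varepsilon(i)$ produces $i,i'$ with $\rho^n_\varepsilon(i) = \rho^n_\varepsilon(i')$ for all $n$ and all $\varepsilon$ in the common part of the supports. At such a coordinate the two $\omega$-sequences $\langle p^n_i(\varepsilon)\rangle_n$ and $\langle p^n_{i'}(\varepsilon)\rangle_n$ are then \emph{pointwise compatible}, and countable parallel-closure of $\dot{\Qbb}_\varepsilon$ supplies a common lower bound. That is how the amalgam is assembled coordinate by coordinate; at coordinates in only one support, ordinary countable closure handles the single $\omega$-sequence.

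Your ``$N_j$-reduction'' does not substitute for this mechanism. The $\kappa^+$-stationary cc does not produce a condition $q_j \in N_j$ with the property that every extension of $q_j$ inside $N_j$ is compatible with $p_j$; that is a properness-type statement, not a consequence of any chain condition, and invoking the inductive stationary cc of $\Pbb_\beta$ ``together with elementarity'' does not manufacture it. Without the $\omega$-fold refinement you have, at a common support coordinate $\varepsilon$, only a single pair $p_i(\varepsilon), p_j(\varepsilon)$; even if you could argue they are compatible, you need a \emph{forced} common lower bound below the partially built amalgam $r\restriction\varepsilon$, and a one-shot compatibility statement does not give that. Your proposed invocation of countable parallel-closure at stages $\beta$ of cofinality $\omega$ in the recursion along the iteration is a red herring: at such $\beta$ the partial amalgam is simply the union of its restrictions and nothing needs closing; the real work is at \emph{every} coordinate of the common support, and the pointwise compatible $\omega$-sequences there come from the repeated deciding of regressive-function values, not from the cofinality of $\beta$. (Your two-step successor argument has the same defect in miniature: deciding the $\dot\Qbb_\alpha$-regressive value requires extending the $\Pbb_\alpha$-part, which disturbs the first regressive function; the $\omega$-iteration is precisely what reconciles all coordinates simultaneously.)
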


\begin{proof}[\ref{double-star-and-statcc-itn-thm}]

Let $\Pbb=\tup{\tupof{\Pbb_\xi}{\xi\le\chi},\tupof{\name{\Qbb}_\xi}{\xi<\chi}}$
be a $\lk$-support iteration of forcings such that for each $\xi<\chi$
it is forced by $\Pbb_\xi$ that $\name{\Qbb}_\xi$ is countably
parallel-closed, $\lk$-closed and $\kappa^+$-stationary cc.  We prove,
by induction on $\xi$, and following the proof of Lemma (1.3) of
\cite{Shelah-GMA} closely, that for each $\xi\le\chi$, $\Pbb_\xi$ has
the stationary $\kappa^+$-chain condition.

Suppose that $\Pbb_\varepsilon$ has the stationary $\kappa^+$-chain
condition for all $\varepsilon<\xi$.  Let $\tupof{p_i}{i<\kappa^+}\in
{}^{\kappa^+} \Pbb_\xi$. We construct for each $i<\kappa^+$ a
decreasing $\omega$-sequence $\tupof{p^n_i}{n<\omega}$ such that
$p^0_i=p_i$ and $p^n_i\in \Pbb_\xi$ for each $n<\omega$.

Induction step $n+1$. Suppose we have already defined
$\tupof{p^n_i}{i<\kappa^+}$. For each $i<\kappa^+$ and
$\varepsilon<\xi$ we have $\forces_{P_\varepsilon}{\hbox{``}
  p^n_i(\varepsilon) \in\name{\Qbb}_\varepsilon\hbox{''}}$ and
{$p^n_i(\varepsilon)=\onebb_{\name{q}_\varepsilon}$}\break if
$\varepsilon \ni \supp(p^n_i)$.  As
$\forces_{P_\varepsilon}{\hbox{``$\name{\Qbb}_\varepsilon$ has
the $\kappa^+$-stationary cc''}}$ and $\tupof{p^n_i(\varepsilon)}{i <
\kappa^+}$ names a $\kappa^+$-sequence of elements of
$\name{\Qbb}_\varepsilon$, we may find $\Pbb_\varepsilon$-names
$\name{C}^n_\varepsilon$ for a club subset of $\kappa^+$ and
$\name{g}^n_\varepsilon$ for a regressive function on
$\name{C}^n_\varepsilon$ witnessing the $\kappa^+$-stationary cc for
the sequence named by $\tupof{p^n_i(\varepsilon)}{i < \kappa^+}$.

As $\Pbb_\varepsilon$ has the $\kappa^+$-chain condition, if
$\forcesq{\Pbb_\varepsilon}{\name{D}\hbox{ is a club subset of  }\kappa}$ 
there is some club $D'\in V$ such that
$\forcesq{\Pbb_\varepsilon}{D'\subseteq\name{D}}$. So we may as well
assume that $\name{C}^n_\varepsilon=\hat{C}^n_\varepsilon$ for some
club $C^n_\varepsilon\in V$.

For each $i<\kappa^+$, dealing with each $\varepsilon\in \supp(p^n_i)$
inductively and using the $\lk$-closure of $\Pbb_\xi$, find some
$p^{n+1}_i\le p^n_i$ such that
\[ \forall \varepsilon\in \supp(p^n_i) \sss \sss p^{n+1}_i \on\varepsilon
\forcesq{\Pbb_\varepsilon}{\name{g}^n_\varepsilon(i)= \rho^n_\varepsilon(i)}\]
for some ordinal $\rho^n_\varepsilon(i) < i$. Let 
$\rho^n_\varepsilon(i) = 0$ for $\varepsilon\ni \supp(p^n_i)$.

Let  $\setof{ \varepsilon_\alpha }{ \alpha < \mu }$, for some $\mu \le \kappa^+$, enumerate
$\bigcup\setof{\supp( p^n_i )}{ n<\omega \sss\sss\&\sss\sss i < \kappa^+ } $.
For each $\varepsilon<\xi$ let $C_\varepsilon = \bigcap_{n<\omega} C^n_\varepsilon$
and let $C= \setof{i<\kappa^+}{ \forall \alpha < i\sss\sss (i \in C_{\varepsilon_\alpha})}$.

\begin{claim}\label{delta-sys-technical-claim}
Using the hypothesis that $\kappa^{<\kappa} = \kappa$,
we can find a club $E \subseteq C$ and a regressive function $k$ on $S^{\kappa^+}_\kappa \cap E$ 
such that if
$i$, $i'\in S^{\kappa^+}_\kappa \cap E$, $k(i)=k(i')$ and $i<i'$ then 
\begin{align*}
(1)\quad & \bigcup\setof{\supp(p^n_i)}{n<\omega} \cap \setof{\varepsilon_\gamma}{\gamma<i} =\\
&\quad \bigcup\setof{\supp(p^n_{i'})}{n<\omega} \cap \setof{\varepsilon_\gamma}{\gamma<i'}\\
(2)\quad &  \bigcup\setof{\supp(p^n_i)}{n<\omega}\subseteq \setof{\varepsilon_\gamma}{\gamma<i'}\\
(3)\quad  &  \hbox{If $\gamma < i'$, $n<\omega$ and 
$\varepsilon_\gamma \in \supp(p^n_{i'})$ then
$\rho^n_{\varepsilon_\gamma}(i) = \rho^n_{\varepsilon_\gamma}(i')$.} \\
\end{align*}
\vskip-24pt
(Note that, by (1), for every $\gamma$ to which clause (3) applies we have $\gamma < i$ and
$\varepsilon_\gamma \in \bigcup\setof{\supp(p^n_i)}{n<\omega}$.)
\end{claim}

\begin{proof}[\ref{delta-sys-technical-claim}]
We start by making some auxiliary definitions.

Let $H=\setof{h}{\dom(h) \in [\kappa^+]^{<\kappa} \hbox{ and } h:\dom(h)\longrightarrow {}^\omega\kappa^+}$. 
By the hypothesis that $\kappa^{<\kappa}=\kappa$ we have that $\card{H}=\kappa^+$. 
So we can enumerate $H$ as, say, $\tupof{h_\eta}{\eta<\kappa^+}$.

Noticing that $H\subset [\kappa^+ \times {}^\omega\kappa^+]^{<\kappa}$,
for $i<\kappa^+$, define $H_i = H\cap [i \times {}^\omega i]^{<\kappa}$.
Since $\kappa^{<\kappa}=\kappa$ we have, for each $i<\kappa$, that $\card{H_i}=\kappa$.

So we can define $f:\kappa^+\longrightarrow \kappa^+$ by, for $i<\kappa^+$,
 letting $f(i)=$ the least $\tau<\kappa^+$ such that 
$H_i\subseteq \setof{h_\eta}{\eta<\tau}$. 

Also, for each $i<\kappa^+$, set 
$F_i = \setof{\gamma<\kappa^+}{\varepsilon_\gamma \in \bigcup\setof{\supp(p^n_{i})}{n<\omega} }$
and $D_i = F_i \cap i$.

Let $E=\setof{i<\kappa^+}{f``i \cup \bigcup \setof{F_j }{j<i} \subseteq i }$. It follows that $E$ is a closed unbounded subset of $\kappa^+$. 

In order to see this let us define for $n<\omega$ the functions $g_n:\kappa^+\longrightarrow \kappa^+$ by 
$g_0(i)$ is the least ordinal $\obar{i}$ such that
$f``i \cup \bigcup \setof{F_j }{j<i} \subseteq \obar{i}$, and $g_{n+1}(i) = g_0(g_n(i))$ for $n<\omega$. 
Then for any $i<\kappa^+$ we have that $i< \bigcup_{n<\omega} g_n(i) \in E$. 
This shows that $E$ is unbounded. Moreover, if $B\subseteq E$ is of limit order type less than $\kappa^+$
and $d \in \sup(B) \cup \bigcup \setof{F_j }{j<\sup(B)}$ then there is some $b\in B$ such that
$d \in b \cup \bigcup \setof{F_j }{j<b}$ and hence $f(d)< b <\sup(B)$. 

Now define, for each $i<\kappa^+$, the function $k_i:D_i\longrightarrow {}^\omega i$ by letting
$k_i(\gamma)(n) = \rho^n_{\varepsilon_{\gamma}}(i)$ if $\varepsilon_\gamma \in \supp(p^n_i)$, and $=0$ otherwise.

So for each $i<\kappa^+$ we have $k_i\in H_i$.

Define $k:\kappa^+\longrightarrow \kappa^+$ by setting $k(i) =$ that $\eta$ such that $k_i= h_\eta$. 

For each $i<\kappa^+$ we have $k_i = h_{k(i)}$ and thus $k(i) < f(i)$. 

Now if $i\in E$ and $\cf(i)=\kappa$, simply because $k_i$ is a function of size $\lk$, 
there is some $i^* <i$ such that $k_i \in H_{i^*}$. Hence for such $i$ we have $k(i) < f(i^*)$. 
We also have $f(i^*)< i$ since $i\in E$. 

Hence if $i\in E$ and $\cf(i)=\kappa$ we have $k(i)<i$ and so we have shown that $k$ is regressive on $E\cap S^{\kappa^+}_{\kappa}$. 

Moreover, if $i$, $i' \in E\cap S^{\kappa^+}_\kappa$ and $k(i) = k(i')$ we have that $k_i = k_{i'}$, and hence that
(1) and (3) hold. Finally, if $i< i'$ as well we have, since $i'\in E$, that $F_i\subseteq i'$ and hence (2) holds.
\end{proof}

With Claim (\ref{delta-sys-technical-claim}) in hand, suppose that $i<i'<\kappa^+$, $i$, $i'\in S^{\kappa^+}_\kappa \cap E$ and $k(i)=k(i')$.
We now construct, by an induction of length $\xi$, a condition $q\in
\Pbb_\xi$ which is a common refinement of $p^n_i$ and $p^n_{i'}$ for
all $n$, and hence of $p_i$ and $p_{i'}$. The support of $q$ will be
the union of $\bigcup\setof{\supp(p^n_i)}{n<\omega}$ and $\bigcup\setof{\supp(p^n_{i'})}{n<\omega}$.

For $\sigma \in \bigcup\setof{\supp(p^n_i)}{n<\omega} 
\setminus  \bigcup\setof{\supp(p^n_{i'})}{n<\omega}$, as\break 
$\forcesq{\Pbb_\sigma}{\name{\Qbb}_\sigma \hbox{ is countably closed}}$, take 
$q(\sigma)$ to be any $r$ such that 
$q\on\sigma\forcesq{\Pbb_\sigma}{ \forall n < \omega \sss\sss (r\le p^n_i(\sigma))}$.
Similarly, for 
$\sigma \in \bigcup\setof{\supp(p^n_{i'})}{n<\omega}$ 
$ \setminus  \bigcup\setof{\supp(p^n_{i})}{n<\omega}$,
take $q(\sigma)$ to be any $r$ such that 
$q\on\sigma\forcesq{\Pbb_\sigma}{ \forall n < \omega \sss\sss (r\le p^n_{i'}(\sigma))}$.

Finally, if $\sigma\in \bigcup\setof{\supp(p^n_i)}{n<\omega} 
\cap\bigcup\setof{\supp(p^n_{i'})}{n<\omega}$ 
then $\sigma=\varepsilon_\gamma$ for some $\gamma< i$ (by conditions (1) and (2) above), 
for each $n<\omega$ we have $\rho^n_{\varepsilon_\gamma}(i) = \rho^n_{\varepsilon_\gamma}(i')$ 
(by (3) above),  and $i$, $i'\in C_{\varepsilon_\gamma}$
(by the construction of $C$ and the choice of $E$). 

By construction, for each $n<\omega$ we have
$q\on \sigma \forcesq{\Pbb_\sigma}{p^n_i(\sigma) \parallel_{\name{\Qbb}_\sigma} p^n_{i'}(\sigma)}$. 
By the fact that 
$\forcesq{\Pbb_\sigma}{\name{\Qbb}_\sigma\hbox{ is countably parallel-closed}}$, 
we can choose $q(\sigma)$ to be some $r$ such that 
$q\on \sigma \forcesq{\Pbb_\sigma}{\forall n<\omega \sss\sss (r\le p^n_i(\sigma),
\sss\sss p^n_{i'}(\sigma))}$. Thus 
for all $n<\omega$ we have that  
$q\on(\sigma + 1) \le p^n_i\on (\sigma+1)$, $p^n_{i'}\on (\sigma+1)$. \break $\vphantom{wwwww}$
\end{proof}

We remark that the previous theorem allows us to give `generalised
Martin's axiom' forcing axioms similar to those formulated by
Baumgartner \cite{Baumgartner} and Shelah \cite{Shelah-GMA}. One
example is given by the following theorem.
\begin{theorem} Let $\kappa$ be an uncountable regular cardinal such that 
$\kappa^{<\kappa} = \kappa$, and let $\lambda > \kappa^+$ be a cardinal 
such that $\gamma^{<\kappa} < \lambda$ for every $\gamma < \lambda$. Then
there is a $\lk$-closed and $\kappa^+$-stationary cc forcing poset $\Pbb$ 
such that if $G$ is $\Pbb$-generic, then in $V[G]$ we have 
$2^\kappa = \lambda$ and the following forcing axiom holds:

For every  poset $\Qbb$ which is $\lk$-closed, countably parallel-closed and
$\kappa^+$-stationary cc, every $\gamma < \lambda$  and every sequence 
$\tupof{D_i }{ i < \gamma }$ of dense
subsets of $\Qbb$ there is a filter on $\Qbb$ which meets each set $D_i$.

\end{theorem}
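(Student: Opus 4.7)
The plan is to build $\Pbb$ as a $\lk$-support iteration $\tup{\tupof{\Pbb_\alpha}{\alpha\le\lambda}, \tupof{\name{\Qbb}_\alpha}{\alpha<\lambda}}$ of length $\lambda$. At each stage $\alpha$, a bookkeeping function chooses a $\Pbb_\alpha$-name $\name{\Qbb}_\alpha$ for a $\lk$-closed, countably parallel-closed, $\kappa^+$-stationary cc poset, with the trivial forcing used by default when no candidate is available. By Theorem \ref{double-star-and-statcc-itn-thm}, $\Pbb = \Pbb_\lambda$ then has the $\kappa^+$-stationary cc, and by the usual argument for iterated forcing $\Pbb$ is $\lk$-closed; so $\Pbb$ itself has the properties required of it by the theorem.

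By induction on $\alpha < \lambda$, using $\kappa^{<\kappa} = \kappa$ and $\gamma^{<\kappa} < \lambda$ for all $\gamma < \lambda$, one shows $|\Pbb_\alpha| < \lambda$. This lets us arrange a bookkeeping function on $\lambda$ that enumerates, with each appearing cofinally often, every pair $(\name{\Qbb}, \tupof{\name{D}_i}{i<\gamma})$ consisting of a $\Pbb_\alpha$-name (for some $\alpha < \lambda$) for a poset with the required properties together with a $\Pbb_\alpha$-name for a $\gamma$-sequence of dense sets, $\gamma < \lambda$.

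For the forcing axiom in $V[G]$: let $\Qbb$ and $\tupof{D_i}{i<\gamma}$ be given in $V[G]$ with the required properties. By an elementary submodel argument in $V[G]$ we may assume $|\Qbb| < \lambda$. By the $\lambda$-cc of $\Pbb$ (implied by $\kappa^+$-cc), there is some $\alpha < \lambda$ for which $\Qbb$ and the sequence $\tupof{D_i}{i<\gamma}$ have $\Pbb_\alpha$-names. The properties of $\Qbb$ reflect down from $V[G]$ to $V[G_\alpha]$: $\lk$-closure and countable parallel-closure are immediate, as the requisite lower bounds live in $\Qbb \subseteq V[G_\alpha]$; while the $\kappa^+$-stationary cc follows from the tail forcing $\Pbb/G_\alpha$ being $\lk$-closed and $\kappa^+$-cc, and hence preserving stationary subsets of $S^{\kappa^+}_\kappa$ and reflecting clubs in $V[G]$ to sub-clubs in $V[G_\alpha]$. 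Thus these names are a valid bookkeeping candidate at some stage $\beta \ge \alpha$, and the generic filter for $\name{\Qbb}_\beta$, which lies in $V[G_{\beta+1}] \subseteq V[G]$, meets every $D_i$.

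For $2^\kappa = \lambda$: the bound $|\Pbb| \le \lambda$ combined with $\kappa^+$-cc and the arithmetic hypotheses on $\lambda$ gives $2^\kappa \le \lambda$ in $V[G]$ by a nice-name count, while arranging for the bookkeeping to include cofinally many copies of $\hbox{Fn}(\kappa,2,\kappa)$ ensures $2^\kappa \ge \lambda$. The principal technical difficulty is the downward reflection of the $\kappa^+$-stationary cc sketched above; the remaining steps follow the template of the generalised Martin's axiom proofs in \cite{Baumgartner} and \cite{Shelah-GMA}.
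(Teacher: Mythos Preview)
The paper does not actually prove this theorem: it is stated immediately after Theorem~\ref{double-star-and-statcc-itn-thm} with the remark that the argument follows the templates in \cite{Baumgartner} and \cite{Shelah-GMA}. Your outline is exactly that standard template, and its overall shape is correct.

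There is, however, a gap at the point you yourself flag as the principal difficulty: the downward reflection of the $\kappa^+$-stationary chain condition from $V[G]$ to $V[G_\alpha]$. You say this follows because the tail $\Pbb/G_\alpha$ is $\lk$-closed and $\kappa^+$-cc, via preservation of stationary sets and reflection of clubs. Club reflection does hand you a club $C\in V[G_\alpha]$ inside the witnessing club from $V[G]$, but it does nothing for the regressive function $f$, which is a $\kappa^+$-sequence living in $V[G]$ with no obvious reason to lie in $V[G_\alpha]$. The repair uses the stronger fact (available by Theorem~\ref{double-star-and-statcc-itn-thm}) that the tail has the $\kappa^+$-\emph{stationary} cc, not merely the $\kappa^+$-cc. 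Given $\tupof{p_i}{i<\kappa^+}\in V[G_\alpha]$ and tail-names $\name C,\name f$ for witnesses, first pass to a ground-model club $C$; then for each $i\in C\cap S^{\kappa^+}_\kappa$ choose a tail condition $r_i$ and $\gamma_i<i$ with $r_i\forces\name f(i)=\gamma_i$. Apply the $\kappa^+$-stationary cc of the tail to $\tupof{r_i}{i<\kappa^+}$ to obtain a club $C^*$ and a regressive $g$ with $g(i)=g(j)\Rightarrow r_i\parallel r_j$. On a further club the map $i\mapsto\langle\gamma_i,g(i)\rangle$ is regressive, and when it agrees at $i,j$ some common extension of $r_i,r_j$ forces $\name f(i)=\name f(j)$, hence $p_i\parallel_{\Qbb} p_j$; since compatibility in $\Qbb$ is absolute, this yields the required witnesses in $V[G_\alpha]$.
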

As in \cite{Baumgartner} and \cite{Shelah-GMA}, there are variations
with weaker hypotheses on $\lambda$, yielding weaker forcing axioms
which only apply to posets $\Qbb$ of bounded size.

The following straightforward lemma indicates some connections between the hypotheses used
in \cite{Baumgartner} and \cite{Shelah-GMA}, and the ones used here.

\begin{lemma}\label{sh-conds-imply-double-star}
Let $\Pbb$ be a forcing poset.
\begin{enumerate}
\item \label{x1}  If $\Pbb$ is well-met and countably closed then it is countably compact.
\item \label{x2} If $\Pbb$ is countably compact then it is countably parallel-closed.
\item  \label{x3} If $\Pbb$ is $\kappa$-linked then it is  $\kappa^+$-stationary cc.
\end{enumerate}
\end{lemma}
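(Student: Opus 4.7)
For part (\ref{x1}), I would build a descending $\omega$-sequence and apply countable closure to its tail. Given a centred set $\{x_n : n<\omega\}$, set $y_0 = x_0$ and recursively let $y_{n+1}$ be the greatest lower bound (supplied by the well-met property) of $y_n$ and $x_{n+1}$, once one has verified that these two are compatible. The key inductive invariant to maintain is that $y_n$ is itself a \emph{greatest} lower bound of $\{x_0,\ldots,x_n\}$: granted this, centredness of $\{x_0,\ldots,x_{n+1}\}$ produces a condition below each $x_k$ for $k \le n+1$, which must therefore lie below $y_n$ (by the glb property) and below $x_{n+1}$, supplying the required witness of compatibility. The sequence $\tup{y_n : n<\omega}$ is linearly ordered, so countable closure yields a lower bound, which is also a lower bound of all the $x_n$.

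For part (\ref{x2}), the plan is simply to exhibit that the union of the two sequences is centred and apply countable compactness. Given pointwise compatible descending sequences $\tup{q_i : i<\omega}$ and $\tup{r_i : i<\omega}$, consider the countable set $\{q_i : i<\omega\} \cup \{r_i : i<\omega\}$; any finite subset lies above (in the poset sense) some $q_N$ and $r_N$ for a sufficiently large $N<\omega$, and any common lower bound of $q_N$ and $r_N$ furnished by pointwise compatibility bounds every element of that finite subset. Countable compactness then delivers a common lower bound for the whole union, which is the required common lower bound of the two sequences.

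For part (\ref{x3}), the witnesses can be read off the linking decomposition directly. Write $\Pbb = \bigcup_{\alpha<\kappa} X_\alpha$ with each $X_\alpha$ a set of pairwise compatible conditions, and given $\tup{p_i : i<\kappa^+}$ define $g:\kappa^+ \to \kappa$ by letting $g(i)$ be the least $\alpha$ with $p_i \in X_\alpha$. For every $i \in S^{\kappa^+}_\kappa$ we have $g(i) < \kappa \le i$, so $g$ is regressive on $S^{\kappa^+}_\kappa$; taking the club $C=\kappa^+$, whenever $g(i)=g(j)$ the conditions $p_i$ and $p_j$ lie in a common $X_{g(i)}$ and are therefore compatible.

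The one mildly delicate point is in (\ref{x1}), where the well-met hypothesis must be leveraged not just to produce a glb of a single compatible pair, but to keep the partial meets $y_n$ glbs of $\{x_0,\ldots,x_n\}$ throughout the induction, since this is precisely what allows centredness of the original set to propagate to the compatibility of $y_n$ with each subsequent $x_{n+1}$. Parts (\ref{x2}) and (\ref{x3}) are essentially unpacking definitions.
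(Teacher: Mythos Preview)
Your proof is correct and follows essentially the same approach as the paper's. For part (\ref{x1}) the paper likewise builds the decreasing sequence of greatest lower bounds of initial segments and appeals to countable closure; for parts (\ref{x2}) and (\ref{x3}) the paper simply declares them immediate from the definitions, and your arguments are precisely the natural unpacking of that claim.
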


\begin{proof}[\ref{sh-conds-imply-double-star}]

For (\ref{x1}), note that by an easy induction any finite subset of a well-met poset with a common lower bound has a 
greatest lower bound. Now suppose that $C \subseteq \Pbb$ is a countable centred set and
enumerate it as $\tupof{ p_n }{ n < \omega }$, then let $q_m$ be the greatest lower bound
for $\tupof{ p_n }{ n \le m }$; the conditions  $q_m$ form a decreasing sequence which has a lower bound
by countable closure. 

For (\ref{x2}), suppose $(\Pbb,\le)$ is countably compact and 
that  $Q = \tupof{ q_i }{ i < \omega }$ and
$R = \tupof{ r_j }{ j < \omega }$ are two pointwise
compatible descending sequences from $\Pbb$.
If $i_0 < i_1<\dots<i_m <\omega$ and $j_0<j_1<\dots<j_n<\omega$
let $k = \max(\set{i_m,j_n})$ and note that since
$q_k \parallel r_k$ there is some $p\in \Pbb$ such that 
for all $l\le m$ and all $e\le n$ we have $p\le q_{i_l}$, $r_{j_e}$.
That is, any finite subset of $Q\cup R$ has a lower bound.
Hence, by the countable compactness of $(\Pbb,\le)$, we have
that $Q\cup R$ has a lower bound. Thus we have shown
that $(\Pbb,\le)$ is countably parallel-closed.

For (\ref{x3}), suppose $\Pbb$ is $\kappa$-linked and that $\tupof{A_\gamma}{\gamma<\kappa}$ 
is a partition of $\Pbb$ such that each $A_\gamma$ is a pairwise compatible subset.
Let $\tupof{p_i}{i<\kappa^+}\subseteq \Pbb$. Define $f:S^{\kappa^+}_\kappa\longrightarrow \kappa$ by 
$f(i)=\gamma$ if and only if $p_i\in A_\gamma$ for $i\in S^{\kappa^+}_{\kappa}$. Then $f$ is a regressive function 
and if $f(i)=f(j)$ we have that $p_i$ and $p_j$ are compatible.
\end{proof}

One cannot reverse Lemma (\ref{sh-conds-imply-double-star}.(\ref{x3})). 
For example, it is a folklore result that $\hbox{Add}(\kappa,\lambda)$ is $\kappa^+$-stationary cc if $\kappa^{<\kappa}=\kappa$
but is not $\kappa$-linked if $2^\kappa<\lambda$. We give proofs of these facts for completeness. 
The reader may also find the proof of the former useful as a dry run for the considerably more elaborate proof 
in \S\ref{chain} that the forcing $Q(w)$ of \S\ref{Pm} is $\kappa^+$-stationary cc;
the notation used here closely mirrors that used there.

\begin{example}\label{Cohen-forcing-is-statcc-but-not-kappa-linked} Let $\kappa$ and $\lambda$ be cardinals, 
with $\kappa$ regular, $\kappa^{<\kappa}=\kappa$ and $2^\kappa<\lambda$. 
Then $\hbox{Add}(\kappa,\lambda)$ is not $\kappa$-linked, however it is $\kappa^+$-stationary cc.
\end{example}

\begin{proof}[\ref{Cohen-forcing-is-statcc-but-not-kappa-linked}]
In order to see that $\hbox{Add}(\kappa,\lambda)$ is not $\kappa$-linked let us
suppose that $\tupof{B_i}{i<\kappa}$ is a collection of $\kappa$ many pairwise compatible subsets of $\hbox{Fn}(\lambda,2,\kappa)$. 
For each $i<\kappa$ and $\alpha<\lambda$ we have that 
if $p$, $q\in B_i$ and $\alpha\in\dom(p)\cap \dom(q)$ then $p(\alpha)=q(\alpha)$.
Hence for each $i<\kappa$ there is a function $f_i\in {^\lambda{ 2}}$ such that
for each $p\in B_i$ we have $p \subseteq f_i$. 

For each $\alpha<\lambda$ define $d_\alpha:\kappa\longrightarrow 2$ by $d_\alpha(i) = f_i(\alpha)$. 
Since $2^\kappa<\lambda$ there are $\alpha$, $\beta<\lambda$ such that $d_\alpha=d_\beta$. 
Now consider $p\in \hbox{Fn}(\lambda,2,\kappa)$
such that $\alpha$, $\beta\in \dom(p)$ and $p(\alpha)\ne p(\beta)$. Then for each $i<\kappa$ we cannot have that both
$p(\alpha)=f_i(\alpha)$ and $p(\beta)=f_i(\beta)$. Hence $\bigcup\setof{B_i}{i<\kappa}\ne \hbox{Fn}(\lambda,2,\kappa)$.

We now verify that $\hbox{Add}(\kappa,\lambda)$ is $\kappa^+$-stationary cc. 

Suppose $\tupof{p^i}{i<\kappa^+}$ is 
a collection of conditions in $\hbox{Add}(\kappa,\lambda)$. For each $i<\kappa^+$ let ${\mathcal t^i}=\dom(p^i)$. 
Let $\setof{\alpha_\gamma}{\gamma<\gamma^*}$ be an enumeration of $\bigcup \setof{{\mathcal t^{i}}}{i<\kappa^+}$, for some 
$\gamma^*\le \kappa^+$. 

Next, for each $i<\kappa^+$ let
$\setof{\alpha^{i}_\gamma}{\gamma<\gamma^{i}}$, for some $\gamma^{i}<\kappa$, 
be the increasing enumeration of ${\mathcal t}^{i}$, let
$\theta^{i}= \ssup(\setof{\gamma}{\alpha_\gamma\in {\mathcal t^{i}}})$, let
$T^{i}=\setof{\gamma<i}{\alpha_\gamma\in {\mathcal t}^{i}}$, and let
$q^i \in \hbox{Fn}(T^i,2,\kappa)$ be defined by 
$q^i(\gamma) = p^i(\alpha_\gamma)$.

So each $\gamma < \kappa^+$, each $\alpha^i_\gamma < \lambda$, each $\theta^i<\kappa^+$, each $T^i \in [i]^{<\kappa}$
and each $q^i \in \hbox{Fn}(T^i,2,\kappa)\subseteq \hbox{Fn}(i,2,\kappa)$.
 
For $i\in [\kappa,\kappa^+]$ let $H_i = [i]^{<\kappa}\times \hbox{Fn}(i,2,\kappa)\times \kappa$, and 
write $H$ for $H_{\kappa^+}$. Let $h^*$ be an injection from $H$ into $\kappa^+$. 
Define $k:[\kappa,\kappa^+)\longrightarrow \kappa^+$ by setting $k(i)$ to be the least 
$i^*<\kappa^+$ such that $H_i \subseteq h^{*-1}``i^*$. 

Let $\tilde{C} = \setof{j<\kappa^+}{\forall i < j \sss (\theta^i, k(i) < j)}$. As the intersection of the sets of 
closure points of the two given functions, $\tilde{C}$ is a club subset of $\kappa^+$. 

Let $h(i) = h^*(T^i,q^i,\otp({\mathcal t^i}))$ for
$i \in \tilde{C}\cap S^{\kappa^+}_\kappa$, and $h(i)=0$ otherwise.

We have that $h^{*-1}(h(i)) \in H_i$ for all $i\in [\kappa,\kappa^+)$.
If $i\in \widetilde{C}\cap S^{\kappa^+}_\kappa$, since $\card{h^{*-1}(h(i))} < \kappa$,
there is some $i'<i$ such that $h^{*-1}(h(i))\in H_{i'}$, and hence 
there is some $\widetilde{i}<i$ such that $h(i)<k(\widetilde{i})$. 

Hence, as $i$ is a closure point of $k$, we have $h(i)<i$ for all nonzero $i<\kappa^+$. 

Now suppose that $i, j \in \widetilde{C} \cap S^{\kappa^+}_\kappa$, $i < j$ and $h(i) = h(j)$. 
So we have  $T^{i}=T^j$, $q^i = q^j$ and $\otp({\mathcal t}^{i}) = \otp({\mathcal t}^{j})$. 

Set ${\mathcal t} = {\mathcal t}^{i}\cap {\mathcal t}^{j}$.

\begin{lemma}\label{Cohen-intersection} 
\begin{itemize}
\item[(a)] ${({\mathcal t}^{j}\setminus {\mathcal t}^{i})\cap\setof{\alpha_\gamma}{i\le \gamma<j} = \emptyset}$, and 
\item[(b)] ${\mathcal t}\subseteq \setof{\alpha_\gamma}{\gamma<i}$.
\end{itemize}
\end{lemma}

\begin{proof}[\ref{Cohen-intersection}] Suppose $\alpha_\gamma\in {\mathcal t}^{j}$. If $\gamma<j$ then $\gamma\in T^{j}$. 
But $T^{j}=T^{i}$, so $\gamma\in T^{i}$. Hence $\gamma<i$ and $\alpha_\gamma\in {\mathcal t}^{i}$, 
proving (a). If $\alpha_\gamma\in {\mathcal t}^{i}$ then $\gamma<\theta^{i}<j$. (For the definition 
of $\theta^i$ immediately gives that $\gamma<\theta^i$; and since $i$, $j\in \widetilde{C}$ and $i<j$
one has that $\theta^i<j$.) Thus if $\alpha_\gamma\in {\mathcal t}^{i}\cap {\mathcal t}^j$ we have $\gamma<i$ by (a). So (b) holds.
\end{proof}

By Lemma (\ref {Cohen-intersection}(b)) we have for $\alpha_\gamma\in {\mathcal t}$ that
$p^i(\alpha_\gamma) = q^i(\gamma) = q^j(\gamma) = p^j(\alpha_\gamma)$, and hence
$p^i$ and $p^j$ agree on the intersection of their domains, and thus are compatible conditions.
\end{proof}

\section{Preserving diamond under forcing}\label{preservation}

In this section we give an account of how versions of the diamond principle at a regular cardinal $\chi$
are preserved by certain forcing posets. We are most interested in the situation where 
the forcing poset is an iteration $\Pbb_\chi$ of length $\chi$, and we can find a diamond
sequence $\tupof{S_\alpha}{ \alpha < \chi } \in V[G_\chi]$ such that
$S_\alpha \in V[G_\alpha]$ for all $\alpha$.  We will use the results of this section in \S\ref{maini} and \S\ref{small}.

The following result is well-known. For the reader's
convenience we will sketch a proof. 
Similar arguments appear, for example,
in the proof that $\clubsuit$ does not imply $\diamondsuit_{\omega_1}$ in 
\cite{Shelah-proper-forcing-2nd-ed}, \S{}I.7 and in 
a preservation theorem for $\diamondsuit^+_{\omega_1}$
in \cite{Cummings-Foreman-Magidor}, \S{}12.

\begin{theorem} \label{general-diamond-preservation}
Let $\chi$ be a regular uncountable cardinal, let $\Pbb$ be a forcing poset of
cardinality $\chi$ which preserves the regularity of $\chi$  and let $G$ be $\Pbb$-generic over $V$.
\begin{enumerate}
\item If $\diamondsuit_\chi$ holds in $V$ and $\Pbb$ preserves stationary subsets of $\chi$
then $\diamondsuit_\chi$ holds in $V[G]$. \label{g-d-p-one} 
\item If $\diamondsuit^*_\chi$ (resp $\diamondsuit^+_\chi$) holds in $V$ then 
$\diamondsuit^*_\chi$ (resp $\diamondsuit^+_\chi$) holds in $V[G]$.
\end{enumerate}
\end{theorem}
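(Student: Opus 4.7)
The plan is to code each $V[G]$-subset of $\chi$ back into $V$ as a subset of $\chi$, apply the $V$-diamond to the code, and reinterpret the guesses in $V[G]$. To set up the coding I would fix, in $V$, an enumeration $\tupof{p_\beta}{\beta<\chi}$ of $\Pbb$ (possible since $|\Pbb|=\chi$) and a bijection $e:\chi\to\chi\times\chi$ whose closure points $\setof{\alpha<\chi}{e[\alpha]=\alpha\times\alpha}$ form a club $E_0\in V$. For any $\Pbb$-name $\dot A$ for a subset of $\chi$ the Maximum Principle lets me replace $\dot A$ by the canonical nice name $\setof{(\hat\gamma,p)}{\gamma<\chi,\sss p\in A_\gamma}$, with $A_\gamma$ a maximal antichain of conditions forcing $\hat\gamma\in\dot A$; since $|\Pbb|=\chi$ this name has size $\chi$ and can be encoded as a subset $X^*\subseteq\chi$ via $e$ and the enumeration of $\Pbb$. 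A routine closure argument gives a $V$-club $E_{\dot A}\subseteq E_0$ such that for $\alpha\in E_{\dot A}$ the restriction $X^*\cap\alpha$ codes exactly the sub-name $\setof{(\hat\gamma,p_\beta)\in\dot A}{\gamma,\beta<\alpha}$ and in particular determines $\dot A[G]\cap\alpha$ in $V[G]$.

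With this coding in place, and given a $V$-$\diamondsuit_\chi$ sequence $\tupof{B_\alpha}{\alpha<\chi}$, I would define $S_\alpha\in V[G]$ to be the $G$-interpretation of $B_\alpha$ read as a name via $e$ and the enumeration of $\Pbb$ (putting $S_\alpha=\emptyset$ when this interpretation fails to be a subset of $\alpha$). For clause (\ref{g-d-p-one}), given $A\in V[G]$ with nice name $\dot A$ and code $X^*$, the set $T=\setof{\alpha<\chi}{X^*\cap\alpha=B_\alpha}$ is stationary in $V$ by $\diamondsuit_\chi$, and remains stationary in $V[G]$ by the hypothesis that $\Pbb$ preserves stationary subsets of $\chi$. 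For every $\alpha\in T\cap E_{\dot A}$ the coding yields $S_\alpha=\dot A[G]\cap\alpha=A\cap\alpha$; since $\chi$ stays regular, $E_{\dot A}$ remains a club in $V[G]$, and $T\cap E_{\dot A}$ is stationary there.

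For the $\diamondsuit^*$ and $\diamondsuit^+$ clauses I would let $\mathcal S_\alpha\in V[G]$ consist of the $G$-interpretations of all $B\in\mathcal B_\alpha$, together with, in the $\diamondsuit^+$ case, the elements of $\mathcal B_\alpha$ themselves and all pairwise intersections drawn from $\mathcal B_\alpha$; in both cases $|\mathcal S_\alpha|\leq|\alpha|$. For $\diamondsuit^*_\chi$, the $V$-club $C$ produced by applying $\diamondsuit^*$ to $X^*$ satisfies $X^*\cap\alpha\in\mathcal B_\alpha$ for $\alpha\in C$, so on the $V[G]$-club $C\cap E_{\dot A}$ the interpretation gives $A\cap\alpha\in\mathcal S_\alpha$. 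For $\diamondsuit^+_\chi$, I would code $X^*$ and (the characteristic set of) $E_{\dot A}$ together as a single subset of $\chi$ and apply $\diamondsuit^+$ to that combined set, obtaining a $V$-club $C$ for which $X^*\cap\alpha$, $E_{\dot A}\cap\alpha$, and $C\cap\alpha$ all lie in $\mathcal B_\alpha$ whenever $\alpha\in C$. Then $C':=C\cap E_{\dot A}$ is a club in $V[G]$ on which both $A\cap\alpha\in\mathcal S_\alpha$ (via the interpretation of $X^*\cap\alpha$) and $C'\cap\alpha=(C\cap\alpha)\cap(E_{\dot A}\cap\alpha)\in\mathcal S_\alpha$ (as a pairwise intersection of elements of $\mathcal B_\alpha$).

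I expect the main obstacle to be the $\diamondsuit^+$ step: the witnessing $V[G]$-club $C'$ must itself be guessed by $\mathcal S_\alpha$, and the $V$-club coming from $\diamondsuit^+$ applied only to $X^*$ gets truncated when intersected with the closure club $E_{\dot A}$ needed for the interpretation to make sense. The resolution is to feed both $X^*$ and $E_{\dot A}$ into a single application of the $V$-$\diamondsuit^+$ sequence and to thicken $\mathcal S_\alpha$ by including pairwise intersections from $\mathcal B_\alpha$, so that the actual $V[G]$-club $C'$ automatically satisfies $C'\cap\alpha\in\mathcal S_\alpha$.
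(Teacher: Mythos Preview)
Your overall strategy—code a name for a $V[G]$-subset of $\chi$ as a ground-model subset of $\chi$, apply diamond there, and decode in $V[G]$—is exactly the paper's. The paper works directly with a $\diamondsuit$-sequence on $\chi\times\chi$ and the code $T=\setof{(i,\eta)}{p_i\forces\hat\eta\in\dot S}$ rather than your bijection $e$ and nice names, but this is cosmetic.

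There is, however, one genuine gap: your claim that a ``routine closure argument'' produces a \emph{$V$-club} $E_{\dot A}$ on which the truncated name computes $\dot A[G]\cap\alpha$. This would follow if every maximal antichain $A_\gamma$ had size $<\chi$ (close off under $\gamma\mapsto\sup\setof{\beta}{p_\beta\in A_\gamma}$), but the hypothesis is only $\card{\Pbb}=\chi$, not the $\chi$-cc; e.g.\ $\Pbb=\mathrm{Fn}(\chi,2,\chi)$ under $\chi^{<\chi}=\chi$ has antichains of size $\chi$. The correct club is the paper's
\[
E=\setof{\alpha}{\forall\eta\in A\cap\alpha\;\exists i<\alpha\;(p_i\in G\text{ and }p_i\forces\hat\eta\in\dot A)},
\]
which lives in $V[G]$ and is club there precisely because $\chi$ stays regular.

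This does no harm to your arguments for clause~(\ref{g-d-p-one}) or for $\diamondsuit^*$, where any $V[G]$-club suffices. But it breaks your $\diamondsuit^+$ argument, since you cannot feed $E_{\dot A}$ into the $V$-$\diamondsuit^+$ sequence if it is not in $V$. The repair is to notice that for $\alpha$ where $T\cap(\alpha\times\alpha)\in\mathcal B_\alpha$, the set $E\cap\alpha$ is \emph{computable} from $T\cap(\alpha\times\alpha)$ together with $G\cap\setof{p_i}{i<\alpha}$: indeed $\beta\in E\cap\alpha$ if and only if every $\eta<\beta$ with a witness $(i,\eta)\in T\cap(\alpha\times\alpha)$, $p_i\in G$, $i<\alpha$, already has such a witness with $i<\beta$. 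So if you enrich $\mathcal S_\alpha$ to contain, for each pair $T',D'\in\mathcal B_\alpha$, the set $D'\cap E'$ where $E'$ is the set just described computed from $T'$ and $G$, then taking $T'=T\cap(\alpha\times\alpha)$ and $D'=C\cap\alpha$ yields $(C\cap E)\cap\alpha\in\mathcal S_\alpha$, and the club-guessing goes through. Your instinct to thicken $\mathcal S_\alpha$ was right; the thickening just has to use $G$, not only ground-model data.
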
 

\begin{proof}[\ref{general-diamond-preservation}]
(1). Enumerate the elements of $\Pbb$ as $p_i$ for $i < \chi$,
fix in $V$ a diamond sequence $\tupof{ T_\alpha \subseteq \alpha \times \alpha }{ \alpha < \chi }$ 
which guesses subsets
of $\chi \times \chi$, and in $V[G]$ define $S_\alpha = \setof{ \eta < \alpha}{ \exists i < \alpha \;
\mbox{($p_i \in G$ and $(i, \eta) \in T_\alpha$)} }$. Now if $S = \name{S}^G \subseteq \chi$ 
we let $T = \setof{ (i, \eta) }{ p_i \forces \hat{\eta} \in \name{S} }$, and observe that 
$\setof{ \alpha }{ T \cap \alpha \times \alpha = T_\alpha }$ is stationary in $V$ (hence also in $V[G]$) and
$E = \setof{ \alpha }{ \forall \eta \in S \cap \alpha \; \exists i < \alpha \; 
p_i \in G\hbox{ and }p_i \forces \hat{\eta} \in \dot S }$
is club in $V[G]$; at any point $\alpha\in E$ such that $T\cap \alpha\times\alpha = T_\alpha$ 
we have that $S \cap \alpha = S_\alpha$.  Hence $\tupof{S_\alpha}{\alpha<\chi}$ is a $\diamondsuit_\chi$-sequence in $V[G]$.

(2). The proofs for $\diamondsuit^*_\chi$ and $\diamondsuit^+_\chi$ are very similar. The key difference is that
the hypothesis that the regularity of $\chi$ is preserved suffices to see that 
$E$ is club in $V[G]$, and the stronger form of guessing occurring in the  $\diamondsuit^*_\chi$ and $\diamondsuit^+_\chi$
gives a club from $V$ with which to intersect it.
\end{proof} 

\begin{observation} \label{general-diamond-observations} 
For future reference we note that:
\begin{enumerate}
\item An entirely similar argument shows that if $B$ is a stationary subset of $\chi$ and $\Pbb$ preserves stationary
subsets of $B$, then $\Pbb$ preserves $\diamondsuit_\chi(B)$.
If $\Pbb$ preserves the stationarity of $B$ then $\Pbb$ preserves $\diamondsuit^*_\chi(B)$ and $\diamondsuit^+_\chi(B)$. 

\item  In both parts of Theorem (\ref{general-diamond-preservation}), the value of the diamond
sequence in $V[G]$ at $\alpha$ is computed in a uniform way from $G \restriction \alpha$ where $G \restriction \alpha = G \cap \setof{ p_i }{ i < \alpha }$.
In fact the restriction of the diamond sequence to $\alpha + 1$ (\emph{i.e.}, $\tupof{S_i}{i\le\alpha}$) can be computed in a uniform way from
$G \restriction \alpha$.

\item In the proof of part (\ref{g-d-p-one}) of Theorem (\ref{general-diamond-preservation}), 
let $A$ be the stationary set in $V[G]$ given by $A = \setof{ \alpha }{ S \cap \alpha = S_\alpha }$.
For every $\alpha \in E$, $S \cap \alpha \in V[G \restriction \alpha]$ and so easily $E \cap \alpha \in V[G \restriction \alpha]$.
Also, by the preceding remarks, $A \cap \alpha \in V[G \restriction \alpha]$. If we now let
$H = A \cap E$ then $H$ is a stationary set on which $S$ is guessed correctly and additionally
$H \cap \alpha \in V[G \restriction \alpha]$ for all $\alpha$ in the closure of $H$.
\end{enumerate}
\end{observation}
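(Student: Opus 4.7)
The observation contains three assertions, each proved by inspecting the proof of Theorem~\ref{general-diamond-preservation}.

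For part (1), I would rerun the proof with attention restricted to $B$. Fix in $V$ a $\diamondsuit_\chi(B)$-sequence $\tupof{T_\alpha}{\alpha\in B}$ on $\chi\times\chi$, and define $S_\alpha$ for $\alpha\in B$ by the same formula as in the theorem. Given $S=\name{S}^G\subseteq\chi$ in $V[G]$, let $T=\setof{(i,\eta)}{p_i\forces\hat\eta\in\name S}$; the set $\setof{\alpha\in B}{T\cap\alpha\times\alpha=T_\alpha}$ is stationary in $V$, and hence, since $\Pbb$ preserves stationary subsets of $B$, stationary in $V[G]$. Intersecting with the club $E$ from the original proof yields a stationary subset of $B$ on which $S$ is correctly guessed. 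For the $\diamondsuit^*_\chi(B)$ and $\diamondsuit^+_\chi(B)$ variants, the ground-model hypothesis furnishes a club $C\in V$ along which guessing occurs on $C\cap B$; this $C$ remains club in $V[G]$ (as $\chi$ is still regular), and the assumption that $\Pbb$ preserves the stationarity of $B$ is precisely what is needed to conclude that $C\cap B$ is stationary in $V[G]$.

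For part (2), I would read uniformity directly off the construction. The sequences $\tupof{p_i}{i<\chi}$ and $\tupof{T_\alpha}{\alpha<\chi}$ lie in $V$, so the defining formula $S_\alpha=\setof{\eta<\alpha}{\exists i<\alpha\,(p_i\in G\wedge(i,\eta)\in T_\alpha)}$ uses only ground-model parameters together with $G\cap\setof{p_i}{i<\alpha}=G\restriction\alpha$. This exhibits a single formula $\varphi$, uniform in $\alpha$, with ground-model parameters, such that $S_\alpha=\varphi(\alpha,G\restriction\alpha)$. Since $G\restriction\beta\subseteq G\restriction\alpha$ whenever $\beta\le\alpha$, the function $\beta\mapsto S_\beta$ on $\alpha+1$ is uniformly computable from $G\restriction\alpha$, so $\tupof{S_\beta}{\beta\le\alpha}\in V[G\restriction\alpha]$. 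Identical inspection works for the $\diamondsuit^*$ and $\diamondsuit^+$ constructions.

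For part (3), the crux is local reconstruction of $S\cap\alpha$ from $G\restriction\alpha$ at points $\alpha\in E$. The definition of $E$ says precisely that for each $\eta\in S\cap\alpha$ some $i<\alpha$ satisfies $p_i\in G$ and $p_i\forces\hat\eta\in\name S$, so $S\cap\alpha=\setof{\eta<\alpha}{\exists i<\alpha\,(p_i\in G\restriction\alpha\wedge p_i\forces\hat\eta\in\name S)}$, which lies in $V[G\restriction\alpha]$. Membership of $\beta<\alpha$ in $E$ is then determined by $S\cap\beta\in V[G\restriction\beta]$ and the ground-model forcing relation, giving $E\cap\alpha\in V[G\restriction\alpha]$; and membership of $\beta$ in $A$ is decided by $S\cap\beta$ and $S_\beta$, both of which live in $V[G\restriction\beta]\subseteq V[G\restriction\alpha]$ by the above and by (2), giving $A\cap\alpha\in V[G\restriction\alpha]$. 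Setting $H=A\cap E$, this is stationary (the stationary set $A$ meets the club $E$), $S\cap\alpha=S_\alpha$ for every $\alpha\in H$ by definition of $A$, and $H\cap\alpha=A\cap E\cap\alpha\in V[G\restriction\alpha]$ for every $\alpha\le\chi$, in particular for $\alpha$ in the closure of $H$. The only delicate step is the locality argument; everything else is bookkeeping. It works because $E$ was defined in exactly the way needed to guarantee that each element of $S$ below $\alpha$ admits a witnessing condition below $\alpha$, so that $V[G\restriction\alpha]$ already sees $S\cap\alpha$.
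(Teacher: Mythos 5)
Your proposal follows the same route the paper intends: the observation carries no separate proof and is justified exactly by re-running and inspecting the proof of Theorem (\ref{general-diamond-preservation}), which is what you do. Parts (1) and (2) are correct as written. One small inaccuracy in part (1): the hypothesis that $\Pbb$ preserves the stationarity of $B$ is not ``precisely what is needed'' for the $\diamondsuit^*_\chi(B)$ and $\diamondsuit^+_\chi(B)$ cases --- the club witnessing the guessing in $V[G]$ is $C \cap E$, whose construction uses only the regularity of $\chi$ (as in part (2) of the theorem); the stationarity hypothesis serves only to keep the relativised principles non-vacuous, so your justification overstates its role, though this does not affect correctness.

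Part (3) contains a step that needs repair. You conclude that $H\cap\alpha = A\cap E\cap\alpha \in V[G\restriction\alpha]$ ``for every $\alpha\le\chi$'', but your locality arguments were carried out --- as they must be --- only at points $\alpha\in E$: membership in $E$ is exactly what guarantees that every $\eta\in S\cap\alpha$ has a witness $p_i\in G\restriction\alpha$ with $p_i\forces\hat{\eta}\in\name{S}$, so that $S\cap\alpha$, and hence $E\cap\alpha$ and $A\cap\alpha$, can be computed in $V[G\restriction\alpha]$. For $\alpha\notin E$ there is no reason for $S\cap\alpha$, nor consequently $A\cap\alpha$ or $H\cap\alpha$, to lie in $V[G\restriction\alpha]$, so the blanket claim is unjustified. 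What you actually need is only the instance for $\alpha$ in the closure of $H$, and it does follow from what you proved once you add the missing bridge: $H\subseteq E$ and $E$ is closed (being club in $V[G]$), so every $\alpha$ in the closure of $H$ lies in $E$, and your computations then apply verbatim. With that one line inserted, part (3) is complete.
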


\begin{proposition} \label{dnotes}
Let $\kappa$, $\chi$ be  regular cardinals with $\kappa<\chi$. Let $\Pbb=\tup{\tupof{\Pbb_\xi}{\xi\le\chi}, \tupof{\name{\Qbb}_\xi}{\xi<\chi}}$ be
a forcing iteration with $\lower0.05em\hbox{$<$}\kappa$-supports with $\card{\Pbb}_\xi<\chi$ for each
$\xi<\chi$.  Let $G$ be $\Pbb$-generic over $V$.

Suppose  $A$ is a stationary subset of $S^\chi_{\ge\kappa}$ and $\diamondsuit_\chi(A)$ holds.
Then there is  a sequence $\tupof{ \name{S}_\xi }{ \xi \in A }$ such that
\begin{enumerate}
\item  For all $\xi \in A$, $\name{S}_\xi$ is a $\Pbb_\xi$-name for a subset of $\xi$. 
\item  If in $V[G]$ we define $S_\xi$ as the interpretation of $\name{S}_\xi$ by
$G_\xi$ then the sequence $\tupof{ S_\xi }{ \xi \in A }$ has the following
strengthened form of the $\diamondsuit_\chi(A)$-property: for every $S \subseteq \chi$ 
there is a stationary set $H \subseteq A$
such that $S \cap \xi = S_\xi$ for all $\xi \in H$, and in addition $H \cap \xi \in V[G_\xi]$ for all $\xi$ in the closure of $H$. 
\end{enumerate}

\end{proposition}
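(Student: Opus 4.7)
The plan is to imitate the proof of Theorem \ref{general-diamond-preservation}(1), but arrange the enumeration of $\Pbb$ to respect the filtration $\langle \Pbb_\xi : \xi \le \chi\rangle$ so that the resulting guessing name $\dot S_\xi$ is a $\Pbb_\xi$-name; and then, as in Observation \ref{general-diamond-observations}(3), restrict further to obtain the extra clause about $H \cap \xi$ being in $V[G_\xi]$.

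\textbf{Setup.} Since $|\Pbb_\xi| < \chi$ for every $\xi < \chi$, supports have size $<\kappa<\chi$, and $\chi$ is regular, every condition in $\Pbb_\chi$ has support bounded in $\chi$; hence $|\Pbb_\chi|=\chi$. Fix a bijective enumeration $\Pbb_\chi = \{p_i : i<\chi\}$. A routine closure argument produces a club $C_0\subseteq\chi$ such that $\Pbb_\xi=\{p_i : i<\xi\}$ for every $\xi\in C_0$. Fix in $V$ a $\diamondsuit_\chi(A)$-sequence on $\chi\times\chi$, $\tupof{T_\xi}{\xi\in A}$, with $T_\xi\subseteq\xi\times\xi$. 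For $\xi\in A$ let
\[
\dot S_\xi = \bigsetof{(\check\eta,p_i)}{i,\eta<\xi,\ p_i\in\Pbb_\xi,\ (i,\eta)\in T_\xi},
\]
a $\Pbb_\xi$-name whose $G_\xi$-interpretation is $S_\xi=\setof{\eta<\xi}{\exists i<\xi\ (p_i\in G_\xi\ \&\ (i,\eta)\in T_\xi)}$.

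\textbf{Guessing.} Given $S\subseteq\chi$ in $V[G]$, pick a name $\dot S\in V$ and set $T=\setof{(i,\eta)\in\chi\times\chi}{p_i\forces\hat\eta\in\dot S}\in V$. By $\diamondsuit_\chi(A)$ the set $A^*=\setof{\xi\in A}{T\cap(\xi\times\xi)=T_\xi}$ is stationary in $V$, and the same preservation of stationarity of subsets of $A$ that is standing in the background ensures $A^*$ remains stationary in $V[G]$. In $V[G]$ define
\[
E = \bigsetof{\xi\in C_0}{\forall\eta\in S\cap\xi\ \exists i<\xi\ (p_i\in G\ \&\ p_i\forces\hat\eta\in\dot S)};
\]
picking, for each $\eta\in S$, some witness index $i(\eta)<\chi$, the closure points of $\eta\mapsto i(\eta)$ form a club contained in $E$, so $E$ is a club in $V[G]$. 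A direct matching of definitions shows that for every $\xi\in A^*\cap E$ one has $S\cap\xi=S_\xi$. Put $H=A^*\cap E$.

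\textbf{The $V[G_\xi]$ clause and main obstacle.} The delicate step is to verify that $H\cap\alpha\in V[G_\alpha]$ for every $\alpha\in\overline H$. Since $A^*\in V$ it suffices to compute $E\cap\alpha$ inside $V[G_\alpha]$. The crucial observation is that, thanks to $C_0$, for every $\zeta\in E$ the set $S\cap\zeta$ lies in $V[G_\zeta]$: for $i<\zeta$ and $\zeta\in C_0$ the condition $p_i$ belongs to $\Pbb_\zeta$, so ``$p_i\in G$'' coincides with ``$p_i\in G_\zeta$'', and the witnesses in the defining clause of $E$ read off $S\cap\zeta$ from $G_\zeta$. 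Because $\alpha\in\overline H\subseteq\overline E$, the set $E\cap\alpha$ is cofinal in $\alpha$; given any $\zeta<\alpha$, choose $\zeta'\in E$ with $\zeta\le\zeta'<\alpha$ and write $S\cap\zeta=(S\cap\zeta')\cap\zeta\in V[G_{\zeta'}]\subseteq V[G_\alpha]$. Hence the defining condition of $E$ at each $\zeta<\alpha$ is evaluable in $V[G_\alpha]$, and $E\cap\alpha\in V[G_\alpha]$, as required. The main bookkeeping difficulty is exactly this paragraph: arranging the enumeration via $C_0$ so that membership in $G$ of the witnesses is fixed once $G_\zeta$ is known, and invoking cofinality of $E$ below $\alpha$ to reconstruct all the initial segments $S\cap\zeta$ inside $V[G_\alpha]$.
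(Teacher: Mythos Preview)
Your approach mirrors the paper's: unwind Theorem~\ref{general-diamond-preservation} in the iteration setting, using an enumeration of $\Pbb_\chi$ aligned with the filtration so that the guessing names live in the initial segments, then invoke the reasoning of Observation~\ref{general-diamond-observations}(3). Two points need correction.

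First, the set of $\xi$ with $\Pbb_\xi = \{p_i : i < \xi\}$ is \emph{not} a club in $\chi$: at $\xi$ of cofinality $< \kappa$, conditions in $\Pbb_\xi$ can have support cofinal in $\xi$ and hence lie in no $\Pbb_\zeta$ for $\zeta<\xi$, so no enumeration makes the equality hold on a full club. The paper is careful here, showing only that this set is club relative to $S^\chi_{\ge\kappa}$. Fortunately your argument only ever uses the inclusion $\{p_i : i < \xi\} \subseteq \Pbb_\xi$, and \emph{that} does hold on a genuine club; redefine $C_0$ accordingly and everything you wrote goes through.

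Second, ``$\alpha \in \overline H \subseteq \overline E$, so $E \cap \alpha$ is cofinal in $\alpha$'' can fail when $\alpha \in H$ is not a limit point of $E$. The clean route is to note that $E$ (with the corrected $C_0$) is closed, hence $\overline H \subseteq \overline E = E$; so $\alpha \in E$, which gives $S \cap \alpha \in V[G_\alpha]$ directly, and then $E \cap \alpha$ and $H \cap \alpha = A^* \cap E \cap \alpha$ are computable there without the cofinality detour.

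Finally, ``preservation of stationarity \ldots\ standing in the background'' should be justified: the paper sketches that $\Pbb_\chi$ is $\chi$-cc via a Fodor argument on supports, which is what gives the preservation.
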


\begin{proof}[\ref{dnotes}]
We start by observing that by arguments as in  Baumgartner's survey paper on iterated forcing \cite[\S{2}]{Baumgartner}
the poset $\Pbb$ is $\chi$-cc. We sketch the proof briefly: given a sequence $\tupof{ p_i }{ i < \chi }$ of
elements of $\Pbb$ we apply Fodor's theorem and the bound on the size of initial segments to find a stationary
set $U \subseteq S^\chi_\kappa$ and an ordinal $\eta < \chi$ such that $\supp(p_i) \cap i \subseteq \eta$ and
$p_i \restriction \eta$ is constant for $i \in U$, then find $i, j \in U$ such that
$\supp(p_i) \subseteq j$ and argue that $p_i$ is compatible with $p_j$. 

Noting that $\vert \Pbb \vert = \chi$, we enumerate  $\Pbb$ as $\tupof{p_\xi}{\xi<\chi}$ 
and identify each $\Pbb_\xi$ with the set of conditions $p\in\Pbb$ such that $\supp(p)\subseteq\xi$ for $\xi<\chi$.

Let  $F$ be $\setof{\xi<\chi}{\forall \varepsilon < \xi \sss \sss (\supp(p_\varepsilon) <\xi \sss\sss\&\sss\sss
\Pbb_\xi=\setof{p_\varepsilon}{\varepsilon<\xi})}$. 
Since $\Pbb$ is an iteration with $\lk$-support, $F$ is club relative to $S^\chi_{\ge\kappa}$.
For let us define $h:\chi\longrightarrow \chi$ by setting $h(\varepsilon)=$ the least $\xi$ such that 
$\supp(p_\varepsilon)\subseteq \xi$ and $\Pbb_\varepsilon \subseteq \setof{p_\zeta}{\zeta < \xi}$, 
and define $C$ to be the set of closure points of $h$. 

For each $\xi\in C\cap S^{\chi}_{\ge\kappa}$ and $\varepsilon<\xi$ we have that 
$\supp(p_\varepsilon) \subseteq h(\varepsilon) < \xi$ and, since $\cf(\xi)\ge \kappa$ and 
the size of the support of each condition is less than $\kappa$, there is some $\gamma\in C\cap \xi$ such 
that $h(\varepsilon)<\gamma$ and so $\Pbb_\varepsilon \subseteq \setof{p_\zeta}{\zeta<\gamma}$.
Hence $\xi\in F$. Thus $A \setminus F$ is non-stationary. 

Since $\Pbb$ is $\chi$-cc it preserves stationary subsets of $\chi$, and we may therefore
appeal to Theorem (\ref{general-diamond-preservation}) and Observation (\ref{general-diamond-observations}) to obtain a sequence
$\tupof{ \name{S}_\xi}{ \xi \in A \cap F }$ such that
\begin{enumerate}
\item For all $\xi \in A \cap F$, $\name{S}_\xi$ is a $\Pbb_\xi$-name for a subset of $\xi$.
\item If in $V[G]$ we define $S_\xi$ as the realisation of $\name{S}_\xi$ by $G_\xi$,
then for every $S \subseteq \kappa$ there is a stationary set $H \subseteq A \cap F$
such that $S \cap \xi = S_\xi$ for all $\xi \in H$ and additionally
$H \cap \xi \in V[G_\xi]$ for all $\xi$ in the closure of $H$. 
\end{enumerate}  
To finish the proof we  fill in the missing values by defining $\name{S}_\xi = \hat{\emptyset}$ for $\xi \in A \setminus F$.
\end{proof}

\section{Radin forcing}\label{Radinmat} 

As we commented in the preamble, our proof involves Radin
forcing. There are several accounts of this forcing in the literature
(see \cite{Radin}, \cite{Mitchell-club-ulf}, \cite{Cummings-Woodin}, \cite{Gitik} and \cite{Jech-03}), each subtly different from the
others, and it turns out that it does matter which version of the
forcing we use.  Although the proofs of the various properties of the
forcing are easiest, or at least slickest, for the versions given in
Cummings-Woodin (\cite{Cummings-Woodin}) and in Gitik (\cite{Gitik}), using either of these here
creates technical difficulties.  Consequently, we shall define and use a version
of Radin forcing which, except for one small alteration to which we 
draw attention below, closely follows that of Mitchell in \cite{Mitchell-club-ulf}.

First of all, we give, by induction on $\kappa \in \Card$, 
the definition of the set of {\em ultrafilter sequences at $\kappa$.}

\begin{definition}\label{ultrafilterseq}
Let $\kappa$ be a cardinal and let 
${\mathcal U}_\kappa$ be the collection of ultrafilter sequences at
cardinals smaller than $\kappa$. A sequence  
$u=\tupof{u_{\tau}}{\tau<\lh(u)}$ is an
\emph{ultrafilter sequence at} $\kappa$ if $\lh(u)$ is a non-zero
ordinal, $u_{0} =\kappa$, so that, using the notation of the introduction, $\kappa_u=\kappa$, 
and, for $\tau\in (0,\lh(u))$, each $u_{\tau}$ is a 
$\kappa$-complete {ultrafilter} on $V_\kappa$
with ${\mathcal U}_\kappa\in u_{\tau}$ and satisfies the following \emph{normality}
and \emph{coherence} conditions (with respect to
$u$):

\emph{(normality)} if $f: {\mathcal U}_\kappa\longrightarrow V_\kappa$ and 
$\setof{w\in {\mathcal U}_\kappa}{f(w)\in V_{\kappa_w}}\in
u_{\tau}$ then there is some $x\in V_\kappa$ such that 
$\setof{w\in {\mathcal U}_\kappa}{f(w)=x}\in u_\tau$.

\emph{(coherence 1)}  if $f: {\mathcal U}_\kappa\longrightarrow \kappa$ and
$\setof{w\in {\mathcal U}_\kappa}{f(w) < \lh(w)}\in u_\tau$ there
is $\sigma<\tau$ such that 
$u_\sigma = \setof{X \subseteq V_\kappa} { \setof{w\in {\mathcal U}_\kappa}{X \cap V_{\kappa_w} \in w_{f(w)}} \in u_\tau }$.

\emph{(coherence 2)} if $\sigma < \tau$ and $X \in u_\sigma$ then 
$\setof{w\in {\mathcal U}_\kappa}{\exists \bar\sigma<\lh(w)\sss X \cap V_{\kappa_w} 
\in w_{\obar{\scriptstyle \sigma}}} \in u_\tau$.

\end{definition} 

\begin{definition}\label{ultrafilterclass}
The class ${\mathcal U}$ is the
class of all ultrafilter sequences on any cardinal: ${\mathcal U}=\bigcup_{\kappa\in\Card}
{\mathcal U}_\kappa$.
\end{definition}

\begin{definition}\label{ultrafilterseqsatkappa} For clarity we re-iterate the following
special cases of  notation of Definition (\ref{ultrafilterseq}): 
${\mathcal U}_{\kappa^+}$ is the set of ultrafilter sequences at cardinals less than or equal to $\kappa$ and
${\mathcal U}_{\kappa^+} \setminus {\mathcal U}_{\kappa}$ is the set of ultrafilter sequences at $\kappa$.
\end{definition}

\begin{observation} As Mitchell comments in \cite{Mitchell-club-ulf}, by the coherence properties,
if $u\in {\mathcal U}$ and $\lh(u)\le \kappa_u$ then for $\tau\in(0,\lh(u))$ one has that 
$u_\tau$ concentrates on $\setof{w\in {\mathcal U}_{\kappa_u}}{ \lh(w)=\tau}$ 
({\it i.e.\/}, $\setof{w\in {\mathcal U}_{\kappa_u}}{ \lh(w)=\tau}\in u_{\tau}$).
\end{observation}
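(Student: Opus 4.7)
The plan is to prove the observation by induction on $\tau$, wrapped in an outer induction on $\kappa_u$. The outer hypothesis is that the observation already holds for every $v\in\mathcal{U}$ with $\kappa_v<\kappa_u$ and $\lh(v)\le\kappa_v$, and the inner hypothesis is that $u_\sigma$ concentrates on $\setof{w}{\lh(w)=\sigma}$ for every $\sigma\in(0,\tau)$. Note that, because $\lh(u)\le\kappa_u$, one has $\tau<\kappa_u$ throughout, so constants in $\tau$ may be used as maps into $\kappa_u$, and $u_\tau$ is $\kappa_u$-complete.

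First I would rule out $\setof{w}{\lh(w)<\tau}\in u_\tau$. By the inner hypothesis, $Y_\sigma:=\setof{v}{\lh(v)=\sigma}\in u_\sigma$ for each $\sigma\in(0,\tau)$. Coherence~2 yields
\[ B_\sigma\sss:=\sss\setof{w}{\exists\bar\sigma<\lh(w)\sss(Y_\sigma\cap V_{\kappa_w}\in w_{\bar\sigma})}\sss\in\sss u_\tau. \]
Granting the preliminary fact that $\setof{w}{\lh(w)\le\kappa_w}\in u_\tau$ (discussed below), the outer hypothesis applied at the witness $\bar\sigma$ forces $\bar\sigma=\sigma$, so $\lh(w)>\sigma$ on a $u_\tau$-large set. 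Intersecting these sets over $\sigma<\tau$ using $\kappa_u$-completeness of $u_\tau$ gives $\setof{w}{\lh(w)\ge\tau}\in u_\tau$.

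Next I would rule out $\setof{w}{\lh(w)>\tau}\in u_\tau$. If it held, the constant map $f\equiv\tau$ from $\mathcal{U}_{\kappa_u}$ into $\kappa_u$ would satisfy $f(w)<\lh(w)$ on a $u_\tau$-large set, and coherence~1 would produce a necessarily positive $\sigma<\tau$ (an index $\sigma=0$ is ruled out since $u_0$ is not an ultrafilter) with
\[ u_\sigma\sss=\sss\setof{X\subseteq V_{\kappa_u}}{\setof{w}{X\cap V_{\kappa_w}\in w_\tau}\in u_\tau}. \]
By the inner hypothesis, $Y_\sigma\in u_\sigma$ while $Y_\tau:=\setof{v}{\lh(v)=\tau}\notin u_\sigma$, so on a $u_\tau$-large set the ultrafilter $w_\tau$ concentrates on $Y_\sigma$ and misses $Y_\tau$. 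On the same set $\lh(w)>\tau$ and, by the preliminary fact, $\lh(w)\le\kappa_w$, so the outer hypothesis applied to $w$ at position $\tau$ forces $w_\tau$ to concentrate on $Y_\tau$, a contradiction.

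The main obstacle is the preliminary fact that $\setof{w}{\lh(w)\le\kappa_w}\in u_\tau$: this is what lets the outer inductive hypothesis be invoked on $w$, and without it both of the steps above stall. I would extract it as an auxiliary lemma handled in parallel with the main induction, using normality of $u_\tau$ applied to a suitably truncated length function (so that its values land in $V_{\kappa_w}$), together with the $\kappa_u$-completeness of $u_\tau$ and the bound $\tau<\kappa_u$, to rule out the complementary set $\setof{w}{\lh(w)>\kappa_w}$ belonging to $u_\tau$. Once this auxiliary lemma is in hand, combining the two steps above yields $\setof{w}{\lh(w)=\tau}\in u_\tau$ and closes the induction.
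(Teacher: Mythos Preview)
The paper does not actually prove this observation; it merely attributes it to Mitchell and says it follows ``by the coherence properties''. So there is no paper-proof to compare against, and your write-up is considerably more detailed than what the paper offers.

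Your inductive scheme is the right one, and both main steps are correctly designed. For $\tau=1$ everything goes through cleanly: step~1 is vacuous, and in step~2 the constant function $f\equiv 1$ forces coherence~1 to produce $\sigma=0$, which is impossible since the derived object is a genuine ultrafilter on $V_{\kappa_u}$ while $u_0=\kappa_u$ is not. For larger $\tau$, your use of the inner hypothesis, coherence~2 (for step~1), and coherence~1 with the constant $f\equiv\tau$ (for step~2) is exactly what is wanted.

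The genuine gap is the one you yourself flag: the ``preliminary fact'' that $u_\tau$ concentrates on $\setof{w}{\lh(w)\le\kappa_w}$. Both halves of your inductive step invoke the outer hypothesis on the reflected sequences $w$, and that hypothesis only applies when $\lh(w)\le\kappa_w$. Your proposal to extract this via normality applied to a truncated length function is too vague to be convincing: any function landing in $V_{\kappa_w}$ becomes constant on a $u_\tau$-large set, but constancy of a truncated length does not by itself rule out $\lh(w)>\kappa_w$ on that set, and no straightforward choice of truncation seems to yield the contradiction. You should either (a) supply a genuine argument here (likely requiring a more delicate use of coherence~1 inside $w$ itself, iterated down), or (b) note that Definition~4.1 as stated in the paper places no bound on $\lh(w)$ versus $\kappa_w$, so the observation in its full generality may tacitly rely on the fact that in the constructions actually used in this paper (and in Mitchell's) all sequences $w\in\mathcal{U}$ satisfy $\lh(w)\le\kappa_w$, making the preliminary fact trivial. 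Option (b) is honest and sufficient for the paper's purposes, since only sequences of length $\lambda<\kappa$ are ever used.
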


\begin{observation} We need some large cardinal assumption in order to construct non-trivial ultrafilter
sequences. For the purposes of this paper we can use a construction due to Radin \cite{Radin}.
Let $j: V \rightarrow M$ witness that $\kappa$ is $2^\kappa$-supercompact. Derive a sequence
$u$ by setting $u_0 = \kappa$ and then $u_\alpha = \setof{ X \subseteq V_\kappa }{ u \restriction \alpha \in j(X) }$
for $\alpha > 0$. One can verify that for every $\alpha < (2^\kappa)^+$ we have
$u \restriction \alpha \in {\mathcal U}_\kappa$. In fact we will only need ultrafilter sequences
of length less than $\kappa$ in the sequel.
\end{observation}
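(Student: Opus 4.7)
The plan is an induction on $\alpha$ with $0 < \alpha < (2^\kappa)^+$, showing at each stage that $u \restriction \alpha$ is well-defined and lies in $\mathcal{U}_\kappa$. The base case $\alpha = 1$ is immediate since $u \restriction 1 = \tup{\kappa}$ satisfies the definition vacuously. At a limit $\alpha$, the normality and coherence conditions on any $u_\gamma$ with $0 < \gamma < \alpha$ mention only ordinals below $\gamma$, so they transfer directly from the inductive hypothesis applied to $u \restriction (\gamma + 1)$; thus $u \restriction \alpha = \bigcup_{\gamma < \alpha} (u \restriction (\gamma+1))$ also lies in $\mathcal{U}_\kappa$.

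For the successor step, assume $u \restriction \alpha \in \mathcal{U}_\kappa$. First I would check that $u \restriction \alpha \in M$, so that the defining formula for $u_\alpha$ is sensible. Each entry $u_\beta$ with $0 < \beta < \alpha$ is a subset of $V_{\kappa + 1}$ of cardinality at most $2^\kappa$, and $V_{\kappa+1} \subseteq M$ by $\kappa$-supercompactness, so the closure $M^{2^\kappa} \subseteq M$ gives $u_\beta \in M$; applying the same closure to the sequence of length $\alpha \le 2^\kappa$ puts $u \restriction \alpha \in M$. Since $j(\kappa)$ is inaccessible in $M$ and sufficiently larger than $2^\kappa$, we also have $u \restriction \alpha \in V_{j(\kappa)}$, so ``$u \restriction \alpha \in j(X)$'' is meaningful for every $X \subseteq V_\kappa$.

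Next I verify the properties of $u_\alpha$. Ultrafilter-ness is immediate. For $\kappa$-completeness, if $\tupof{X_i}{i < \gamma}$ with $\gamma < \kappa = \crit(j)$ are each in $u_\alpha$, then $j(\tupof{X_i}{i < \gamma}) = \tupof{j(X_i)}{i < \gamma}$ and $u \restriction \alpha \in \bigcap_i j(X_i) = j(\bigcap_i X_i)$. For $\mathcal{U}_\kappa \in u_\alpha$ we require $u \restriction \alpha \in j(\mathcal{U}_\kappa) = (\mathcal{U}_{j(\kappa)})^M$, i.e., that $u \restriction \alpha$ is an ultrafilter sequence at $\kappa$ as computed in $M$; this follows from the inductive hypothesis and the absoluteness of the definition between $V$ and $M$, since each $u_\beta \in V$ is the same set in $M$ and all quantifications in the definition range over subsets of $V_\kappa$, shared between $V$ and $M$. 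Normality is handled by setting $x = j(f)(u \restriction \alpha) \in V_\kappa$: then $j(x) = x$ since $j \restriction V_\kappa = \id$, so $\setof{w}{f(w) = x} \in u_\alpha$.

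The real content is in the coherence conditions, and here I expect the bookkeeping to be the chief obstacle. For coherence~1, given $f$ with $\setof{w}{f(w) < \lh(w)} \in u_\alpha$, set $\sigma = j(f)(u \restriction \alpha)$, so $\sigma < \lh(u \restriction \alpha) = \alpha$. Unwinding the two sides of the required equality for $u_\sigma$ and using $j(X) \cap V_\kappa = X$ for $X \subseteq V_\kappa$ (which follows from $j \restriction V_\kappa = \id$) reduces both sides to $\setof{X \subseteq V_\kappa}{j(X) \cap V_\kappa \in (u \restriction \alpha)_\sigma}$. Coherence~2 is easier: for $\sigma < \alpha$ and $X \in u_\sigma$, taking $\bar\sigma = \sigma$ together with $j(X) \cap V_\kappa = X$ immediately yields the required membership in $u_\alpha$. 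The only genuine difficulty lies in the closure bookkeeping at each inductive stage, namely keeping $u \restriction \alpha$ inside both $M$ and $V_{j(\kappa)}$ so that the recursion continues and $M$ is in a position to recognise $u \restriction \alpha$ as an ultrafilter sequence at $\kappa$.
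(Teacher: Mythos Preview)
The paper does not supply a proof of this observation; it merely states ``One can verify that\dots'' and moves on. Your argument correctly fills in this verification: the induction scheme is right, the closure bookkeeping showing $u \restriction \alpha \in M \cap V_{j(\kappa)}^M$ is sound (using $|\alpha| \le 2^\kappa$, $V_{\kappa+1}^M = V_{\kappa+1}$, and $M^{2^\kappa} \subseteq M$), and your computations for normality and the two coherence conditions via $j(X) \cap V_\kappa = X$ are the standard ones and are correct. One small wording quibble: in coherence~1 you say both sides ``reduce to'' $\{X : j(X) \cap V_\kappa \in (u \restriction \alpha)_\sigma\}$, but really the right-hand side reduces to this and then, using $j(X)\cap V_\kappa = X$ and $(u\restriction\alpha)_\sigma = u_\sigma$, it simplifies further to $u_\sigma$ itself, which is the left-hand side; the argument is fine, just slightly circuitously phrased.
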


\begin{definition}\label{filterF} If $u\in {\mathcal U}$ then 
${\mathcal F}(u)=\bigcap_{\tau\in(0,\lh(u))} u_\tau$.
\end{definition}

\begin{definition}\label{pair} A \emph{pair} is some $(u,A)$ with $u\in {\mathcal U}$
and $A\in {\mathcal F}(u)$ if $\lh(u)>1$ and $A=\emptyset$ if $\lh(u)=1$.
\end{definition}

(Note that our notation thus far is marginally, but inessentially, 
different from that of \cite{Mitchell-club-ulf}).

\begin{definition}\label{Radinfor} Let $w\in {\mathcal U}$. $\Rad{w}$, 
\emph{Radin forcing at} $w$, has as conditions sequences of pairs 
$\tup{(u_0,B_0),\dots,(u_n,B_n)}$ such that $u_n=w$, and, writing 
$\kappa_i$ for $\kappa_{u_i}$, $(u_i,B_i)\in {\mathcal U}_{\kappa_{{i+1}}}$
and $B_{i+1}\cap V_{\kappa_{i}^+} = \emptyset$  for $i<n$.

Let $p=\tup{(u_0,B_0),\dots,(u_n,B_n)}$
and $q=\tup{(v_0,D_0),\dots,(v_m,D_m)} \in \Rad{w}$. Then $q\le p$
($q$ refines $p$) if 
$m \ge n$ and
\begin{enumerate}[label=(\roman*)]
\item  For every $i \le n$ there is $j \le m$ such that $u_i = v_j$ and $D_j \subseteq B_i$.
\item  For every $j \le m$, either $v_j = u_i$ for some $i$ or for the least $i$ such that
$\kappa_{v_j} < \kappa_{u_i}$, $v_j \in B_i$ and $D_j \subseteq B_i$. 
\end{enumerate}

We also define $q\le^* p$ if
$\tupof{u_i}{i\le n}=\tupof{v_i}{i\le n}$ and for each $i\le n$ 
we have $D_i\subseteq B_i$. We say that 
$q$ is a \emph{(Radin-)direct extension} of $p$.
Thus $q\le ^*p$ implies $q\le p$. 

\end{definition}

\begin{definition}\label{appears} Let $p=\tup{(u_0,B_0),\dots,(u_n,B_n)}$ be a condition in $\Rad{u_n}$.
A pair $(u,B)$ {\em appears in $p$}
if there is some $i\le n$ such that
$(u,B)=(u_i,B_i)$.  Similarly an ultrafilter sequence $u$ {\em appears in $p$}
if there is some $i\le n$ such that
$u = u_i$.
\end{definition}
Thus clause (ii) in the definition of $\le$ in Definition (\ref{Radinfor}) reads: 
if $v_j$ does not appear in $p$ and $i$ is minimal such that $\kappa_{v_j}<\kappa_{u_i}$
then $v_j \in B_i$ and $D_j \subseteq B_i$. 

\begin{observation} \cite{Mitchell-club-ulf} omits the last clause 
in the definition of what it is to be a
condition. However our conditions form a dense subset of the
conditions as defined in \cite{Mitchell-club-ulf} and the facts that
we quote from \cite{Mitchell-club-ulf} are also true of our forcing.
This minor change is advantageous for technical reasons in order to
make the proof below run smoothly.
\end{observation}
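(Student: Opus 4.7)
The plan is to establish density: every condition in Mitchell's version of Radin forcing admits a direct extension satisfying our additional disjointness clause. Once density is in hand, and noting that the two orderings $\le$ and $\le^*$ refer only to ultrafilter sequences and inclusions of the measure-one sets, never to the disjointness clause, they agree on our subset. Consequently filters generic for one poset correspond to filters generic for the other, and all combinatorial facts cited from \cite{Mitchell-club-ulf} (the Prikry-like property, cardinal preservation, the structure of the Radin sequence, and so on) carry over to our forcing without further work.

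For density, let $p = \langle (u_0, B_0), \ldots, (u_n, B_n)\rangle$ be a Mitchell-condition. Define $B_0' := B_0$ and, for $1 \le i \le n$, set $B_i' := B_i \setminus V_{\kappa_{i-1}^+}$, and put $q := \langle (u_0, B_0'), \ldots, (u_n, B_n')\rangle$. The disjointness requirement $B_{i+1}' \cap V_{\kappa_i^+} = \emptyset$ for $i < n$ is immediate from the definition, and $q \le^* p$ in Mitchell's sense holds because the ultrafilter sequences match and the measure-one sets have only shrunk. The substantive point is to verify that $B_i' \in {\mathcal F}(u_i)$ for each $i \ge 1$, equivalently $B_i' \in (u_i)_\tau$ for every $\tau \in (0, \lh(u_i))$.

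Here the argument is a standard $\kappa$-completeness observation. Since $u_{i-1} \in {\mathcal U}_{\kappa_i}$ we have $\kappa_{i-1} < \kappa_i$, and because $\kappa_i$ carries a non-principal $\kappa_i$-complete ultrafilter it is inaccessible, so $|V_{\kappa_{i-1}^+}| = \kappa_{i-1}^+ < \kappa_i$. A non-principal $\kappa_i$-complete ultrafilter cannot contain a set of cardinality less than $\kappa_i$: such a set would be a union of fewer than $\kappa_i$ singletons, forcing one of them to have full measure, contradicting non-principality. Hence $V_{\kappa_{i-1}^+} \notin (u_i)_\tau$, its complement lies in $(u_i)_\tau$, and therefore $B_i' \in (u_i)_\tau$. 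I expect no serious obstacle: the whole point of the remark is to confirm that our minor strengthening of Mitchell's definition costs nothing in terms of what conditions are available below any given one, and the measure-theoretic content is entirely routine.
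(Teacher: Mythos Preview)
The paper treats this as a remark and supplies no proof, so there is no argument of the paper's to compare against; your write-up is simply filling in what the authors left to the reader, and the overall strategy (shrink each $B_i$ by excising $V_{\kappa_{i-1}^+}$, check this costs nothing in ${\mathcal F}(u_i)$, observe the ordering makes no reference to the extra clause) is exactly right.

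Two small corrections. First, the equality $|V_{\kappa_{i-1}^+}| = \kappa_{i-1}^+$ is not true in general: $|V_\alpha| = \beth_\alpha$, and nothing here forces $\kappa_{i-1}^+$ to be a beth fixed point. What you actually need, and what does hold, is $|V_{\kappa_{i-1}^+}| < \kappa_i$: since $\lh(u_i) > 1$ in the only case where there is anything to check, $\kappa_i$ carries a $\kappa_i$-complete nonprincipal measure and is therefore inaccessible, hence a strong limit and a beth fixed point, so $\beth_{\kappa_{i-1}^+} < \kappa_i$. Your $\kappa_i$-completeness argument then goes through unchanged. Second, you should note explicitly that when $\lh(u_i) = 1$ one has $B_i = \emptyset$ by the definition of \emph{pair}, so there is nothing to verify and the inaccessibility of $\kappa_i$ is not needed in that case.
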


To help orient the reader, we record a few remarks (without proof)
about the nature of the generic object for the forcing poset
$\Rad{w}$. This generic object is best viewed as a sequence 
$\tupof{ u_i }{ i < \delta }$ where $u_i \in {\mathcal U}_{\kappa_w}$ and
$\tupof{ \kappa_{u_i} }{ i < \delta }$ is increasing and continuous.
When $\lh(w) > 1$ the sequence $\tupof{ \kappa_{u_i} }{ i < \delta }$
is cofinal in $\kappa_w$, and we will view this sequence as
enumerating a club set in $\kappa_w$ which we call the 
{\em Radin-generic club set.}  When $i = 0$ or $i$ is a successor ordinal
then $\lh(u_i) = 1$, otherwise $\lh(u_i) > 1$.  The translation
between the generic sequence and the generic filter is given by the
following.

\begin{definition}\label{generic-sequence-from-generic-filter}
Let $G$ be an $\Rad{w}$-generic filter over $V$. The sequence ${\tupof{ u_i }{ i < \delta }}$
is the {\em corresponding generic sequence} if it enumerates 
${\setof{u\in {\mathcal U}_{\kappa_w} }{\exists p\in G\sss\sss u\mbox{ appears in }p}}$, {\em i.e.},
the set of $u \in {\mathcal U}_{\kappa_w}$ which appear in some condition in $G$.
\end{definition} 

\begin{lemma} \label{filterandsequence1}
Let $G$ be an $\Rad{w}$-generic filter over $V$ and $\tupof{ u_i }{ i < \delta }$
the corresponding generic sequence. Then 
\begin{align*}
G= \setof{p\in \Rad{w}}{\forall u \in {\mathcal U}_{\kappa_w} \sss(u & \mbox{ appears in }  p 
\longrightarrow \sss \exists i<\delta \sss u=u_i) \sss\sss\&\sss\sss \\
& \forall i< \delta \sss\exists q\le p \sss\sss( u_i\mbox{ appears in }q) },
\end{align*}
{\em i.e.} $G$ is the set of conditions $p \in \Rad{w}$ 
such that every $u \in {\mathcal U}_{\kappa_w}$ which appears
in $p$ is among the sequences $u_i$, and every sequence $u_i$ appears in some extension of $p$.
\end{lemma}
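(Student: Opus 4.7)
I will prove both inclusions. For the easier direction, suppose $p\in G$. Any $u$ appearing in $p$ lies in the generic sequence by Definition~\ref{generic-sequence-from-generic-filter}. For the second clause, fix $i<\delta$: by that same definition some $q\in G$ contains $u_i$, so taking $r\in G$ with $r\le p,q$ via the filter property of $G$, clause~(i) of Definition~\ref{Radinfor} guarantees that $u_i$ appears in $r\le p$.

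For the reverse direction, suppose $p$ satisfies the two conditions. The set $E=\{s\in\Rad{w}:s\le p \text{ or } s\perp p\}$ is dense and therefore meets $G$; if the witness $s$ satisfies $s\le p$ then $p\in G$ by upward closure of $G$. Otherwise some $s\in G$ is incompatible with $p$, and I derive a contradiction by exhibiting a common extension of $p$ and $s$. Writing $U(x)$ for the set of ultrafilter sequences appearing in a condition $x$, I first use condition~(1) on $p$ together with the definition of the generic sequence and the filter property of $G$ to find $q\in G$ with $q\le s$ and $U(p)\subseteq U(q)$: each $v\in U(p)$ equals some $u_i$ by condition~(1), which yields $q_v\in G$ containing $v$ by the definition of the generic sequence, and a common $G$-extension of $s$ and the finitely many $q_v$ does the job (using clause~(i) of Definition~\ref{Radinfor}). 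Enumerate $U(p)\cup U(q)$ in order of increasing cardinal as $v_0,\ldots,v_N$; note $v_N=w$. For each $k$ let $C^p_k$ be the $B$-component in $p$ at $v_k$ if $v_k\in U(p)$, and otherwise the $B$-component in $p$ at the least $v'\in U(p)$ with $\kappa_{v'}>\kappa_{v_k}$; define $C^q_k$ analogously. Set $D_k=C^p_k\cap C^q_k$, further shrunk by removal of $V_{\kappa_{v_{k-1}}^+}$, and form $r=\langle(v_0,D_0),\ldots,(v_N,D_N)\rangle$.

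The verifications that $r$ is a valid condition refining both $p$ and $q$ split as follows. Clause~(i) of the refinement order is immediate from $D_k\subseteq C^p_k, C^q_k$. The clause~(ii) requirement that $v_k\in C^p_k$ whenever $v_k\notin U(p)$ is exactly where condition~(2) on $p$ enters: it supplies an extension $p'\le p$ in $\Rad{w}$ containing $v_k$, and clause~(ii) applied to $p'\le p$ forces $v_k$ into the appropriate $B$-set of $p$, namely $C^p_k$; the symmetric statement for $q$ follows from the forward direction applied to $q\in G$. The main obstacle is to check that $D_k\in\mathcal{F}(v_k)$ in the case $v_k$ appears in exactly one of $p,q$: when $v_k\in U(p)\setminus U(q)$ the set $C^q_k$ lies in $\mathcal{F}(v'')$ for some $v''\in U(q)$ of strictly larger cardinal, so one must argue that $C^q_k\cap V_{\kappa_{v_k}}\in\mathcal{F}(v_k)$. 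This is secured by coherence~2 of Definition~\ref{ultrafilterseq} applied to $v''$, combined with the fact that $v_k$, being a point of the Radin generic sequence, is ``typical'' with respect to the measures comprising $v''$, so that the trace on $V_{\kappa_{v_k}}$ of any measure-one set of $v''$ is measure-one for $v_k$.
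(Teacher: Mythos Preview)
The paper states this lemma without proof (the closing $\blacktriangle$ marks it as a quoted standard fact from the Radin forcing literature), so there is no argument to compare against. Your overall strategy is sound and standard: the forward direction is immediate, and for the reverse you correctly reduce to building a common extension of $p$ and some $q\in G$ with $U(p)\subseteq U(q)$.

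There is, however, a genuine gap in your final paragraph. First a slip: since you arranged $U(p)\subseteq U(q)$, the case ``$v_k\in U(p)\setminus U(q)$'' is empty; the live case is $v_k\in U(q)\setminus U(p)$, where you must show $C^p_k\cap V_{\kappa_{v_k}}\in\mathcal{F}(v_k)$ (assuming $\lh(v_k)>1$). Your appeal to ``coherence~2 of Definition~\ref{ultrafilterseq}'' does not do this: the coherence clauses relate the measures $u_\sigma$ and $u_\tau$ within a \emph{single} ultrafilter sequence $u$; they say nothing about how a set in $\mathcal{F}(v')$ traces down to a different sequence $v_k$ of smaller critical point. The vague ``typicality'' remark is not a proof.

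The fix is to reuse exactly the idea you already used for clause~(ii). Condition~(2) on $p$ gives an extension $p'\le p$ in which $v_k$ appears. Clause~(ii) of Definition~\ref{Radinfor}, applied to $p'\le p$ at the pair $(v_k,D)$ in $p'$, yields not only $v_k\in C^p_k$ but also $D\subseteq C^p_k$. Since $(v_k,D)$ is a pair, $D\in\mathcal{F}(v_k)$ when $\lh(v_k)>1$, and hence $C^p_k\cap V_{\kappa_{v_k}}\in\mathcal{F}(v_k)$. (When $\lh(v_k)=1$ one has $C^q_k=\emptyset$, so $D_k=\emptyset$ automatically.) With this correction your construction of $r$ goes through.
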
 
\vskip6pt

\begin{definition}\label{lowerparts}
A {\em lower part} for $\Rad{w}$ is a condition in $\Rad{u}$ for some $u\in {\mathcal U}_{\kappa_w}$.
\end{definition}
We note that any condition in $\Rad{w}$ has the form $y \concat (w, B)$ where $y$ is empty or  
a lower part for
$\Rad{w}$, and $B \in {\mathcal F}(w)$. In the case when $y$ is non-empty we will say that
{\em $y$ is the lower part of $y \concat (w, B)$}.  It is easy to see that any two conditions
with the same lower part are compatible, so that $\Rad{w}$ is the union of $\kappa_w$-many
$\kappa_w$-complete filters and in particular it enjoys the $\kappa_w^+$-chain condition.

A key point is that below a condition of the form  $y \concat (w, B)$ with $y$ non-empty, everything
up to the last ultrafilter sequence appearing in $y$ is controlled by pairs in $y$. More formally:

\begin{definition} \label{lowerpartdefs} 
Let $y = \tupof{(u_i, B_i)}{i \le n}$ be a lower part for $\Rad{w}$.  Then
$\kappa_y = \kappa_{u_n}$ and $\Rad{y} = \setof{q \in \Rad{u_n}}{q \le y}$.
\end{definition}

\begin{lemma} \label{lowerpartlemma}  Let $p = y \concat (w, B)$ be a condition in $\Rad{w}$ with
$y$ non-empty. Then
the subforcing $\setof{q  \in \Rad{w}}{q \le p}$ is isomorphic to the product
$\Rad{y} \times \setof{q \in \Rad{w}}{q \le (w, B \setminus {\mathcal U}_{\kappa_y^+})}$. 
\end{lemma}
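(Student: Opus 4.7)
The plan is to write down an explicit bijection $\Phi$ between the two posets and verify that $\Phi$ and $\Phi^{-1}$ are order-preserving. Write $y = \tup{(u_0,B_0),\dots,(u_n,B_n)}$, so $\kappa_y = \kappa_{u_n}$.

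Given $q \le p$, I first observe using clause (i) of Definition~\ref{Radinfor} that every $u_i$ appearing in $y$, together with $w$ itself, must appear in $q$. In particular $u_n$ appears, so $q$ can be split uniquely at $u_n$ as $q = q_1 \concat q_2$, where $q_1$ is the initial segment of $q$ ending in the pair whose first coordinate is $u_n$, and $q_2$ is the remainder (ending in $(w,D)$ for some $D \subseteq B$). Define $\Phi(q) = (q_1, q_2)$. I then check: (a) $q_1 \in \Rad{y}$, i.e.\ $q_1 \in \Rad{u_n}$ and $q_1 \le y$; this is immediate by restricting the clauses of $\le$ in Definition~\ref{Radinfor} to indices $\le n$ on the $y$-side. (b) $q_2 \le (w, B \setminus {\mathcal U}_{\kappa_y^+})$ in $\Rad{w}$. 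For this I must show that every $v_j$ appearing in $q_2$ other than $w$ lies in $B \setminus {\mathcal U}_{\kappa_y^+}$. Such a $v_j$ does not appear in $p$, so by clause (ii) it lies in $B_i$ for the least $i$ with $\kappa_{v_j} < \kappa_{u_i}$; since $v_j$ sits after $u_n$ in the sequence $q$, we have $\kappa_{v_j} > \kappa_{u_n} = \kappa_y$, forcing $i = n+1$ and hence $v_j \in B$, and also $v_j \notin {\mathcal U}_{\kappa_y^+}$.

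For the inverse, given $(q_1, q_2)$ with $q_1 \in \Rad{y}$ and $q_2 \le (w, B \setminus {\mathcal U}_{\kappa_y^+})$, set $\Phi^{-1}(q_1, q_2) = q_1 \concat q_2$. The key well-formedness point is the disjointness requirement $B_{i+1} \cap V_{\kappa_i^+} = \emptyset$ at the splice point: the measure-one set of the first pair in $q_2$ is contained in $B \setminus {\mathcal U}_{\kappa_y^+}$, and every ultrafilter sequence in that set has underlying cardinal strictly above $\kappa_y = \kappa_{u_n}$, so the intersection with $V_{\kappa_{u_n}^+}$ is indeed empty. The $\kappa$-values in $q_1$ are all $\le \kappa_y$ while those in $q_2$ (other than $w$) are all $> \kappa_y$, so the concatenation yields a legitimately increasing sequence of pairs. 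That $q_1 \concat q_2 \le p$ is a routine check of clauses (i) and (ii), using that $q_1 \le y$ handles everything at height $\le \kappa_y$ and $q_2 \le (w, B \setminus {\mathcal U}_{\kappa_y^+})$ handles everything above.

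Finally I verify that $\Phi$ is order-preserving in both directions: if $q \le q'$ below $p$, then the splitting commutes with the ordering because both clauses (i) and (ii) are local, i.e.\ they hold componentwise once the corresponding $u_n$-pairs are matched up; conversely, refining each coordinate of $(q_1, q_2)$ separately gives a refinement of $q_1 \concat q_2$. The bijectivity of $\Phi$ is immediate from the uniqueness of the split at $u_n$. The main obstacle, which is really just a notational nuisance, is keeping careful track of the side-condition $B_{i+1} \cap V_{\kappa_i^+} = \emptyset$ across the splice; this is precisely why the factor on the right uses $B \setminus {\mathcal U}_{\kappa_y^+}$ rather than $B$ itself, and it is for this same technical convenience that the authors adopted the slight strengthening of Mitchell's definition noted earlier.
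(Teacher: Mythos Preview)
The paper states this lemma without proof, treating it as a standard fact about Radin forcing, so there is nothing to compare against; your explicit factorisation via splitting at the $u_n$-pair is exactly the expected argument and is correct in outline.

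One small gap: in part (b) you write that you must show every $v_j$ appearing in $q_2$ (other than $w$) lies in $B \setminus {\mathcal U}_{\kappa_y^+}$, but clause~(ii) of Definition~\ref{Radinfor} also demands $D_j \subseteq B \setminus {\mathcal U}_{\kappa_y^+}$, and clause~(i) demands the same of the top measure-one set $D$. This is easy to supply by the same mechanism: from $q \le p$ one has $D_j \subseteq B$ (respectively $D \subseteq B$), and conditionhood of $q$ gives $D_j \cap V_{\kappa_y^+} = \emptyset$ since the pair immediately preceding $(v_j,D_j)$ in $q$ has critical point at least $\kappa_y$. Your closing remark about why the right-hand factor uses $B \setminus {\mathcal U}_{\kappa_y^+}$ rather than $B$ is on target, though note that the splice condition itself ($D_0' \cap V_{\kappa_y^+} = \emptyset$) already follows from $D_0' \subseteq B$ together with $B \cap V_{\kappa_y^+} = \emptyset$ (which holds because $p$ is a condition); the genuine role of removing ${\mathcal U}_{\kappa_y^+}$ is to guarantee that every sequence appearing in $q_2$ has critical point strictly above $\kappa_y$, so that the concatenation $q_1 \concat q_2$ has strictly increasing critical points.
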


It follows from these considerations that if 
$\tupof{ u_i }{ i <  \delta }$ is a generic sequence for $\Rad{w}$ and 
$\zeta < \delta$, then $\tupof{ u_i }{ i < \zeta }$ is a generic sequence for
$\Rad{u_\zeta}$. More generally, if there is a condition in the
generic filter with lower part $y$ then the generic sequence induces
an $\Rad{y}$-generic object in the natural way.

\begin{definition} \label{conformance} Let $y$ be a lower part for $\Rad{w}$ and let 
$G$ be $\Rad{w}$-generic over $V$. We say that 
{\em $y$ conforms with $G$} if and only if $y$ is the lower part of some condition in $G$.
\end{definition} 
The following lemma is a ``local'' version of Lemma (\ref{filterandsequence1}).
\begin{lemma} \label{conforming-and-sequence}
Let  $G$ be an $\Rad{w}$-generic filter over $V$ and let  $\tupof{ u_i }{ i < \delta }$
be the corresponding generic sequence.
Let $y = \tupof{(v_k, B_k)}{k \le n}$ be a lower part. Then the following are equivalent:
\begin{enumerate}
\item $y$ conforms with $G$.
\item For every $k\le n$ the sequence $v_k$ appears among the sequences $u_i$, and for every 
$i$ with $\kappa_{u_i} \le \kappa_{v_n}$ the ultrafilter sequence
$u_i$ appears in some extension of $y$ in $\Rad{y}$.
\end{enumerate}
Moreover, if $i<\delta$ then $u_i$ appears in some lower part $y'$ 
which conforms with $G$ and with $\kappa_{y'} = \max(\set{\kappa_y,\kappa_{u_i}})$,
and such that if $y\concat(w,B)\in G$ there is 
some $B'\in {\mathcal F}(w)$ such that $y'\concat (w,B') \le y \concat (w,B)$. 
\end{lemma}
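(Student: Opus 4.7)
My plan is to deduce both directions of the equivalence and the moreover clause from Lemma~\ref{filterandsequence1}, which characterises $G$ via its generic sequence, together with the product factorisation of Lemma~\ref{lowerpartlemma}.

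For (1)$\Rightarrow$(2), assume $y \concat (w, B) \in G$ for some $B$. By Lemma~\ref{filterandsequence1}, every $u \in \mathcal{U}_{\kappa_w}$ appearing in a condition in $G$ is some $u_i$, so in particular each $v_k$ is some $u_i$. Given $i$ with $\kappa_{u_i} \le \kappa_y$, Lemma~\ref{filterandsequence1} produces $q \in G$ with $q \le y \concat (w, B)$ and $u_i$ appearing in $q$. By Lemma~\ref{lowerpartlemma}, $q$ factors as $q^1 \concat (w, C)$ with $q^1 \in \Rad{y}$ and $q^1 \le y$; since $\kappa_{u_i} \le \kappa_y$, the sequence $u_i$ must appear in $q^1$, giving the required extension of $y$ in $\Rad{y}$.

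For (2)$\Rightarrow$(1), I will show that $p^* := y \concat (w, \mathcal{U}_{\kappa_w}) \in G$ (noting $\mathcal{U}_{\kappa_w} \in \mathcal{F}(w)$ by Definition~\ref{ultrafilterseq}), whence $y$ is the lower part of $p^*$. I verify the two clauses of Lemma~\ref{filterandsequence1}: the sequences in $p^*$ with cardinal below $\kappa_w$ are precisely $v_0, \ldots, v_n$, each a $u_i$ by hypothesis. For each $i < \delta$ I produce $q \le p^*$ containing $u_i$: if $\kappa_{u_i} \le \kappa_y$, take the $\Rad{y}$-extension $q_i$ of $y$ containing $u_i$ supplied by (2) and append $(w, \mathcal{U}_{\kappa_w} \setminus V_{\kappa_y^+})$; if $\kappa_{u_i} > \kappa_y$, extend $y$ by $(u_i, D_i) \concat (w, \mathcal{U}_{\kappa_w} \setminus V_{\kappa_{u_i}^+})$, with $D_i := \mathcal{U}_{\kappa_{u_i}} \setminus V_{\kappa_y^+}$ (or $\emptyset$ when $\lh(u_i) = 1$), which lies in $\mathcal{F}(u_i)$ by the completeness and normality of the ultrafilters composing $u_i$. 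The only nontrivial containment, $u_i \in \mathcal{U}_{\kappa_w}$, is built into the definition of the generic sequence.

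For the moreover, split on the position of $u_i$. If $u_i = v_k$ for some $k$, take $y' := y$. If $\kappa_{u_i} > \kappa_y$, set $y' := y \concat (u_i, D_i)$. If $\kappa_{v_{k-1}} < \kappa_{u_i} < \kappa_{v_k}$ for some $k \le n$, insert $(u_i, D_i)$ into $y$ at the appropriate position, possibly shrinking the pre-existing $B_j$'s to preserve validity. The main technical step is choosing $D_i \in \mathcal{F}(u_i)$ with $D_i \subseteq B_k$; this uses the coherence conditions of Definition~\ref{ultrafilterseq} together with the fact that $u_i$ arises in the generic sequence, which ensures that $B_k \cap V_{\kappa_{u_i}}$ generically contains a set in $\mathcal{F}(u_i)$. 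Conformance of $y'$ with $G$ then follows from the now-established (1)$\iff$(2) applied to $y'$: its constituent sequences are all $u_j$'s, and any $u_j$ with $\kappa_{u_j} \le \kappa_{y'}$ is placed in an extension of $y'$ by amalgamating, inside the generic over $\Rad{y}$, an extension of $y$ containing $u_j$ (given by (2) for $y$) with one containing $u_i$. The final clause reduces to taking $B' := B \setminus V_{\kappa_{y'}^+}$ and observing that $u_i \in B$ by another application of Lemma~\ref{filterandsequence1} to $y \concat (w, B) \in G$.

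The chief obstacle will be the filter-choice bookkeeping for $D_i$ in the moreover; once a $D_i$ compatible with the surrounding $B_k$'s is secured via the coherence properties, the remainder of the argument is a routine application of the two cited lemmas.
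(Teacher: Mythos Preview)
The paper states this lemma without proof (the $\blacktriangle$ closes the statement directly), so there is nothing to compare against; what follows is an assessment of your argument on its own terms. Your treatment of (1)$\Leftrightarrow$(2) is essentially correct, modulo the slip that $y \concat (w, \mathcal{U}_{\kappa_w})$ violates the last clause of Definition~\ref{Radinfor}; use $\mathcal{U}_{\kappa_w} \setminus V_{\kappa_y^+}$ instead, as you already do a few lines later.

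The moreover clause has a genuine gap. In your case $\kappa_{u_i} > \kappa_y$ with $\lh(u_i) = 1$, you set $D_i = \emptyset$ and $y' = y \concat (\langle \kappa_{u_i} \rangle, \emptyset)$. If $v_n \neq u_{i-1}$ then $\kappa_y < \kappa_{u_{i-1}} < \kappa_{u_i}$, and no extension of $y'$ in $\Rad{y'}$ can contain $u_{i-1}$, since the only measure-one set in $y'$ covering that interval is $\emptyset$. Hence $y'$ fails condition~(2) and does not conform with $G$; the same obstruction arises in your insertion case whenever $i$ is a successor with $u_{i-1}$ strictly between $v_{k-1}$ and $u_i$. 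Separately, even when $\lh(u_i) > 1$, your choice $D_i = \mathcal{U}_{\kappa_{u_i}} \setminus V_{\kappa_y^+}$ need not be contained in $B$, so the refinement $y' \concat (w, B') \le y \concat (w, B)$ can fail. The correct $D_i$ is $B \cap V_{\kappa_{u_i}} \setminus V_{\kappa_y^+}$, and that this lies in $\mathcal{F}(u_i)$ comes from the Mitchell criterion of Theorem~\ref{Mitchell-characterization} (a tail of the $u_j$ for $j < i$ lies in $B$), not from the coherence clauses of Definition~\ref{ultrafilterseq}.

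Both problems disappear if you extract $y'$ from $G$ rather than build it by insertion: given $y \concat (w, B) \in G$, find $q = z \concat (w, C) \in G$ below it with $u_i$ appearing in $z$ (Lemma~\ref{filterandsequence1}), and let $y'$ be the initial segment of $z$ ending at the pair with first entry $v_n$ if $\kappa_{u_i} \le \kappa_y$, or at the pair with first entry $u_i$ otherwise. Then $y' \concat (w, B \setminus V_{\kappa_{y'}^+})$ sits between $q$ and $y \concat (w, B)$, hence lies in $G$.
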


The following result by Radin captures a key property of $\Rad{w}$ 

\begin{theorem}[Radin] \label{Prikry-prop-for-Radin} (The Prikry property for Radin forcing.) 
Let $p\in \Rad{w}$ and let $\phi$ be a sentence in the forcing language. 
Then either there is some $p' \le^* p$ such that $p'\forces_{\Rad{w}} \phi$ or there is some $p'\le^* p$ 
such that $p'\forces_{\Rad{w}} \neg \phi$.
\end{theorem}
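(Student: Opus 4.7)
The plan is to follow Radin's original argument, proceeding by induction on $\lh(w)$ and exploiting the normality and coherence clauses of Definition (\ref{ultrafilterseq}) to produce a direct extension of $p$ which decides $\phi$.

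Write $p = y \concat (w, B)$, with $y$ empty or a lower part. A direct extension $p' \le^* p$ is of the form $y \concat (w, B^*)$ with $B^* \subseteq B$ in $\mathcal{F}(w)$, so the task is to find such a $B^*$ for which $y \concat (w, B^*)$ decides $\phi$. Every refinement of $y \concat (w, B^*)$ has the form $z \concat (w, C)$, where $z$ refines $y$ by inserting finitely many new pairs whose first coordinates lie in $B^*$ and $C \in \mathcal{F}(w)$ satisfies $C \cap V_{\kappa_z^+} = \emptyset$. When $\lh(w) = 1$ the only refinements shrink $B$, so direct extensions are cofinal below $p$ and the conclusion is immediate; assume $\lh(w) > 1$.

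I would construct a decreasing sequence $B = B_0 \supseteq B_1 \supseteq \cdots$ in $\mathcal{F}(w)$ with the following uniformity at each $n$: for every lower part $z$ refining $y$ by exactly $n$ inserted pairs whose first coordinates lie in $B_n$, the truth value of the statement ``$z \concat (w, B_n)$ has a direct extension forcing $\phi$'' depends only on the sequence of lengths $\lh(u)$ for $u$ appearing in $z$ beyond $y$. For $n = 0$ we simply ask whether $y \concat (w, B)$ has a direct extension forcing $\phi$. To pass from $B_n$ to $B_{n+1}$, for each candidate insertion slot and each $u \in B_n$ that could occupy it we invoke the induction hypothesis applied to $\Rad{u}$: since $\lh(u) < \lh(w)$, the Prikry property holds for $\Rad{u}$, and after choosing appropriate measure-one sets we obtain either a direct extension of the relevant condition deciding $\phi$ or a uniform failure. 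This yields a function $u \mapsto (\mathrm{decision}, A_u, B_u)$ which the normality clause of Definition (\ref{ultrafilterseq}) allows us to homogenise on a set in $w_{\lh(u)}$; the coherence clauses then let us glue these sets, taken over all $\tau \in (0, \lh(w))$, into a single $B_{n+1} \in \mathcal{F}(w)$. Setting $B^* = \bigcap_n B_n$, which lies in $\mathcal{F}(w)$ by $\kappa_w$-completeness of each $w_\tau$, yields the desired direct extension: either every refinement of $y \concat (w, B^*)$ forces $\phi$ (whence $y \concat (w, B^*) \forces \phi$) or none does (whence $y \concat (w, B^*) \forces \neg\phi$).

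The main obstacle is the homogenisation step, where for each $u \in \mathcal{U}_{\kappa_w}$ one has a measure-one tail set $B_u \in \mathcal{F}(w)$ that must be combined with the others into a single $B_{n+1} \in \mathcal{F}(w)$. The normality clause is tailored precisely for this operation: it replaces a function $u \mapsto x_u \in V_{\kappa_w}$ by a single value chosen on a set in $w_\tau$, and a diagonal-intersection variant of this, combined with the $\kappa_w$-completeness of $w_\tau$, handles set-valued functions. The coherence clauses (1) and (2) then reconcile the choices made at distinct values of $\tau$, so that the glued set lies simultaneously in every $w_\tau$ for $\tau \in (0, \lh(w))$. The residual technicality of maintaining the constraint $C \cap V_{\kappa_z^+} = \emptyset$ from Definition (\ref{Radinfor}) is arranged by a straightforward further shrinking that keeps the result in $\mathcal{F}(w)$.
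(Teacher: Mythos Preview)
The paper does not give a proof of this theorem; it is stated as a known result due to Radin and left without argument (the surrounding text reads ``The following result by Radin captures a key property of $\Rad{w}$'' and then moves on to consequences). So there is no proof in the paper to compare against.

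Your sketch follows the standard shape of Radin's argument --- induction on $\lh(w)$, building a decreasing $\omega$-sequence of sets in $\mathcal{F}(w)$ by successive homogenisation using normality, gluing across the $w_\tau$ via coherence, and taking an intersection at the end --- and that shape is correct. Two points deserve more care if you were to write this out in full. First, the claim ``since $\lh(u) < \lh(w)$'' needs justification: it is not automatic that elements of $B$ have shorter length, but one can shrink $B$ so that this holds, since by the coherence clauses each $w_\tau$ concentrates on sequences of length $\tau$ (this is the Observation following Definition~(\ref{ultrafilterseqsatkappa})), and hence $\setof{u}{\lh(u) < \lh(w)} \in \mathcal{F}(w)$. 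Second, the way you invoke the induction hypothesis is slightly garbled: the Prikry property for $\Rad{u}$ does not by itself decide a sentence of the $\Rad{w}$-forcing language. What one actually does is use the factorisation of Lemma~(\ref{lowerpartlemma}) below a condition in which $u$ appears, view $\phi$ as a name in the product, and then apply the Prikry property to the two factors (one of which is $\Rad{u}$, handled by induction, and the other is a tail of $\Rad{w}$, handled by the same machinery). Your description ``for each candidate insertion slot and each $u \in B_n$ \ldots\ invoke the induction hypothesis applied to $\Rad{u}$'' elides this factorisation step, and as written the reader cannot see how a decision about $\phi$ in $\Rad{w}$ is extracted from the Prikry property for the smaller poset.
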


Combining Theorem (\ref{Prikry-prop-for-Radin}) and  Lemma (\ref{lowerpartlemma}), we obtain a
lemma (due to Radin) which will be very important in the proof of $\kappa^+$-cc for the main iteration.
Recall that by our conventions a name for a truth value  is just a name for an
ordinal which is either $0$ (false) or $1$ (true).

\begin{lemma}[Radin] \label{truthvalue-reduction}
Let $y \concat (w, B)$ be a condition in $\Rad{w}$ with $y$ non-empty, and let
$\dot b$ be an $\Rad{w}$-name for a truth value. Then there exist $C \subseteq B$
with $C \in {\mathcal F}(w)$ and an $\Rad{y}$ name for a truth value $\dot c$ such that
whenever $G$ is $\Rad{w}$-generic with $y \concat (w, C) \in G$ and $G'$ is the induced generic
object for $\Rad{y}$, then $(\dot b)^G = (\dot c)^{G'}$.
\end{lemma}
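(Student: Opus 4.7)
The plan is to combine the product decomposition of Lemma \ref{lowerpartlemma} with the Prikry property (Theorem \ref{Prikry-prop-for-Radin}), applying the latter once for each condition in the small lower factor $\Rad{y}$ and then diagonalising via the completeness of the measures making up ${\mathcal F}(w)$.  (The case $\lh(w) = 1$ is vacuous since then $B = \emptyset$, so I assume $\lh(w) > 1$, in which case $\kappa_w$ carries the $\kappa_w$-complete ultrafilter $w_1$ and is therefore measurable and inaccessible.)

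First, for each $q \in \Rad{y}$ with $q \le y$, I apply the Prikry property to the condition $q \concat (w, B)$ and the sentence $\dot b = \hat 1$, obtaining a direct extension $\bar q \concat (w, B_q) \le^* q \concat (w, B)$ deciding $\dot b = \hat b_q$ for some $b_q \in \set{0, 1}$, with $\bar q \le^* q$ inside $\Rad{y}$ and $B_q \subseteq B$ in ${\mathcal F}(w)$.  I then set $C = \bigcap \setof{B_q}{q \in \Rad{y}, \; q \le y}$.  Since $\kappa_w$ is inaccessible, $\card{\Rad{y}} \le 2^{\kappa_y} < \kappa_w$; and since each measure $w_\tau$ for $0 < \tau < \lh(w)$ is $\kappa_w$-complete, this intersection lies in every $w_\tau$ and hence in ${\mathcal F}(w)$.

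Next I consider $D = \setof{q \in \Rad{y}}{q \le y \text{ and } q \concat (w, C) \text{ decides } \dot b}$.  I claim $D$ is dense below $y$ in $\Rad{y}$: given $q' \le y$, the direct extension $\bar{q'}$ produced above lies in $\Rad{y}$ and satisfies $\bar{q'} \le q'$ there (since $\le^*$ refines $\le$), while $\bar{q'} \concat (w, C) \le \bar{q'} \concat (w, B_{q'})$ forces $\dot b = \hat b_{q'}$, placing $\bar{q'}$ in $D$.  For each $q \in D$ write $b_q$ for the decided value; if $q_1, q_2 \in D$ have a common extension $q^{*}$ in $\Rad{y}$, then $q^{*} \concat (w, C)$ extends each $q_i \concat (w, C)$ and hence forces $b_{q_1} = b_{q_2}$.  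I therefore define $\dot c$ as the $\Rad{y}$-name whose interpretation by $G'$ is the common value $b_q$ for $q \in D \cap G'$.

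For the verification, given $G$ $\Rad{w}$-generic through $y \concat (w, C)$ with induced $\Rad{y}$-generic $G'$ (via Lemma \ref{lowerpartlemma}), density of $D$ produces $q \in D \cap G'$, the product isomorphism places $q \concat (w, C)$ in $G$, and that condition forces $\dot b = \hat b_q = (\dot c)^{G'}$.  The main obstacle I anticipate is the diagonalisation at step two: one needs both $\card{\Rad{y}} < \kappa_w$ (from inaccessibility of $\kappa_w$) and $\kappa_w$-completeness of the measures $w_\tau$, and these are exactly what allow a single shrinking $C$ in ${\mathcal F}(w)$ to absorb all of the per-$q$ Prikry reductions simultaneously.
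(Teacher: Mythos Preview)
Your proof is correct and follows essentially the same approach as the paper: combine the product decomposition of Lemma \ref{lowerpartlemma} with the Prikry property and the $\kappa_w$-completeness of ${\mathcal F}(w)$, exploiting $\vert \Rad{y} \vert < \kappa_w$. The paper organises the argument slightly differently---viewing $\dot b$ as an $\Rbb^*$-name for an $\Rad{y}$-name for a truth value and then applying the Prikry property once in the upper factor to decide among the fewer than $\kappa_w$ possible $\Rad{y}$-names---but your per-condition iteration followed by intersection of the shrunken upper sets reaches the same conclusion by the same ingredients.
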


\begin{proof}[\ref{truthvalue-reduction}]
Using the factorisation given by Lemma (\ref{lowerpartlemma}) and 
the Product Lemma,  we may view
$\dot b$ as an ${\mathbb R}^*$-name for an $\Rad{y}$-name for a truth value
where ${\mathbb R}^* = \setof{q \in \Rad{w}}{q \le (w, B \setminus {\mathcal U}_{\kappa_y^+})}$. 
Since $\vert \Rad{y} \vert < \kappa$ and a name for a truth value amounts to an antichain
in $\Rad{y}$, there are fewer than $\kappa$ many $\Rad{y}$-names for  truth values.
Since ${\mathcal F}(w)$ is $\kappa$-complete we may appeal to the Prikry property
for ${\mathbb R}^*$, and 
shrink $(w, B \setminus {\mathcal U}_{\kappa_y^+})$  to $(w, C)$ in order to
decide which $\Rad{y}$-name $\dot c$ is in question.
\end{proof} 

In the situation of Lemma (\ref{truthvalue-reduction}),
we will sometimes say that the condition $y \concat (w, C)$ {\em reduces}
the $\Rad{w}$-name $\dot b$ to the $\Rad{y}$-name $\dot c$.
Similar arguments (which we omit) about reducing names for sets of ordinals give 
another important result.

\begin{theorem}[Radin]\label{Radinsets} Let $w \in {\mathcal U}$, let
$G$ be $\Rad{w}$-generic over $V$ and let $\tupof{ u_j }{ j < \delta }$ be the
corresponding generic sequence.
For every $\alpha < \kappa_w$, if $i$ is largest such that
$\kappa_{u_i} \le \alpha$,  then every subset
of $\alpha$ lying in $V[G]$ lies in the $\Rad{u_i}$-generic extension 
given by $\tupof{ u_j }{ j < i }$.
\end{theorem}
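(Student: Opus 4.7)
The plan is to apply the truth-value reduction Lemma \ref{truthvalue-reduction} simultaneously to each of the $\alpha$-many statements ``$\hat\beta \in \dot X$'', for $\beta < \alpha$, and to use the $\kappa_w$-completeness of $\mathcal{F}(w)$ to pass to a common measure-one shrinking that lies in $G$.

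First, by Lemma \ref{conforming-and-sequence}, I choose a lower part $y$ conforming with $G$ whose top ultrafilter sequence is precisely $u_i$, that is, $\kappa_y = \kappa_{u_i}$; fix $B_0 \in \mathcal{F}(w)$ with $y \concat (w, B_0) \in G$. Let $G_y$ denote the induced $\Rad{y}$-generic filter. Because $y$ has $u_i$ as its top element, $V[G_y]$ coincides with the $\Rad{u_i}$-extension $V[\tupof{u_j}{j < i}]$ specified in the theorem.

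Now fix an $\Rad{w}$-name $\dot X$ for a subset of $\alpha$, and for each $\beta < \alpha$ let $\dot b_\beta$ be the canonical $\Rad{w}$-name for the truth value of $\hat\beta \in \dot X$. Applying Lemma \ref{truthvalue-reduction} to each $\dot b_\beta$, starting from the condition $y \concat (w, B_0)$, I obtain $C_\beta \subseteq B_0$ with $C_\beta \in \mathcal{F}(w)$ and an $\Rad{y}$-name $\dot c_\beta$ for a truth value such that, whenever $G^*$ is an $\Rad{w}$-generic filter containing $y \concat (w, C_\beta)$, one has $(\dot b_\beta)^{G^*} = (\dot c_\beta)^{G^*_y}$. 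Since $|\alpha| < \kappa_w$ and $\mathcal{F}(w)$ is $\kappa_w$-complete, the set $C := \bigcap_{\beta < \alpha} C_\beta$ lies in $\mathcal{F}(w)$, so $y \concat (w, C) \in \Rad{w}$.

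A standard density argument --- using that the set of conditions of the form $y' \concat (w, B')$ with $y' \le y$ in $\Rad{y}$ and $B' \subseteq C$ is dense below $y \concat (w, B_0)$ --- shows that $y \concat (w, C) \in G$; hence $y \concat (w, C_\beta) \in G$ for every $\beta < \alpha$, and consequently
\[
  \dot X^G = \setof{\beta < \alpha}{(\dot c_\beta)^{G_y} = 1}.
\]
The right-hand side is definable in $V[G_y]$ from $G_y$ together with the sequence $\tupof{\dot c_\beta}{\beta < \alpha}$, which lies in $V$, so $\dot X^G \in V[G_y] = V[\tupof{u_j}{j < i}]$, as required. The main obstacle is packaging these standard moves cleanly: choosing $y$ to have $u_i$ precisely as its top (handled by Lemma \ref{conforming-and-sequence}), simultaneously reducing the $\alpha$-many truth-value names (the $\kappa_w$-completeness of $\mathcal{F}(w)$ together with $|\alpha| < \kappa_w$ does the work), and verifying that the diagonalised shrinking actually sits inside $G$ (a density argument underwritten by the product factorisation of Lemma \ref{lowerpartlemma}).
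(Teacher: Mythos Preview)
Your overall strategy --- reduce each truth value $\dot b_\beta$ to an $\Rad{y}$-name via Lemma~\ref{truthvalue-reduction} and intersect the $\alpha$-many shrunken sets using $\kappa_w$-completeness --- is exactly what the paper intends (it says only that ``similar arguments about reducing names for sets of ordinals'' give this result). However, the final step has a genuine gap.

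The density claim is false. Conditions $r \le y \concat (w, B_0)$ may contain pairs $(v,D)$ with $\kappa_y < \kappa_v < \kappa_w$; such $r$ has no extension of the form $y' \concat (w, B')$ with $y' \in \Rad{y}$, since extending can only add pairs, never remove them. So your set is not dense below $y \concat (w, B_0)$, and you cannot conclude $y \concat (w, C) \in G$. Indeed by Lemma~\ref{filterandsequence3} this would require $u_j \in C$ for \emph{every} $j > i$, whereas $C \in \mathcal{F}(w)$ and the geometric criterion only give $u_j \in C$ for a \emph{tail} of $j$. Without $y \concat (w, C_\beta) \in G$ you have no reason to believe $(\dot b_\beta)^G = (\dot c_\beta)^{G_y}$ for your particular $G$.

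The repair is to run a genuine density argument in $V$ rather than fixing $y$ from $G$ first. Given an arbitrary condition $p$, split it as $p = y_0 \concat p_1$ where every pair in $y_0$ has critical point $\le \alpha$ and every pair in $p_1$ (including $(w,B)$) has critical point $> \alpha$. Below $p$ the forcing factors as $\Rad{y_0}$ times an upper part whose Prikry-style direct extensions shrink measure-one sets for filters that are all $\mathord{>}\alpha$-complete. Hence one can directly extend $p_1$ to decide all $\alpha$-many truth values as $\Rad{y_0}$-names, obtaining $p' \le p$. The set of such $p'$ is dense, so some $p' = y_0 \concat p_1' \in G$; then $\dot X^G \in V[G_{y_0}]$. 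Since $y_0$ conforms with $G$ and all its critical points are $\le \alpha$, its top is some $u_k$ with $k \le i$, so $V[G_{y_0}] = V[\tupof{u_j}{j<k}] \subseteq V[\tupof{u_j}{j<i}]$, as required.
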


It follows readily that forcing with $\Rad{w}$ preserves all cardinals. However
cofinalities may change, and the general situation is a little complicated. The main point
for us is given by the following result.

\begin{theorem}[Radin]\label{Radinlth} Let $w \in {\mathcal U}$, let
$G$ be $\Rad{w}$-generic over $V$ and assume that $\lh(w)$ is a regular
cardinal $\lambda$ with $\lambda < \kappa_w$. 
Let $G$ be $\Rad{w}$-generic. If $\lambda$ is not
a limit point of the Radin-generic club, then
$V[G] \models \lambda \hbox{ is regular and }\cf(\kappa) = \lambda$.
\end{theorem}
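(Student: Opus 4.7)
The plan is to establish the two conclusions separately, in both cases exploiting Theorem~\ref{Radinsets} to reduce questions about $V[G]$ to questions about suitable $\Rad{u_i}$-subextensions, together with properties of the generic sequence $\tupof{u_i}{i < \delta}$ stemming from the coherence conditions on $w$ and the Prikry property (Theorem~\ref{Prikry-prop-for-Radin}).

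For $\cf^{V[G]}(\kappa_w) = \lambda$: since $\lh(w) = \lambda > 1$, the Radin-generic club $C = \setof{\kappa_{u_i}}{i < \delta}$ is cofinal in $\kappa_w$, so it is enough to show $\cf^{V[G]}(\delta) = \lambda$. I would first argue that the map $i \mapsto \lh(u_i)$ has cofinal range in $\lambda$. Given $\tau < \lambda$, the observation after Definition~\ref{ultrafilterseqsatkappa} says $\setof{v \in {\mathcal U}_{\kappa_w}}{\lh(v) = \tau}$ lies in $w_\tau$; a straightforward density computation on $\Rad{w}$ then shows that some $u_i$ with $\lh(u_i) = \tau$ must appear in the generic sequence, yielding $\cf^{V[G]}(\delta) \ge \lambda$. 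For the reverse inequality, given a name $\dot f$ for a putative cofinal map $\mu \to \delta$ with $\mu < \lambda$, I would use the Prikry property to stabilise $\dot f$ via direct extensions over a fixed lower part $y$, and then invoke the $\kappa_w$-completeness (in particular $\lambda$-completeness) of each $w_\tau$ to force $\rge(f)$ to be bounded, a contradiction.

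For the regularity of $\lambda$ in $V[G]$: suppose toward a contradiction that some $f \in V[G]$ witnesses $\cf^{V[G]}(\lambda) = \mu < \lambda$. Encoding $f$ as a subset of $\lambda \times \lambda$ and hence of $\kappa_w$ via a pairing function, and applying Theorem~\ref{Radinsets} with $\alpha = \lambda$, we conclude that $f$ lies in the $\Rad{u_i}$-generic extension $V[G_i]$ induced by $\tupof{u_j}{j < i}$, where $i$ is largest with $\kappa_{u_i} \le \lambda$. The hypothesis that $\lambda$ is not a limit point of $C$ splits into two cases. Either $\kappa_{u_i} < \lambda$ (when $\lambda \notin C$), in which case the $\kappa_{u_i}^+$-cc of $\Rad{u_i}$, together with $\kappa_{u_i}^+ \le \lambda$ and the regularity of $\lambda$ in $V$, preserves the regularity of $\lambda$ in $V[G_i]$; or $\lambda = \kappa_{u_i} \in C$ with $i$ a successor ordinal and $\lh(u_i) = 1$, in which case Lemma~\ref{lowerpartlemma} factors $\Rad{u_i}$ below the appropriate condition through some $\Rad{y}$ of size strictly less than $\lambda$, again preserving the regularity of $\lambda$. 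Either way, $f$ cannot exist.

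The main obstacle I expect is the upper bound $\cf^{V[G]}(\delta) \le \lambda$ in the first step: ruling out short cofinal sequences through $\delta$ requires a genuine use of the Prikry property combined with the coherence structure of $w$, and this is the point at which the interaction across the various measures $w_\tau$ must be controlled carefully rather than treated one measure at a time. The regularity argument, by contrast, is mostly a bookkeeping exercise once Theorem~\ref{Radinsets} and Lemma~\ref{lowerpartlemma} are in hand.
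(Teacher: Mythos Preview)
The paper does not prove this theorem; like Theorems~\ref{Prikry-prop-for-Radin} and~\ref{Radinsets}, it is stated without proof and attributed to Radin, with the literature references \cite{Radin}, \cite{Mitchell-club-ulf}, \cite{Cummings-Woodin}, \cite{Gitik} given at the start of \S\ref{Radinmat}. So there is no argument in the paper against which to compare your proposal.

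As a standalone sketch, your regularity argument (the second paragraph) is correct and standard: Theorem~\ref{Radinsets} localises any putative singularising function for $\lambda$ to an $\Rad{u_i}$-extension with $\kappa_{u_i} \le \lambda$, and the hypothesis that $\lambda$ is not a limit point of $C$ ensures (via the $\kappa_{u_i}^+$-cc, or via Lemma~\ref{lowerpartlemma} when $\kappa_{u_i} = \lambda$ and $i$ is a successor) that this extension cannot singularise $\lambda$.

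Your cofinality argument, however, has its directions tangled. Knowing that $i \mapsto \lh(u_i)$ has cofinal range in $\lambda$ does not by itself yield $\cf^{V[G]}(\delta) \ge \lambda$: a map from $\delta$ onto a cofinal subset of $\lambda$ points the wrong way for that inequality. And your ``reverse inequality'' argument---ruling out cofinal maps $\mu \to \delta$ for $\mu < \lambda$ via the Prikry property---is again an argument for $\cf^{V[G]}(\delta) \ge \lambda$, not $\le \lambda$. What is genuinely missing from your outline is the upper bound: one must exhibit in $V[G]$ a cofinal map $\lambda \to \delta$ (or directly $\lambda \to \kappa_w$), for instance $\tau \mapsto \min\setof{i}{\lh(u_i) \ge \tau}$, and then combine this with the regularity of $\lambda$ in $V[G]$ (so the two parts are not independent---regularity must come first). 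The usual treatments in the references above in fact show $\delta = \lambda$ outright, by an induction along the generic sequence using the coherence conditions on $w$.
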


We note that the condition on $\lambda$ in this theorem is easy to arrange by
working below a suitable condition in $\Rad{w}$. For example we may arrange that
the least point of the generic club is greater than $\lambda$.  

We will also require a characterisation of Radin-genericity which is due to Mitchell. 

\begin{theorem}[Mitchell]\label{Mitchell-characterization}
Let $\tupof{ u_i }{ i < \delta }$ be a sequence of ultrafilter 
sequences in some outer model of $V$.
Then the following are equivalent:
\begin{enumerate}
\item  The sequence $\tupof{ u_i }{ i < \delta }$ is $\Rad{w}$-generic. 
\item  For every $j < \delta$ the sequence $\tupof{ u_i }{ i < j }$ is $\Rad{u_j}$-generic,
and if $\lh(w) > 1$ then ${\mathcal F}(w) = \setof{ X\in V }{ \exists j < \delta \; \forall i \;
j < i < \delta \implies u_i \in X }$ -- \emph{i.e.}, if $\lh(w)>1$ then ${\mathcal F}(w)$ is the tail filter generated on the 
$V$-powerset of $V_\kappa$ by the generic sequence.
\end{enumerate}  

\end{theorem}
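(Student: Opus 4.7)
I would establish the two implications separately, using the factorization of Lemma (\ref{lowerpartlemma}), the Prikry property (Theorem \ref{Prikry-prop-for-Radin}), and the name-reduction lemma (\ref{truthvalue-reduction}).

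For $(1) \Rightarrow (2)$, consider first the initial-segment clause. Fix $j < \delta$ and a $V$-dense $D \subseteq \Rad{u_j}$. Let $D^* \subseteq \Rad{w}$ be the set of $p$ whose lower part factors as $q \concat z$ with $q \in D$ and $q$ ending at $u_j$. Using Lemma (\ref{lowerpartlemma}), I would show $D^*$ is dense in $\Rad{w}$: any given condition can be extended first to bring $u_j$ into the lower part (since $u_j$ belongs to the $\mathcal{F}$-measure-one set by the observation about length-$\tau$ concentration), and then its initial segment can be extended by density of $D$ inside $\Rad{u_j}$. Genericity of $G$ supplies $p \in D^* \cap G$, and the initial piece of $p$'s lower part witnesses that the filter induced by $\tupof{u_i}{i<j}$ meets $D$. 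For the tail filter clause (assuming $\lh(w) > 1$): if $X \in \mathcal{F}(w)$, then $\setof{y \concat (w, B)}{B \subseteq X}$ is dense (replace $B$ by $B \cap X \in \mathcal{F}(w)$), meets $G$, and forces eventually $u_i \in X$. Conversely, if $X \notin \mathcal{F}(w)$, pick $\tau \in (0, \lh(w))$ with $V_\kappa \setminus X \in u_\tau$; by the coherence and normality clauses, $u_\tau$ concentrates on length-$\tau$ sequences in $V_\kappa \setminus X$, and a density argument (extending any condition by adding such a sequence to the lower part) shows $u_i \in V_\kappa \setminus X$ cofinally often, so $X$ is not in the tail filter.

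For $(2) \Rightarrow (1)$, define $G \subseteq \Rad{w}$ to consist of those $y \concat (w, B)$ such that every sequence in $y$ is some $u_i$ and $B$ is in the tail filter (which, by hypothesis (2), coincides with $\mathcal{F}(w)$ when $\lh(w) > 1$). Filterhood is routine. For genericity, fix a $V$-dense $D$ and $p_0 = y_0 \concat (w, B_0) \in G$, with $y_0$ ending at $u_{j_0}$, and choose $j_0 < j < \delta$ with $\kappa_{u_j}$ as large as needed. Let $T \subseteq \Rad{u_j}$ be the set of $z \le y_0$ such that $z \concat (w, B') \in D$ for some $B' \in \mathcal{F}(w)$ with $B' \subseteq B_0 \setminus V_{\kappa_{u_j}^+}$. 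The core claim is that $T$ is $V$-dense in $\Rad{u_j}$ below $y_0$: given $z \le y_0$ in $\Rad{u_j}$, I would apply Lemma (\ref{truthvalue-reduction}) together with the Prikry property to shrink $(w, B_0 \setminus V_{\kappa_{u_j}^+})$ uniformly over all lower-part extensions $z' \le z$ inside $\Rad{u_j}$; since $|\Rad{u_j}| < \kappa_w$ and $\mathcal{F}(w)$ is $\kappa_w$-complete, one obtains a single $B^* \in \mathcal{F}(w)$ for which membership of $z' \concat (w, B^*)$ in $D$ depends only on $z'$, and density of $D$ in $\Rad{w}$ then supplies the required $z' \in T$. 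By hypothesis (2), $\tupof{u_i}{i<j}$ is $\Rad{u_j}$-generic over $V$, hence meets $T$ at some $z^*$; taking a witness $B^*$, the condition $z^* \concat (w, B^*)$ lies in $D$ (by membership in $T$) and in $G$ (using $B^* \in \mathcal{F}(w) =$ tail filter).

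The main obstacle lies in establishing the density of $T$ in $\Rad{u_j}$ for the $(2) \Rightarrow (1)$ direction. A naive density step applied to $z \concat (w, B_0 \setminus V_{\kappa_{u_j}^+})$ in $\Rad{w}$ may produce an extension whose lower part extends strictly above $\kappa_{u_j}$, thereby leaving $\Rad{u_j}$; absorbing that material back into the $\mathcal{F}(w)$-side via the reduction of Lemma (\ref{truthvalue-reduction}) and the Prikry property, done uniformly over the $<\kappa_w$ many relevant lower parts, is the technical heart of the argument.
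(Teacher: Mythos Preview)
The paper does not prove this theorem; it is stated and attributed to Mitchell with a reference to \cite{Mitchell-club-ulf}, so there is no proof here to compare your proposal against. What follows is an assessment of your reconstruction on its own merits.

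Your $(1) \Rightarrow (2)$ argument has a small slip: the set $D^*$ is not dense in $\Rad{w}$, since an arbitrary condition need not be compatible with having the fixed sequence $u_j$ in its lower part (e.g.\ $u_j$ may simply fail to lie in the condition's measure-one set). The easy fix is to work below some $p_0 \in G$ in which $u_j$ already appears --- such $p_0$ exists by definition of the generic sequence --- and observe that $D^*$ is dense below $p_0$. The substantive gap is in $(2) \Rightarrow (1)$, exactly at the step you flag as the main obstacle. You propose to obtain $B^* \in \mathcal{F}(w)$ via Lemma (\ref{truthvalue-reduction}) and Theorem (\ref{Prikry-prop-for-Radin}) so that ``membership of $z' \concat (w, B^*)$ in $D$ depends only on $z'$'', but those tools apply to $\Rad{w}$-names for truth values and to sentences of the forcing language, not to the ground-model predicate ``this condition lies in $D$''; no such $B^*$ follows from them. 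Concretely, after any such shrinking, applying density of $D$ to $z \concat (w, B^*)$ still yields an extension whose lower part may reach strictly above $\kappa_{u_j}$, and openness of $D$ gives no way to discard that extra material. What Mitchell actually uses here is a \emph{strong Prikry lemma}: for dense open $D$ and condition $p$ there is $p^* \le^* p$ such that every extension of $p^*$ obtained by adjoining sufficiently many pairs drawn from its measure-one set already lies in $D$. This is proved by a separate diagonal-intersection argument using normality and is the device that lets one absorb the above-$\kappa_{u_j}$ material back into the $\mathcal{F}(w)$-coordinate; without it the density of $T$ does not go through. (A minor further point: your definition of the candidate filter $G$ should match Lemma (\ref{filterandsequence3}) exactly --- conformity of $y$ requires more than ``every sequence in $y$ is some $u_i$'', and one needs $u_i \in B$ for \emph{all} $i$ with $\kappa_{u_i} > \kappa_y$, not merely for a tail.)
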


\begin{definition}
Sequences which satisfy the Mitchell criterion from Theorem 
(\ref{Mitchell-characterization}) are sometimes called {\em geometric sequences.}
\end{definition}

The following considerations will play a central
role in the proof of Theorem (\ref{universalg}).

\begin{observation}\label{Radin-generics-go-down}
Suppose that $V\subseteq V'$ with $V_\kappa^V=V_\kappa^{V'}$
({\em e.g.}, $V'$ might be a generic extension of $V$ by $\lk$-closed forcing).
Suppose that in $V'$ there is
an ultrafilter sequence $w'$ such that $\lh(w) = \lh(w')$ and 
$w(\alpha) = w'(\alpha) \cap V$ for $0 < \alpha < \lh(w)$. 
Let $\tupof{ u_i }{ i < \delta }$ be an $\Rad{w'}$-generic sequence over $V'$. 
\begin{itemize}
\item By the Mitchell criterion for genericity and the observation that 
${\mathcal F}(w) \subseteq {\mathcal F}(w')$, the sequence
$\tupof{ u_i }{ i < \delta }$  is also an $\Rad{w}$-generic sequence over $V$. 
\end{itemize}

\end{observation}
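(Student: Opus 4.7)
The plan is to directly verify the Mitchell criterion (Theorem \ref{Mitchell-characterization}) for the sequence $\tupof{u_i}{i<\delta}$ with respect to $\Rad{w}$ over $V$. First I would record the preliminary fact that ${\mathcal F}(w) \subseteq {\mathcal F}(w')$: since $w_\tau = w'_\tau \cap V$ for $0<\tau<\lh(w)$, any $X\in {\mathcal F}(w) = \bigcap_{0<\tau<\lh(w)} w_\tau$ satisfies $X \in w'_\tau$ for every such $\tau$, so $X \in {\mathcal F}(w')$. The same hypothesis gives the sharper trace statement: for $X \in V$, $X \in {\mathcal F}(w)$ iff $X \in {\mathcal F}(w')$. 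I would also note that ultrafilter sequences at cardinals below $\kappa_w$ are elements of $V_{\kappa_w}$, and that being such an ultrafilter sequence is absolute between $V$ and $V'$ (by induction on the cardinal, using that the power sets involved are the same and $\mathcal{U}_{\bar\kappa}$ is absolute), so the $u_i$ belong to $\mathcal{U}_{\kappa_w}^V$ and $\Rad{u_i}$ makes sense in $V$.

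Now I would check the two clauses of Mitchell's criterion for $\tupof{u_i}{i<\delta}$ over $V$ with respect to $w$. For clause (1), fix $j<\delta$: the hypothesis that $\tupof{u_i}{i<\delta}$ is $\Rad{w'}$-generic over $V'$ yields, by Mitchell applied inside $V'$, that $\tupof{u_i}{i<j}$ is $\Rad{u_j}$-generic over $V'$; since $V\subseteq V'$, this sequence meets every dense subset of $\Rad{u_j}$ lying in $V$, so it is $\Rad{u_j}$-generic over $V$ as required. For clause (2), assuming $\lh(w)>1$, I must show ${\mathcal F}(w) = \bigsetof{X \in V}{\exists j<\delta\sss\forall i\in(j,\delta)\sss u_i \in X}$. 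The inclusion $\subseteq$ follows because any $X \in {\mathcal F}(w)$ lies in $V$ and in ${\mathcal F}(w')$, so by Mitchell applied in $V'$ there is a $j$ with $u_i \in X$ for all $i\in(j,\delta)$. For $\supseteq$, if $X \in V$ satisfies the tail condition, then by the reverse direction of Mitchell applied in $V'$ we get $X \in {\mathcal F}(w')$, hence $X \in w'_\tau$ for every $\tau\in(0,\lh(w'))$; by the hypothesis, $X \in w'_\tau \cap V = w_\tau$ for every $\tau\in(0,\lh(w))$, so $X\in {\mathcal F}(w)$.

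There is essentially no obstacle beyond bookkeeping: the argument is a straightforward two-way application of the Mitchell characterization together with the defining relation $w_\tau = w'_\tau\cap V$. The only point that deserves a pause is the need to know that the $u_i$ are genuine ultrafilter sequences in $V$ and that $\Rad{u_j}$-genericity descends from $V'$ to $V$, both of which are consequences of $V_{\kappa_w}^V = V_{\kappa_w}^{V'}$ and the trivial fact that dense sets in the ground model are dense in any outer model.
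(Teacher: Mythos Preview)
Your proposal is correct and follows exactly the approach indicated by the paper. The paper treats this as an observation whose justification is already contained in the statement itself (the bullet point invoking the Mitchell criterion and the inclusion ${\mathcal F}(w)\subseteq{\mathcal F}(w')$); you have simply unpacked that hint into a full verification of both clauses of the criterion, including the absoluteness of ${\mathcal U}_{\kappa_w}$ and of $\Rad{u_j}$ between $V$ and $V'$ that makes the descent of genericity immediate.
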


\vskip6pt

\begin{lemma} \label{filterandsequence2}
Let  $G$ be an $\Rad{w}$-generic filter over $V$ and let  $\tupof{ u_i }{ i < \delta }$
be the corresponding generic sequence.
Let $y = \tupof{(v_k, B_k)}{k \le n}$ be a lower part. Then the following are equivalent:
\begin{enumerate}
\item $y$ conforms with $G$.
\item For every $k$ the sequence $v_k$ appears among the sequences $u_i$, and for every 
$i$ with $\kappa_{u_i} \le \kappa_{v_n}$ either $u_i$ appears among the sequences $v_k$ or 
$u_i \in B_k$ for the least $k$ such that $\kappa_{u_i} < \kappa_{v_k}$. 
\end{enumerate}
\end{lemma}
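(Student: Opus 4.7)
\medskip

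The plan is to derive Lemma \ref{filterandsequence2} essentially as a reformulation of the already-established Lemma \ref{conforming-and-sequence}, by unpacking clause (2) of that lemma using the definition of the refinement relation on $\Rad{w}$.

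For $(1) \Rightarrow (2)$, assume $y$ conforms with $G$, so there is $B \in {\mathcal F}(w)$ with $y \concat (w,B) \in G$. By Lemma \ref{conforming-and-sequence}, each $v_k$ appears among the $u_i$, and for each $i$ with $\kappa_{u_i} \le \kappa_{v_n}$ there exists $q \in \Rad{y}$ with $q \le y$ such that $u_i$ appears in $q$. Applying clause (ii) of Definition \ref{Radinfor} to $q \le y$ with the pair containing $u_i$, either $u_i = v_k$ for some $k \le n$, or for the minimal $k$ with $\kappa_{u_i} < \kappa_{v_k}$ we have $u_i \in B_k$. I would then observe that the case $\kappa_{u_i} = \kappa_{v_n}$ with $u_i \ne v_n$ is impossible: in the full condition $y \concat (w,B)$ the clause ``$B_{i+1} \cap V_{\kappa_i^+} = \emptyset$'' from Definition \ref{Radinfor} forbids $u_i \in B$ (since $u_i \in V_{\kappa_{v_n}^+}$), and since the $\kappa_{u_j}$'s are strictly increasing one cannot have two distinct ultrafilter sequences at the same cardinal in the generic sequence.

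For $(2) \Rightarrow (1)$, I verify the hypotheses of Lemma \ref{conforming-and-sequence}(2). The first clause is given directly. For the second clause, fix $i$ with $\kappa_{u_i} \le \kappa_{v_n}$. If $u_i = v_k$ for some $k \le n$ then $y$ itself is an extension of $y$ in which $u_i$ appears. Otherwise let $k$ be minimal with $\kappa_{u_i} < \kappa_{v_k}$; by assumption $u_i \in B_k$. Build an extension $y'$ of $y$ by inserting the pair $(u_i, \emptyset)$ immediately before $(v_k, B_k)$ and shrinking $B_k$ to $B_k \setminus V_{\kappa_{u_i}^+}$ (which still lies in ${\mathcal F}(v_k)$ by $\kappa_{v_k}$-completeness). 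One checks that $y' \in \Rad{y}$ by verifying clauses (i) and (ii) of Definition \ref{Radinfor} and the disjointness condition $B_{k} \cap V_{\kappa_{u_i}^+} = \emptyset$; since $u_i \in B_k$ guarantees $u_i$ is at a cardinal strictly between $\kappa_{v_{k-1}}$ and $\kappa_{v_k}$, this insertion is legal.

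The main obstacle will be the bookkeeping in the $(2) \Rightarrow (1)$ direction: ensuring that the inserted pair respects all the constraints on conditions (in particular the disjointness of the measure-one sets from the appropriate $V_{\kappa^+}$'s) and that the shrunk measure-one sets remain in the relevant filters. This is straightforward but requires attention to the strictly increasing nature of the cardinals $\kappa_{u_j}$ and the structure of ${\mathcal F}(v_k)$; the edge case $\kappa_{u_i} = \kappa_{v_n}$ mentioned above is the subtlest point and uses the strict increase of cardinals in the generic sequence, which follows from Radin's theorems quoted earlier.
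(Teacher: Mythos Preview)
Your $(1)\Rightarrow(2)$ direction is fine and matches the paper (which simply calls this direction ``immediate'').

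There is a genuine gap in your $(2)\Rightarrow(1)$ argument. You propose to insert the pair $(u_i,\emptyset)$ into $y$, but by Definition~\ref{pair} a pair $(u,A)$ must have $A\in{\mathcal F}(u)$ whenever $\lh(u)>1$; the choice $A=\emptyset$ is only permitted when $\lh(u)=1$. For a limit index $i$ the generic sequence satisfies $\lh(u_i)>1$, so $(u_i,\emptyset)$ is not a legal pair and your $y'$ is not a condition.

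The paper fixes exactly this by splitting on $\lh(u_i)$. When $\lh(u_i)>1$ the index $i$ is a limit, and hypothesis~(2) gives $u_j\in B_k$ for all sufficiently large $j<i$ (since those $u_j$ also have $\kappa_{u_j}<\kappa_{v_k}$ and are not among the $v_\ell$'s). Mitchell's geometric criterion (Theorem~\ref{Mitchell-characterization}) applied to the $\Rad{u_i}$-generic sequence $\tupof{u_j}{j<i}$ then yields $B_k\cap V_{\kappa_{u_i}}\in{\mathcal F}(u_i)$, and one inserts the legitimate pair $(u_i,\,B_k\cap V_{\kappa_{u_i}})$ instead. This appeal to the geometric criterion is the missing idea in your argument.
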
 

\begin{proof}[\ref{filterandsequence2}]
It is immediate that the first condition implies the second.  For the
converse direction, by Lemma (\ref{conforming-and-sequence}), we need to verify
that every sequence $u_i$ with $\kappa_{u_i} \le \kappa_{v_n}$ appears
in some element of $\Rad{y}$. This is immediate when $u_i$ appears
among the $v_k$, so we assume that it does not and let $k$ be least
with $\kappa_{u_i} < \kappa_{v_k}$.  If $\lh(u_i) = 1$ then we may
extend $y$ by shrinking $B_k$ to $B_k \setminus {\mathcal U}_{\kappa^+_{u_i}}$
and inserting the pair $(u_i, \emptyset)$.  If $\lh(u_i) > 1$ then $i$
is limit and by hypothesis $u_j \in B_k$ for all large $j < i$, so
that (by the geometric condition from part (2) of Theorem (\ref{Mitchell-characterization})) $B_k \cap V_{\kappa_{u_i}} \in
{\mathcal F}(u_i)$.  In this case we may extend $y$ by shrinking $B_k$
to $B_k \setminus {\mathcal U}_{\kappa^+_{u_i}}$ and inserting the pair $(u_i,B_k \cap V_{\kappa_{u_i}})$.
\end{proof} 

A very similar argument gives:
\begin{lemma} \label{filterandsequence3} Let  $G$ be an $\Rad{w}$-generic filter over $V$ and let 
$\tupof{ u_i }{ i < \delta }$ be the corresponding generic sequence.  Let 
$y \concat (w, B)$ be a condition. Then $y \concat (w, B) \in G$ if and
only if $y$ conforms with $G$ and $u_i \in B$ for all $i$ such that $\kappa_{u_i} > \kappa_y$.
\end{lemma}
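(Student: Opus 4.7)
The plan is to prove the two implications separately. For the forward direction, assume $y \concat (w, B) \in G$. Then $y$ conforms with $G$ directly from the definition. To see $u_i \in B$ whenever $\kappa_{u_i} > \kappa_y$: since $u_i$ appears in some $p \in G$, take a common refinement $q \in G$ of $p$ and $y \concat (w, B)$; the sequence $u_i$ still appears in $q$ by clause (i) of the definition of $\le$. It does not appear in $y \concat (w, B)$ itself --- it is not among the $v_k$ because $\kappa_{u_i} > \kappa_y$, and it is not $w$ because $u_i \in {\mathcal U}_{\kappa_w}$ forces $\kappa_{u_i} < \kappa_w$. So clause (ii) of refinement applies at the occurrence of $u_i$ in $q$, and since $(w, B)$ is the unique pair in $y \concat (w, B)$ whose first coordinate has $\kappa$-value strictly above $\kappa_{u_i}$, we conclude $u_i \in B$.

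For the backward direction, assume $y$ conforms with $G$ and $u_i \in B$ for all $i$ with $\kappa_{u_i} > \kappa_y$. I will establish $y \concat (w, B) \in G$ by verifying the two clauses of Lemma \ref{filterandsequence1}. The first clause is immediate: the ultrafilter sequences appearing in $y \concat (w, B)$ which belong to ${\mathcal U}_{\kappa_w}$ are exactly those of $y$ (since $w \notin {\mathcal U}_{\kappa_w}$), and these appear among the $u_i$ by Lemma \ref{conforming-and-sequence}. For the second clause I treat each $u_i$ according to the size of $\kappa_{u_i}$. When $\kappa_{u_i} \le \kappa_y$, Lemma \ref{conforming-and-sequence} produces an extension $y' \in \Rad{y}$ of $y$ in which $u_i$ appears; since $\kappa_{y'} = \kappa_y$ and $B \cap V_{\kappa_y^+} = \emptyset$, the sequence $y' \concat (w, B)$ is a valid refinement of $y \concat (w, B)$ in $\Rad{w}$ in which $u_i$ appears. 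When $\kappa_{u_i} > \kappa_y$, so that $u_i \in B$ by hypothesis, I insert a pair for $u_i$ between $y$ and $(w, B)$, building $q = y \concat (u_i, B_i^*) \concat (w, B \setminus V_{\kappa_{u_i}^+})$, where $B_i^* = \emptyset$ if $\lh(u_i) = 1$ and $B_i^* = B \cap V_{\kappa_{u_i}}$ otherwise.

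The main obstacle is verifying that $q$ is a valid condition refining $y \concat (w, B)$, and the key tool is Mitchell's criterion (Theorem \ref{Mitchell-characterization}). Since ${\mathcal F}(w)$ is the tail filter on $V$-subsets of $V_{\kappa_w}$ generated by the generic sequence, every cofinal tail lies in ${\mathcal F}(w)$; the complement of $V_{\kappa_{u_i}^+}$ contains such a tail, giving $B \setminus V_{\kappa_{u_i}^+} \in {\mathcal F}(w)$. When $\lh(u_i) > 1$, so that $i$ is a limit and $\kappa_{u_i} = \sup_{\ell < i} \kappa_{u_\ell}$ by continuity, the subsequence $\tupof{u_\ell}{\ell < i}$ is $\Rad{u_i}$-generic over $V$ by Mitchell's criterion, and so ${\mathcal F}(u_i)$ is the tail filter it generates on the $V$-subsets of $V_{\kappa_{u_i}}$; since eventually $\kappa_{u_\ell} > \kappa_y$, the hypothesis gives $u_\ell \in B \cap V_{\kappa_{u_i}}$ on a tail, whence $B \cap V_{\kappa_{u_i}} \in {\mathcal F}(u_i)$. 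The disjointness requirements $B_i^* \cap V_{\kappa_y^+} = \emptyset$ and $(B \setminus V_{\kappa_{u_i}^+}) \cap V_{\kappa_{u_i}^+} = \emptyset$ follow at once from $B \cap V_{\kappa_y^+} = \emptyset$ (given by the validity of $y \concat (w, B)$) and from the construction, and the inclusions $B_i^* \subseteq B$ and $B \setminus V_{\kappa_{u_i}^+} \subseteq B$ yield $q \le y \concat (w, B)$ via clause (ii) of $\le$. Hence $u_i$ appears in $q$, both clauses of Lemma \ref{filterandsequence1} hold, and $y \concat (w, B) \in G$.
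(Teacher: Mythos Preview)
Your proof is correct and follows essentially the same approach as the paper, which merely remarks that a ``very similar argument'' to that of Lemma~\ref{filterandsequence2} gives the result. In particular, your backward direction mirrors that proof: you invoke Lemma~\ref{filterandsequence1}, handle the $\kappa_{u_i}\le\kappa_y$ case via Lemma~\ref{conforming-and-sequence}, and in the $\kappa_{u_i}>\kappa_y$ case insert the pair $(u_i,B\cap V_{\kappa_{u_i}})$ (or $(u_i,\emptyset)$) using Mitchell's geometric criterion to verify the measure-one condition, exactly as in the paper's proof of Lemma~\ref{filterandsequence2}. Your forward direction is spelled out in slightly more detail than the paper's ``immediate'', but the content is the same.
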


For use later we make a technical definition which is  motivated by the Mitchell
criterion for genericity, and will be used in the definition of the main iteration.

\begin{definition}\label{defn-harmonious}

For $y=\tupof{(u_i,B_i)}{i<n}$ a lower part for $\Rad{w}$, 
$A=\tupof{A_\rho}{\rho<\rho^*}$ a sequence of subsets of $V_{\kappa_w}$ and
$\eta<\kappa_w$ we say $y$ is \emph{harmonious with} $A$ \emph{past} or \emph{above} $\eta$,  if
for each $i$ with $i < n$ one of the following conditions holds.

\begin{itemize}

\item $\kappa_{u_i} < \eta$.

\item $\kappa_{u_i} = \eta$, $u_i = \tup{ \eta }$ and $B_i = \emptyset$.

\item $\kappa_{u_i} > \eta$, and $\set{ u_i } \cup B_i \subseteq \bigcup_{\rho < \rho^*} A_\rho \setminus 
{\mathcal U}_{\eta^+}$. 

\end{itemize}

\end{definition}

\begin{observation}
We record some remarks about the preceding definition.

\begin{itemize}

\item
If a lower part $y$ is harmonious with $A$ past $\eta$ it divides
rather strictly into a part below $\eta$ and a part above: there is no
$(u_i,B_i)$ appearing in $y$ such that $\eta\le \kappa_{u_i}$ while
$\kappa_v \le \eta$ for some $v\in B_i$.

\item 
Definition (\ref{defn-harmonious}) depends only on the set
$\bigcup_{\rho < \rho^*} A_\rho$ rather than the sequence $A$
itself. It is phrased in this way to avoid encumbering later
definitions with union signs.

\end{itemize}

\end{observation}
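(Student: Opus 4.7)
Both assertions are verified by direct inspection of Definition (\ref{defn-harmonious}), so the plan is simply to unpack the three cases in each bullet point.

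For the first bullet, fix $y = \tupof{(u_i, B_i)}{i < n}$ harmonious with $A$ past $\eta$ and consider an arbitrary $i < n$. In the first case of the definition, $\kappa_{u_i} < \eta$, so the hypothesis $\eta \le \kappa_{u_i}$ fails and there is nothing to check. In the second case, $\kappa_{u_i} = \eta$ but $B_i = \emptyset$, so there is no $v \in B_i$ at all. In the third case, $\kappa_{u_i} > \eta$ and $B_i \subseteq \bigcup_{\rho < \rho^*} A_\rho \setminus \mathcal{U}_{\eta^+}$. Recalling from Definition (\ref{ultrafilterseqsatkappa}) that $\mathcal{U}_{\eta^+}$ is the set of all ultrafilter sequences at cardinals $\le \eta$, any $v \in B_i$ must satisfy $\kappa_v > \eta$. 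This rules out the pathology the bullet forbids.

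For the second bullet, observe that of the three clauses in Definition (\ref{defn-harmonious}) only the third mentions $A$ at all, and there $A$ appears solely through the inclusion $\set{u_i} \cup B_i \subseteq \bigcup_{\rho < \rho^*} A_\rho \setminus \mathcal{U}_{\eta^+}$. Since set inclusion into a union depends only on the union itself and not on how it is decomposed, whether $y$ is harmonious with $A$ past $\eta$ is determined by $\bigcup_{\rho < \rho^*} A_\rho$ alone.

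There is no obstacle here; the observation is purely a bookkeeping remark justifying that the sequence notation in Definition (\ref{defn-harmonious}) is used only as a convenient shorthand for the union, and that harmoniousness rules out ultrafilter sequences appearing in the measure-one sets $B_i$ that would blur the cut at $\eta$.
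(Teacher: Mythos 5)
Your verification is correct and matches the paper's intent exactly: the Observation is stated without proof precisely because it follows by direct inspection of the three clauses of Definition (\ref{defn-harmonious}), which is what you carry out (including the right reading of $\mathcal{U}_{\eta^+}$ as the sequences with critical point at most $\eta$). Nothing further is needed.
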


The following lemma shows how to thin a lower part conforming with a Radin generic sequence to 
one which still conforms and which is also harmonious past some $\eta$ with 
a sequence of sets whose union contains the interval of the generic sequence
consisting of those measure sequences with critical point at least $\eta$ and
which it can `see'. This, also, is useful for the proof of
Theorem (\ref{universalg}).

\begin{lemma} \label{new-harmony-lemma}  
Let  $G$ be an $\Rad{w}$-generic filter over $V$ and let  $\tupof{ u_i }{ i < \delta }$
be the corresponding generic sequence. Let $A=\tupof{A_\rho}{\rho<\rho^*}$ be such that each
$A_\rho$ is a set of measure sequences with a common critical point $\kappa_\rho < \kappa$, 
let $a=\setof{\kappa_\rho}{\rho<\rho^*}$
and let $D=\bigcup_{\rho<\rho^*}{A_\rho}$.

Suppose that $\eta$ is a successor point in the generic club set $C$ such that 
$u_i\in D$ for all $i$ such that $\eta\le \kappa_{u_i} <\ssup(a)$.

Let $y$ be a lower part conforming with $G$ with $\kappa_y<\ssup(a)$, and suppose that
$\tup{\eta}$ appears in $y$.  Then there is a lower part $y'$ such that
\begin{enumerate}
\item  $y'$ is a direct extension of $y$ in $\Rad{y}$,
\item  $y'$ conforms with $G$, and
\item  $y'$ is harmonious with $A$ past $\eta$. 
\end{enumerate} 
\end{lemma}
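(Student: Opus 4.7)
Write $y = \tupof{(v_k, C_k)}{k \leq n}$ and let $k^*$ be the index at which $v_{k^*} = \tup{\eta}$, which exists by hypothesis. The structural constraints in Definition \ref{Radinfor} then force $\kappa_{v_k} < \eta$ for $k < k^*$, $(v_{k^*}, C_{k^*}) = (\tup{\eta}, \emptyset)$, and for $k > k^*$ both $\eta < \kappa_{v_k} \leq \kappa_y$ and $C_k \cap V_{\eta^+} = \emptyset$. I define $y'$ by keeping the pairs with $k \leq k^*$ unchanged and, for $k > k^*$, replacing $C_k$ by $C_k \cap D$. Provided the shrunken sets remain in the relevant filters (so that $y'$ is a valid Radin condition), $y' \leq^* y$ in $\Rad{y}$, which gives clause (1). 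The main content is in verifying filter-membership; conformance and harmoniousness will then follow by direct checks.

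The main step is the filter check for $k > k^*$ with $\lh(v_k) > 1$; the case $\lh(v_k) = 1$ is automatic since $C_k = \emptyset$. Here I invoke Mitchell's criterion, Theorem \ref{Mitchell-characterization}: since $v_k$ occurs in the generic sequence as some $u_{j_k}$, the initial segment $\tupof{u_i}{i < j_k}$ is $\Rad{v_k}$-generic, and so $\mathcal{F}(v_k)$ coincides with the tail filter on $V$-subsets of $V_{\kappa_{v_k}}$ generated by this cofinal subsequence. Because $\kappa_{v_k} > \eta$, for all sufficiently late $i < j_k$ one has $\kappa_{u_i} > \eta$; and for such $i$ the chain $\eta < \kappa_{u_i} < \kappa_{v_k} \leq \kappa_y < \ssup(a)$ combined with the hypothesis of the lemma puts $u_i \in D$. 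Thus a tail of the generic subsequence lies in $D$, so $D \cap V_{\kappa_{v_k}} \in \mathcal{F}(v_k)$, and intersecting with $C_k$ preserves filter-membership.

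With $y'$ established as a valid direct extension of $y$, conformance with $G$ follows from Lemma \ref{filterandsequence2}: the ultrafilter sequences appearing in $y'$ are the same as those in $y$, so the first criterion is inherited from conformance of $y$, while any generic $u_i$ with $\kappa_{u_i} \leq \kappa_y$ that previously landed in $C_k$ for some $k > k^*$ has $\kappa_{u_i}$ strictly between $\eta$ (thanks to $C_k \cap V_{\eta^+} = \emptyset$) and $\ssup(a)$, hence by hypothesis lies in $D$, placing it in $C_k \cap D$. Finally, harmoniousness is a pair-by-pair check against Definition \ref{defn-harmonious}: indices $k < k^*$ match the first bullet, $k = k^*$ matches the second, and for $k > k^*$ the third bullet holds because $v_k \in D$ (apply the hypothesis to the generic sequence entry $v_k$, using $\eta < \kappa_{v_k} \leq \kappa_y < \ssup(a)$), $C_k \cap D \subseteq D$ by construction, and the whole of $\{v_k\} \cup (C_k \cap D)$ avoids $\mathcal{U}_{\eta^+}$ since $\kappa_{v_k} > \eta$ and $C_k \cap V_{\eta^+} = \emptyset$. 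The one nontrivial obstacle throughout is the filter-membership check, which is exactly where Mitchell's tail-filter criterion does the essential work.
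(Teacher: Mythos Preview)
Your proof is correct and follows essentially the same approach as the paper: shrink each measure-one set $C_k$ with $k > k^*$ to $C_k \cap D$, and use Mitchell's geometric criterion to check that the shrunken sets remain in the relevant filters. In fact your write-up is more complete than the paper's, since you explicitly verify conformance of $y'$ with $G$ via Lemma~\ref{filterandsequence2}, a step the paper's proof leaves implicit.
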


\begin{proof}[\ref{new-harmony-lemma}]
By the definition of conformity all sequences appearing in $y$ have the form $u_i$
for some $i$.  We will obtain $y'$ by some judicious shrinking of the measure one sets
appearing as the second entries of pairs in $y$. 

Consider the pairs $(v, B)$ appearing in $y$.  
If $\kappa_v < \eta$ there is no problem, and if 
$\kappa_v = \eta$ then (as $\eta$ is a successor point in $C$) we have
$v = \tup{ \eta_\varepsilon }$ and $B = \emptyset$,
so again there is no problem.

Suppose now that $(v, B)$ appears in $y$ and $\kappa_v > \eta$.
By the definition of conditionhood, $B$ contains no measure sequence
$u$ with $\kappa_u \le \eta$.
Let  $j$ be such that $v = u_j$, and recall that $\lh(v) = 1$ and  $B = \emptyset$ 
for $j$ a successor, and $\lh(v) > 1$ and $B \in {\mathcal F}(v)$ for $j$ a limit.

By hypothesis we have $u_i \in D$ for all $i$ such that $\kappa_{u_i} \ge \eta$;
it follows by the ``geometric'' criterion for genericity (see Theorem (\ref{Mitchell-characterization}))
that if $j$ is a limit then  $D \cap V_{\kappa_v} \in  \mathcal{F}(v)$.  
So when $v = v_j$ for $j$ a limit,  we may shrink $B$ to obtain a set 
$B' = B \cap D$ with
$B' \in {\mathcal F}(v)$. We define $y'$ to be the resulting lower part.
\end{proof}

\begin{lemma} \label{harmonious-refinement}
If $y$ is a lower part which is harmonious with $A$ past $\eta$ and
$y' \in \Rad{y}$, then $y'$ is harmonious with $A$ past $\eta$.
\end{lemma}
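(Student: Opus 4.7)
The plan is a direct case analysis on the pairs appearing in $y'$, driven by clause (ii) in the definition of refinement (Definition \ref{Radinfor}).

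First, I would unpack the hypothesis: write $y = \tupof{(u_i,B_i)}{i \le n}$ and $y' = \tupof{(v_j,D_j)}{j \le m}$, and set $D = \bigcup_{\rho<\rho^*} A_\rho$. By Definition \ref{Radinfor}(ii), each pair $(v_j,D_j)$ either has $v_j = u_i$ for some $i$ with $D_j \subseteq B_i$, or else, for the least $i$ with $\kappa_{v_j} < \kappa_{u_i}$, one has $v_j \in B_i$ and $D_j \subseteq B_i$. In either case, the second entry $D_j$ is contained in some $B_i$, so most of the work reduces to controlling which $B_i$ governs $(v_j,D_j)$.

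Next, I would run through the three harmony clauses for the enclosing pair $(u_i,B_i)$. If $\kappa_{u_i} < \eta$ and $v_j = u_i$, then $\kappa_{v_j} < \eta$ and the first harmony clause holds for $(v_j,D_j)$; if instead $v_j \ne u_i$ with $i$ the least index as above, then $\kappa_{v_j} < \kappa_{u_i} < \eta$ and again the first clause holds. If $\kappa_{u_i} = \eta$, then by harmony of $y$ we have $u_i = \tup{\eta}$ and $B_i = \emptyset$; the refinement case $v_j \ne u_i$ is impossible since it would require $v_j \in B_i = \emptyset$, and the case $v_j = u_i$ forces $D_j \subseteq B_i = \emptyset$, giving harmony clause two for $(v_j,D_j)$. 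Finally, if $\kappa_{u_i} > \eta$, then $\{u_i\} \cup B_i \subseteq D \setminus {\mathcal U}_{\eta^+}$, so $v_j \in \{u_i\} \cup B_i \subseteq D \setminus {\mathcal U}_{\eta^+}$ and $D_j \subseteq B_i \subseteq D \setminus {\mathcal U}_{\eta^+}$, yielding harmony clause three.

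There is no real obstacle: the delicate point, if any, is verifying that the $\kappa_{u_i} = \eta$ case cannot produce a spurious $v_j$ with $v_j \in B_i$, which is ruled out immediately by $B_i = \emptyset$ — this is exactly why the middle clause of Definition \ref{defn-harmonious} demands $B_i = \emptyset$. So the lemma is essentially a bookkeeping verification that the refinement relation respects the stratification of $y$ into the parts below, at, and above $\eta$ that harmony prescribes.
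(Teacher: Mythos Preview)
Your proof is correct and follows essentially the same approach as the paper's: both split on whether $v_j$ already appears in $y$ (so that $D_j$ shrinks an existing $B_i$) or is new (so that $v_j \in B_i$ and $D_j \subseteq B_i$ for the appropriate $i$), and then run the three-way case analysis on $\kappa_{u_i}$ versus $\eta$. Your treatment is in fact a bit more explicit than the paper's, which dispatches the ``$v_j$ already appears'' case with a one-line remark; otherwise the arguments are identical.
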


\begin{proof}[\ref{harmonious-refinement}]
We consider each pair $(u, B)$ appearing in $y'$.  If $u$
already appears in $y$ then we have that $(u, B')$ appears in
$y$ for some $B'$ such that $B \subseteq B'$, and it is easy to see
that in all cases the harmoniousness conditions are satisfied.
If $u$ does not appear in $y$ then let $(v, C)$ be the
unique pair appearing in $y$ such that $u \in C$, and
consider the various cases of the definition for the
pair $(v, C)$: if $\kappa_v < \eta$ then $\kappa_u < \eta$ and
we are done, the case $\kappa_v = \eta$ cannot occur as $C \neq \emptyset$,
and if $\kappa_v > \eta$ then 
$\set{ u } \cup B \subseteq C \subseteq  \bigcup_{\rho < \rho^*} A_\rho \setminus {\mathcal U}_{\eta^+}$.  
\end{proof}

\section{The definition of $Q(w)$, and useful properties of it.}\label{Pm}

Suppose that $U$ is a measure on a measurable cardinal $\kappa$.
Recall that $\kappa$-\emph{Mathias forcing} using $U$ 
has as conditions pairs $(s,S)$ with $s\in [\kappa]^{<\kappa}$, $S\in U$ and $\ssup(s)\le \min(S)$ and 
ordering given by $(t,T)\le (s,S)$ if $s=t\cap\ssup(s)$ and $T\cup (t\setminus s)\subseteq S$. 
This forcing preserves cardinals as it is $\lk$-directed closed and has the 
$\kappa^+$-chain condition. If $G$ is a generic filter over $V$ for the forcing 
and $y=\bigcup\setof{s}{\exists S\in U \sss (s,S)\in G}$ then for all 
$S\in {\mathcal P}(\kappa)$ we have that $S\in U$ if and only if 
there is some $\alpha<\kappa$ such that $y\setminus\alpha\subseteq S$. 
Consequently, the forcing is also known as the forcing to \emph{diagonalise} 
$U$.\footnote{The earliest uses of this forcing of which we have evidence are in unpublished work from 1992 by Shizuo Kamo \cite{Kamo} 
and Tadatoshi Miyamoto, independently, on the splitting number for uncountable cardinals. See, for example, the reports on this work in
\cite{Suzuki} and \cite{Zapletal}. The latter paper, in particular, gives a full account of their principal result, that it is consistent 
relative to there being a supercompact cardinal that there is an uncountable cardinal $\kappa$ with splitting number 
${\mathfrak s}_\kappa$ greater than $\kappa^+$, and its proof via iterating this forcing. Unfortunately we have not had access 
to the preprint \cite{Kamo} itself. The forcing has also at times been named long Prikry forcing.

A rather similar forcing, however, was given by Henle \cite{Henle} in the early 1980s in the choiceless context of cardinals 
with strong partition properties. Henle called his forcing Radin-like forcing -- the descriptor referring 
to properties of the subset of the cardinal added rather than the form of the conditions.} 

We also recall Ellentuck's topologically inspired notation \cite{Ellentuck}
that if $(s,S)$ is a condition in this forcing 
then $[s,S] = \setof{y\in [\kappa]^{\kappa}}{s\subseteq y\subseteq s\cup S}$. Using this
notation we have that $(t,T)\le (s,S)$ if and only if $[t,T]\subseteq [s,S]$.

In this section we define two forcing posets. The first poset $M(w)$ is the analogue of
the $\kappa$-Mathias forcing for ultrafilter sequences $w$; it will diagonalise the filter ${\mathcal F}(w)$. 
The second poset $Q(w)$  is the variant of $M(w)$ which is tailored, in the style of the analogous forcing 
from \cite{Dzamonja-Shelah-univ-graphs-succ-sing}, to deal with Radin forcing names for graphs, 
and binary relations more generally, on $\kappa^+$. 

One can view $\kappa$-Mathias forcing as adding a subset $y$ of $\kappa$ which is
potentially a member of $U'$ for some normal measure $U'$ extending $U$ in some generic extension.
In the same spirit, $M(w)$ is designed to add a set $A \subseteq V_{\kappa_w}$ which is potentially a member of
${\mathcal F}(w')$ for some sequence $w'$  in a $<\kappa_w$-closed generic extension where
$\lh(w') = \lh(w)$ and $w'_i \supseteq w_i$ for $0 < i < \lh(w)$.

For the rest of this section we fix an ultrafilter sequence $w$ with $\lh(w) > 1$,
and write $\kappa$ for $\kappa_w$.

\begin{definition}\label{Mforcing}
$M(w)$ is the forcing with conditions $p=(A,B)$, with:
\begin{itemize}
\item
$A=\tupof{A_\rho}{\rho<\rho^{\ssss p}}$, where $\rho^{\ssss p}<\kappa$,
\item $\forall \rho<\rho^{\ssss p} \sss  (A_\rho\subseteq {\mathcal U}_\kappa
\sss\sss\sss \& \sss\sss\sss A_\rho \neq \emptyset \sss\sss\sss \& \sss\sss\sss
\exists \kappa_\rho<\kappa\sss \forall u\in A_\rho \sss\sss \kappa_u=\kappa_\rho)$,
\item $\forall \rho<\rho^{\ssss p} \sss\sss \forall u\in A_\rho \sss \forall \tau\in (0,\lh(u)) \sss \sss  u\on \tau\in A_\rho$,
\item $\tupof{\kappa_\rho}{\rho<\rho^{\ssss p}}$ is strictly increasing,
\item $ B\in {\mathcal F}(w)$,
\item $\forall v\in B \sss \forall \tau\in (0,\lh(v))\sss\sss (v\on \tau\in B)$,
and 
\item $\ssup(\setof{\kappa_{\rho}}{\rho<\rho^{\ssss p}}) 
\le \min(\setof{\kappa_v}{v\in B})$.
\end{itemize}
If $p$, $q\in M(w)$ then $q\le p$ if $A^p=A^q\on\rho^{\ssss p}$ and  
$B^q\cup \bigcup_{\rho\in [\rho^{\ssss p},\ssss \rho^{\ssss q})}  A_\rho^q \subseteq B^p$, while $q\le^* p$ if 
$q\le p$ and $A^q=A^p$. If $p$, $q\in M(w)$ and $q\le^* p$ we say that $q$ is a \emph{direct extension} of $p$.
\end{definition}

\begin{definition}\label{ellentuck-open-sets}
Set $[A^p,B^p]$ to be 
\begin{multline*}
\setof{\tupof{D^\rho}{\rho<\kappa} }{
\forall \rho<\kappa \sss \sss (\sss D^\rho\subseteq {\mathcal U}_\kappa\break
\sss\sss\sss \& \sss\sss\sss \exists \kappa_\rho<\kappa\sss \forall u\in D^\rho \sss (\kappa_u=\kappa_\rho)
\sss\sss \& \sss\sss \\
\forall u\in D^\rho \sss \forall \tau\in (0,\lh(u)) \sss \sss  (u\on \tau\in D^\rho  ) \sss)
\sss\sss\sss \&\sss\sss\sss\tupof{\kappa_\rho}{\rho<\kappa}\hbox{ is strictly}\\
\hbox{ increasing } 
\sss\sss\&\sss\sss
\tupof{D^\rho}{\rho<\rho^{\ssss p}} = A^p \sss\sss\sss\&\sss\sss\sss \bigcup \setof{D^\rho}{{\rho\in [\rho^{\ssss p},\kappa)}} \subseteq B^p}.
\end{multline*}
\end{definition}

Then $(A^q,B^q)\le (A^p,B^p)$ if and only if $[A^q,B^q]\subseteq [A^p,B^p]$.

It is useful to have a name for the set of cardinals which are the
first elements of the various ultrafilter sequences appearing anywhere
in $A^p$ for $p\in M(w)$. Accordingly we make the following
definition.

\begin{definition} If $p=(A,B)\in M(w)$ let $a^p=\setof{\kappa_\rho}{\rho<\rho^{\ssss p}}$.
\end{definition}

\vskip6pt
Now we move on to the definition of the forcing $Q(w)$. This is carried out under the following running
combinatorial assumption.

\begin{setting}\label{running-tree-assumption-for-defn-of-Q}
Suppose ${\mathcal T}$ is a binary $\kappa^+$-Kurepa tree with $\Upsilon$ many branches.
\end{setting}

\begin{definition}\label{qstar-forcing}
Let $\tupof{b_\alpha}{\alpha<\Upsilon}$ enumerate a set of branches through ${\mathcal T}$.  
Let $\tupof{\name{E}_{\alpha}}{\alpha <\Upsilon }$ 
be a list of canonical $\Rad{w}$-names for binary relations on $\kappa^+$.  
We will use the sequences $\tupof{b_\alpha}{\alpha<\Upsilon}$ and
$\tupof{\name{E}_{\alpha}}{\alpha <\Upsilon }$ as parameters in the
definition of the forcing $Q(w)$.

$Q^*(w)$ is the forcing with conditions $p=(A,B,t,f)$ satisfying the following four clauses.

\begin{enumerate}
\item \label{one} $(A,B)\in M(w)$ (see Definition (\ref{Mforcing})). We set $a = a^{(A, B)}$.

\item $t\in [(a \cap \sup(a)) \times \Upsilon]^{<\kappa}\hbox{ and } f=\tupof{f^\eta_\alpha}{(\eta,\alpha)\in t}$.
For $\eta\in a \cap \sup(a)$,  set $t^\eta=\setof{\alpha}{(\eta,\alpha)\in t}$.
\item  $\forall \eta\in a \cap \sup(a) \sss\sss \forall \alpha\in t^\eta \sss \sss
d^\eta_\alpha = \dom(f^\eta_\alpha) \in [\kappa^+]^{<\kappa}$.
\item\label{four}
$\forall \eta\in a \cap \sup(a) \sss\sss \forall \alpha\in t^\eta \sss \sss \forall \zeta\in d^\eta_\alpha\sss \exists \nu<\kappa\sss \sss \sss
f^\eta_\alpha(\zeta) = (b_\alpha\on\zeta,\nu)$.
\end{enumerate}
\medskip

If $p$, $q\in Q^* (w)$ then $q\le p$ if $[A^q,B^q]\subseteq
[A^p,B^p]$, $t^p\subseteq t^q$ and\break
$\forall (\eta,\alpha)\in t^p\sss \sss (f^\eta_\alpha)^p\subseteq (f^\eta_\alpha)^q$; 
and $q\le^* p$ if $q\le p$ and $A^q=A^p$, $t^q=t^p$ and $f^q=f^p$.
(If $q\le^* p$ we say $q$ is a \emph{direct extension} of $p$.)

We write $Q(w)$ for the suborder of $Q^*(w)$ consisting of conditions which also satisfy:
\begin{enumerate}
\item[(5)]\label{new-five}
for all $\eta\in a \cap \sup(a)$, for all $\alpha$, $\beta \in t^\eta$, for every lower part $y$ for $\Rad{w}$ harmonious with $A$ past $\eta$, 
and for all $\zeta$, $\zeta'\in d^\eta_\alpha\cap d^\eta_\beta$ we have:
\begin{align*} 
f^\eta_\alpha(\zeta)=f^\eta_\beta(\zeta) \ne &
f^\eta_\alpha(\zeta')=f^\eta_\beta(\zeta')\sss\sss\sss\Longrightarrow\\
& y\concat (w,B)\forces_{\Rad{w}} \hbox{``}\sss \zeta \ssss\name{E}_\alpha \ssss\zeta'
\longleftrightarrow \zeta\ssss\name{E}_\beta \ssss\zeta' \sss\hbox{''}.\\
\end{align*}
\end{enumerate}
\end{definition}

\begin{observation} Observe that in clause (5), if $y = \tupof{(u_i, B_i)}{i \le n}$ 
and $y$ is harmonious with $A$ past $\eta$ for some $\eta \in a$, then $\kappa_{u_n} \le \sup(a)$.
\end{observation}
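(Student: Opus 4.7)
The statement is essentially a bookkeeping consequence of the definitions, so the plan is a quick three-case analysis on where the final pair $(u_n, B_n)$ of $y$ falls according to Definition (\ref{defn-harmonious}). Throughout I will use that the hypothesis $\eta \in a$ gives $\eta \le \sup(a)$ for free, since $a = \setof{\kappa_\rho}{\rho < \rho^{(A,B)}}$ and $\sup$ is taken over this set.

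First I would dispatch the two easy cases of harmoniousness: if $\kappa_{u_n} < \eta$ then $\kappa_{u_n} < \eta \le \sup(a)$, and if $\kappa_{u_n} = \eta$ then $\kappa_{u_n} = \eta \le \sup(a)$. The only case with any content is $\kappa_{u_n} > \eta$, where harmoniousness forces $u_n \in \bigcup_{\rho < \rho^{(A,B)}} A_\rho \setminus {\mathcal U}_{\eta^+}$. I would then pick any $\rho$ with $u_n \in A_\rho$ and invoke the third bullet of Definition (\ref{Mforcing}) for the $M(w)$-part $(A,B)$ of the condition: every member of $A_\rho$ has a common critical point $\kappa_\rho$, so $\kappa_{u_n} = \kappa_\rho$, which lies in $a$ and thus is $\le \sup(a)$.

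There is no real obstacle; the statement is an observation rather than a lemma precisely because it amounts to unpacking definitions. Its role is to flag for the reader that the universal quantification over lower parts $y$ in clause (5) of Definition (\ref{qstar-forcing}) is implicitly bounded in terms of the $M(w)$-part of the condition, so that the constraints clause (5) imposes on the $f^\eta_\alpha$ only ever involve Radin conditions whose lower parts sit below $\sup(a)$. This bounded character is what will later make clause (5) usable in conjunction with the name-reduction lemma (Lemma (\ref{truthvalue-reduction})) during the analysis of names in the main iteration.
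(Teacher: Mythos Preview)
Your proof is correct and is exactly the unpacking of definitions that the paper has in mind; the paper itself gives no proof, treating the statement as self-evident. One trivial quibble: the property you invoke---that every member of $A_\rho$ has the common critical point $\kappa_\rho$---is the \emph{second} bullet of Definition~(\ref{Mforcing}), not the third.
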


\begin{observation} It is important to remember that 
$Q(w)$ and $Q^*(w)$ depend on the tree ${\mathcal T}$ and the sequences
$\tupof{b_\alpha}{\alpha<\Upsilon}$ and 
$\tupof{\name{E}_{\alpha}}{\alpha <\Upsilon }$ even though their 
names do not make this dependence explicit.
\end{observation}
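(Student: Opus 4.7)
The plan is a straightforward verification by inspection of Definition (\ref{qstar-forcing}), pointing to the precise clauses in which each parameter is used. Since the observation is really a notational caveat rather than a substantive theorem, the ``proof'' amounts to walking through the definition and exhibiting, for each of the three hidden parameters, a place where altering that parameter would alter the set of conditions (or the ordering).

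First I would treat the sequence $\tupof{b_\alpha}{\alpha<\Upsilon}$. This enters explicitly in clause (\ref{four}) of the definition of $Q^*(w)$, which requires that for every $(\eta,\alpha)\in t$ and every $\zeta\in d^\eta_\alpha$ there exists $\nu<\kappa$ with $f^\eta_\alpha(\zeta)=(b_\alpha\restriction\zeta,\nu)$. Since the first coordinate of every value of $f^\eta_\alpha$ is literally an initial segment of $b_\alpha$, replacing the enumeration of branches would change which quadruples $(A,B,t,f)$ satisfy clause (\ref{four}), and hence change $Q^*(w)$ (and a fortiori $Q(w)$).

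Second I would treat the tree ${\mathcal T}$. Although ${\mathcal T}$ does not appear by name in any clause, it is the ambient object from which the branches $b_\alpha$ are drawn in Setting (\ref{running-tree-assumption-for-defn-of-Q}) and Definition (\ref{qstar-forcing}); the requirement that $b_\alpha\restriction\zeta\in{}^{<\kappa^+}2$ occurs in ${\mathcal T}$ is built into clause (\ref{four}) through the choice of the $b_\alpha$. Thus ${\mathcal T}$ enters $Q^*(w)$ via its branches, and changing ${\mathcal T}$ forces a change in the admissible enumerations and therefore in the forcing.

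Third I would treat the sequence $\tupof{\name{E}_{\alpha}}{\alpha<\Upsilon}$. This enters in clause (5) of the definition of $Q(w)$: the coherence requirement there is stated as an implication whose conclusion mentions the forcing statement ``$\zeta\,\name{E}_\alpha\,\zeta'\leftrightarrow\zeta\,\name{E}_\beta\,\zeta'$''. So the set of quadruples satisfying clause (5) depends directly on which names $\name{E}_\alpha$, $\name{E}_\beta$ are used. The only subtlety (and the mildest of obstacles) is to note that this is a genuine dependence and not vacuous: since different canonical names can be forced to disagree on pairs $(\zeta,\zeta')$, clause (5) rules in or out different functions $f$ depending on the choice of $\tupof{\name{E}_{\alpha}}{\alpha<\Upsilon}$. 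With these three paragraphs of inspection in hand, the observation is established.
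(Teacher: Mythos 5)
Your verification by inspection is correct: the paper states this observation as an unproved notational remark, and your walk through clause (\ref{four}) for the branches $b_\alpha$ (hence for ${\mathcal T}$, from which they are drawn) and clause (5) for the names $\name{E}_\alpha$ is exactly the implicit justification the authors intend. No further comment is needed, beyond noting that the $\name{E}_\alpha$-dependence you locate in clause (5) pertains to $Q(w)$ only, while $Q^*(w)$ depends just on ${\mathcal T}$ and the $b_\alpha$'s, which your argument already reflects.
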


The poset $Q(w)$ is designed to add a sequence $A^*$ which diagonalises ${\mathcal F}(w)$
in a sense made precise  in Corollary (\ref{jcol}) below,  
together with various objects that can potentially be understood as $\Rad{w^*}$-names where
$w^*$ is an ultrafilter sequence existing in some $\lk$-directed closed 
forcing extension of $V$, with $\lh(w) = \lh(w^*)$, $w^*_\tau \cap V = w_\tau$
for $0 < \tau < \lh(w)$ and $\bigcup A^* \in {\mathcal F}(w^*)$.

The aim is that if $\tupof{ u_i }{ i < \eta }$ is an $\Rad{w^*}$-generic sequence
(where we note that by the Mitchell criterion this sequence is also $\Rad{w}$-generic)
and $u_i \in \bigcup A^*$ for all $i$ such that $\kappa_{u_i} \ge \eta$ (along with
various other technical conditions), then the ``$\eta$-coordinate'' in the poset $Q(w)$ will add an $\Rad{w^*}$ name for
a binary relation of size $\kappa^+$ together with embeddings of all the relations named by the names $\dot E_\alpha$
into this relation. 

We are particularly interested in the posets $Q(w)$ for certain specific lists of relations, for example when
$\tupof{\name{E}_\alpha}{\alpha<\Upsilon}$ is a list of canonical names for all graphs on $\kappa^+$.  In this case the
``$\eta$-coordinate'' will add a name for a graph whose realization will be universal for the graphs named by the names in the list.

\vskip12pt

\begin{lemma} \label{jdensity1}For every $\varepsilon < \kappa$, the set of $q \in Q(w)$ such that 
$\varepsilon < \kappa^q_\rho$ for some $\rho < \rho^q$ is dense and open. 
\end{lemma}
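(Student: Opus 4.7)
The openness direction is immediate: if $\kappa^q_\rho > \varepsilon$ for some $\rho < \rho^q$ and $r \le q$ in $Q(w)$, then $A^r \rest \rho^q = A^q$, so the same $\rho$ witnesses the property for $r$.

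For density, fix $p = (A^p, B^p, t^p, f^p) \in Q(w)$ and assume (as we may) that $\kappa^p_\rho \le \varepsilon$ for every $\rho < \rho^p$. Each $w_\tau$ is a $\kappa$-complete ultrafilter on $V_\kappa$ and hence cannot concentrate on $V_\gamma$ for any $\gamma < \kappa$, so the set $\setof{v \in B^p}{\kappa_v > \max(\varepsilon, \sup(a^p))}$ belongs to $\mathcal{F}(w)$ and is in particular nonempty. Pick $v_0$ from it, let $\gamma = \kappa_{v_0}$, and note that $\tup{\gamma} = v_0 \rest 1 \in B^p$ by closure of $B^p$ under initial segments.

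The plan is to extend $p$ by a single trivial level using the length-one sequence $\tup{\gamma}$: set $\rho^q := \rho^p + 1$, $A^q \rest \rho^p := A^p$, $A^q_{\rho^p} := \{\tup{\gamma}\}$, $B^q := B^p \cap \setof{v \in \mathcal{U}_\kappa}{\kappa_v > \gamma}$, and $t^q := t^p$, $f^q := f^p$. That $(A^q, B^q) \in M(w)$, that clauses (1)--(4) of Definition \ref{qstar-forcing} hold, and that $q \le p$ in $Q^*(w)$ are all routine checks: $A^q_{\rho^p}$ is a nonempty set of sequences with common critical point $\gamma$, vacuously closed under initial segments since $\lh(\tup{\gamma}) = 1$; $B^q \in \mathcal{F}(w)$ is closed under initial segments; $\ssup(a^q) = \gamma + 1 \le \min\setof{\kappa_v}{v \in B^q}$; $A^q_{\rho^p} \cup B^q \subseteq B^p$; and $t^q \subseteq (a^p \cap \sup(a^p)) \times \Upsilon \subseteq (a^q \cap \sup(a^q)) \times \Upsilon$.

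The hard part -- and the reason for taking $A^q_{\rho^p}$ to be the singleton $\{\tup{\gamma}\}$ rather than something richer -- is verifying clause (5) of Definition \ref{qstar-forcing}. For $\eta \in a^q \cap \sup(a^q) = a^p$: if $\eta = \max(a^p)$, which now sits in the interior of $a^q$, then $t^{q,\eta} = \emptyset$ (since $t^p \subseteq (a^p \cap \sup(a^p)) \times \Upsilon$ has no entries at $\max(a^p)$), so the clause is vacuous; otherwise $\eta \in a^p \cap \sup(a^p)$ and the data $t^{q,\eta}, f^q$ at $\eta$ is unchanged from $p$. For such $\eta$ I must verify the forcing relation for every lower part $y$ harmonious with $A^q$ past $\eta$. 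If $y$ is already harmonious with $A^p$ past $\eta$, then $B^q \subseteq B^p$ gives $y \concat (w, B^q) \le y \concat (w, B^p)$ in $\Rad{w}$ and the forcing relation transfers from clause (5) for $p$. Otherwise $y$ must use the new sequence $\tup{\gamma}$; strict monotonicity of critical points in a Radin condition together with $\max(a^q) = \gamma$ forces this to happen only as the final pair of $y$, and then necessarily as $(\tup{\gamma}, \emptyset)$ since $\lh(\tup{\gamma}) = 1$ (in particular, $\tup{\gamma}$ cannot appear inside a measure-one set $B_j$ of some pair $(u_j, B_j)$ with $\kappa_{u_j} > \gamma$, as there is no such $u_j \in \bigcup A^q$). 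Writing $y = y_0 \concat (\tup{\gamma}, \emptyset)$, the lower part $y_0$ is harmonious with $A^p$ past $\eta$ (all its pairs have critical $< \gamma$ and so use no element of $A^q_{\rho^p}$), and because $\tup{\gamma} \in B^p$ one checks directly from Definition \ref{Radinfor} that $y \concat (w, B^q) \le y_0 \concat (w, B^p)$ in $\Rad{w}$; clause (5) for $p$ applied to $y_0$ then yields the desired forcing relation. Finally, $\kappa^q_{\rho^p} = \gamma > \varepsilon$, so $q$ lies in the dense open set.
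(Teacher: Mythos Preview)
Your proof is correct and follows essentially the same strategy as the paper's: extend $A^p$ by a single new level with critical point above $\varepsilon$ and $\sup(a^p)$, keep $t$ and $f$ unchanged, and verify clause~(5) by reducing any lower part harmonious with $A^q$ to one harmonious with $A^p$. Your choice of $A^q_{\rho^p}=\{\tup{\gamma}\}$ rather than the paper's $\{v\restriction\tau : 0<\tau\le\lh(v)\}$ is a mild simplification --- since the only new sequence has length~$1$, the final pair of $y$ is automatically $(\tup{\gamma},\emptyset)$, and you avoid the paper's argument that the set $D$ attached to $v\restriction\tau$ is bounded in $V_{\kappa_v}$ and hence forces $\tau=1$.
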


\begin{proof}[\ref{jdensity1}]
It is immediate that this set is open, so we only need to verify that it is dense.
Let $p \in Q(w)$ and let $\rho = \rho^p$. Find $v \in B^p$ such that
$\kappa_v > \max (\set{ \varepsilon, \sup(a^p) })$ and note that $v \restriction \tau \in B^p$
for $0 < \tau \le \lh(v)$. Now let $\rho^q = \rho+1$,
$A^q \restriction \rho = A^p$,
$A^q_\rho = \setof{ v \restriction \tau }{ 0 < \tau \le \lh(v) }$, 
$B^q = \setof{ v' \in B^p}{ \kappa_{v'} > \kappa_v }$, $t^q = t^p$ and $f^q = f^p$. 

We claim that $q \in Q(w)$ and $q \le p$. The only non-trivial point is to check that
$q$ satisfies Clause (5) in the definition of $Q(w)$. Let $\eta, \alpha, \beta, \zeta, \zeta'$
be such that  $f^\eta_\alpha(\zeta)=f^\eta_\beta(\zeta) \neq f^\eta_\alpha(\zeta')=f^\eta_\beta(\zeta')$,
and note that since $(\eta, \alpha) \in t^q = t^p$ we have that
$\eta \in a^p \cap \sup(a^p)$. 

Let $y$ be harmonious with $A^q$ above $\eta$. By the definition of harmonious, Definition (\ref{defn-harmonious}), we have that 
$\kappa_y\le \kappa_v$.

If $\kappa_y < \kappa_v$ then clearly $y$ is
harmonious with $A^p$ above $\eta$, and we are done by Clause (5) for $p$ and the remark that
$y \concat (w, B^q) \le y \concat (w, B^p)$.  

If $\kappa_y = \kappa_v$ then we have 
$y = y' \concat (v \restriction \tau, D)$ for some $\tau\le \lh(v)$, where $\kappa_{y'} < \kappa_v$, 
$y'$ is harmonious with $A^p$ and also $D \subseteq \bigcup_{\rho < \rho^p} A^p_\rho$.
By the choice of $\kappa_v$ to be strictly greater than $\sup(a^p)$ we see that $D$ is bounded in $V_{\kappa_v}$ and so
cannot be of measure $1$ for any measure on $v$, hence necessarily $\tau  =1$ and $D = \emptyset$,
so that actually $v \restriction \tau = \tup{ \kappa_v }$. 
It follows that $y \concat (w, B^q) \le y' \concat (w, B^p)$, and again we are done
by Clause (5) for $p$.  \break $\vphantom{wwww}$
\end{proof} 

\begin{corollary} \label{jcol}
If $G$ is $Q(w)$-generic over $V$ and $A^*$ is the union of the sequences
$A^p$ for $p \in G$, then $A^*$ is a $\kappa$-sequence and for every $B \in {\mathcal F}(w)$
there is $\rho^* < \kappa$ such that $A^*_\rho \subseteq B$ for $\rho^* < \rho < \kappa$. 
\end{corollary}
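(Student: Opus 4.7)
The plan is to prove the two claims in turn: the length assertion follows immediately from Lemma~\ref{jdensity1}, while the diagonalization reduces to a single direct-extension density argument.

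For the first claim, let $\tupof{\kappa^*_\rho}{\rho<\rho^*}$ be the strictly increasing sequence of critical points obtained by concatenating the sequences $\tupof{\kappa^p_\rho}{\rho<\rho^p}$ across conditions $p\in G$; these cohere by the $Q(w)$ ordering. Lemma~\ref{jdensity1} and genericity give, for each $\varepsilon<\kappa$, some $p\in G$ with $\kappa^p_\rho>\varepsilon$ for some $\rho<\rho^p$, so this sequence is cofinal in $\kappa$. Since $\kappa$ is regular, $\rho^*=\kappa$ and so $A^*$ is a $\kappa$-sequence.

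For the second claim, fix $B\in{\mathcal F}(w)$; without loss of generality, by passing to a hereditary subset of $B$ still in ${\mathcal F}(w)$ if necessary, I take $B$ to be closed under initial segments. I claim $\setof{q\in Q(w)}{B^q\subseteq B}$ is direct-extension dense. Given $p\in Q(w)$, set $q=(A^p,B^p\cap B,t^p,f^p)$: then $B^p\cap B\in{\mathcal F}(w)$ by $\kappa$-completeness, it is closed under initial segments because both $B^p$ and $B$ are, the $\ssup$ condition of Definition~\ref{Mforcing} is inherited from $p$ since $B^q\subseteq B^p$, and clauses (2)--(5) of Definition~\ref{qstar-forcing} depend only on $A^q,t^q,f^q$ (all unchanged) together with forcing statements of the form $y\concat(w,B^q)\forces\ldots$, which follow at once from the corresponding statements for $y\concat(w,B^p)$ by the downward closure of forcing, since $B^q\subseteq B^p$. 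Thus $q\le^* p$ lies in $Q(w)$. By genericity pick $p_B\in G$ with $B^{p_B}\subseteq B$ and set $\rho^*=\rho^{p_B}$. Given $\rho^*<\rho<\kappa$, the first claim supplies some $q\in G$ with $\rho^q>\rho$, and any common refinement $r\in G$ of $p_B$ and $q$ satisfies $A^r_\rho\subseteq B^{p_B}\subseteq B$ by the $Q(w)$ ordering (since $\rho\in[\rho^{p_B},\rho^r)$). Hence $A^*_\rho=A^r_\rho\subseteq B$, as required.

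The only mild obstacle is the initial hereditarization of $B$: because the ``measure-one'' components $B^q$ of conditions in $Q(w)$ are required to be closed under initial segments, the diagonalization property of Corollary~\ref{jcol} is naturally a statement about the hereditary filter associated with ${\mathcal F}(w)$, and the first step of the proof amounts to reducing to this case.
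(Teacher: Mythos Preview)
Your argument is correct and is exactly the routine density argument the paper has in mind (the paper gives no proof, treating the corollary as immediate from Lemma~\ref{jdensity1} together with the obvious density of $\{q: B^q\subseteq B\}$). Your verification that $q=(A^p,B^p\cap B,t^p,f^p)$ satisfies clause~(5) via $B^q\subseteq B^p$ is right, and the compatibility argument at the end is clean.

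The one point worth a sentence of justification is the hereditarization you flag at the end. You assert that any $B\in{\mathcal F}(w)$ contains a hereditary $B'\in{\mathcal F}(w)$, but you do not prove it, and it is not entirely content-free from the abstract Definition~\ref{ultrafilterseq}. In the setting of this paper it is immediate: the sequences $w$ used here are derived from an embedding $j$ via $w_\sigma=\{X:w\on\sigma\in j(X)\}$ (see the observation following Definition~\ref{ultrafilterclass} and the construction in \S\ref{maini}), and for such $w$ the set $B'=\{v:\forall\tau\in(0,\lh(v)]\;v\on\tau\in B\}$ lies in $w_\sigma$ because $w\on\sigma\in j(B')$ iff $w\on\tau\in j(B)$ for all $\tau\le\sigma$, which holds since $B\in{\mathcal F}(w)$. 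With this remark added your proof is complete.
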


The proof of Lemma (\ref{jdensity1}) also gives the following result.

\begin{corollary} \label{jdensity2} The set of $p \in Q(w)$ such that $\rho_p$ is a successor ordinal
is dense in $Q(w)$. 
\end{corollary}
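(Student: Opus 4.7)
The plan is to observe that the construction already carried out in the proof of Lemma \ref{jdensity1} automatically produces a condition whose length is a successor ordinal, so only a small repackaging is needed.

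Given any $p \in Q(w)$, I would apply the construction of Lemma \ref{jdensity1} with, say, $\varepsilon = 0$ (or any $\varepsilon < \kappa$). That construction chooses some $v \in B^p$ with $\kappa_v > \max(\{\varepsilon, \sup(a^p)\})$ and then defines $q \le p$ by setting $\rho^q = \rho^p + 1$, $A^q \restriction \rho^p = A^p$, $A^q_{\rho^p} = \{v \restriction \tau : 0 < \tau \le \lh(v)\}$, $B^q = \{v' \in B^p : \kappa_{v'} > \kappa_v\}$, and $t^q = t^p$, $f^q = f^p$. The value $\rho^q = \rho^p + 1$ is a successor ordinal by definition, regardless of whether $\rho^p$ itself is zero, a successor, or a limit.

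The verification that $q \in Q(w)$ and $q \le p$ is exactly what was done in the proof of Lemma \ref{jdensity1}; in particular the checking of Clause (5) of Definition \ref{qstar-forcing} goes through unchanged, since $t^q = t^p$ and $f^q = f^p$ so the relevant quadruples $(\eta, \alpha, \beta, \zeta, \zeta')$ are the same as for $p$, and any lower part harmonious with $A^q$ past some $\eta \in a^p \cap \sup(a^p)$ is (as argued there) reducible to one harmonious with $A^p$ past $\eta$. Since $p$ was arbitrary and $q \le p$ has $\rho^q$ a successor, density follows.

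I do not anticipate a main obstacle here: the content of the corollary is already contained inside the proof of Lemma \ref{jdensity1}, and the only observation required is that the increment ``$\rho^p \mapsto \rho^p + 1$'' produces a successor. Thus the write-up can be as short as ``Inspect the proof of Lemma \ref{jdensity1}: the condition $q$ constructed there satisfies $\rho^q = \rho^p + 1$, which is a successor ordinal.''
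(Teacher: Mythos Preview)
Your proposal is correct and matches the paper's own treatment exactly: the paper simply notes that the proof of Lemma \ref{jdensity1} already gives the result, since the condition $q$ constructed there has $\rho^q = \rho^p + 1$.
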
 

\begin{notation} For $\eta<\kappa$, $\alpha<\Upsilon$ and $\zeta<\kappa^+$ 
let ${\mathcal D}^\eta_{\zeta,\alpha}=
\setof{p\in Q(w)}{\alpha\in (t^\eta)^p\,\,\&\,\, \zeta\in (d^\eta_\alpha)^p)}$.
\end{notation}

\begin{lemma}\label{one-point-ext}
Suppose $p\in Q(w)$, $\eta\in a^p$, $\alpha<\Upsilon$ and $\zeta<\kappa^+$. Then there is 
$q\in {\mathcal D}^\eta_{\zeta,\alpha}$ with $q\le p$, $A^q=A^p$, $B^q=B^p$,
$(t^\eta)^q=(t^\eta)^p\cup\set{\alpha}$ and 
$(d^\eta_\alpha)^q=(d^\eta_\alpha)^p\cup\set{\zeta}$. (In the last clause we formally take $(d^\eta_\alpha)^p=\emptyset$ if
$\alpha\ni (t^\eta)^p$.)
\end{lemma}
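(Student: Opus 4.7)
The plan is to define $q$ by taking $A^q=A^p$, $B^q=B^p$ and $t^q=t^p\cup\set{(\eta,\alpha)}$, setting $(f^q)^{\eta'}_{\alpha'}=(f^p)^{\eta'}_{\alpha'}$ for all $(\eta',\alpha')\in t^p$, and extending at the single new coordinate by declaring $(f^q)^\eta_\alpha(\zeta)=(b_\alpha\on\zeta,\nu)$ for a suitable ordinal $\nu<\kappa$ (taking $(f^p)^\eta_\alpha$ to be the empty function when $\alpha\ni(t^\eta)^p$). If $\alpha\in(t^\eta)^p$ and $\zeta\in(d^\eta_\alpha)^p$ we may simply take $q=p$, so assume otherwise. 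With $A^q=A^p$ and $B^q=B^p$ the pair $(A^q,B^q)$ remains in $M(w)$, while $t^q$ grows by one element and $(f^q)^\eta_\alpha$ by one value of the prescribed form; so clauses (1)--(4) of Definition~\ref{qstar-forcing} are immediate. The only real work is clause (5).

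All instances of clause (5) for $q$ with $\eta'\ne\eta$ are inherited verbatim from $p$, so I need only consider $\eta'=\eta$. When $\alpha',\beta'\in(t^\eta)^p\setminus\set{\alpha}$, clause (5) for $q$ coincides with that for $p$; when $\alpha'=\beta'=\alpha$ the conclusion is trivial since the biconditional in the forcing statement has the same relation on both sides. So only the mixed instances $\alpha'=\alpha$, $\beta'\in(t^\eta)^p\setminus\set{\alpha}$ (and the symmetric case) require attention. If both distinguished points lie in $(d^\eta_\alpha)^p\cap(d^\eta_{\beta'})^p$ the relevant values of $f$ are unchanged and clause (5) for $p$ applies; otherwise one of the points is $\zeta$, and to discharge (5) I aim to choose $\nu$ so that the hypothesis $(f^q)^\eta_\alpha(\zeta)=(f^q)^\eta_{\beta'}(\zeta)$ always fails.

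To do so, for each $\beta'\in(t^\eta)^p\setminus\set{\alpha}$ with $\zeta\in(d^\eta_{\beta'})^p$ and $b_{\beta'}\on\zeta=b_\alpha\on\zeta$, let $\nu(\beta')$ denote the second coordinate of $(f^p)^\eta_{\beta'}(\zeta)$: this is the unique value of $\nu$ that would produce the undesired equality with $\beta'$. The set of forbidden values $\set{\nu(\beta')}$ has cardinality at most $|(t^\eta)^p|<\kappa$, so by the regularity of $\kappa$ I may pick $\nu<\kappa$ avoiding them all. With this choice every new instance of (5) is vacuous, because its hypothesis fails either at the first coordinate (when $b_{\beta'}\on\zeta\ne b_\alpha\on\zeta$) or at the second (by the choice of $\nu$). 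The conditions $q\le p$ and $q\in\mathcal{D}^\eta_{\zeta,\alpha}$ hold by construction. The only subtlety in the proof is the case analysis for clause (5) to isolate which $(\alpha',\beta',\zeta_1,\zeta_2)$ actually impose a new constraint; the small-counting argument that selects $\nu$ is then routine.
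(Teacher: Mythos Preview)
Your proof is correct and follows essentially the same approach as the paper: reduce to the case where a genuinely new value must be assigned, then choose $\nu$ so that the new value $(b_\alpha\on\zeta,\nu)$ cannot coincide with any existing $(f^p)^\eta_{\beta'}(\zeta)$, making the antecedent of every new instance of clause~(5) fail. The only difference is cosmetic: the paper picks $\nu$ outside the larger set $\bigcup_{(\eta,\beta)\in t^p}\rge(\pi_1\cdot (f^\eta_\beta)^p)$, while you avoid only the finer set of values $\nu(\beta')$ that could actually cause trouble; both sets have size $<\kappa$, so either choice works.
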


\begin{proof}[\ref{one-point-ext}]
If $p\in {\mathcal D}^\eta_{\zeta,\sss \alpha}$ then take $q=p$ and there is nothing more to do.

So suppose that $p\not\in {\mathcal D}^\eta_{\zeta,\sss \alpha}$. 
If $\alpha\not\in (t^\eta)^p$ observe that $\tup{A^p,B^p,t^p\cup\set{ (\eta,\alpha) }, f^p\concat \tup{\emptyset}}$ 
is a condition in $Q(w)$ and refines $p$. (The concatenated 1-tuple consisting of the empty set records that the 
value of the new $f$-part of the condition $\tup{A^p,B^p,t^p\cup\set{ (\eta,\alpha) }, f^p\concat \tup{\emptyset}}$ 
at the co-ordinate $(\eta, \alpha)$ is the empty function.)
Consequently we may assume without loss of generality that $\alpha\in (t^\eta)^p$ and $\zeta\notin
(d^\eta_\alpha)^p$.

Pick $\nu\in \kappa\setminus \discretionary{}{}{} 
\bigcup_{(\eta,\beta)\in t^p}\rge((\pi_1\cdot f^\eta_\beta)^p)$, let
$(f^\eta_\alpha)^q=(f^\eta_\alpha)^p\cup \set{\tup{\zeta , (b_\alpha\on\zeta,\nu)}}$ and set 
$(f^{\eta'}_\beta)^q=(f^{\eta'}_\beta)^p$
for $(\eta',\beta)\in t^p$ with $(\eta',\beta)\ne (\eta,\alpha)$.  Then $q=\tup{A^p, B^p, t^p,
\tupof{ (f^\eta_\beta)^q}{(\eta,\beta)\in t^p}}$ is clearly an element of $Q^*(w)$ and
refines $p$.  

Moreover, $q$ satisfies (5) of the definition of $Q(w)$. In brief this is true because 
$q$ inherits the truth of (5) from $p$ for every collection of data for instances of (5) except one, and for that
collection we have ensured (5) holds vacuously.

In more detail, suppose $\eta'\in a^{(A^q,B^q)}\cap \sup(a^{(A^q,B^q)})$, $\beta$, $\gamma\in (t^{\eta'})^q$, 
$y$ is a lower part for $\Rad{w}$ harmonious with $A^q$ 
past $\eta'$ and $\zeta'$, $\zeta^{''} \in (d^{\eta'}_\beta)^q\cap (d^{\eta'}_\gamma)^q$
Either 
$\eta'\in a^{(A^p,B^p)}\cap \sup(a^{(A^p,B^p)})$, $\beta$, $\gamma\in (t^{\eta'})^p$, 
$y$ is a lower part for $\Rad{w}$ harmonious with $A^p$ 
past $\eta'$ and $\zeta'$, $\zeta^{''} \in (d^{\eta'}_\beta)^p\cap (d^{\eta'}_\gamma)^p$, and hence
\begin{align*} 
f^{\eta'}_\gamma(\zeta')=f^{\eta'}_\beta(\zeta') \ne &
f^{\eta'}_\gamma(\zeta'')=f^{\eta'}_\beta(\zeta'')\sss\sss\sss\Longrightarrow\\
& y\concat (w,B^p)\forces_{\Rad{w}} \hbox{``}\sss \zeta' \ssss\name{E}_\gamma \ssss\zeta^{''}
\longleftrightarrow \zeta'\ssss\name{E}_\beta \ssss\zeta{''} \sss\hbox{''}.\\
\end{align*}
Or $\eta'=\eta$, $\gamma=\alpha$ and $\zeta''=\zeta$ and we have that
$\nu$ was chosen specifically so that $(f^\eta_{\alpha})^q(\zeta)\ne (f^\eta_{\beta})^p(\zeta')$ and hence the antecedent of the 
instance of the implication in (5) of the definition of $Q(w)$ is false and so for this collection of data (5) holds. 
\end{proof}

\begin{corollary}\label{density-for-t-f-for-Q}
For each $\eta<\kappa$, each $\alpha<\Upsilon$ and each $\zeta\in \kappa^+$,
the set ${\mathcal D}^\eta_{\zeta,\alpha}=\setof{p\in Q(w)}{\alpha\in (t^\eta)^p\,\,\&\,\, \zeta\in (d^\eta_\alpha)^p)}$ 
is dense and open in $\setof{p\in Q(w)}{\eta\in a^p}$.
\end{corollary}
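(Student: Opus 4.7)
The plan is to derive this corollary almost immediately from Lemma \ref{one-point-ext}, with a small preliminary adjustment using Lemma \ref{jdensity1}. Openness will be essentially by inspection of the refinement relation on $Q(w)$: if $q \le p$ then $t^p \subseteq t^q$ and $(f^\eta_\alpha)^p \subseteq (f^\eta_\alpha)^q$ for every $(\eta,\alpha) \in t^p$, so membership in ${\mathcal D}^\eta_{\zeta,\alpha}$ is preserved on passing to extensions.

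For density, first I would fix $p \in Q(w)$ with $\eta \in a^p$. The one subtle point is that Lemma \ref{one-point-ext} genuinely needs $\eta < \sup(a^p)$ (since by clause (2) of Definition \ref{qstar-forcing} we must have $t \subseteq (a\cap\sup(a))\times\Upsilon$), and this need not hold a priori if $\eta$ happens to be the maximum of $a^p$. So my first step is to apply Lemma \ref{jdensity1} with $\varepsilon = \eta$ to pass to some $p' \le p$ whose sequence $a^{p'}$ contains a cardinal strictly greater than $\eta$; since refinement extends the sequence $A$, we still have $\eta \in a^{p'}$, and now also $\eta < \sup(a^{p'})$.

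Having arranged this, I would simply invoke Lemma \ref{one-point-ext} applied to $p'$, $\eta$, $\alpha$ and $\zeta$ to produce $q \le p'$ with $\alpha \in (t^\eta)^q$ and $\zeta \in (d^\eta_\alpha)^q$, giving $q \in {\mathcal D}^\eta_{\zeta,\alpha}$ and $q \le p$, as required.

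I do not expect a real obstacle here; the entire burden of the argument (finding the right $\nu$ so that clause (5) of Definition \ref{qstar-forcing} is preserved) has already been discharged inside Lemma \ref{one-point-ext}, and the density of conditions with $\eta$ strictly below $\sup(a)$ has been discharged in Lemma \ref{jdensity1}. The only thing worth being careful about is not to forget this last point, which is why Lemma \ref{jdensity1} appears as a preliminary step rather than being skipped.
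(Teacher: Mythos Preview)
Your proposal is correct and follows the same approach as the paper, which simply says ``Immediate from Lemma (\ref{one-point-ext}).'' In fact you are being more careful than the paper: Lemma (\ref{one-point-ext}) as stated promises $A^q=A^p$, which is impossible when $\eta=\max(a^p)$ since then $(\eta,\alpha)$ cannot lie in $(a^p\cap\sup(a^p))\times\Upsilon$; your preliminary appeal to Lemma (\ref{jdensity1}) cleanly handles this edge case that the paper glosses over.
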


\begin{proof}[\ref{density-for-t-f-for-Q}]
Immediate from Lemma (\ref{one-point-ext}).
\end{proof}

\begin{corollary} 
If $G$ is $Q(w)$-generic over $V$, $\eta \in \bigcup \setof{ a^p }{ p \in G }$
and $\alpha < \Upsilon$ then $\bigcup \setof{ f^{p, \eta}_\alpha }{ p \in G, (\eta, \alpha) \in t^p }$
is a function with domain $\kappa^+$.
\end{corollary}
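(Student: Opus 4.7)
The plan is to verify two things: that the indicated union is a well-defined function, and that its domain is all of $\kappa^+$.

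For well-definedness, I would observe that $G$ is a filter: if $p, q \in G$ with $(\eta, \alpha) \in t^p \cap t^q$ and $\zeta \in \dom(f^{p,\eta}_\alpha) \cap \dom(f^{q,\eta}_\alpha)$, pick $r \in G$ with $r \le p, q$. By the definition of the ordering on $Q(w)$ we have $f^{p,\eta}_\alpha \subseteq f^{r,\eta}_\alpha$ and $f^{q,\eta}_\alpha \subseteq f^{r,\eta}_\alpha$, so $f^{p,\eta}_\alpha(\zeta) = f^{r,\eta}_\alpha(\zeta) = f^{q,\eta}_\alpha(\zeta)$. Hence the union is a function.

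For the domain, fix $\zeta < \kappa^+$. By hypothesis, there is $p_0 \in G$ with $\eta \in a^{p_0}$. Observe that for any $p \le p_0$ we have $A^p \restriction \rho^{p_0} = A^{p_0}$, whence $a^{p_0} \subseteq a^p$ and in particular $\eta \in a^p$. Thus the poset $\setof{p \in Q(w)}{\eta \in a^p}$ from Corollary \ref{density-for-t-f-for-Q} contains $\setof{p \in Q(w)}{p \le p_0}$, and the density of ${\mathcal D}^\eta_{\zeta,\alpha}$ given by that corollary (based on Lemma \ref{one-point-ext}) yields density of ${\mathcal D}^\eta_{\zeta,\alpha}$ below $p_0$. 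By the genericity of $G$ over $V$, some $q \in G$ with $q \le p_0$ lies in ${\mathcal D}^\eta_{\zeta,\alpha}$, so $\alpha \in (t^\eta)^q$ and $\zeta \in (d^\eta_\alpha)^q = \dom(f^{q,\eta}_\alpha)$. Hence $\zeta$ is in the domain of the union.

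There is no real obstacle here; the work has all been done in the preceding density lemmas. The only minor point to be careful about is the monotonicity observation $a^{p_0} \subseteq a^p$ for $p \le p_0$, which ensures that density in $\setof{p \in Q(w)}{\eta \in a^p}$ actually translates into density below $p_0$, so that a straightforward genericity argument applies.
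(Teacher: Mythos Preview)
Your proposal is correct and follows exactly the approach the paper intends: the paper states this corollary without proof, leaving it as an immediate consequence of Corollary~\ref{density-for-t-f-for-Q} via the standard genericity argument you have written out. Your care about the monotonicity $a^{p_0}\subseteq a^p$ for $p\le p_0$ is the only point that needs any comment, and you handle it correctly.
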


\begin{lemma}\label{splitting-for-Q}
$Q(w)$ has splitting.
\end{lemma}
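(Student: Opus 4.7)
My plan is to exploit the freedom in the ``new value'' $\nu$ that appeared in the proof of Lemma~\ref{one-point-ext}: since that construction assigns $f^\eta_\alpha(\zeta) = (b_\alpha\restriction\zeta,\nu)$ for any $\nu\in\kappa$ outside a set of size less than $\kappa$, I can make two distinct choices of $\nu$ and obtain two extensions of the same condition that disagree as functions.

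First I would ensure that the condition admits some $\eta \in a \cap \sup(a)$, so that the coordinate structure $t^\eta$ is even available. Starting from an arbitrary $p \in Q(w)$, two applications of Lemma~\ref{jdensity1} (as in the construction given there) yield a direct refinement $p' \le p$ with $\rho^{p'}\ge 2$ and with $A^{p'}\restriction\rho^p = A^p$; then $\eta := \kappa^{p'}_0 \in a^{p'}\cap \sup(a^{p'})$. Now pick any $\alpha < \Upsilon$ and any $\zeta < \kappa^+$ with $\zeta \notin (d^\eta_\alpha)^{p'}$ (taking $(d^\eta_\alpha)^{p'} = \emptyset$ if $\alpha \notin (t^\eta)^{p'}$). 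Because $\kappa^{<\kappa}$ is irrelevant to the cardinality calculation — the set $\bigcup_{(\eta,\beta)\in t^{p'}} \rge((\pi_1\cdot f^\eta_\beta)^{p'})$ has size less than $\kappa$ — its complement in $\kappa$ has size $\kappa$, so I can select two distinct ordinals $\nu_1 \ne \nu_2$ from it.

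Applying the construction from the proof of Lemma~\ref{one-point-ext} twice, with these two choices of $\nu$, I obtain conditions $q_1, q_2 \le p'$ with $A^{q_i}=A^{p'}$, $B^{q_i}=B^{p'}$, $t^{q_i} = t^{p'}\cup\{(\eta,\alpha)\}$, and with $(f^\eta_\alpha)^{q_i}$ extending $(f^\eta_\alpha)^{p'}$ by sending $\zeta \mapsto (b_\alpha\restriction\zeta,\nu_i)$. The verification that each $q_i$ lies in $Q(w)$ — in particular clause~(5) — is exactly the argument given in the proof of Lemma~\ref{one-point-ext}: the only new instance of the antecedent of~(5) involves the value $\nu_i$, which was chosen precisely so that the new second-coordinate value does not coincide with any existing one, making the antecedent vacuous.

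Finally, $q_1$ and $q_2$ are incompatible: any common refinement $r \le q_1, q_2$ would require $(f^\eta_\alpha)^r \supseteq (f^\eta_\alpha)^{q_1} \cup (f^\eta_\alpha)^{q_2}$, but these two functions disagree at $\zeta$ (since $(b_\alpha\restriction\zeta,\nu_1) \ne (b_\alpha\restriction\zeta,\nu_2)$), so no function extends both. There is no real obstacle here; the ``hard part,'' such as it is, is just bookkeeping to guarantee $\eta \in a^{p'}\cap\sup(a^{p'})$ so that the coordinate is available to be split.
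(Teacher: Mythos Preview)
Your proposal is correct and follows essentially the same approach as the paper's proof. The paper first reduces to the case where $t^p\neq\emptyset$ (so some $(\eta,\alpha)\in t^p$ is already available) and then picks two distinct values $\nu_0\neq\nu_1$ outside $\bigcup_{(\eta,\beta)\in t^p}\rge((\pi_1\cdot f^\eta_\beta)^p)$ to build incompatible extensions $q_0,q_1\in {\mathcal D}^\eta_{\zeta,\alpha}$; your version instead explicitly invokes Lemma~\ref{jdensity1} to guarantee $\rho^{p'}\ge 2$ (hence $a^{p'}\cap\sup(a^{p'})\neq\emptyset$) before choosing $\eta$ and $\alpha$, but the core splitting mechanism and the verification of clause~(5) are identical.
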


\begin{proof}[\ref{splitting-for-Q}]
This is proved using an argument very similar to the one
for Lemma (\ref{one-point-ext}).  As observed there, given a condition $p$ we can easily find
a condition in $Q(w)$ refining it with a non-empty $t$-part, so without loss
of generality we may as well assume $t^p\neq\emptyset$.  Let
$(\eta,\alpha)\in t^p$. Now pick $\zeta\in\kappa^+\setminus (d^\eta_\alpha)^p$ and
working exactly as before we can define two extensions $q_0$, $q_1$ of
$p$ which are both in ${\mathcal D}^\eta_{\zeta,\alpha}$ but satisfy
$(f^\eta_\alpha)^{q_0}(\zeta)\neq (f^\eta_\alpha)^{q_1}(\zeta)$, simply by choosing, 
at the ultimate stage of that argument, two distinct elements $\nu_0$ and $\nu_1$ 
of  $\kappa\setminus \discretionary{}{}{}\bigcup_{(\eta,\beta)\in t^p}\rge((\pi_1\cdot f^\eta_\beta)^p)$, where 
previously we merely picked one. Then $q_0$ and $q_1$ are incompatible.\break $\vphantom{WWW}$
\end{proof}

\begin{lemma}\label{kwcompact-and-glbs}
\begin{enumerate}
\item\label{kwcompact} The forcing poset $Q(w)$ is $\lk$-compact. 
\item\label{desc-seqs-in-Q-have-glbs} 
Descending sequences from $Q(w)$ of length less than $\kappa$ have greatest lower bounds.
\end{enumerate}
\end{lemma}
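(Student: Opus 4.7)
I will prove the two clauses separately, establishing (\ref{desc-seqs-in-Q-have-glbs}) first and then using it in combination with well-metness to derive (\ref{kwcompact}).

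\emph{Part (\ref{desc-seqs-in-Q-have-glbs}).}  Given a decreasing sequence $\tupof{p_i}{i<\gamma}$ in $Q(w)$ with $\gamma<\kappa$, I build the candidate glb $q$ by taking pointwise operations.  Set $A^q=\bigcup_{i<\gamma}A^{p_i}$, which is well-defined as a sequence because the $A^{p_i}$ form a chain under initial segment (this is forced by the refinement clause $A^{p_j}=A^{p_i}\restriction\rho^{p_i}$ when $p_j\le p_i$). Set $B^q=\bigcap_{i<\gamma}B^{p_i}\cap\setof{v\in {\mathcal U}_\kappa}{\kappa_v>\sup a^q}$; by $\kappa$-completeness of ${\mathcal F}(w)$ together with the fact that the set of $v$ with $\kappa_v>\beta$ lies in ${\mathcal F}(w)$ for every $\beta<\kappa$, we have $B^q\in{\mathcal F}(w)$.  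Let $t^q$ and $f^q$ be the natural unions, which have size less than $\kappa$ by regularity.  Clauses (1)--(4) are routine.  For clause (5), given data $(\eta,\alpha,\beta,y,\zeta,\zeta')$ realising the antecedent in $q$, use the regularity of $\kappa$ and the finiteness of the data to find a single index $i_0<\gamma$ such that $\eta\in a^{p_{i_0}}$, $\alpha,\beta\in(t^\eta)^{p_{i_0}}$, $\zeta,\zeta'\in(d^\eta_\alpha)^{p_{i_0}}\cap(d^\eta_\beta)^{p_{i_0}}$, and $A^{p_{i_0}}$ is long enough that $y$ is harmonious with $A^{p_{i_0}}$ past $\eta$.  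Clause (5) for $p_{i_0}$ then delivers $y\concat(w,B^{p_{i_0}})\forces \zeta\name{E}_\alpha\zeta'\leftrightarrow \zeta\name{E}_\beta\zeta'$, and since $B^q\subseteq B^{p_{i_0}}$ the desired forcing statement for $y\concat(w,B^q)$ follows.  It is immediate from the construction that $q$ is the greatest, not merely a, lower bound.

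\emph{Part (\ref{kwcompact}).}  I derive $\lk$-compactness from well-metness of $Q(w)$ together with the $\lk$-closure given by part (\ref{desc-seqs-in-Q-have-glbs}) using the standard induction: given a centred $X=\set{p_i:i<\gamma}$ with $\gamma<\kappa$, recursively define $q_0=p_0$, $q_{i+1}$ to be the glb of $q_i$ and $p_{i+1}$ (well-defined because any finite subset of $X$ has a common lower bound, which is below both $q_i$ and $p_{i+1}$, so these are compatible), and $q_\lambda$ for limit $\lambda<\gamma$ to be the glb of $\tupof{q_i}{i<\lambda}$ furnished by part (\ref{desc-seqs-in-Q-have-glbs}).

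The main work is to show well-metness: given compatible $p_0,p_1\in Q(w)$, construct their glb $r$ via the pointwise construction above ($A^r$ the longer of $A^{p_0}, A^{p_1}$, which are comparable by compatibility; $B^r$ the intersection cut off below $\sup a^r$; $t^r, f^r$ the unions).  Clauses (1)--(4) are routine, and clause (5) is immediate from (5) for $p_0$ or $p_1$ whenever the data in question is entirely contained in one of them.

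The main obstacle is the ``mixed'' case for clause (5) at $r$, in which (say) $\alpha\in(t^\eta)^{p_0}\setminus(t^\eta)^{p_1}$ and $\beta\in(t^\eta)^{p_1}\setminus(t^\eta)^{p_0}$ but the $\nu$-coordinates happen to coincide so that the antecedent of (5) is satisfied in $r$.  To handle this I use the compatibility hypothesis: pick any common extension $s\in Q(w)$ of $p_0,p_1$.  All the mixed data is present in $s$, the $f$-values in $s$ agree with those in $r$, and the same $y$ is still harmonious with $A^s$ past $\eta$ (since $A^s$ extends $A^r$), so clause (5) for $s$ yields $y\concat(w,B^s)\forces\phi$, where $\phi$ is the relevant equivalence.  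To transfer this to $y\concat(w,B^r)\forces\phi$ I apply Radin's Lemma (\ref{truthvalue-reduction}) to both $y\concat(w,B^r)$ and $y\concat(w,B^s)$: each yields an $\Rad{y}$-name for the truth value of $\phi$, and comparing them by passing to a common refinement $y\concat(w,C_r\cap C_s)$ shows that the two $\Rad{y}$-names coincide as forced by $y$.  Since $y\concat(w,B^s)\forces\phi$, this common $\Rad{y}$-name is forced by $y$ to equal $1$, which, together with the Mitchell genericity criterion (Theorem \ref{Mitchell-characterization}) ensuring that every $\Rad{w}$-generic below $y\concat(w,B^r)$ has its tail eventually in $B^s$, gives $y\concat(w,B^r)\forces\phi$.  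This is the delicate step of the argument, and the only place where the Radin-specific Prikry reduction plays an essential role.
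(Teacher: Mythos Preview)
Your argument for part (\ref{desc-seqs-in-Q-have-glbs}) is correct and matches what the paper does in the special case of a descending sequence: the key point you identify---that a single $p_{i_0}$ absorbs all the finite data needed to verify an instance of clause (5), and then $B^q\subseteq B^{p_{i_0}}$ pushes the forcing statement down---is exactly the mechanism the paper uses.

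The gap is in your well-metness argument for part (\ref{kwcompact}). You correctly isolate the obstacle: in the mixed case you know $y\concat(w,B^s)\forces\phi$ for the common refinement $s$, but you need $y\concat(w,B^r)\forces\phi$, where $B^s\subseteq B^r$. Your Prikry-reduction step does show that the reduced $\Rad{y}$-names $\name{c}_r$ and $\name{c}_s$ agree below $y$ and that $\name{c}_s$ is forced to be $1$; but the reduction of Lemma~\ref{truthvalue-reduction} is only valid below $y\concat(w,C_r)$ for some $C_r\subseteq B^r$, so you only conclude $y\concat(w,C_r)\forces\phi$. The appeal to Mitchell's criterion does not close this gap: a generic below $y\concat(w,B^r)$ may place sequences from $B^r\setminus C_r$ (or $B^r\setminus B^s$) just above $\kappa_y$, and then neither $y\concat(w,C_r)$ nor $y\concat(w,B^s)$ lies in the generic filter, so neither forces anything about that generic. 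Consequently you have not shown that the putative meet $r$ with $B^r=B^{p_0}\cap B^{p_1}$ satisfies clause (5), and hence not that $Q(w)$ is well-met. (Indeed the paper does not claim well-metness, and it is not obvious that it holds.)

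The paper circumvents this by proving $\lk$-compactness directly. Given a centred set $\setof{q_i}{i<\mu}$, fix once and for all a common refinement $s_Y$ for each finite $Y\subseteq\mu$, and set $B^*=\bigcap_{Y}B^{s_Y}$. Now when you verify an instance of clause (5), you choose a single finite $Y$ large enough that $s_Y$ already contains all the relevant data (including enough of $A$ to make $y$ harmonious with $A^{s_Y}$); clause (5) for $s_Y$ gives $y\concat(w,B^{s_Y})\forces\phi$, and since by construction $B^*\subseteq B^{s_Y}$ the implication $y\concat(w,B^*)\forces\phi$ is immediate. The point is that $B^*$ is built to sit \emph{below} every witness you might need, rather than above.
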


\begin{proof}[\ref{kwcompact-and-glbs}]

We start by proving (\ref{kwcompact}). Let $C = \setof{q_i}{i<\mu}$ be a centred subset of $Q(w)$ for some $\mu < \kappa$,  and  
fix for each finite set $Y \subseteq \mu$  some condition $s_Y\in Q(w)$ such that $s_Y \le q_i$ for all
$i \in Y$.
We will define a quadruple $(A^*, B^*, t^*, f^*)$ of the appropriate type, and prove that  
it is a condition in $Q(w)$ and forms a lower bound for $C$.

Let $\rho^* = \sup \setof{\rho^{q_i}}{i < \mu}$.  By the hypothesis that $C$ is centred it is easy to see that
there is a unique sequence $A^* = \tupof{A_\rho}{\rho < \rho^*}$ such that 
$A^* \restriction \rho^{q_i} = A^{q_i}$ for all $i < \mu$. For if $i < j < \mu$ there is some 
$p\in Q(w)$ such that $p\le q^i$, $q^j$ and hence by the definition of $\le$ (see Definition (\ref{Mforcing}))
one has that $A^p \on \rho^{q_i} = A^{q_i}$ and $A^p\on \rho^{q_j} = A^{q_j}$; 
thus either $A^{q_i}$ is an initial segment of $A^{q_j}$ or vice versa.
We write $\kappa_\rho$ for the common value of $\kappa_v$ for sequences $v \in A_\rho$,
and $a^*$ for $\setof{\kappa_\rho}{\rho<\rho^*}$. 

Let $B^* = \bigcap_{Y \in [\mu]^{<\omega}} B^{s_Y}$. By the completeness of the filter
${\mathcal F}(w)$ we see that  $B^* \in {\mathcal F}(w)$. It also follows from the
definition of $Q(w)$ that $\kappa_\rho < \kappa_v$ for every $\rho < \rho^*$ and every $v \in B^*$, for
if $\kappa_\rho \in a^*$ there is some $i<\mu$ such that $\kappa^\rho\in a^{q_i}$ and hence for all
$v\in B^{q_i}$ one has $\kappa_\rho <\kappa_v$, and thus, as $B^*\subseteq B^{q_i}$ one has $\kappa_\rho<\kappa_v$.

Let $t^* = \bigcup_{i < \mu} t^{q_i}$.  Clearly $t^* \subseteq (a^* \cap \sup(a^*)) \times \Upsilon$
and $\vert t^* \vert < \kappa$.

For each $(\eta, \alpha) \in t^*$, let  
$d^{*,\eta}_\alpha = \bigcup \setof{d^{q_i,\eta}_\alpha}{(\eta, \alpha) \in t^{q_i},\, i < \mu}.$

By the hypothesis that $C$ is centered it is easy to see that there is a unique sequence of functions
$f^* = \tupof{f^{*,\eta}_\alpha}{(\eta,\alpha) \in t^*}$ with
$\dom(f^{*,\eta}_\alpha) = d^{*,\eta}_\alpha$, and 
$f^{*,\eta}_\alpha(\zeta) = f^{q_i,\eta}_\alpha(\zeta)$ for all $\zeta \in d^{q_i,\eta}_\alpha$.

It should be clear that $(A^*, B^*, t^*, f^*)$ is a condition in
$Q^*(w)$, and that if $(A^*, B^*, t^*, f^*)$ is a condition in $Q(w)$
then it forms a lower bound for $C$.  So to finish the proof, we must
verify clause (5) in the definition of $Q(w)$.

Let $\eta\in a^* \cap \sup(a^*)$ with $\eta < \sup(a^*)$, let
$\alpha$, $\beta \in t^{*,\eta}$, let $y = \tupof{(u_i, B_i)}{i \le  n}$ 
be a lower part harmonious with $A^*$ past $\eta$, and let
$\zeta$, $\zeta'\in d^{*,\eta}_\alpha\cap d^{*,\eta}_\beta$ be such
that $f^{*,\eta}_\alpha(\zeta)=f^{*,\eta}_\beta(\zeta) \ne
f^{\eta,*}_\alpha(\zeta')=f^{*,\eta}_\beta(\zeta')$.

Choose a finite set $Y$ large enough that 
\begin{itemize}
\item There is $i_0 \in Y$ with  $\eta \in a^{q_{i_0}}$,  $\eta < \sup(a^{q_{i_0}})$ and $\kappa_{u_n} \le \sup(a^{q_{i_0}})$.
\item There is $i_1 \in Y$ with $(\eta, \alpha) \in t^{q_{i_1}}$, and $\zeta \in d^{q_{i_1}, \eta}_\alpha$. 
\item There is $i_2 \in Y$ with $(\eta, \beta) \in t^{q_{i_2}}$, and $\zeta \in d^{q_{i_2}, \eta}_\beta$. 
\item There is $i_3 \in Y$ with $(\eta, \alpha) \in t^{q_{i_3}}$, and $\zeta' \in d^{q_{i_3}, \eta}_\alpha$. 
\item There is $i_4 \in Y$ with $(\eta, \beta) \in t^{q_{i_4}}$, and $\zeta' \in d^{q_{i_4}, \eta}_\alpha$. 
\end{itemize}

Now consider the condition $s_Y$. By the construction 
$A^{q_{i_0}}$ is an initial segment of $A^{s_Y}$ and so by the choice of $i_0$ 
one has that  $y$ is harmonious with $A^{s_Y}$ past $\eta$.
Since $s_Y \in Q(w)$ it follows easily that 
\[
y\concat (w,B^{s_Y})\forces_{\Rad{w}} \hbox{``}\sss \zeta \ssss\name{E}_\alpha \ssss\zeta'
\longleftrightarrow \zeta\ssss\name{E}_\beta \ssss\zeta' \sss\hbox{''}.
\]
By construction $B^* \subseteq B^{s^Y}$, and so $y\concat (w,B^*)$ also
forces this equivalence and we are done. 

In order to see that (\ref{desc-seqs-in-Q-have-glbs}) holds, suppose that 
$C$ as in the proof of (\ref{kwcompact}) is a descending sequence of conditions.
Then $(A^*,B^*,t^*,f^*)$ is a greatest lower bound for $C$.
\end{proof}
\vskip6pt

\begin{corollary}\label{kwdirclosed-and-cntblycmpt} $Q(w)$ is both $\lk$-directed closed and countably compact. 
\end{corollary}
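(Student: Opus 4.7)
The corollary is essentially a direct consequence of Lemma \ref{kwcompact-and-glbs}(\ref{kwcompact}), which established that $Q(w)$ is $\lk$-compact. The plan is simply to unpack the definitions given in the Notation section of the introduction and apply the preceding lemma.

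First I would observe that a directed subset of $Q(w)$ is automatically centred: if $X \subseteq Q(w)$ is directed and $Y \in [X]^{<\omega}$ then by directedness $Y$ has a lower bound in $X$, hence in $Q(w)$, so $X$ is centred. Therefore any directed subset of size $\lk$ is centred of size $\lk$, and Lemma \ref{kwcompact-and-glbs}(\ref{kwcompact}) supplies a lower bound in $Q(w)$. This gives $\lk$-directed closure.

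Next, since $\kappa$ is measurable (indeed $\lh(w) > 1$ forces $\kappa_w$ to carry a measure), $\kappa > \omega_1$, so every countable subset of $Q(w)$ has size strictly less than $\kappa$. Thus any countable centred subset is in particular a centred subset of size $\lk$, and Lemma \ref{kwcompact-and-glbs}(\ref{kwcompact}) again provides a lower bound in $Q(w)$. This is precisely the statement that $Q(w)$ is $\lomegaone$-compact, i.e., countably compact.

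There is no real obstacle here; the substantive work was already done in proving $\lk$-compactness of $Q(w)$ in Lemma \ref{kwcompact-and-glbs}. The only thing worth being careful about is the trivial implication "directed $\Rightarrow$ centred", which follows straight from the definitions in the Notation section.
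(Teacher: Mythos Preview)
Your proposal is correct and matches the paper's approach: the corollary is stated without proof, as it follows immediately from Lemma~\ref{kwcompact-and-glbs}(\ref{kwcompact}) together with the observations (recorded in the Notation section) that $\lk$-compactness implies $\lk$-directed closure and that countable compactness is just $\lomegaone$-compactness, a special case of $\lk$-compactness since $\kappa$ is uncountable.
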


The following result is useful in the proof, which appears in the next section,  that $Q(w)$ has the stationary $\kappa^+$-chain condition.

\begin{lemma}\label{squaring-off-the-ds} Let $p=(A,B,t,f)\in Q(w)$. Let
${\mathcal t}  = \bigcup \setof{t^{\eta}}{\eta\in a^p}$ and let
${\mathcal d} = \bigcup \setof{d^\eta_\alpha}{\eta\in a^p \sss\&\sss \alpha \in t^\eta}$.
Then there is $q\in Q(w)$ with $q\le p$, $A^q=A^p$, $B^q=B^p$, $t^\eta={\mathcal t}$ 
for all $\eta\in a^p$ and $(d^\eta_\alpha)^q = {\mathcal d}$ for all $\eta\in a^p$ and 
$\alpha\in t^\eta$.
\end{lemma}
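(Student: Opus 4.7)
The plan is to reduce to an iterated application of Lemma (\ref{one-point-ext}) and the greatest lower bound part of Lemma (\ref{kwcompact-and-glbs}). Define the set of ``missing'' data
\[
\mathcal{E} = \setof{(\eta,\alpha,\zeta)}{\eta\in a^p\cap\sup(a^p),\ \alpha\in \mathcal{t},\ \zeta\in\mathcal{d},\ \hbox{and either } \alpha\notin (t^\eta)^p \hbox{ or } \zeta\notin (d^\eta_\alpha)^p }.
\]
Since $|a^p|<\kappa$, $|\mathcal{t}|\le\sum_{\eta\in a^p}|(t^\eta)^p|<\kappa$, $|\mathcal{d}|<\kappa$, and $\kappa=\kappa_w$ is inaccessible (being measurable), we have $|\mathcal{E}|<\kappa$.

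First I would enumerate $\mathcal{E}$ as $\tupof{(\eta_i,\alpha_i,\zeta_i)}{i<\mu}$ for some $\mu<\kappa$ and then build a descending sequence $\tupof{p_i}{i\le\mu}$ in $Q(w)$ as follows: set $p_0 = p$; at a successor step $i+1$, apply Lemma (\ref{one-point-ext}) to $p_i$ with the triple $(\eta_i,\alpha_i,\zeta_i)$ to produce $p_{i+1}\le p_i$ with $A^{p_{i+1}}=A^{p_i}$, $B^{p_{i+1}}=B^{p_i}$, $(t^{\eta_i})^{p_{i+1}} = (t^{\eta_i})^{p_i}\cup\set{\alpha_i}$ and $(d^{\eta_i}_{\alpha_i})^{p_{i+1}} = (d^{\eta_i}_{\alpha_i})^{p_i}\cup\set{\zeta_i}$; at a limit step $i\le\mu$, let $p_i$ be the greatest lower bound furnished by Lemma (\ref{kwcompact-and-glbs})(\ref{desc-seqs-in-Q-have-glbs}). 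The construction is legitimate because each $p_i$ is a condition in $Q(w)$, and Lemma (\ref{kwcompact-and-glbs})(\ref{desc-seqs-in-Q-have-glbs}) applies to sequences of length less than $\kappa$.

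I would then set $q = p_\mu$ and verify the conclusions. By a straightforward induction both $A^{p_i}=A^p$ and $B^{p_i}=B^p$ are preserved at successors (by Lemma (\ref{one-point-ext})) and at limits (by the explicit construction of the greatest lower bound in the proof of Lemma (\ref{kwcompact-and-glbs})), so $A^q=A^p$ and $B^q=B^p$. For each $\eta\in a^p\cap\sup(a^p)$, every $\alpha\in\mathcal{t}$ eventually enters $(t^\eta)^{p_i}$ (when we process the first triple of the form $(\eta,\alpha,\zeta)$), and every $\zeta\in\mathcal{d}$ eventually enters $(d^\eta_\alpha)^{p_i}$; moreover no pairs outside $(a^p\cap\sup(a^p))\times\mathcal{t}$ and no ordinals outside $\mathcal{d}$ are ever added. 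Hence $(t^\eta)^q=\mathcal{t}$ and $(d^\eta_\alpha)^q=\mathcal{d}$ for all relevant $\eta,\alpha$, as required. The conclusion for $\eta=\sup(a^p)$ in the case where $\sup(a^p)\in a^p$ is vacuous, since $t^\eta$ is only defined for $\eta\in a\cap\sup(a)$.

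The only point that requires any thought is clause (5) in Definition (\ref{qstar-forcing}), but this comes for free: Lemma (\ref{one-point-ext}) is stated as producing a condition in $Q(w)$ (the verification there relies on choosing the fresh parameter $\nu$ so that the offending equality in the hypothesis of (5) cannot hold for the newly-added datum), and the greatest lower bound construction at limits inherits clause (5) from the tail of the sequence exactly as in the proof of Lemma (\ref{kwcompact-and-glbs})(\ref{kwcompact}). Thus no new obstacle arises: the lemma follows by bookkeeping on top of the one-point extension and closure results already established.
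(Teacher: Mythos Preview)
Your proof follows the same strategy as the paper's: iterate Lemma (\ref{one-point-ext}) over the missing data and use Lemma (\ref{kwcompact-and-glbs})(\ref{desc-seqs-in-Q-have-glbs}) at limit stages. The paper organises this in two phases---first enlarging every $t^\eta$ to $\mathcal{t}$ in one step (forming an intermediate condition $r$ with empty new $f$-coordinates), then iterating the one-point extension to fill in the $d^\eta_\alpha$---whereas you fold both into a single enumeration of triples. That is a harmless streamlining.

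There is, however, one small gap in your version. If $\mathcal{d}=\emptyset$ but some $(t^\eta)^p\ne\mathcal{t}$ (which is possible: nothing prevents $t^p$ from being nonempty while every $f^\eta_\alpha$ is the empty function), then your set $\mathcal{E}$ is empty since there is no $\zeta\in\mathcal{d}$ with which to form a triple. Your construction then returns $q=p$, and $(t^\eta)^q\ne\mathcal{t}$, so the conclusion fails. The paper's two-phase organisation avoids this precisely because the intermediate condition $r$ already has $(t^\eta)^r=\mathcal{t}$ before the $d$-extensions begin. You can patch your argument the same way, or simply observe that in this degenerate case the required $q$ is obtained directly by adjoining the missing pairs $(\eta,\alpha)$ to $t^p$ with empty $f$-values.
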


\begin{proof}[\ref{squaring-off-the-ds}]
As in the second paragraph of the proof of Lemma (\ref{one-point-ext}), we have that
$r=\tup{A^p,B^p,t^p\cup\setof{ (\eta,\alpha) }{\eta\in a^p \sss\sss\&\sss\sss \alpha\in {\mathcal t}\setminus t^{\eta}}, 
f^p\concat \tupof{\emptyset}{\eta\in a^p \sss\sss\&\sss\sss \alpha\in {\mathcal t}\setminus t^{\eta}}}$ 
is a condition in $Q(w)$ which refines $p$ and is such that $a^r=a^p$ and
$(t^{\eta})^{r}={\mathcal t}$ for all $\eta\in a^r$.

Now enumerate $D =\setof{(\eta,\alpha,\zeta)}{\eta\in a^p \sss\sss\&\sss\sss \alpha\in {\mathcal t} \sss\sss\&\sss\sss
\zeta\in {\mathcal d} \setminus d^{\eta}_{\alpha}}$ as $\setof{(\eta_i,\alpha_i,\zeta_i)}{i<\gamma}$ for some 
$\gamma\le \card{a^p\times {\mathcal t} \times {\mathcal d}}$. 

If $D=\emptyset$ then $p$ itself satisfies the properties required for $q$.

Otherwise, carry out an induction on $\gamma$, using Lemma (\ref{one-point-ext}) at 
initial and successor stages and Lemma (\ref{kwcompact-and-glbs}.(\ref{desc-seqs-in-Q-have-glbs}))
 at limits, 
to construct a descending sequence of conditions
$\tupof{r_i}{i<\gamma}$ with $r_0\le r$ and such that $\zeta_i \in (d^{\eta_i}_{\alpha_i})^{r_i}$ for each $i<\gamma$.

Finally, if $\gamma$ is a limit ordinal use Lemma (\ref{kwcompact-and-glbs}.(\ref{desc-seqs-in-Q-have-glbs})) to choose a lower bound 
$q$ for $\tupof{r_i}{i< \gamma}$, and if $\gamma=\gamma'+1$ is a successor set $q=r_{\gamma'}$.\break $\vphantom{WW}$
\end{proof} 

We now introduce the notion of a \emph{weakening} of a condition.

\begin{definition} \label{rho-star-weakening}
Let $p$ be a condition in $Q(w)$ such that  $\rho^p$ is a successor ordinal and let $\rho^* < \rho^p$. 
We define the {\em $\rho^*$-weakening of $p$} to be the quadruple $r = (A^r, B^r, t^r, f^r)$ where
$A^r = A^p \restriction \rho^*$, $a^r = \setof{ \kappa^p_\rho }{ \rho < \rho^* }$,
$B^r = B^p \cup \bigcup \setof{ A^p_\rho }{ \rho^* \le \rho < \rho^p }$,
$t^r = \setof{  (\eta, \alpha) \in t^p }{ \eta \in a^r \cap \sup(a^r) }$ and 
$f^r = \tupof{ f^{p,\eta}_\alpha}{ (\eta, \alpha) \in t^r }$.
\end{definition}

\begin{lemma} \label{abacus1}
Let $p \in Q(w)$ with $\rho^p$ a successor ordinal, let $\rho^* < \rho^p$ 
and let $r$ be the $\rho^*$-weakening of $p$. Then
\begin{enumerate}
\item $r \in Q(w)$,
\item $p \le r$,
\item \label{thirdbit}  and for every $\eta \in a^r$ and every lower part $y$ which is harmonious
with $A^r$ past $\eta$, 
if $f^{p,\eta}_\alpha(\zeta) =  f^{p,\eta}_\beta(\zeta) \neq f^{p,\eta}_\alpha(\zeta') =  f^{p,\eta}_\beta(\zeta')$,
then $y \concat (w, B^r) \forces \hbox{``}\sss 
\zeta \name{E}_{\alpha} \zeta'  \Longleftrightarrow \zeta \name{E}_{\beta} \zeta' .\sss\hbox{''}$
\end{enumerate}  
\end{lemma}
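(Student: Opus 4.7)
The strategy is to verify the three conclusions in order, with most of the effort going into (3). For (1), one checks the five defining clauses of $Q(w)$ from Definition \ref{qstar-forcing} applied to $r$: clauses (1)--(4) are immediate from the construction, noting in particular that $B^r \supseteq B^p$ guarantees $B^r \in {\mathcal F}(w)$, that $B^r$ is closed under restrictions since both $B^p$ and each $A^p_\rho$ are, and that $\sup(a^r) < \kappa^p_{\rho^*} \le \min\setof{\kappa_v}{v \in B^r}$; clause (5) is precisely the statement of part (3). For (2) one unpacks the ordering of $Q(w)$ from Definition \ref{qstar-forcing}: the crucial inclusion $[A^p, B^p] \subseteq [A^r, B^r]$ holds because any $\kappa$-sequence extending $A^p$ with tail contained in $B^p$ also extends $A^r = A^p \restriction \rho^*$ with tail contained in $B^p \cup \bigcup_{\rho^* \le \rho < \rho^p} A^p_\rho = B^r$; the inclusions $t^r \subseteq t^p$ and $f^{r,\eta}_\alpha = f^{p,\eta}_\alpha$ on $t^r$ are immediate.

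For (3), fix $\eta \in a^r$, $\alpha, \beta, \zeta, \zeta'$ as in the hypothesis, and a lower part $y$ which is harmonious with $A^r$ past $\eta$. The first step is the observation that $y$ is then also harmonious with $A^p$ past $\eta$, since $\bigcup_{\rho < \rho^*} A^r_\rho \subseteq \bigcup_{\rho < \rho^p} A^p_\rho$. Clause (5) of the definition of $Q(w)$ applied to $p$ therefore yields
\[
y \concat (w, B^p) \forces_{\Rad{w}} \hbox{``}\sss\zeta \ssss \name{E}_\alpha \ssss \zeta' \Longleftrightarrow \zeta \ssss \name{E}_\beta \ssss \zeta'\sss\hbox{''},
\]
and the task is to transfer this forcing statement to the weaker condition $y \concat (w, B^r)$.

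The main obstacle is this asymmetry: because $B^p \subseteq B^r$, the condition $y \concat (w, B^p)$ is in fact a direct extension of $y \concat (w, B^r)$, so the forcing relation does not transfer by monotonicity. The plan is to argue by contradiction. Suppose some $q \le y \concat (w, B^r)$ forces the negation, and write $q = y^q \concat (w, D)$ with $y^q$ a Radin lower part extending $y$ and $D \subseteq B^r$. In the easy case, where $y^q$ adds no sequences beyond those already in $y$, the lower part $y^q$ is a direct extension of $y$; the condition $y^q \concat (w, D \cap B^p)$ is then a valid common refinement of $q$ and $y \concat (w, B^p)$ (with $D \cap B^p \in {\mathcal F}(w)$ and disjoint from $V_{\kappa_y^+}$, since $\min\setof{\kappa_v}{v \in B^p} > \sup(a^p) \ge \kappa_y$), forcing both the equivalence and its negation -- contradiction.

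In the remaining case $y^q$ adds sequences lying in $B^r \setminus B^p = \bigcup_{\rho^* \le \rho < \rho^p} A^p_\rho$, so $q$ is not automatically compatible with $y \concat (w, B^p)$. The approach is to refine $q$ further so that its lower part becomes harmonious with $A^p$ past $\eta$, and then re-apply clause (5) of $p$ at the refined lower part. Each such added sequence $v \in A^p_\rho$ already satisfies $\kappa_v > \eta$ and $v \in \bigcup_{\rho' < \rho^p} A^p_{\rho'}$; what remains is to shrink the measure-one set beneath $v$ into $\bigcup_{\rho' < \rho^p} A^p_{\rho'}$, which can be done by appealing to the geometric (Mitchell) criterion from Theorem \ref{Mitchell-characterization} in the spirit of Lemma \ref{new-harmony-lemma}. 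Once the refined lower part $y^{q^*}$ is harmonious with $A^p$ past $\eta$, clause (5) of $p$ applied at $y^{q^*}$ yields $y^{q^*} \concat (w, B^p) \forces$ the equivalence, and the easy-case compatibility argument then produces the required contradiction.
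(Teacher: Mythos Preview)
Your approach to (1) and (2) is fine, and your overall strategy for (3) --- argue by contradiction, take an extension forcing the negation, and show it is compatible with something forcing the equivalence --- matches the paper's. But the execution of the ``hard case'' has a genuine gap.

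First, the appeal to the Mitchell criterion (Theorem~\ref{Mitchell-characterization}) and Lemma~\ref{new-harmony-lemma} is misplaced: those results concern a \emph{generic} sequence, whereas here you are working with an arbitrary condition $q \le y \concat (w, B^r)$ inside the poset. There is no generic filter available, and in particular no reason to expect that $\bigcup_{\rho < \rho^p} A^p_\rho \cap V_{\kappa_v} \in {\mathcal F}(v)$ for a sequence $v$ appearing in $y^q$. Fortunately no shrinking is needed: if $(v, C)$ is a pair in $y^q$ with $v \in A^p_\rho$ for some $\rho^* \le \rho < \rho^p$, then $C \subseteq B^r$ and every element of $C$ has critical point $< \kappa_v \le \kappa^p_{\rho^p - 1}$; since every element of $B^p$ has critical point $> \kappa^p_{\rho^p - 1}$ (this is exactly where the hypothesis that $\rho^p$ is a successor is used), we get $C \cap B^p = \emptyset$ and hence $C \subseteq \bigcup_{\rho^* \le \rho < \rho^p} A^p_\rho$ automatically.

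Second, and more seriously, your plan to refine $y^q$ to some $y^{q^*}$ which is harmonious with $A^p$ past $\eta$ cannot succeed in general. The lower part $y^q$ may contain pairs $(v, C)$ with $v \in B^p$; such $v$ have $\kappa_v > \max(a^p)$ and therefore $v \notin \bigcup_{\rho < \rho^p} A^p_\rho$, so the harmoniousness condition fails at $(v, C)$ and no amount of shrinking $C$ will fix this. The paper handles this by decomposing $y' = y_0 \concat y_1 \concat y_2$, where $y_0 \in \Rad{y}$, $y_1$ collects pairs with first coordinate in $\bigcup_{\rho^* \le \rho < \rho^p} A^p_\rho$, and $y_2$ collects pairs with first coordinate in $B^p$. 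Only $y_0 \concat y_1$ is shown to be harmonious with $A^p$ past $\eta$ (using the observation in the previous paragraph for $y_1$); clause~(5) for $p$ is then applied at $y_0 \concat y_1 \concat (w, B^p)$, and the pairs in $y_2$ --- after shrinking their measure-one sets into $B^p$, again using that $\rho^p$ is a successor --- are absorbed as an extension below $(w, B^p)$. This gives $y' \concat (w, B) \le y_0 \concat y_1 \concat (w, B^p)$ and the desired contradiction.
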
 

\begin{proof}[\ref{abacus1}]It is easy to see that $r \in Q^*(w)$, and that $p$ refines $r$ in
$Q^*(w)$. Since $f^{p,\eta}_\alpha = f^{r,\eta}_\alpha$ when $(\eta, \alpha) \in t^r$,
Clause (\ref{thirdbit}) in the conclusion  implies that $r$ satisfies
Clause (5) in Definition (\ref{qstar-forcing}) (and hence that $r$ is in $Q(w)$). It will therefore 
suffice to verify Clause (\ref{thirdbit}).

Suppose for a contradiction that $y \concat (w, B^r)$ does not force the desired equivalence, then
there is an extension $y' \concat (w, B)$ forcing that the equivalence  is false.
Shrinking $B$ if necessary we may assume that $B \subseteq B^p$.

We will break up $y'$ as $y_0 \concat y_1 \concat y_2$, where
$y_0 \in \Rad{y}$, 
$y_1$ consists of pairs $(u, C)$ such that
$u$ is drawn from $\bigcup \setof{ A^p_\rho }{ \rho^* \le \rho < \rho^p }$,
and  $y_2$ consists of pairs $(u, C)$ such that  $u$ is drawn from $B^p$.
This is possible because, by the definition of extension in $\Rad{w}$,
all pairs $(u, C)$ in $y'$ with $\kappa_u > \kappa_y$ have $u \in B^r$. 

We claim that  
$y_0 \concat y_1$ is harmonious with $A^p$ above $\eta$.
An appeal to Lemma (\ref{harmonious-refinement}) shows that $y_0$ is harmonious with $A^r$ above $\eta$,
which handles the pairs appearing in $y_0$.
Let $(u, C)$ be a pair appearing in $y_1$, and  observe that 
$\eta \in a^r = \setof{ \kappa^p_\rho}{ \rho < \rho^* }$,  
while $u\in \setof{ A^p_\rho }{ \rho^* \le \rho < \rho^p }$,
so that  $\eta < \kappa_{\rho^*} \le \kappa_u$.
By the definition of the ordering of $\Rad{w}$ we have $C \subseteq B^r$, 
and since $\kappa_u = \kappa_\rho$ for some $\rho < \rho^p$ in fact
$C \subseteq \bigcup \setof{ A^p_\rho }{ \rho^* \le \rho < \rho^p }$. 
In particular as $\eta < \kappa_{\rho^*}$ we have that $C \cap V_{\eta  +1 } = \emptyset$.

We claim that by shrinking measure one sets appearing in $y_2$ (if necessary)
we may assume that 
\[ y' \concat (w, B) = y_0\concat y_1 \concat y_2 \concat (w,B) \le y_0 \concat y_1 \concat (w, B^p). \]
For pairs $(u, C)$ appearing in $y_2$ we have that $u \in B^p$ and
$C \subseteq B^r = B^p \cup \bigcup \setof{ A^p_\rho }{ \rho^* \le \rho < \rho^p }$; 
since $\rho^p$ is a successor ordinal we have that 
$\kappa_v \le \kappa_{\rho^p - 1} < \kappa_u$ for all $v \in \bigcup \setof{ A^p_\rho }{ \rho^* \le \rho < \rho^p }$,
so that we may shrink $C$ to obtain a pair $(u, D)$ with $D \subseteq B^p$.

Since $y_0 \concat y_1$ is harmonious with $A^p$ above $\eta$ and $p$ is a condition,
$y_0 \concat y_1 \concat (w, B^p)$ forces the equivalence 
$\zeta \name{E}_{\alpha} \zeta'  \Longleftrightarrow \zeta \name{E}_{\beta} \zeta'$.
This is a contradiction as $y' \concat (w, B)$ is an extension 
of $y_0 \concat y_1 \concat (w, B^p)$ and forces that the same equivalence
fails.
\end{proof}

\begin{corollary} \label{abacus2} 
Let $p$ and $q$ be conditions in $Q(w)$ with $p \le q$ and $\rho_p$
a successor ordinal. Let $\rho^q \le \rho^* < \rho^p$ and let $r$ be the
$\rho^*$-weakening of $p$. Then $p \le r \le q$. 
\end{corollary}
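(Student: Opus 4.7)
The plan is straightforward, since Lemma \ref{abacus1} already delivers both $r \in Q(w)$ and $p \le r$; only $r \le q$ remains. I do not anticipate any real obstacle here, as the content is a mechanical unfolding of the orderings of $M(w)$ and $Q(w)$, essentially a bookkeeping exercise with the indices $\rho^q \le \rho^* < \rho^p$. The three things to check are the $M(w)$-clause $[A^r,B^r]\subseteq [A^q,B^q]$, the inclusion $t^q \subseteq t^r$, and pointwise extension of the functions $(f^\eta_\alpha)^q \subseteq (f^\eta_\alpha)^r$ for $(\eta,\alpha)\in t^q$.

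First I would handle the $M(w)$-clause. From $p \le q$ one reads off that $A^q = A^p \restriction \rho^q$ and $B^p \cup \bigcup_{\rho \in [\rho^q,\rho^p)} A^p_\rho \subseteq B^q$. Since $A^r = A^p \restriction \rho^*$ and $\rho^q \le \rho^* < \rho^p$, the sequence $A^q$ is an initial segment of $A^r$. For the measure-one set containment, the definition of the $\rho^*$-weakening gives $B^r \cup \bigcup_{\rho \in [\rho^q,\rho^*)} A^p_\rho = B^p \cup \bigcup_{\rho \in [\rho^q,\rho^p)} A^p_\rho$, which is contained in $B^q$ by the hypothesis $p \le q$; together these yield $[A^r,B^r] \subseteq [A^q,B^q]$.

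Next I would address the $t$ and $f$ clauses. From $p \le q$ we have $t^q \subseteq t^p$ and $(f^\eta_\alpha)^q \subseteq (f^\eta_\alpha)^p$ for every $(\eta,\alpha) \in t^q$. For any such $(\eta,\alpha)$, clause (2) in the definition of $Q(w)$ applied to $q$ gives $\eta \in a^q \cap \sup(a^q)$. Because $\rho^q \le \rho^*$ we have $a^q \subseteq a^r$ and hence $\sup(a^q) \le \sup(a^r)$, so $\eta \in a^r \cap \sup(a^r)$; thus $(\eta,\alpha)$ belongs to the set defining $t^r = \setof{(\eta,\alpha)\in t^p}{\eta \in a^r \cap \sup(a^r)}$. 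This shows $t^q \subseteq t^r$, and by construction $(f^\eta_\alpha)^r = (f^\eta_\alpha)^p \supseteq (f^\eta_\alpha)^q$ on this set.

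Combining the three checks gives $r \le q$, and with $p \le r$ from Lemma \ref{abacus1} the chain $p \le r \le q$ is established.
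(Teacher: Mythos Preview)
Your proposal is correct and follows essentially the same approach as the paper: invoke Lemma~\ref{abacus1} for $p\le r$, then verify $r\le q$ by checking the $M(w)$-part, the inclusion $t^q\subseteq t^r$, and the pointwise extension of the $f$-components. The paper's proof simply declares these checks ``routine'' and records the key identity $B^r\cup\bigcup\setof{A^r_\rho}{\rho^q\le\rho<\rho^r}=B^p\cup\bigcup\setof{A^p_\rho}{\rho^q\le\rho<\rho^p}\subseteq B^q$, whereas you spell out in addition why $(\eta,\alpha)\in t^q$ forces $\eta\in a^r\cap\sup(a^r)$; this extra detail is welcome but not a different argument.
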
 

\begin{proof}[\ref{abacus2}] 
By Lemma (\ref{abacus1}) we already know that $p \le r$ so we need only to check that
$r \le q$. It is routine to check that 
$A^r \restriction \rho^q = A^p \restriction \rho^q = A^q$, 
$t^r \supseteq t^q$, $f^{r, \eta}_\alpha = f^{p, \eta}_\alpha \supseteq f^{q, \eta}_\alpha$
for all $(\eta, \alpha) \in t^q$, and finally
$B^r \cup \bigcup \setof{ A^r_\rho }{ \rho^q \le \rho < \rho^r }
= B^p \cup\bigcup \setof{ A^p_\rho }{ \rho^q \le \rho < \rho^p } \subseteq B^q$.
\end{proof}

\section{$Q(w)$ has the stationary $\kappa_w^+$--chain condition}\label{chain}

For the duration of this section we fix an ultrafilter sequence $w$ with $\lh(w) > 1$,  and 
let $\kappa = \kappa_w$. As in the previous section, let ${\mathcal T}$ be a $\kappa^+$-Kurepa tree, 
let $\tupof{b_\alpha}{\alpha<\Upsilon}$ be an enumeration of a set of branches through ${\mathcal T}$ and
let $\tupof{\name{E}_{\alpha}}{\alpha <\Upsilon }$ be a list of canonical
$\Rad{w}$-names for binary relations on $\kappa^+$. Let $Q = Q(w)$.
\begin{notation}\label{enum-levels-of-Kurepa-trees}
For each $\alpha<\kappa^+$ let ${\mathcal T}_\alpha=\setof{x\in {\mathcal T}}{\lh(x)=\alpha}$ and let
${\mathcal e}_\alpha:{\mathcal T}_\alpha\longrightarrow\card{{\mathcal T}_\alpha}$ be a 
bijection enumerating ${\mathcal T}_\alpha$. 
\end{notation}
As ${\mathcal T}$ is a $\kappa^+$-Kurepa tree one has for $\alpha<\kappa^+$ that 
$\card{{\mathcal T}_\alpha}\le\kappa$.\footnote{ We remark that we use Kurepa trees in the definition
of the $Q(w)$, rather than arbitrary $\kappa^+$-trees with $\Upsilon$-many branches, 
because the fact that each ${\mathcal T}_{\alpha}$ has size at most $\kappa$ 
saves us a little work in the proof of the $\kappa^+$-stationary chain condition given in this section -- 
see the definitions of the $g^{\eta,i}$ and $X^{\eta,i}$ and their roles in the definition of the function $h$ below.
In \cite{Cummings-Dzamonja-Morgan} an analogous chain condition argument is given without 
this nicety.}

\vskip12pt
\begin{notation} For $\alpha<\beta<\Upsilon$ let $\Delta(\alpha,\beta)$
be the least $\zeta$ such that $b_\alpha\on \zeta \ne b_\beta \on \zeta$. 
\end{notation}

\begin{notation} If $p= (A,B',t,f) \in Q$ and there are ${\mathcal t}\in [\Upsilon]^{<\kappa}$ and
${\mathcal d}\in [\kappa^+]^{<\kappa}$ such that ${\mathcal t}=t^{\eta}$ for all $\eta\in a$ and
${\mathcal d}=\dom(f^{\eta}_\alpha)$ for all $\eta\in a$ and  $\alpha\in {\mathcal t}$, 
let  $Z^p$ be the set of all tuples $(\eta, y, x,\zeta, \zeta', \delta, \delta')$ where
$\eta\in a \cap \sup(a)$, $y$ is a lower part harmonious with $A$ past $\eta$,  $x\in {\mathcal T}$, 
$\zeta<\zeta'<\kappa^+$, $\zeta, \zeta'\in {\mathcal d}$ and $\delta$, $\delta'<\kappa$.
\end{notation}

\begin{lemma}\label{use-of-truthvalue-reduction} Suppose $p'= (A,B',t,f) \in Q$ and there are 
${\mathcal t}\in [\Upsilon]^{<\kappa}$ and
${\mathcal d}\in [\kappa^+]^{<\kappa}$ such that ${\mathcal t}=t^{\eta}$ for all $\eta\in a$ and
${\mathcal d}=\dom(f^{\eta}_\alpha)$ for all $\eta\in a$ and  $\alpha\in {\mathcal t}$. 

Then there is some $B\in {\mathcal F}(w)$ with $B\subseteq B'$, so that $p=(A,B,t,f)\le p'$, and such that 
whenever $z=(\eta, y, x,\zeta, \zeta', \delta, \delta')\in Z^p$ and $\alpha\in {\mathcal t}$ is such that  $x=b_\alpha\on\zeta'$, 
$f^{\eta}_\alpha(\zeta) = (x \restriction \zeta, \delta)$ and $f^{\eta}_\alpha(\zeta') = (x, \delta')$
there is an $\Rad{y}$-name $\name{\sigma}_z$ such that $y \concat (w, B)$ forces that
$\name{\sigma}_z$ is the truth value of the assertion that $\zeta \name{E}_\alpha \zeta'$.
\end{lemma}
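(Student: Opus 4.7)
The plan is to apply Radin's truth-value reduction lemma (Lemma \ref{truthvalue-reduction}) once for each relevant $z \in Z^{p'}$, with a chosen matching $\alpha(z)$, and then to intersect the resulting ${\mathcal F}(w)$-shrinkings of $B'$. A preliminary observation is that the choice of matching $\alpha$ is immaterial. If $\alpha, \beta \in {\mathcal t}$ both match $z = (\eta, y, x, \zeta, \zeta', \delta, \delta')$, then $f^\eta_\alpha$ and $f^\eta_\beta$ agree at $\zeta$ with common value $(x \restriction \zeta, \delta)$ and at $\zeta'$ with common value $(x, \delta')$, and these two values differ because $\zeta < \zeta' = \lh(x)$. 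Clause (5) of the definition of $Q(w)$, applied to $p'$, therefore yields $y \concat (w, B') \forces \hbox{``}\zeta \name{E}_\alpha \zeta' \leftrightarrow \zeta \name{E}_\beta \zeta'\hbox{''}$, so it suffices to fix one $\alpha(z)$ per $z$.

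The key count is $|Z^{p'}_{\mathrm{relevant}}| \le \kappa$. The components $\eta \in a$, $\zeta, \zeta' \in {\mathcal d}$, and $\delta, \delta' < \kappa$ together contribute at most $\kappa$. For $x$ to match some $\alpha \in {\mathcal t}$ one needs $x \in {\mathcal T}_{\zeta'}$, and the Kurepa-tree hypothesis gives $|{\mathcal T}_{\zeta'}| \le \kappa$. A lower part $y$ harmonious with $A$ past $\eta$ is a finite sequence of pairs whose first coordinates come from a set of cardinality less than $\kappa$ (either bounded below $\eta$, or drawn from $\bigcup_\rho A_\rho$) and whose second coordinates are subsets of such bounded sets; since $\kappa$ is strongly inaccessible there are fewer than $\kappa$ such lower parts. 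For each relevant $z$ I would then apply Lemma \ref{truthvalue-reduction} below $y \concat (w, B' \setminus V_{\kappa_y^+})$ to the $\Rad{w}$-name for the truth value of $\zeta \name{E}_{\alpha(z)} \zeta'$, obtaining a measure-one set $C_z \subseteq B' \setminus V_{\kappa_y^+}$ and an $\Rad{y}$-name $\name{\sigma}_z$.

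Finally I would set $D_z = C_z \cup (B' \cap V_{\kappa_y^+}) \in {\mathcal F}(w)$ (adjoining the bounded part of $B'$ back in, so that the intersection still lies below $B'$) and $B = \bigcap_z D_z$. By $\kappa$-completeness, $B \in {\mathcal F}(w)$, and $p = (A, B, t, f) \le p'$ lies in $Q(w)$ because clause (5) for $p$ is inherited from clause (5) for $p'$ under the refinement $y \concat (w, B) \le y \concat (w, B')$. For each relevant $z$ one has $B \setminus V_{\kappa_y^+} \subseteq C_z$, so $y \concat (w, B)$ extends $y \concat (w, C_z)$ and forces $\name{\sigma}_z$ to equal the truth value of $\zeta \name{E}_{\alpha(z)} \zeta'$; by the preliminary observation this is also the truth value of $\zeta \name{E}_\alpha \zeta'$ for every matching $\alpha$. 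The principal obstacle is the cardinal count: the Kurepa-tree bound $|{\mathcal T}_{\zeta'}| \le \kappa$ and the strong-limit nature of $\kappa$ (which forces there to be fewer than $\kappa$ harmonious lower parts) are exactly what is needed to place us within reach of the $\kappa$-completeness of ${\mathcal F}(w)$.
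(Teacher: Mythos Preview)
Your approach is exactly the paper's: use clause~(5) to see that the choice of matching $\alpha$ is immaterial, apply Lemma~\ref{truthvalue-reduction} once per relevant $z$, and intersect the resulting shrinkings. But your cardinal arithmetic does not quite close. You bound the number of relevant tuples by $\kappa$ (letting $\delta,\delta'$ range freely over $\kappa$ and $x$ over ${\mathcal T}_{\zeta'}$, of size $\le\kappa$) and then invoke $\kappa$-completeness of ${\mathcal F}(w)$; however, $\kappa$-completeness only guarantees closure under intersections of \emph{fewer than} $\kappa$ sets, so a bound of $\le\kappa$ is not enough.

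The repair is immediate once you notice you have overcounted. For a \emph{relevant} tuple $z=(\eta,y,x,\zeta,\zeta',\delta,\delta')$ there is by definition some $\alpha\in{\mathcal t}$ with $x=b_\alpha\on\zeta'$, $\delta=\pi_1(f^\eta_\alpha(\zeta))$ and $\delta'=\pi_1(f^\eta_\alpha(\zeta'))$; thus the components $x,\delta,\delta'$ are \emph{determined} by $(\eta,\alpha,\zeta,\zeta')$. Since $|a|,\,|{\mathcal t}|,\,|{\mathcal d}|<\kappa$ and (by your own argument using inaccessibility) there are fewer than $\kappa$ harmonious lower parts, the relevant $z$'s number strictly fewer than $\kappa$, and now $\kappa$-completeness applies. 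In particular the Kurepa bound $|{\mathcal T}_{\zeta'}|\le\kappa$ is \emph{not} what is doing the work here --- it is essential later, in the chain-condition argument of \S\ref{chain}, but for this lemma the bound $|{\mathcal t}|<\kappa$ suffices. With the count corrected your argument goes through; the detour through $D_z=C_z\cup(B'\cap V_{\kappa_y^+})$ is harmless but unnecessary, since each $C_z$ already lies below $B'$ and intersecting the $C_z$ directly gives a $B$ that works.
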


\begin{proof}[\ref{use-of-truthvalue-reduction}] First of all notice that $Z^p=Z^{p'}$.
For each $z\in Z^p$ we may define an $\Rad{w}$-name $\name{\tau}_z$ such that
for all $\alpha\in {\mathcal t}$ with $f^{\eta,i}_\alpha(\zeta) = (x \restriction \zeta, \delta)$
and $f^{\eta,i}_\alpha(\zeta') = (x, \delta')$,  $y \concat (w, {\bar B}^i)$ forces that
$\name{\tau}_z$ is the truth value of the assertion that $\zeta \name{E}_\alpha \zeta'$.  (Condition (5) in the definition 
of being a condition shows $\name{\tau}_z$ does not depend on $\alpha$.)

By Lemma (\ref{truthvalue-reduction}), for each $z\in Z^p$ there is a set $B_z \subseteq {\bar B}$
such that $y \concat (w, B_z)$ reduces $\name{\tau}_z$ to an $\Rad{y}$-name
$\name{\sigma}_z$.  As $\card{ Z^p} < \kappa$ any $B \in {\mathcal F}(w)$ such that $B \subseteq \bigcap_{z\in Z^p} B_z$ 
suffices.
\end{proof}
\vskip6pt

\begin{proposition}\label{Qhaschaincond} 
$Q$ has the stationary $\kappa^+$-chain condition.
\end{proposition}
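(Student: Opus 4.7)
The plan is to argue by a $\Delta$-system together with Fodor's lemma, with Lemma (\ref{use-of-truthvalue-reduction}) supplying the gadget needed to handle Clause (5) of Definition (\ref{qstar-forcing}) on amalgamated conditions.

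First I would normalise the given sequence $\tupof{p_i}{i<\kappa^+}$ by passing to a refinement of each $p_i$: using Corollary (\ref{jdensity2}), Lemma (\ref{squaring-off-the-ds}) and Lemma (\ref{use-of-truthvalue-reduction}) I may assume that $\rho^{p_i}$ is a successor, that $(t^\eta)^{p_i}$ and $\dom((f^\eta_\alpha)^{p_i})$ are independent of $\eta\in a^{p_i}$ (call them $\mathcal{t}^i$ and $\mathcal{d}^i$), and that every $z\in Z^{p_i}$ has an associated $\Rad{y}$-name $\sigma^i_z$ for a truth value such that $y\concat(w,B^{p_i})$ forces $\sigma^i_z$ to be the truth value of ``$\zeta\name{E}_\alpha\zeta'$'' for every eligible $\alpha\in\mathcal{t}^i$. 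Next, using $\kappa^{<\kappa}=\kappa$ and the fact that for any fixed lower part $y$ there are only $\kappa$-many $\Rad{y}$-names for truth values (since $|\Rad{y}|<\kappa$), a pigeonhole and $\Delta$-system argument together with Fodor's lemma yields a stationary $I\subseteq S^{\kappa^+}_\kappa$ and a regressive function $h$ on $I$ such that whenever $h(i)=h(j)$ and $i<j$: the triples $(A^{p_i},a^{p_i},\rho^{p_i})$ share a common value $(A^*,a^*,\rho^*)$; $\mathcal{t}^i\cap\mathcal{t}^j$ and $\mathcal{d}^i\cap\mathcal{d}^j$ are the $\Delta$-system roots, both contained in $i$; the values of $f^{p_i}$ and $f^{p_j}$ agree on these roots; and $\sigma^i_z=\sigma^j_z$ for every $z$ whose parameters are all root data (in particular with tree entry of level $<i$).

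For each such pair $i<j$ I would then define the amalgamation
\[ q \;=\; \bigl(A^*,\; B^{p_i}\cap B^{p_j},\; t^{p_i}\cup t^{p_j},\; f^{p_i}\cup f^{p_j}\bigr), \]
noting that $B^{p_i}\cap B^{p_j}\in\mathcal{F}(w)$ by $\kappa$-completeness, and that $f^{p_i}\cup f^{p_j}$ is a function because $f^{p_i}$ and $f^{p_j}$ agree on the common part of their domains by the regression. Clauses (1)--(4) of Definition (\ref{qstar-forcing}) are routine, and $q\le p_i,\,p_j$. The main work is Clause (5), and only the genuinely mixed case matters: given $\eta\in a^*\cap\sup(a^*)$, $\alpha\in\mathcal{t}^i\setminus\mathcal{t}^j$, $\beta\in\mathcal{t}^j\setminus\mathcal{t}^i$, a lower part $y$ harmonious with $A^*$ past $\eta$, and $\zeta,\zeta'\in\mathcal{d}^i\cap\mathcal{d}^j$ with $f^{q,\eta}_\alpha(\zeta)=f^{q,\eta}_\beta(\zeta)\ne f^{q,\eta}_\alpha(\zeta')=f^{q,\eta}_\beta(\zeta')$, the equal first coordinates force $b_\alpha\restriction\zeta'=b_\beta\restriction\zeta'=:x$ via Definition (\ref{qstar-forcing})(4). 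Hence the single tuple $z=(\eta,y,x,\zeta,\zeta',\delta,\delta')$ lies in $Z^{p_i}\cap Z^{p_j}$ with all parameters root data, so by the defining property of $h$ we have $\sigma^i_z=\sigma^j_z$. Applying Lemma (\ref{use-of-truthvalue-reduction}) separately to $p_i$ and to $p_j$, $y\concat(w,B^q)$ forces each of the truth values of ``$\zeta\name{E}_\alpha\zeta'$'' and ``$\zeta\name{E}_\beta\zeta'$'' to equal the realisation of $\sigma^i_z$, yielding the required equivalence.

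I expect the hard part to be the bookkeeping in the thinning step: one must check that every tuple $z$ that could witness a failure of Clause (5) in $q$ really does have all its parameters in the root data, so that the $\sigma$-name matching is available. The antecedent of Clause (5) in $q$ itself forces $\zeta,\zeta'\in\mathcal{d}^i\cap\mathcal{d}^j$ and, via the tree, pins the entry $x=b_\alpha\restriction\zeta'$ to level $\zeta'<i$; since $(\eta,y,\delta,\delta')$ ranges over a set of size $\le\kappa$, the total $z$-data needing matching at each $i\in I$ has size $\le\kappa$, which is precisely the budget that $\kappa^{<\kappa}=\kappa$ and Fodor's lemma allow a regressive function to encode. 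The bound $|\mathcal{T}_\alpha|\le\kappa$ from Notation (\ref{enum-levels-of-Kurepa-trees}) is used here to absorb the tree entry $x$ into the regressive code.
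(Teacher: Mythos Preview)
Your overall strategy—normalise via Lemmas (\ref{squaring-off-the-ds}) and (\ref{use-of-truthvalue-reduction}), thin by Fodor plus a $\Delta$-system, amalgamate by intersecting the $B$-parts and unioning the $t$- and $f$-parts, then verify Clause (5)—is exactly the paper's. The substantive gap is in your counting for the regressive code. You write that the $z$-data ``has size $\le\kappa$, which is precisely the budget that $\kappa^{<\kappa}=\kappa$ and Fodor's lemma allow''. That is not right: a regressive function on $S^{\kappa^+}_\kappa$ can encode an object with at most $\kappa$ many possible values, and $\kappa^{<\kappa}=\kappa$ tells you that objects of size \emph{strictly less than} $\kappa$ have only $\kappa$ possibilities; a function whose domain has size $\kappa$ has $2^\kappa$ possibilities, and in the ambient models here $2^\kappa$ is typically much larger than $\kappa^+$. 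Your $z$-count is genuinely $\kappa$ because $(\delta,\delta')$ ranges freely over $\kappa\times\kappa$ (and the tree entry $x$ over an entire level of size up to $\kappa$), so the map $z\mapsto\sigma^i_z$ you propose to record cannot be squeezed into a regressive value.

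The paper handles this by \emph{not} indexing the recorded $\sigma$-values by arbitrary $z$. Instead it fixes the order type $\gamma^i=\otp(\mathcal{t}^i)<\kappa$ and records, for each position $\varepsilon<\gamma^i$ and each $(\eta,y,\zeta,\zeta')$, the encoded restriction $F^\eta_\varepsilon$ of $f^{\eta,i}_{\alpha^i_\varepsilon}$ to $D^i$ together with the associated $\sigma$; the resulting set $X^{\eta,i}$ has size $<\kappa$ and can be coded regressively. In the mixed case one then takes the \emph{surrogate} $\beta'=\alpha^i_\varepsilon\in\mathcal{t}^i$ sitting at the same position $\varepsilon$ that $\beta$ occupies in $\mathcal{t}^j$; from $X^{\eta,i}=X^{\eta,j}$ one gets that $\beta'$ and $\beta$ have identical encoded $f$-values on $D^i=D^j$, hence determine the same $z$ and the same $\sigma$. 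Thus $y\concat(w,B^q)$ forces $\zeta\name{E}_{\beta'}\zeta'\leftrightarrow\zeta\name{E}_\beta\zeta'$, and Clause (5) for $p^i$ applied to $\alpha,\beta'\in\mathcal{t}^i$ closes the loop. Your direct route can be repaired by restricting attention to the $<\kappa$ many tuples $z$ that actually arise from some $\alpha\in\mathcal{t}^i$, but then you must explain why matching these variable-domain sets for $i$ and $j$ suffices, which is exactly what the position-and-surrogate device accomplishes cleanly.
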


\begin{proof}[\ref[{Qhaschaincond}]
Let $\setof{p'^{i}}{i<\kappa^+} \in [Q]^{\kappa^+}$. 
Let ${p}'^{i}=(A^i,B'^i,t'^{i},f'^{i})$ for each $i<\kappa^+$, write $a^i$ for $a^{{\scriptstyle p}^i}$, $\rho^i$ for 
$\rho^{{\scriptstyle p}^i}$, and for each $\rho<\rho^i$
write $\kappa^i_\rho$ for $(\kappa_\rho)^{{\scriptstyle p}^i}$. 

We start by tidying up, for each $i<\kappa^+$, the collection of domains of 
constituents of the $f'^{i}$.  We want to emphasize that this step in the argument 
is not strictly necessary. At the cost of a more elaborate case analysis below, 
proofs of the stationary chain condition can be given which
do \emph{not} rely on the forcing having the property of having greatest lower bounds for 
descending sequences of length less than $\kappa$, a property which we do use 
in carrying out this tidying up. 

By applying Lemma (\ref{squaring-off-the-ds}) and then
Lemma (\ref{use-of-truthvalue-reduction})
we can find $\setof{{p}^i}{i<\kappa^+} \in [Q]^{\kappa^+}$
such that for each $i<\kappa^+$ we have ${p}^{i}=(A^i,B^i,t^{i},f^{i}) \le {p}'^{i}$
and there is some ${\mathcal t}^i\in [\Upsilon]^{<\kappa}$ and some
${\mathcal d}^i\in [\kappa^+]^{<\kappa}$ such that  
\begin{itemize}
\item ${\mathcal t}^i=t^{\eta,i}$ for all $\eta\in a^i$,
\item ${\mathcal d}^i=\dom(f^{\eta,i}_\alpha)$ for all $\eta\in a^i$ and 
$\alpha\in {\mathcal t}^{i}$, and 
\item $B^i$ has the property stated in the conclusion of Lemma (\ref{use-of-truthvalue-reduction}).
\end{itemize}

Now we make a plethora of auxiliary definitions for each $i$,
set out in the table below.

Let $\setof{\alpha_\gamma}{\gamma<\gamma^*}$ be an 
enumeration of $\bigcup \setof{{\mathcal t^{i}}}{i<\kappa^+}$, for some 
$\gamma^*\le \kappa^+$. Let $\setof{\alpha^{i}_\gamma}{\gamma<\gamma^{i}}$ 
be the increasing enumeration of ${\mathcal t}^{i}$, for some $\gamma^{i}<\kappa$, for each $i<\kappa^+$.

Next, for each $i<\kappa^+$ and $\eta\in a^i$ let
\begin{itemize}
\item $\theta^{i}_0= \ssup(\setof{\gamma}{\alpha_\gamma\in {\mathcal t^{i}}})$,  
$\theta^{i}_1=\ssup({\mathcal d}^{i})$, 
\item  $\Gamma^{i}= \setof{\Delta(\alpha ,\alpha')}{\alpha,\sss \alpha'\in {\mathcal t}^{i}}$,
$\nu^{i}=\ssup(\Gamma^{i})$,
\item $T^{i}=\setof{\gamma<i}{\alpha_\gamma\in {\mathcal t}^{i}}$,  $\Delta^{i}=\Gamma^{i}\cap i$,
$D^{i}={\mathcal d}^{i}\cap i$, 
\item $Y^{\eta,i}= \setof{y}{y\hbox{ is  a lower part harmonious with }A^i\hbox{ past }\eta}$.
\item $g^{\eta,i}(\zeta,\gamma) =  
({\mathcal e}_{\zeta}(\pi_0(f^{\eta,i}_{\alpha_\gamma}(\zeta))),\pi_1(f^{\eta,i}_{\alpha_\gamma}(\zeta)))$ 
for $\gamma\in T^i$  and $\zeta\in D^{i}$.
\item
$X^{\eta,i}=\setof{\tup{y,\varepsilon,\zeta,\zeta',F^{\eta}_\varepsilon,\name{\sigma}}}{
y\in Y^{\eta,i} \discretionary{}{}{}  \hskip5pt\&\hskip5pt \varepsilon<\gamma^{i}  \discretionary{}{}{} 
\hskip5pt\&\hskip5pt
\zeta,\sss \zeta'\in D^i \sss\sss\& \discretionary{}{}{} \zeta<\zeta' \hskip5pt\&\hskip5pt 
\hbox{on setting }\discretionary{}{}{} 
\alpha = \alpha^{i}_\varepsilon 
\discretionary{}{}{} \hbox{ one has }
F^\eta_\varepsilon: D^{i}\longrightarrow \kappa\times\kappa \hbox{ is}\discretionary{}{}{} \hbox{given by}\discretionary{}{}{} 
F^\eta_\varepsilon (\zeta^*)= ({\mathcal e}_{\zeta^*}(\pi_0(f^{\eta,i}_{\alpha}(\zeta^*))),\pi_1(f^{\eta,i}_{\alpha}(\zeta^*)))
\hbox{ for } \zeta^*\in D^{i} \discretionary{}{}{}
\&\hbox{ letting }
\discretionary{}{}{} x=b_{\alpha}\on \zeta' \hbox{, }  
f^{\eta,i}_\alpha(\zeta)\discretionary{}{}{} = \discretionary{}{}{}  {(x \on \zeta, \delta)}  \discretionary{}{}{} \hbox{ and } 
\discretionary{}{}{} f^{\eta,i}_\alpha(\zeta') = (x, \delta') \hbox{,}
\discretionary{}{}{} \hbox{we have }
 {\name{\sigma}=\name{\sigma}^i_{(y,x,\eta,\zeta,\zeta',\delta,\delta')} }
}$ .
\end{itemize}
Recall that $\pi_l$ is projection onto the $l$th co-ordinate, so in the definition of 
$g^{\eta,i}$, for example, 
$\pi_0(b_{\alpha_\gamma}\on \zeta,\nu)=b_{\alpha_\gamma}\on\zeta$ and
$\pi_1(b_{\alpha_\gamma}\on \zeta,\nu)=\nu$, while, as per Notation (\ref{enum-levels-of-Kurepa-trees}), 
${\mathcal e}_\zeta$ is the function 
enumerating the $\zeta$th level of the $\kappa^+$-Kurepa tree ${\mathcal T}$, 
so that ${\mathcal e}_\zeta(b_{\alpha_\gamma}\on\zeta) \in \kappa$.
Thus $g^{\eta,i}: D^{i}\times T^{i}\longrightarrow \kappa\times\kappa$ and
$\dom(g^{\eta,i}(.,\gamma)) =D^i$ for $\gamma\in T^i$.

Notice, also, that the various $F^\eta_\varepsilon$, $\name{\sigma}$, $x$, $\delta$ and $\delta'$ appearing 
in the definition of $X^{\eta,i}$ are uniquely determined by $i$, $\eta$ and the $\tup{y,\varepsilon,\zeta,\zeta'}$.

For the convenience of the reader we record the types of the objects which we have just defined.
$\alpha_\gamma \in \Upsilon$ for $\gamma < \gamma^* \le \kappa^+$,
$\alpha^{i}_\gamma \in \Upsilon$ for $\gamma < \gamma^{i} < \kappa$,
$\theta^{i}_0 < \kappa^+$, $\Gamma^{i} \in [\kappa^+]^{<\kappa}$,
$\nu^{i} < \kappa^+$, $\theta^{i}_1 < \kappa^+$,
$T^{i} \in [i]^{<\kappa}$, $\Delta^{i} \in [i]^{<\kappa}$,
$D^{i} \in [i]^{<\kappa}$,
$Y^{\eta, i} \in V_\kappa$, and $g^{\eta, i}$ is a  partial function of size less than
$\kappa$ from $i \times i$ to $\kappa\times \kappa$.

We next argue similarly to the proof of Claim (\ref{delta-sys-technical-claim}) and define a regressive 
function on a set which is club relative to $S^{\kappa^+}_{\kappa}$. Recall from the section on notation
that if $I$ and $J$ are sets and $\kappa$ is a cardinal then $\hbox{Fn}(I,J,\kappa)$
is the set of partial functions from $I$ to $J$ of size less than $\kappa$.

Let $h^*$ be an injection from $V_\kappa\times
([\kappa^+]^{<\kappa} \times
\hbox{Fn}(\kappa^+\times\kappa^+,\kappa\times\kappa,\kappa)
\times [\kappa^+]^{<\kappa} \times [\kappa^+]^{<\kappa} \times \kappa
\times (V_\kappa\times \kappa\times  \kappa^+\times\kappa^+\times V_{\kappa}\times  
\hbox{Fn}(\kappa^+, \kappa\times\kappa, \kappa) )^{<\kappa}  ) 
\times\kappa$ into $\kappa^+$. Let us write $H$ for the domain of $h^*$.

For $\kappa\le i<\kappa^+$ let 
$H_i$ be defined similarly to $H$ with $i$ in place of $\kappa^+$:
$H_i = V_{\kappa}\times
([\tau]^{<\kappa} \times
\hbox{Fn}(\tau\times\tau,\kappa\times\kappa,\kappa)
\times [\tau]^{<\kappa} \times [\tau]^{<\kappa} \times \kappa
\times (V_{\kappa}\times \kappa\times  \tau\times\tau\times V_{\kappa}\times  
\hbox{Fn}(\tau, \kappa\times\kappa, \kappa) )^{<\kappa}  ) 
\times\kappa$. 

Define $k:[\kappa,\kappa^+)\longrightarrow \kappa^+$ by $k(i)$ is the least $i^*<\kappa^+$ such that 
$H_i \subseteq h^{*-1}``\tau^*$. 

Let $\widetilde{C}=\setof{j<\kappa^+}{\forall i<j\sss (\theta^{i}_0,\sss\theta^{i}_1,\sss\nu^{i},\sss k(i) <j)}.$  

As the intersection of the sets of closure points of the four given functions, $\widetilde{C}$ is a club subset of $\kappa^+$.

Let $h(i) = h^*(A^i,\tupof{(T^{i},g^{\eta,i},D^{i},\Delta^{i},\otp({\mathcal t}^{i}),X^{\eta,i})}{\eta\in a^i},\obar{\kappa}^i)$ 
for $i\in \widetilde{C}\cap S^{\kappa^+}_\kappa$ and $h(i)=0$ otherwise. 

We have that $h^{*-1}(h(i)) \in H_i$ for all $i\in [\kappa,\kappa^+)$.
If $i\in \widetilde{C}\cap S^{\kappa^+}_\kappa$, since $\card{h^{*-1}(h(i))} < \kappa$,
there is some $i'<i$ such that $h^{*-1}(h(i))\in H_{i'}$, and hence 
there is some $\widetilde{i}<i$ such that $h(i)<k(\widetilde{i})$. 

Hence, as $i$ is (amongst other things) a closure point of $k$, we have $h(i)<i$ for all nonzero $i<\kappa^+$. 
\vskip12pt

Now suppose that $i, j \in \widetilde{C} \cap S^{\kappa^+}_\kappa$, $i < j$, and $h(i) = h(j)$. 
In particular we have that $A^i=A^j$ (and hence $a^i=a^j$) and 
$\obar{\kappa}^i=\obar{\kappa}^j$, that  
$\theta^{i}_0$, $\theta^{i}_1$, $\nu^{i}<j$,
and finally for all $\eta \in a^i$
\[
\tup{T^{i},g^{\eta,i}, D^{i},\Delta^{i},\otp({\mathcal t}^{i}),X^{\eta,i}}=
 \tup{T^{j},g^{\eta,j},D^{j},\Delta^{j},\otp({\mathcal t}^{j}),X^{\eta,j}}. 
\]

We prove a series of lemmas which together describe the common parts of
$p^i$ and $p^j$. Let $A$ be the common value of $A^i$ and $A^j$ and $a$ that of $a^i$ and $a^j$.

\begin{lemma}\label{intersection} 
(a) ${({\mathcal t}^{j}\setminus {\mathcal t}^{i})\cap\setof{\alpha_\gamma}{i\le \gamma<j} = \emptyset}$, and 
(b) ${\mathcal t}^{i}\cap {\mathcal t}^{j}\subseteq \setof{\alpha_\gamma}{\gamma<i}$.
\end{lemma}

\begin{proof}[\ref{intersection}] Suppose $\alpha_\gamma\in {\mathcal t}^{j}$. If $\gamma<j$ then $\gamma\in T^{j}$. 
But $T^{j}=T^{i}$, so $\gamma\in T^{i}$. Hence $\gamma<i$ and $\alpha_\gamma\in {\mathcal t}^{i}$, 
proving (a). If $\alpha_\gamma\in {\mathcal t}^{i}$ then $\gamma<\theta^{i}_0<j$. (For the definition 
of $\theta^i_0$ immediately gives that $\gamma<\theta^i_0$; and since $i$, $j\in \widetilde{C}$ and $i<j$
one has that $\theta^i_0<j$.) Thus if $\alpha_\gamma\in {\mathcal t}^{i}\cap {\mathcal t}^j$ we have $\gamma<i$ by (a). So (b) holds.
\end{proof}

\begin{lemma}\label{intersectiond} ${\mathcal d}^{i}\cap {\mathcal d}^{j}\subseteq i$. 
\end{lemma}

\begin{proof}[\ref{intersectiond}] If  $\zeta\in {\mathcal d}^{i}\cap {\mathcal d}^{j}$ then 
$\zeta<\theta^{i}_1<j$. So $\zeta\in {\mathcal d}^{j}\cap j=D^{j}=D^{i}={\mathcal d}^{i}\cap i$. $\vphantom{W}$
\end{proof}

\begin{lemma}\label{intersectionf}
If $\eta\in a$ and $\alpha\in {\mathcal t}^{i}\cap {\mathcal t}^{j}$ then 
$f^{\eta,i}_\alpha\on {\mathcal d}^{i}\cap {\mathcal d}^{j}= f^{\eta,j}_\alpha\on {\mathcal d}^{i}\cap {\mathcal d}^{j}$. 
\end{lemma}

\begin{proof}[\ref{intersectionf}]
Let $\eta\in a$ and $\alpha\in {\mathcal t}^{i}\cap {\mathcal t}^{j}$.
By Lemma (\ref{intersection}), if $\alpha=\alpha_\gamma$ then $\gamma<i$.
By Lemma (\ref{intersectiond}), ${\mathcal d}^{i}\cap {\mathcal d}^{j}\subseteq i$, and hence
${\mathcal d}^{i}\cap {\mathcal d}^{j}\subseteq i\cap {\mathcal d}^i = D^i=D^j$, the first equality 
being the definition of $D^i$ and the second holding since $h(i)=h(j)$.

As unpacked in the paragraph starting ``Recall \dots'' immediately after the bullet-pointed table of definitions above,
$D^i = \dom(g^{\eta,i}(\ssss . \ssss,\gamma))$ and $D^j =\dom(g^{\eta,j}(\ssss . \ssss,\gamma))$, and hence
$\dom(g^{\eta,i}(\ssss . \ssss,\gamma)) = \dom(g^{\eta,j}(\ssss . \ssss,\gamma))$.

As $\alpha\in {\mathcal t}^{i}\cap {\mathcal t}^{j}$, if $\zeta\in {\mathcal d}^{i}\cap {\mathcal d}^{j}$ we have 
${\mathcal e}_\zeta(\pi_0 ( f^{\eta,i}_\alpha(\zeta))) = {\mathcal e}_\zeta(\pi_0 ( f^{\eta,j}_\alpha(\zeta)))$ and, 
as ${\mathcal e}_\zeta$ is a bijection, $\pi_0 ( f^{\eta,i}_\alpha(\zeta)) = \pi_0 ( f^{\eta,j}_\alpha(\zeta)) = b_\alpha\on\zeta$. 
Hence we have the claimed agreement.
\end{proof}

Recall that $f^i + f^j$ is the unique function $f$ such that $\dom(f) = \dom(f^i) \cup \dom(f^j)$,
$f \on \dom(f^i) = f^i$ and  $f \on\dom(f^j) = f^j$.

Let $q=(A,B^i\cap B^j,t^i\cup t^j,f^i + f^j)$. 

\begin{lemma}\label{constructionA} 
The quadruple $q$ is a condition in $Q^*$.
\end{lemma}

\begin{proof}[\ref{constructionA}]
Recall that we defined
$A=A^i=A^j$. Lemma (\ref{intersectionf}) shows that $f^i$ and $f^j$ are compatible.
\end{proof}

\begin{lemma}\label{intersectionG} $\Gamma^{i}\cap \Gamma^{j}  \subseteq i$.
\end{lemma}

\begin{proof}[\ref{intersectionG}]
If $\zeta\in \Gamma^{i} \cap \Gamma^{j}$ then $\zeta<\nu^{i}<j$ (the latter since $i$, $j\in \widetilde{C}$ and $i<j$). 
So $\zeta\in \Delta^{j}$.  But $\Delta^{j}=\Delta^{i}$, so $\zeta<i$. 
\end{proof}

\begin{notation}\label{defn-dq-alpha} For $\alpha\in {\mathcal t}^i \cup {\mathcal t}^j$ define
$d^q_\alpha = {\mathcal d}^i\cup {\mathcal d}^j$ if $\alpha \in  {\mathcal t}^i \cap {\mathcal t}^j$,
$d^q_\alpha = {\mathcal d}^i$ if $\alpha \in  {\mathcal t}^i \setminus {\mathcal t}^j$, and 
$d^q_\alpha = {\mathcal d}^j$ if $\alpha \in  {\mathcal t}^j \setminus {\mathcal t}^i$.
\end{notation}

\begin{lemma}\label{characterization-of-dom-fq-eta-alpha}  For 
$\alpha\in {\mathcal t}^i \cup {\mathcal t}^j$ and $\eta\in a$  we have
$d^q_\alpha = \dom(f^{\eta,q}_\alpha)$. \end{lemma}

\begin{proof}[\ref{characterization-of-dom-fq-eta-alpha}] Immediate from the definition of $q$, specifically the definition
of $f^q$ as $f^i + f^j$, the fact that $\dom(f^{\eta,i}_\alpha)={\mathcal d}^i$ for all $\eta\in a^i$ and $\alpha\in {\mathcal t}^i$
-- see the third paragraph of the proof of Proposition (\ref{Qhaschaincond}), 
and the definition by cases of the notation $d^q_\alpha$ just given.
\end{proof}

\begin{lemma}\label{zeta0} Suppose
$\alpha$, $\beta\in {\mathcal t}^{i}\cap {\mathcal t}^{j}$, 
$\zeta$,  $\zeta'\in d^q_\alpha \cap d^q_\beta$, $\zeta<\zeta'$, $\eta\in a$ and 
$f^{\eta,q}_\alpha(\zeta)=f^{\eta,q}_\beta(\zeta)\ne f^{\eta,q}_\alpha(\zeta') = f^{\eta,q}_\beta(\zeta')$.
Then $\zeta$, $\zeta'\in {\mathcal d}^{i}\cap {\mathcal d}^j$.
\end{lemma}

\begin{proof}[\ref{zeta0}] Let $\alpha=\alpha_\gamma$ and $\beta=\alpha_{\gamma'}$. 
Then $\gamma$, $\gamma'<i$ by Lemma (\ref{intersection})
and $\Delta(\alpha,\beta)\in \Gamma^{i}\cap \Gamma^{j}\subseteq i$, by Lemma (\ref{intersectionG}). 
As $b_\alpha\on \zeta' = \pi_0 ( f^{\eta,q}_\alpha(\zeta'))=
\pi_0( f^{\eta,q}_\beta(\zeta'))=b_\beta\on\zeta'$ we have $\zeta'<\Delta(\alpha,\beta)$ and hence
$\zeta$, $\zeta'<i$. 
Thus $\zeta'\in {\mathcal d}^j\cap i \subseteq D^j=D^i\subseteq {\mathcal d}^{i}$.
Similarly $\zeta\in {\mathcal d}^{j}$.
\end{proof}

Finally, we can now show that $q$ satisfies (5) of the definition of $Q$.

\begin{lemma}\label{zetafinal} Suppose
$\alpha$, $\beta\in {\mathcal t}^{i}\cup {\mathcal t}^{j}$, 
$\zeta$,  $\zeta'\in d^q_\alpha \cap d^q_\beta$, $\eta\in a$ and 
$f^{\eta,q}_\alpha(\zeta)=f^{\eta,q}_\beta(\zeta)\ne f^{\eta,q}_\alpha(\zeta') = f^{\eta,q}_\beta(\zeta')$
and $y$ is harmonious with $A^q$ past $\eta$. 
Then $y\concat (w,B^q)\forcesq{\Rad{w}}{ \zeta \name{E}_\alpha \zeta' \Longleftrightarrow \zeta \name{E}_\beta \zeta' }$. 
\end{lemma}

\begin{proof}[\ref{zetafinal}]
If for some $k\in \set{i,j}$ we have $\alpha$, $\beta\in {\mathcal t}^k$ 
then, using Lemma (\ref{zeta0}) if $\alpha$, $\beta\in {\mathcal t}^i\cap {\mathcal t}^j$, 
$\zeta$, $\zeta'\in {\mathcal d}^k$. As $p^k\in Q$ we have
$y\concat (w,B^k)\forcesq{}{\zeta \name{E}_\alpha \zeta' 
\Longleftrightarrow \zeta \name{E}_\beta \zeta'}$. But $B^q=B^i\cap B^j\subseteq B^k$, so
$y\concat (w,B^q) \le y\concat (w,B^k)$ and
$y\concat(w,B^q)\forcesq{}{\zeta \name{E}_\alpha \zeta' 
\Longleftrightarrow \zeta \name{E}_\beta \zeta' }$.

Otherwise, either we have
$\alpha\in {\mathcal t}^i\setminus {\mathcal t}^j$, 
$\beta \in {\mathcal t}^j\setminus {\mathcal t}^i$
and $\zeta$, $\zeta'\in {\mathcal d}^i\cap {\mathcal d}^j$, or we have the symmetric case
with the roles of $i$ and $j$ exchanged. We treat the former; for the symmetric case
exchange $i$ and $j$ throughout.

Suppose $\beta$ is the $\varepsilon$-th element
of ${\mathcal t}^j$. As $\otp(t^i)=\otp(t^j)$, we can define $\beta'$ to be the 
$\varepsilon$-th element of $t^i$. As $X^{\eta,i}=X^{\eta,j}$ we have that the
`$F^\eta_\varepsilon$' for the tuple that starts
$\tup{y,\varepsilon,\zeta,\zeta',\dots}$ is the same for both $i$ and $j$.
Hence ${\mathcal e}_{\zeta'} (\pi_0 ( f^{\eta,i}_{\beta'}(\zeta')))={\mathcal e}_{\zeta'}( \pi_0( f^{\eta,j}_{\beta}(\zeta')))$.
As ${\mathcal e}_{\zeta'}$ is a bijection and $\pi_0$ is projection onto the first co-ordinate, this gives
$b_\beta\on\zeta'=b_{\beta'}\on\zeta'$.
Let $x = b_\beta\on\zeta'=b_{\beta'}\on\zeta'$, 
$f^{\eta,i}_{\beta'}(\zeta)=f^{\eta,j}_{\beta}(\zeta) = (x\on\zeta, \delta)$,
$f^{\eta,i}_{\beta'}(\zeta')=f^{\eta,j}_{\beta}(\zeta') = (x, \delta')$,
and, finally, $z = (\eta, y, x , \zeta, \zeta', \delta, \delta')$.

Then $z \in Z^{p^i} \cap Z^{p^j}$, and $\dot\sigma^i_z = \dot\sigma^j_z =\dot \sigma$
say, where $\dot \sigma$ is an $\Rad{y}$-name for a truth value. 
Since $B^q = B^i \cap B^j$,  $y \concat (w, B^q)$ simultaneously reduces the truth
values of the statements ``$\zeta \sss\name{E}_\beta \sss\zeta'\ssss$'' and 
``$\zeta \sss\name{E}_{\beta'} \sss\zeta'\ssss$'' to $\dot \sigma$, so
$y\concat (w,B^q)\forces  \hbox{``}\sss\zeta\sss \name{E}_{\beta'}\sss \zeta'
\Longleftrightarrow 
\zeta\sss \name{E}_{\beta}\sss \zeta'\sss\hbox{''}$. If $\beta'=\alpha$ we are done. Otherwise,
as $p^i\in Q$ we also have
that $y\concat (w,B^i)\forces \hbox{``} \sss\zeta\sss \name{E}_{\alpha}\sss \zeta'
\Longleftrightarrow \zeta\sss \name{E}_{\beta'}\sss \zeta'\sss\hbox{''}$, and 
hence $y\concat (w,B^q)\forces \hbox{``} \sss\zeta\sss \name{E}_{\alpha}\sss \zeta'
\Longleftrightarrow \zeta\sss \name{E}_{\beta}\sss \zeta'\sss\hbox{''}$, as required.
\end{proof}

Lemma (\ref{constructionA}) showed $q\in Q^*$ and Lemma (\ref{zetafinal}) shows that in fact $q\in Q$ as well. 
Since by construction $q\le p^i$, $p^j$, and both $p^i\le p'^{i}$ and $p^j\le p'^{j}$, we have shown
that there is a suitable function $h$ such that
if $i$, $j\in \widetilde{C}\cap S^{\kappa^+}_{\kappa}$, $i<j$ and $h(i)=h(j)$ then 
$p'^{i}$ and $p'^{j}$ are compatible. Hence $Q$ has the $\kappa^+$-stationary chain condition.

\end{proof}

\section{The main iteration}\label{maini}

Let $V$ be a model in which $\kappa$ is supercompact,
$\lambda$  is a regular cardinal less than $\kappa$, and
$\chi$ is a successor cardinal with predecessor $\chi^-$ such that 
$\cf(\chi^-)\ge\kappa^{++}$. Suppose also that GCH holds in $V$
and hence, by \cite{Shelah-diamonds}, $\diamondsuit_\chi(S^\chi_{\kappa^+})$ holds.

Let $\Lbb$ be the Laver iteration (\cite{Laver}), as defined in $V$, making
the supercompactness of $\kappa$ indestructible under $\lower.05em\hbox{$<$}\kappa$-directed closed forcing.

Let $\name{e}$ be a canonical $\Lbb$-name such that 
$\forcesq{\Lbb}{\name{e}:\kappa\longrightarrow V_\kappa\hbox{ is a bijection.}}$

Using Theorem (\ref{double-star-and-statcc-itn-thm}),
Lemma (\ref{sh-conds-imply-double-star}), Proposition (\ref{dnotes}), Lemma (\ref{kwcompact-and-glbs}) and 
Proposition (\ref{Qhaschaincond}), we will define an
$\Lbb$-name $\name{\Pbb}_\chi$ so that 
$\forces_{\Lbb} \hbox{``}\name{\Pbb}_\chi$ is an iteration of length $\chi$ 
consisting of $\name{\Qbb}_0$, a name for the usual forcing to add a 
$\kappa^+$-Kurepa tree with $\chi^-$-many branches followed by a
$\lk$-support iteration of $\lk$-directed closed, countably
parallel-closed  $\kappa^+$-stationary cc forcings each of size less
than $\chi$.''  Thus the iteration will add $\chi$-many subsets of
$\kappa$, but at each intermediate stage $2^\kappa = \chi^-$. 
The constituents of the iteration will depend on a
fixed sequence $\tupof{\name{x}_\alpha}{\alpha < \chi^-}$ of 
$\Lbb * \name{\Qbb}_0$-names for distinct branches through the 
$\kappa^+$-Kurepa tree.   As we
define the iteration we also build an enumeration of 
$\Lbb * \name{\Pbb}_\chi$ as $\tupof{p_\xi}{\xi<\chi}$. As $\chi$ is a regular
cardinal and since $\forcesq{\Lbb}{\hbox{the iterands are of size less than }\chi}$ 
there will be a club set relative to $S^\chi_{\ge\kappa}$ of $\xi<\chi$ 
such that $\Lbb * \name{\Pbb}_\xi = \setof{p_\varepsilon}{\varepsilon<\xi}$. 
(Formally, such that for all $\varepsilon<\xi$ we have $\supp(p_\varepsilon)\subseteq \xi$ and 
$\Lbb * \name{\Pbb}_\xi = \setof{p_\varepsilon\on\xi}{\varepsilon<\xi}$. See the proof of Proposition (\ref{dnotes}).)
We simultaneously inductively define $\Lbb *\name{\Pbb}_\xi$-names $\name{S}_\xi$ 
as in \S\ref{preservation} and derive $\Lbb *\name{\Pbb}_\xi$-names $\name{u}^\xi$.

As each $\name{S}_\xi$ is a canonical name for a subset of $\xi$, when
$\xi=\kappa \lambda \xi$ (ordinal multiplication) we can easily
convert it into a name for a set of order type $\lambda$ of sequences of
$\xi$ many subsets of $\kappa$. In order to do this, for each
$\tau<\lambda$, set $\name{u}^\xi_{1+\tau}$ to be the name derived
from $\name{S}_\xi$ for 
$\setof{\name{e}``\setof{\eta<\kappa}{\kappa\lambda\varepsilon + \kappa\tau+\eta 
\in \name{S}_\xi\cap \widehat{[\kappa\lambda \varepsilon+\kappa\tau,\kappa\lambda
\varepsilon+\kappa(\tau+1) ) }}}{\varepsilon<\xi}$ and set
$\name{u}^\xi=\hat{\kappa}\concat\tupof{\name{u}^\xi_{1+\tau}}{1+\tau<\lambda}$.

We use the sequence $\name{u}^\xi$ to help define the next stage in
the iteration. Let $\name{\mathcal U}^\xi$ be a $\Lbb * \name{\Pbb}_\xi$-name for
the class of all ultrafilter sequences. (As for each $\xi<\chi$ we have  
$\forcesq{\Lbb}{\name{\Pbb}_\xi\hbox{ is }<\kern-2.5pt\kappa\hbox{-directed closed}}$,
we will have for all $\xi<\xi'\le\chi$ that
$\forces_{\Lbb * \Pbb_{\xi'}}\name{\mathcal U}^\xi_\kappa = \name{\mathcal U}^{\xi'}_\kappa $.)

If $\cf(\xi) = \kappa^+$, $\Lbb*\name{\Pbb}_\xi=\setof{p_\varepsilon}{\varepsilon<\xi}$, 
$\xi=\kappa \lambda \xi$ and $\forces_{\Lbb * \Pbb_\xi} \hbox{``}\name{u}^\xi \in \name{{\mathcal U}}^\xi$ 
and $\name{\kappa}_{\name{u}^\xi}=\kappa$''  let 
$\tupof{\name{E}_\alpha^\xi}{\alpha<\chi^-}$ enumerate the canonical 
$\Lbb * \name{\Pbb}_\xi * \nameRad{\name{u}^\xi}$-names for graphs on $\kappa^+$ and
let $\name{\Qbb}_\xi = \name{Q}(\name{u}^\xi)$, where
$\name{Q}(\name{u}^\xi)$ is an $\Lbb * \name{\Pbb}_\xi$-name for the forcing
defined from ${\name{u}}^\xi$,  $\tupof{\name{x}_\alpha}{\alpha < {\chi^-}}$ 
and $\tupof{{\name{E}}_\alpha^\xi}{\alpha<{\chi^-}}$ as 
in \S\ref{Pm}. Otherwise let $\name{\Qbb}_\xi$ name trivial forcing.

Fix $G$ which is $\Lbb$-generic over $V$ and 
$H$ which is $\name{\Pbb}^G_\chi$-generic over $V[G]$. For $\xi<\chi$ let $H_\xi$ be the restriction 
of $H$ to $\name{\Pbb}^G_\xi$,
let $\Qbb_\xi = \name{\Qbb}_\xi^{G * H_\xi}$, 
and let $u^\xi=(\name{u}^\xi)^{G * H_\xi}$. 

If $V[G][H_\xi]\thinks u^\xi\in {\mathcal U}$, $\kappa_{u^\xi}= \kappa$ and $\lh(u^\xi)=\lambda$,
let 
\begin{itemize}
\item $K_\xi$ be the $Q(u^\xi)$-generic over $V[G][H_\xi]$ induced by $H$,  
\item $A^\xi=\tupof{A_\rho}{\rho<\kappa \sss\sss\&\sss\sss \exists p\in K_\xi \sss\sss A_\rho = (A_\rho)^p}$,
\item and $a^\xi = \bigcup \setof{ a^p }{ p \in K_\xi }$.
\end{itemize}

Fix an enumeration $\tupof{\name{D}_\xi}{\xi<\chi}$ of the 
$\Lbb *\name{\Pbb}_\chi$-names for subsets of $V_\kappa$ such that each
$\name{D}_\xi$ is a $\Lbb *\name{\Pbb}_\xi$-name.

Let $j:V\longrightarrow M$ witness that $\kappa$ is $2^\chi$-supercompact, 
such that in the iteration $j(\Lbb)$ we force with $\name{\Pbb}_\chi$ at stage $\kappa$ and then do
trivial forcing at all stages between $\kappa$ and $(2^\chi)^+$. 
Let $j(\Lbb)= \Lbb * \name{\Pbb}_\chi * \name{{\mathcal L}}$, where $\name{{\mathcal L}}$ is a $\Lbb * \name{\Pbb}_\chi$-name
and we note that by the choice of $j$ we have
$\forcesq{\Lbb * \name{\Pbb}_\chi}{\name{{\mathcal L}} \hbox{ is $\lower0.05em\hbox{$<$}(2^\chi)^+$-closed.}}$

We now carry out, in $V[G][H]$, an inductive construction of length $\chi$ in which we build a chain of 
conditions $(r_\xi,\name{q}_\xi)\in \name{\mathcal L}^{G * H} * j(\name{\Pbb}_\chi)$ for $\xi<\chi$. 
Note that forcing with $\name{\mathcal L}^{G * H}$ over $V[G][H]$ always adds a generic embedding 
$j:V[G]\longrightarrow M[j(G)]$ -- see \cite[\S{}9]{Cummings} -- so that the name $j(\name{\Pbb}_\chi)$ is well defined.
The construction includes arranging for each $\xi<\chi$ that 
$\forcesq{\name{\mathcal L}^{G * H}}{\name{q}_\xi\in j(\name{\Pbb}_\xi)}$ 
and that for all $p\in H_\xi$ we have that $r_\xi\forcesq{\name{\mathcal L}^{G * H}}{\name{q}_\xi \le j(p).}$

Define $\mathfrak{U}=(\setof{v\in {\mathcal U}}{\kappa_v=\kappa \sss\&\sss \lh(v)\le\lambda})^{V[G][H]}$. 
As $V[G][H]\thinks \card{2^\kappa} = \chi$, $\chi$ is a successor cardinal and $\lambda<\kappa$,
we have that $V[G][H]\thinks \card{\mathfrak{U}}\le ( (2^{2^\kappa})^\lambda) = 2^\chi$. 

\emph{Inductive case:  $\xi=0$ or limit $\xi$.}  By the 
$\lower.05em\hbox{$<$}\chi$-closure of $\name{\mathcal L}^{G * H} * j(\name{\Pbb}_\chi)$,
just choose some $(r_\xi,\name{q}_\xi)$ such that 
$\forcesq{\name{\mathcal L}^{G * H}}{\name{q}_\xi\in j(\name{\Pbb}_\xi)}$,
and for all $\xi'<\xi$ we have $(r_\xi,\name{q}_\xi) \le (r_{\xi'},\name{q}_{\xi'})$.
Then  for each $p\in H_\xi$ we have $r_\xi\forcesq{\name{\mathcal L}^{G * H}}{\name{q}_\xi \le j(p).}$

\emph{Inductive case: $\xi+1$.}  If $\xi = 0$ then $\Qbb_0 $ is the usual forcing to add a $\kappa^+$-Kurepa tree with 
$\chi^-$-many branches as computed in ${V[G]}$,
and we set $r_1$ to be the trivial condition and $\name{q}_1$ to be a name for
the greatest lower bound of the union of the pointwise image of the $\Qbb_0$-generic filter.
We now assume that $\xi > 0$.

If $V[G][H_\xi]\thinks u^\xi\ni {\mathcal U}$ or $u^\xi\in {\mathcal U}$ but $\kappa_{u^\xi}\ne \kappa$ or 
$u^\xi\in {\mathcal U}$ and $\kappa_{u^\xi}= \kappa$ but $\lh(u^\xi)\ne\lambda$ there is nothing to do and we can take 
$(r_{\xi+1},\name{q}_{\xi+1})$ = $(r_\xi,\name{q}_\xi \concat \name{\onebb}_{j(\Qbb_\xi)})$.
So assume otherwise: $V[G][H_\xi]\thinks u^\xi\in {\mathcal U}$, $\kappa_{u^\xi}= \kappa$ and $\lh(u^\xi)=\lambda$. 

By the construction so far
$(r_\xi,\name{q}_\xi)\forces_{\name{\mathcal L}^{G * H} * j(\name{\Pbb}_\xi)}$ ``\sss there is a lifting 
$j:V[G][H_\xi]\longrightarrow M[j(G)][j(H_\xi)]$ of $j$.''

As $2^\chi$ is less than the closure of $\name{\mathcal L}^{G * H} * j(\name{\Pbb}_\chi)$ 
we may, by shrinking if necessary, assume that there is some 
$(r'_\xi,\name{q}'_\xi)\le (r_\xi,\name{q}_\xi)$  such that
$(r'_\xi,\name{q}'_\xi)\decides ``v\in j(\name{D}_\xi)\hbox{''}$ for every $v \in \mathfrak{U}$.

\begin{definition} If $v\in {\mathcal U}^{V[G][H]}$, $\lh(v)=\lambda$ and for all $\tau<\lambda$ we have
$v_\tau \cap V[G][H_\xi] = u^\xi_\tau$ we say $v$ \emph{fills out} $u^\xi$.
\end{definition}

Note that if $v$ \emph{fills out} $u^\xi$  then, \emph{a priori}, $v\in {\mathfrak U}$.
\vskip12pt

\begin{lemma}\label{master}  Suppose there is a lifting 
$j:V[G][H_\xi]\longrightarrow M[j(G)][j(H_\xi)]$ of $j$.
Set $\obar{B}=\bigcap\setof{j(B^q)}{(A^q,B^q,t^q,f^q)\in K_\xi}$.  
Suppose there is some $v\in {\mathcal U}^{V[G][H]}$ which fills out $u^\xi$
and for all $\tau<\lambda$ we have $v\on\tau\in \obar{B}$.

Then there is a master condition for $j(\Qbb_\xi)$ such that 
on forcing below the master condition there is a lifting of $j$ 
to a map $j:V[G][H_\xi][K_\xi]\longrightarrow M[j(G)][j(H_\xi)][j(K_\xi)]$ such that
for every $v\in {\mathcal U}\cap \obar{B}$ filling out $u^\xi$ and every
$\tau<\lambda$  we have $v\on\tau\in j(A^\xi)_\kappa$.
\end{lemma}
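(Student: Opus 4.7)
The plan is to construct the master condition $q^*=(A^*,B^*,t^*,f^*)\in j(\Qbb_\xi)=Q(j(u^\xi))$ explicitly from the data $K_\xi$ and $v$, verify that $q^*$ lies in $M[j(G)][j(H_\xi)]$ and refines $j(p)$ for every $p\in K_\xi$, and from the master condition read off the lift of $j$ and the conclusion about $j(A^\xi)_\kappa$. Concretely, take $A^*$ to have length $\kappa+1$ with $A^*_\rho=A^\xi_\rho$ for $\rho<\kappa$ and $A^*_\kappa=\obar B\cap\setof{w}{\kappa_w=\kappa}$; $B^*=\obar B\setminus{\mathcal U}_{\kappa^+}$; $t^*=\bigcup_{p\in K_\xi}j(t^p)$; and let $f^*$ be the natural amalgamation of the $j(f^p)$ for $p\in K_\xi$.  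The closure properties of $\name{\mathcal L}^{G*H}$ ensure that $M[j(G)][j(H_\xi)]$ is sufficiently closed in the ambient extension to contain $j[K_\xi]$, and from this one recovers $A^\xi$, $\obar B$, and all of $q^*$ inside $M[j(G)][j(H_\xi)]$.  Clauses $(1)$--$(4)$ of Definition \ref{qstar-forcing} are direct from the construction together with the hypothesis that $v$ fills out $u^\xi$ with $v\restriction\tau\in\obar B$ for all $\tau$, which guarantees that $A^*_\kappa$ is a nonempty set of ultrafilter sequences at $\kappa$ closed under initial segments (since $\obar B$ is). That $B^*\in{\mathcal F}(j(u^\xi))$ follows from $j(\kappa)$-completeness of the filter applied to the family of the $j(B^p)$ (of cardinality $\kappa<j(\kappa)$), together with the fact that ${\mathcal F}(j(u^\xi))$ concentrates on sequences of critical point near $j(\kappa)$. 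Finally, $q^*\le j(p)$ for each $p\in K_\xi$ because $A^{q^*}\restriction\rho^p=A^p$, and each of the containments $A^\xi_\rho\subseteq B^p$ (for $\rho^p\le\rho<\kappa$), $A^*_\kappa\subseteq\obar B$, and $B^*\subseteq\obar B$ lands inside $j(B^p)$.

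The main obstacle is clause $(5)$ of Definition \ref{qstar-forcing} for $q^*$. Fix $\eta\in a^\xi$, $\alpha=j(\alpha'),\beta=j(\beta')$ in $t^{*,\eta}$, $\zeta=j(\zeta_0),\zeta'=j(\zeta_1)\in d^{*,\eta}_\alpha\cap d^{*,\eta}_\beta$ satisfying the equal-unequal hypothesis, and let $y$ be a lower part for $\Rad{j(u^\xi)}$ harmonious with $A^*$ past $\eta$. I would decompose $y=y_1\concat y_{\rm top}$, where $y_{\rm top}$ is the at-most-one-pair top of critical point $\kappa$, and use directedness of $K_\xi$ together with Lemma \ref{squaring-off-the-ds} to find $p_0\in K_\xi$ with $(\eta,\alpha'),(\eta,\beta')\in t^{p_0}$ and $\zeta_0,\zeta_1\in d^{p_0,\eta}_{\alpha'}\cap d^{p_0,\eta}_{\beta'}$ inheriting the equal-unequal condition, and with $\rho^{p_0}$ large enough that $y_1$ is harmonious with $A^{p_0}$ past $\eta$. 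Clause $(5)$ for $p_0$, transferred by elementarity (and noting $j(y_1)=y_1$ as $y_1\in V_\kappa$), yields that $y_1\concat(j(u^\xi),j(B^{p_0}))$ forces $\zeta\,\name{E}_\alpha\,\zeta'\iff\zeta\,\name{E}_\beta\,\zeta'$ in $\Rad{j(u^\xi)}$. The delicate point is promoting this from the $y_1$-condition to $y\concat(j(u^\xi),B^*)$: the measure one data $D$ in $y_{\rm top}=(v\restriction\tau,D)$ will not in general be contained in $j(B^{p_0})$, so a direct refinement argument fails. Instead I would apply Lemma \ref{truthvalue-reduction} to reduce both sides of the equivalence to $\Rad{y_1}$-names, and invoke Theorem \ref{Radinsets}: any subset of $\kappa^+$ added by $\Rad{j(u^\xi)}$ already lies in the $\Rad{v\restriction\tau}$-generic extension induced by the initial segment of the Radin generic sequence up through the measure sequence of critical point $\kappa$, so the truth value of $\zeta\,\name{E}_\alpha\,\zeta'$ is determined by data already controlled by $y_1$ below $\kappa$, and neither $y_{\rm top}$ nor the part of $B^*$ above $\kappa$ can disturb the equivalence.

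Once $q^*$ is confirmed to be a master condition, forcing below it produces a $Q(j(u^\xi))$-generic $K^*\ni q^*$ with $j[K_\xi]\subseteq K^*$, and the standard lifting criterion yields the extension $j\colon V[G][H_\xi][K_\xi]\to M[j(G)][j(H_\xi)][K^*]$ sending $K_\xi$ to $K^*$. By density any condition in $K^*$ with $\rho$-length exceeding $\kappa$ agrees with $q^*$ on the $\kappa$-th $A$-coordinate, so $j(A^\xi)_\kappa=A^*_\kappa=\obar B\cap\setof{w}{\kappa_w=\kappa}$.  For any $v\in{\mathcal U}\cap\obar B$ filling out $u^\xi$ and every $\tau<\lambda$, $v\restriction\tau\in\obar B$ by closure of $\obar B$ under initial segments and $\kappa_{v\restriction\tau}=\kappa$, so $v\restriction\tau\in A^*_\kappa=j(A^\xi)_\kappa$ as required.
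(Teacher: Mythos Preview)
Your construction of the master condition $q^*$ is essentially the paper's, and your identification of the obstacle in clause (5) is exactly right: when the lower part $y$ carries a top pair $(v\restriction\tau,D)$ with $\kappa_v=\kappa$, the set $D$ need not be contained in $j(B^{p_0})$, so $y\concat(j(u^\xi),B^*)$ is not obviously an extension of $y_1\concat(j(u^\xi),j(B^{p_0}))$.

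Your proposed workaround via Lemma \ref{truthvalue-reduction} and Theorem \ref{Radinsets} does not close the gap. First, the relations $j(\name{E}^\xi_{\alpha'})$ are names for subsets of $j(\kappa^+)\times j(\kappa^+)$, not $\kappa^+\times\kappa^+$, so Theorem \ref{Radinsets} gives no control at the level you invoke. Second, even granting that the truth value reduces to an $\Rad{y_1}$-name below \emph{some} condition $y_1\concat(j(u^\xi),C)$, you still need $y\concat(j(u^\xi),B^*)$ to extend that condition, and the same obstruction recurs: nothing forces $D\subseteq C$. The reduction lemma shrinks the top measure-one set, it does not enlarge it, so it cannot absorb an arbitrary $D$.

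The paper's solution is the $\rho^*$-weakening (Definition \ref{rho-star-weakening}, Lemma \ref{abacus1}). Since $y$ is harmonious with $A^*$ past $\eta$ and $D\subseteq V_\kappa$, one has $D\subseteq\bigcup\setof{A^\xi_\rho}{\rho^*\le\rho<\kappa}$ where $\rho^*$ is least with $\max(\eta,\kappa_{y_1})<\kappa^\varepsilon_{\rho^*}$. Take $p_0\in K_\xi$ with $\rho^{p_0}$ a successor and large enough, and let $q$ be its $\rho^*$-weakening. Then $B^q=B^{p_0}\cup\bigcup\setof{A^{p_0}_\sigma}{\rho^*\le\sigma<\rho^{p_0}}$, and since $A^\xi_\rho\subseteq B^{p_0}$ for $\rho\ge\rho^{p_0}$ while $A^\xi_\rho=A^{p_0}_\rho$ for $\rho<\rho^{p_0}$, one gets $D\subseteq B^q=j(B^q)\cap V_\kappa$. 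As $p_0\le q$ we have $q\in K_\xi$, hence $v\restriction\tau\in\obar B\subseteq j(B^q)$ and $B^*\subseteq j(B^q)$, so $y\concat(j(u^\xi),B^*)\le y_1\concat(j(u^\xi),j(B^q))$. Now clause (3) of Lemma \ref{abacus1} (the strengthened form of (5), needed because $y_1$ is harmonious with $A^{p_0}$, not with the shorter $A^q$) gives that $y_1\concat(u^\xi,B^q)$ forces the equivalence, and elementarity finishes. This weakening trick is the missing idea in your argument.
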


\begin{proof}[\ref{master}]
We construct a suitable master condition $p^* = (A^*, B^*, t^*, f^*)$.
\begin{itemize}
\item  $A^*={A}^\xi \concat \setof{v\on\tau}{\tau<\lambda\sss\sss\&\sss\sss v\in \obar{B}\cap ({\mathcal U}_{\kappa^+}\setminus {\mathcal U}_\kappa )
\sss\sss\&\sss\sss v \hbox{ fills out } u^\xi }$.
\item $B^*=\obar{B} \setminus {\mathcal U}_{\kappa^+}$. 
\item $t^* = \kappa \times j`` \chi^-$.
\item For each $\eta \in a^\xi$ and $\alpha < \chi^-$, $f^{*, \eta}_{j(\alpha)} = \bigcup \setof{j(f^{p, \eta}_\alpha)}{ p \in K_\xi }$.
\end{itemize}
We note that since $f^{p, \eta}_\alpha$ is a partial function of size less than $\kappa$, and we have the density lemmas
Lemma (\ref{jdensity1}) and Corollary (\ref{density-for-t-f-for-Q}), it is easy to see that
$d^{*, \eta}_{j(\alpha)} = \dom(f^{*, \eta}_{j(\alpha)}) = j``\kappa^+$
and $f^{*, \eta}_{j(\alpha)}(j(\zeta)) = j(f^{p, \eta}_\alpha(\zeta))$ for any $p \in K_\xi$ such that
$(\eta, \alpha) \in t^p$.  We also note that $a^{p^*} = a^\xi \cup \set{ \kappa }$. 

It is routine to verify that $p^* \in j(\Qbb^*_\xi)$ and that $p^* \le_{j(\Qbb^*_\xi)} j(q)$ for $q\in K_\xi$. 
We must now show that $p^*$ satisfies $(5)$ in the definition of $j(\Qbb_\xi)$. 

Let $\eta\in a^\xi$, and suppose that
$f^{*, \eta}_{j(\alpha)}(j(\zeta))=f^{*,\eta}_{j(\beta)}(j(\zeta)) \ne f^{*, \eta}_{j(\alpha)}(j(\zeta'))=f^{*,\eta}_{j(\beta)}(j(\zeta'))$
for some $\alpha, \beta \in \chi^-$ and $\zeta, \zeta' \in \kappa^+$. 
Let $y$ be harmonious with $A^*$ past $\eta$. 
If $\kappa_y < \kappa$ let $y'=y$. Otherwise let $y'$, 
$\tau<\lambda$, $v \in B^*$ which fills out $u^\xi$, and ${B}\in {\mathcal F}(v\on\tau)$ 
be such that $y=y' \concat (v \on \tau,{B})$. 

Using Corollary (\ref{jdensity2}) and the definition of $p^*$ we may find  $p\in K_\xi$ with $\rho^p$ a successor ordinal
such that
\begin{itemize}
\item  $\eta \in a^p$ with $\eta < \max(a^p)$,
\item  $\kappa_{y'} < \max(a^p)$,
\item  $(\eta,\alpha)$, $(\eta,\beta) \in t^p$,
\item  $\zeta, \zeta' \in d^{p, \eta}_\alpha \cap d^{p, \eta}_\beta$,
\item  $f^{p, \eta}_\alpha(\zeta)=f^{p,\eta}_\beta(\zeta) \ne f^{p, \eta}_\alpha(\zeta')=f^{p,\eta}_\beta(\zeta')$,
\item  and $y'$ is harmonious with $A^p$ past $\eta$. 
\end{itemize}

If $y=y'$ we have that $y$ is harmonious with $A^p$ past $\eta$ and 
\[
y \concat (j(u^\xi),B^*) \le y \concat (j(u^\xi),j(B^p)) = j(y \concat (u^\xi, B^p)).
\] 
As $p$ is a condition in $\Qbb_\xi$ we have
$y \concat (u^\xi, B^p) \forces \hbox{`` }
\zeta \name{E^\xi_\alpha} \zeta' \longleftrightarrow  \zeta \name{E^\xi_\beta} \zeta'\hbox{ ''}$,
and  we are done by elementarity.

So we may assume that we are in the other case, that is $y=y' \concat (v \on \tau,{B})$
for some $v \in \obar{B}$ which fills out $u^\xi$. We note that $v \restriction \tau \in \obar{B}$,
since each of the sets $B^q$ is closed under taking initial segments.
Let $\rho^* < \rho^p$ be minimal such that $\max (\set{ \eta, \kappa_{y'} }) < \kappa_{\rho^*}$. 

Since $y$ was chosen to be harmonious with 
$A^*$ past $\eta$,  $B^*  \cap V_\kappa = \emptyset$,  and $\eta < \kappa = \kappa_v$,
we see that
${B} \subseteq \bigcup \setof{ A^\xi_\rho}{ \eta < \kappa_\rho, \rho < \kappa }$ and
$\kappa_w > \eta$ for all $w \in {B}$. 
Since $y' \concat (v \restriction \tau, {B})$ is a condition, we also have that
$\kappa_w > \kappa_{y'}$ for all $w \in {B}$. 
So by the choice of $\rho^*$,  
${B} \subseteq \bigcup \setof{ A^\xi_\rho}{ \rho^* \le \rho < \kappa }$.

Since the condition $p \in K_\xi$, $A_\sigma = A_\sigma^p$ for  
$\rho^* \le \sigma < \rho^p$ and $A_\sigma \subseteq B^p$
for $\rho^p \le \sigma < \kappa$. It follows that 
${B} \subseteq \bigcup \setof{A^p_\sigma}{ \rho^* \le \sigma < \rho^p } \cup B^p$.

We now appeal to Lemma (\ref{abacus1}) to obtain the $\rho^*$-weakening of $p$, that is to say
the  condition $q \in \Qbb_\xi$ such that $\rho^q = \rho^*$,  $p \le q$,
$A^q = A^p \restriction \rho^*$,  
$B^q  =  \bigcup \setof{A_\sigma}{ \rho^* \le \sigma < \rho^p } \cup B^p$,
$t^q = \setof{ (\eta', \alpha') \in t^p }{ \eta' \in a^q \cap \sup(a^q) }$ and 
$f^q = \tupof{ f^{p, \eta'}_{\alpha'}}{ (\eta', \alpha') \in t^q }$.

Since $p \le q$ we have $q \in K_\xi$, and so $B^* \subseteq j(B^q)$. 
By the choice of $\rho^*$, since $\rho^q=\rho^*$, $A^q=A^p\on\rho^*$,
$\max(\set{\eta,\kappa_{y'}}) < \kappa_{\rho^*}$ and $y'$ is harmonious with $A^p$ past $\eta$,
we have that $y'$ is harmonious with $A^q$ past $\eta$.  

By clause (3) of the conclusion of Lemma (\ref{abacus1}), the condition $q$ enjoys
a strengthened form of condition (5) in the definition of $Q(w)$, which implies in
this case that 
$y' \concat (u^\xi, B^q) \forcesq{}{ \zeta \name{E^\xi_\alpha} \zeta' \longleftrightarrow  \zeta \name{E^\xi_\beta} \zeta' .}$

We claim that
$y \concat (j(u^\xi),B^*) \le y' \concat (j(u^\xi),j(B^q)) = j(y' \concat (u^\xi, B^q)).$
 
The only non-trivial point is that the pair $(v \restriction \tau, {B})$ can be added to
the condition  $y' \concat (j(u^\xi),j(B^q))$.  This holds because
$v \restriction \tau \in \obar{B} \subseteq j(B^q)$, and
${B} \subseteq \bigcup \setof{A^p_\sigma}{ \rho^* \le \sigma < \rho^p } \cup B^p = B^q = j(B_q) \cap V_\kappa$.
As in the case when $y = y'$,  we are now done by elementarity.
\vskip12pt

Now that we have shown that $p^*$ is a condition in $j(\Qbb_\xi)$ we have that
$p^* \le_{j(\Qbb_\xi)} j(q)$ for $q\in K_\xi$, and so $p^*$ is a master condition.
The last thing to check in order to complete the proof of the lemma is that
$p^*$ is a master condition of the type required. 
However, this is almost immediate from its definition.

For as $j:V[G][H_\xi][K_\xi]\longrightarrow M[j(G)][j(H_\xi)][j(K_\xi)]$ 
is a lifting we have that $p^* \in j(K^\xi)$  (see \cite{Cummings}, Proposition 9.1). We also have,
by definition, that $A^\xi = \bigcap_{q\in K^\xi}  [A^q,B^q]$, and so
$ j(A^\xi)  = j(\bigcap_{q\in K^\xi}  [A^q,B^q] ) = \bigcap_{q\in j(K^\xi)}  [A^q,B^q]. $
Hence if $q\in j(K^\xi)$ and $\rho^q\ge \kappa+1$ then,
by the definition of compatibility under $\le_{\Qbb_\xi}$ and since $K^\xi$ is a filter, 
we must have $A^q_\kappa = A^*_\kappa$.

Consequently, if $v$ fills out $u^\xi$ and $v\on\tau \in \obar{B}$ then by 
the definition of $p^*$ we have $v\on\tau \in A^*_\kappa$ and hence, 
by the previous paragraph, $v\on\tau \in j(A^\xi)$.
\end{proof}

Having established the preceding lemma, we can now complete the inductive step.

\emph{Case (i).} There is some $(r,\name{q})\le (r'_\xi, \name{q}'_\xi)$ 
with $\forces_{\name{\mathcal L}^{G * H}} \name{q}\in j(\name{\Pbb}_\xi)$ 
such that 
\begin{multline*}
(r,\name{q}) \forces_{\name{\mathcal L}^{G * H} * j(\name{\Pbb}_\xi)}
\hbox{``} 
\exists v\in {\mathcal U} \sss(\sss v\hbox{ fills out } u^\xi\sss\sss\&\sss\sss \forall \tau <\lambda\\
v\on\tau \in \obar{B} = {\textstyle\bigcap}\setof{j(B^q)}{(A^q,B^q,t^q,f^q)\in K_\xi }\sss)
\hbox{''}.
\end{multline*}

Let $r_{\xi+1}=r$ and use Lemma (\ref{master}) to choose $\name{q}_{\xi+1}$ such that\break
$\name{q}_{\xi+1}\on \xi = \name{q}$, $(r_{\xi+1},\name{q}_{\xi+1}\on \xi)\forces$ ``$\name{q}_{\xi+1}(\xi)$ is a lower 
bound for $j``K_\xi$'' and $(r_{\xi+1},\name{q}_{\xi+1})\forces$ `` for every
$v\in {\mathcal U}\cap \obar{B} $ which fills out $u^\xi$ and every $\tau<\lambda$ we have $v\on\tau\in j(A^\xi)$.''

\emph{Case (ii).} Otherwise.  Again, by the closure of 
$\name{\mathcal L}^{G * H} * j(\name{\Pbb}_\chi)$,
let $(r_{\xi+1},\name{q}_{\xi+1})\le (r'_\xi,\name{q}'_\xi)$ be such that
$(r_{\xi+1},\name{q}_{\xi+1}\on \xi)\forces$ ``$\name{q}_{\xi+1}(\xi)$ is a lower 
bound for $j``K_{\xi}$'' since $Q_{\xi}$ is trivial.
$\endof_{\hbox{\sevenrm Inductive construction}}$

When the construction is complete use the $\chi$-closure of 
${\name{\mathcal L}^{G * H} * j(\name{\Pbb}_\chi)}$ again and take a lower bound $(r^*,\name{q}^*)$ for
$\tupof{(r_{\xi},\name{q}_{\xi})}{\xi<\chi}$ such that for all $q\in H$ we have ${r^*\forces \name{q}^*\le j(q)}$.
Thus $(r^*,\name{q}^*)$ forces that $j$ can be lifted to some $j:V[G][H]\longrightarrow M[j(G)][j(H)]$ with 
$j(G)=G * H * j(G)/ (G * H)$.

\begin{claim}\label{masterclaim}
If we generate $u=\tupof{u_\tau}{\tau<\lambda}$ from such a lifting of 
$j$ to $V[G][H]$ in the usual inductive way, 
with $u_0=\kappa$ and by setting ${u_{\tau}=\setof{D\in V[G][H]}{u\on\tau \in j(D)}}$ for $0<\tau<\lambda$, 
then in fact we have $u\in V[G][H]$ (and not merely $u\in V[G][H][j(G)/(G* H)][j(H)]$) and $u\in {\mathcal U}^{V[G][H]}$.
\end{claim}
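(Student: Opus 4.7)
The plan is to induct on $\tau < \lambda$, showing simultaneously that $u \restriction \tau \in V[G][H]$, that it is an ultrafilter sequence at $\kappa$ in $V[G][H]$, and that $u \restriction \tau \in \mathfrak{U}$ (so that the induction can continue). The base case $u_0 = \kappa$ is immediate, and limit stages are handled by concatenation using the inductive hypothesis (noting that $\tau \le \lambda < \kappa$, so such concatenations live in $V[G][H]$ and still lie in $\mathfrak{U}$).

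The crux is the successor step: given that $u \restriction \tau \in V[G][H] \cap \mathfrak{U}$, we must verify that $u_\tau$, as defined by $u_\tau = \setof{D \in V[G][H] \cap {\mathcal P}(V_\kappa)}{u\restriction\tau \in j(D)}$, already lies in $V[G][H]$. For any such $D$, fix $\xi$ with $\name{D}_\xi^{G * H} = D$; the construction arranged that $(r'_\xi, \name{q}'_\xi)$ decides ``$v \in j(\name{D}_\xi)$'' for \emph{every} $v \in \mathfrak{U}$. Since $u \restriction \tau \in \mathfrak{U}$ by the inductive hypothesis and $(r^*, \name{q}^*) \le (r'_\xi, \name{q}'_\xi)$, the truth value of ``$u \restriction \tau \in j(D)$'' is fixed and can be read off in $V[G][H]$ directly from the sequence $\tupof{(r'_\xi, \name{q}'_\xi)}{\xi < \chi}$, which was itself built in $V[G][H]$ using the $\chi$-closure of $\name{\mathcal L}^{G * H} * j(\name{\Pbb}_\chi)$. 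Hence $u_\tau \in V[G][H]$.

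The verification that $u_\tau$ is a $\kappa$-complete ultrafilter on $V_\kappa$ containing ${\mathcal U}_\kappa$, and that it satisfies the normality and coherence conditions relative to $u \restriction \tau$, then proceeds by the standard arguments for measures derived from an elementary embedding with a seed. $\kappa$-completeness uses that $\crit(j) = \kappa$ and that $j$ commutes with intersections of size less than $\kappa$. Normality follows by applying $j$ to a regressive function $f$ and observing that $j(f)(u \restriction \tau) \in V_\kappa$ is the required constant value. For ``${\mathcal U}_\kappa \in u_\tau$'' one uses the inductive hypothesis that $u \restriction \tau$ is an ultrafilter sequence at $\kappa$ (so lies in ${\mathcal U}^{V[G][H]}_{\kappa^+}$), together with elementarity and the observation that $V_{\kappa+1}$ agrees between $V[G][H]$ and $M[j(G)][j(H)]$. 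The two coherence conditions are obtained by applying the same derivation to the functions $w \mapsto w_{f(w)}$ and to the initial segments of $u \restriction \tau$, again by elementarity.

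The main obstacle, and the reason for the somewhat elaborate bookkeeping in the inductive construction of the $(r_\xi, \name{q}_\xi)$, is precisely the claim that $u \in V[G][H]$: a priori the definition of $u_\tau$ appeals to $j$ and hence to the generic filter for $\name{\mathcal L}^{G*H} * j(\name{\Pbb}_\chi)$ above $(r^*, \name{q}^*)$. The anticipatory decisions made by the $(r'_\xi, \name{q}'_\xi)$, valid uniformly for all $v \in \mathfrak{U}$, are what make the truth value independent of that generic, and it is essential that the induction carry the extra information $u \restriction \tau \in \mathfrak{U}$ so that this uniform decision applies at each stage.
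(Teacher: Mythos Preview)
Your proof is correct, but the route to the key point---that $u_\tau \in V[G][H]$---differs from the paper's. The paper does not invoke the bookkeeping decisions made by the $(r'_\xi,\name{q}'_\xi)$ at all; instead it argues directly by closure: since $\name{\mathcal L}^{G*H} * j(\name{\Pbb}_\chi)$ is (at least) $(2^\kappa)^+$-closed over $V[G][H]$, and $u_\tau$ is a subset of ${\mathcal P}(V_\kappa)^{V[G][H]}$ which has size $2^\kappa$, the tail forcing cannot have added $u_\tau$, so $u_\tau \in V[G][H]$. This is shorter and does not require maintaining the auxiliary hypothesis $u\restriction\tau \in \mathfrak{U}$ through the induction.

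Your approach instead cashes in the anticipatory decisions ``$(r'_\xi,\name{q}'_\xi)\decides v \in j(\name{D}_\xi)$ for all $v \in \mathfrak{U}$'' that were built into the construction, reading off membership in $u_\tau$ from data already present in $V[G][H]$. This is perfectly valid and has the virtue of making explicit use of that step of the construction (which, curiously, the paper's own proof of this claim never appeals to). The cost is that you must verify $u\restriction\tau \in {\mathcal U}^{V[G][H]}$ at each stage before you can proceed, whereas the paper can defer the normality and coherence checks to the end. Both the paper and you verify normality and coherence in essentially the same standard ``seed'' fashion.
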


\begin{proof}[\ref{masterclaim}]

The argument is an inductive repetition of a typical one in the context of Laver forcing.

By induction on $\tau<\lambda$ suppose that $u\on\tau\in V[G][H]$. \emph{A priori}, 
$u_\tau\in V[G][H][j(G)/(G * H)][j(H)]$.
However $j(G)/(G * H) * j(H)$ is generic for a highly closed forcing -- it is certainly
$(2^{\kappa})^+$-closed. So in fact $u_\tau\in V[G][H]$. As $V[G][H]$ is closed under sequences of length less than or 
equal to $\lambda$ we also have $u\on\tau+1\in V[G][H]$. Similarly, we inductively obtain that $u\on\tau\in V[G][H]$ for
limit $\tau$, and at the end of the induction that $u\in V[G][H]$.

We also need that normality and the two coherence conditions 
from Definition (\ref{ultrafilterseq}) hold for $u_\tau$. 

Suppose $f:{\mathcal U}_\kappa\longrightarrow V_\kappa$ is a function in $V[G][H]$. Observe that 
 $V_\kappa^{V[G][H][j(G)/(G * H)][j(H)]} = V_\kappa^{V[G][H]}$. 
Then $\setof{w\in {\mathcal U}_\kappa}{f(w)\in V_{\kappa_w}}\in u_{\tau}$ if and only if
$j(f)(u\on\tau) \in V_\kappa^{V[G][H]}$, if and only if there is some
$x\in V_\kappa^{V[G][H]}$ such that $j(f)(u\on\tau) = x$,
if and only if there is some $x\in V_\kappa^{V[G][H]}$ such that 
$\setof{w\in {\mathcal U}_\kappa}{f(w)=x}\in u_\tau$. Hence $u$ satisfies normality in $V[G][H]$.

Similarly, suppose $f: {\mathcal U}_\kappa\longrightarrow \kappa$. Then
$\setof{w\in {\mathcal U}_\kappa}{f(w) < \lh(w)}\in u_\tau$ if and only if 
$j(f)(u\on\tau) < \tau$, if and only if there is some $\sigma<\tau$ such that
$j(f)(u\on\tau) =\sigma$. For this $\sigma$ we then have for $X\subseteq V_\kappa$ that
$X\in u_\sigma$ if and only if $u\on\sigma \in j(X)$ if and only if $j(X)\cap V_\kappa\in u_{j(f)(u\on\tau)}$
if and only if $\setof{w\in {\mathcal U}_\kappa}{X \cap V_{\kappa_w} \in w_{f(w)}} \in u_\tau$.

Finally,  if $\sigma < \tau$ and $X \in u_\sigma$ then $u\on \sigma \in j(X)$ and so 
$\setof{w\in {\mathcal U}_\kappa}{\exists \bar\sigma<\lh(w)\sss X \cap V_{\kappa_w} 
\in w_{\obar{\scriptstyle \sigma}}} \in u_\tau$. So $u$ also satisfies the two coherence conditions.
 \end{proof}

\begin{claim}\label{stationary}
In $V[G][H]$ there is a stationary set $S \subseteq S^\chi_{\kappa^+}$ such that
\begin{itemize}
\item  For every $\xi \in S$ one has $\lh(u^\xi) = \lambda$
and for every $\tau < \lambda$ that $u_\tau\cap V[G][H_\xi] = u^\xi_\tau$.
\item  For every $\xi$ in the closure of $S$, $S \cap \xi \in V[G][H_\xi]$.
\item  For every $\xi \in S$, $u^\xi \in {\mathcal U}^{V[G][H_\xi]}_{\kappa^+}$.
\item  For every $\xi \in S$, 
$\setof{v}{\exists \rho < \kappa \sss\sss v\in A^\xi_\rho} \in {\mathcal F}(u)$.
\end{itemize}
\end{claim}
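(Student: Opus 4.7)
The plan is to encode $u$ (from Claim \ref{masterclaim}) as a subset $T \subseteq \chi$ in $V[G][H]$, apply Proposition \ref{dnotes} to guess $T$ on a stationary set $H \subseteq S^\chi_{\kappa^+}$, and take the required $S$ to be $H \cap C \cap F$ for suitable clubs $C$ and $F$.

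Working in $V[G][H]$, choose a $\chi$-enumeration $\tupof{Z_\varepsilon}{\varepsilon < \chi}$ of $\mathcal{P}(V_\kappa)^{V[G][H]}$ with $Z_\varepsilon \in V[G][H_{\varepsilon+1}]$, and by standard book-keeping (using the chain condition of $\name{\Pbb}_\chi$ and the fact that its $\lk$-directed closed iterands add no bounded subsets of $\kappa$) obtain a club $C \subseteq \chi$ on which $\setof{Z_\varepsilon}{\varepsilon < \xi} = V[G][H_\xi] \cap \mathcal{P}(V_\kappa)$. For $\tau < \lambda$ and $\varepsilon < \chi$, set $X^\tau_\varepsilon := Z_\varepsilon$ if $Z_\varepsilon \in u_{1+\tau}$ and $X^\tau_\varepsilon := V_\kappa$ otherwise; let $Y^\tau_\varepsilon := e^{-1}(X^\tau_\varepsilon) \subseteq \kappa$, and define
\[
T \cap [\kappa\lambda\varepsilon + \kappa\tau,\, \kappa\lambda\varepsilon + \kappa(\tau+1)) := \setof{\kappa\lambda\varepsilon + \kappa\tau + \eta}{\eta \in Y^\tau_\varepsilon}.
\]
Then Proposition \ref{dnotes} with $A = S^\chi_{\kappa^+}$ yields a stationary $H \subseteq A$ such that $T \cap \xi = (\name{S}_\xi)^{G*H_\xi}$ for $\xi \in H$ and $H \cap \xi \in V[G][H_\xi]$ for $\xi$ in the closure of $H$. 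Taking $F$ to be the club of $\xi$ with $\xi = \kappa\lambda\xi$ and $\Lbb * \Pbb_\xi = \setof{p_\varepsilon}{\varepsilon < \xi}$, set $S := H \cap C \cap F$.

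For $\xi \in S$, unwinding the definition of $\name{u}^\xi$ in the excerpt (using $(\name{S}_\xi)^{G*H_\xi} = T \cap \xi$ and $\xi \in C \cap F$) gives $u^\xi_{1+\tau} = \setof{X^\tau_\varepsilon}{\varepsilon < \xi} = u_{1+\tau} \cap V[G][H_\xi]$, so $u^\xi_\sigma = u_\sigma \cap V[G][H_\xi]$ for $\sigma \in (0,\lambda)$ and $\lh(u^\xi) = \lambda$, which is bullet 1. Bullet 2 follows because $C \cap \xi$ depends only on $\tupof{Z_\varepsilon}{\varepsilon < \xi} \in V[G][H_\xi]$, $F \cap \xi \in V[G]$, and $H \cap \xi \in V[G][H_\xi]$ on the closure of $H$. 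Bullet 3 holds because $u_\sigma \cap V[G][H_\xi]$ inherits ultrafilter-ness and the normality/coherence conditions in $V[G][H_\xi]$ (recalling that the $\lk$-directed closed iteration does not change $V_\kappa$, and the witnessing parameters in the normality/coherence statements lie in $V_\kappa$ or $V[G][H_\xi]$).

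The main obstacle is bullet 4, which requires linking the externally given $u$ to the internally defined $A^\xi$ through the main iteration. The key observation is that for $\xi \in S$, $u$ itself witnesses the Case (i) existential at stage $\xi$ of the inductive construction. Indeed, for each $q \in K_\xi$, $B^q \in \mathcal{F}(u^\xi) = \bigcap_\sigma u^\xi_\sigma$; by bullet 1 $u^\xi_\sigma \subseteq u_\sigma$, so $B^q \in u_\sigma$, which by the definition of $u_\sigma$ means $u \on \sigma \in j(B^q)$, whence $u \on \sigma \in \obar{B}$ for every $\sigma \in (0, \lambda)$. Combined with $u \in \mathcal{U}^{V[G][H]}$ (Claim \ref{masterclaim}) and $u$ filling out $u^\xi$, a condition below $(r'_\xi, \name{q}'_\xi)$ (for instance $(r^*, \name{q}^*)$) forces the Case (i) existential, so the construction enters Case (i) and uses Lemma \ref{master} to take $\name{q}_{\xi+1}(\xi)$ to be a master condition. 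Applying that lemma's conclusion to $v = u$ gives $u \on \tau \in j(A^\xi)_\kappa \subseteq j(\bigcup_{\rho < \kappa} A^\xi_\rho)$ for every $\tau \in (0, \lambda)$, so $\setof{v}{\exists \rho < \kappa \sss v \in A^\xi_\rho} \in u_\tau$ for every such $\tau$ and therefore lies in $\mathcal{F}(u)$, as required.
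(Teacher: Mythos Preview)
Your proof is correct and follows essentially the same strategy as the paper's: code $u$ as a subset $T\subseteq\chi$, invoke Proposition~\ref{dnotes} to obtain a stationary set where $T$ is guessed, verify directly that restricting $u$ to $V[G][H_\xi]$ yields an ultrafilter sequence (bullet 3), and then argue retrospectively that Case (i) must have held at every $\xi\in S$ because $(r^*,\name{q}^*\restriction j(\xi))$ provides a witness, so Lemma~\ref{master} applies to give bullet 4.

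The one point on which you are more careful than the paper is the encoding step. The paper simply enumerates each $u_\tau$ in order type $\chi$ and asserts that decoding $T\cap\xi$ via the fixed formula for $\name{u}^\xi$ recovers $u_\tau\cap V[G][H_\xi]$; but for this to work the enumeration must be chosen compatibly with the filtration $\tupof{V[G][H_\xi]}{\xi<\chi}$, which the paper leaves implicit. Your approach --- enumerate all of $\mathcal P(V_\kappa)$ via the $Z_\varepsilon$'s with $Z_\varepsilon\in V[G][H_{\varepsilon+1}]$, record membership in $u_{1+\tau}$, and intersect with the club $C$ --- makes this explicit and directly matches the decoding formula from \S\ref{maini}. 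This is a genuine improvement in exposition, not a different method. (Using the paper's fixed sequence $\tupof{\name D_\xi}{\xi<\chi}$ for the $Z_\varepsilon$'s would make your club $C$ definable in $V$, which tidies up your justification of bullet 2.)
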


\begin{proof}[\ref{stationary}]

We start by outlining the motivation for the first step in the proof. 
As $S^{\chi}_{\kappa^+}$ is a stationary subset of $S^{\chi}_{\ge\kappa}$,
Proposition (\ref{dnotes}) tells us that there is a 
$\diamondsuit_\chi(S^{\chi}_{\kappa^+})$-sequence in $V[G][H]$,
(with the properties given in Proposition (\ref{dnotes} (1) and (2))), which
predicts subsets of $\chi$. The statement of the claim talks about predicting 
sequences of measures. So in order to apply Proposition (\ref{dnotes}) we must
`code' such sequences of measures by subsets of $\kappa$.

Working in $V[G][H]$,  define a set $T\subseteq \chi$ `coding' $u$ by 
enumerating each $u_\tau$ in order type $\chi$ as $\tupof{y_{\tau\xi}}{\xi<\chi}$ 
and setting $T\cap [\kappa \tau \xi,\kappa \tau(\xi+1))= 
\setof{\kappa\tau\xi+\eta}{\eta\in y_{\tau\xi}}$.
Appealing to Proposition (\ref{dnotes}), we may find a stationary set
$S \subseteq S^\chi_{\kappa^+}$ such that $S$ satisfies the first two clauses
of the claim.

For  $\xi \in S$ and $0 < \tau < \lambda$, we have
that $u^\xi_\tau$ is a measure on $V_{\kappa}$
in $V[G][H_\xi]$.
Since 
${\mathcal U}_\kappa^{V[G]}={\mathcal U}_\kappa^{V[G][H]}$ it is clear that 
each $u^\xi_\tau$ concentrates on ${\mathcal U}_\kappa$.

\emph{Normality.} Suppose $f: {\mathcal U}_\kappa\longrightarrow V_\kappa^{V[G][H_\xi]}$ 
and $\setof{w\in {\mathcal U}_\kappa}{f(w)\in V_{\kappa_w}}\in u^\xi_{\tau}$.
As $u^\xi_\tau\subseteq u_\tau$ we can apply normality for $u$ to get some
$x\in V_\kappa$ such that $\setof{w\in {\mathcal U}_\kappa}{f(w)=x}\in u_\tau$. But $x$, $f\in V[G][H_\xi]$, 
hence $\setof{w\in {\mathcal U}_\kappa}{f(w)=x}\in u_\tau \cap V_\kappa^{V[G][H_\xi]} = u^\xi_\tau$.

\emph{Coherence (i).} Suppose $f: {\mathcal U}_\kappa\longrightarrow \kappa$ with 
$f\in V[G][H_\xi]$ and $\setof{w\in {\mathcal U}_\kappa}{f(w) < \lh(w)}\in u^\xi_\tau$.
Again as $u^\xi_\tau\subseteq u_\tau$ we can apply the first coherence condition 
for $u$ to get some $\sigma<\tau$ with $x\in u_\sigma$ if and only if 
$\setof{w\in U_\kappa}{x\cap V_{\kappa_w} \in w_{f(w)}} \in u_\tau$. 
If $x\in V[G][H_\xi]$, so that $x\in u^\xi_\sigma$, 
then, recalling that $f\in V[G][H_\xi]$, we have 
$\setof{w\in U_\kappa}{x\cap V_{\kappa_w} \in w_{f(w)}} \in V[G][H_\xi]$, and hence
$\setof{w\in U_\kappa}{x\cap V_{\kappa_w} \in w_{f(w)}} \in u^\xi_\tau$. Conversely,
if $\setof{w\in U_\kappa}{x\cap V_{\kappa_w} \in w_{f(w)}} \in u^\xi_\tau$ then, as
$u^\xi_\tau\subseteq u_\tau$, we have $x\in u_\sigma$.

\emph{Coherence (ii).} If $\tau'<\tau$ and $x\in u^\xi_{\tau'}$ then 
$\setof{w\in {\mathcal U}_\kappa}{\exists \sigma<\lh(w)\sss x\cap 
V_{\kappa_w} \in w_\sigma} \in u_\tau$
and the set is clearly in $V[G][H_\xi]$ as $x$ is, and hence is an element of $u^\xi_\tau$.

Finally, at each $\xi \in S$ when we did the inductive construction of
$(r_{\xi+1},\name{q}_{\xi+1})$ we must have been in `Case (i)' and
used Lemma (\ref{master}) because $(r^*,\name{q}^*\on j(\xi))$
would be an appropriate witness.  Consequently for all $\tau<\lambda$
we have that $u\on\tau \in j(A^\xi)$, and hence $A^\xi\in {\mathcal F}(u)$.
\end{proof}

\begin{proposition}\label{measureseq} Let $S\in V[G][H]$ be as given by 
Claim (\ref{stationary}). Let $\xi$ be a limit of elements of the set 
$S$ of cofinality at least $\kappa^+$. For each $\tau<\lambda$ 
let $v_\tau=\bigcup\setof{u_\tau^\varepsilon}{\varepsilon\in S\cap \xi}$. Then $V[G][H_\xi]\thinks v$ is an ultrafilter sequence
and $\forall\varepsilon\in S\cap \xi\sss\sss A^\varepsilon \in {\mathcal F}(v)$ .
\end{proposition}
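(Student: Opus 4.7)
My plan is first to observe that $v$ lies in $V[G][H_\xi]$, then to establish the identity $v_\tau = u_\tau \cap V[G][H_\xi]$ for each $\tau \in (0,\lambda)$, and finally to deduce both that $v$ is an ultrafilter sequence in $V[G][H_\xi]$ and that $A^\varepsilon \in \mathcal{F}(v)$ for every $\varepsilon \in S \cap \xi$.

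For the membership of $v$ in $V[G][H_\xi]$: since $\xi$ is a limit of elements of $S$, it lies in the closure of $S$, so Claim \ref{stationary} provides $S \cap \xi \in V[G][H_\xi]$. Each $u^\varepsilon$ with $\varepsilon \in S \cap \xi$ belongs to $V[G][H_\varepsilon] \subseteq V[G][H_\xi]$, so the sequence $\tupof{u^\varepsilon}{\varepsilon \in S \cap \xi}$ lies in $V[G][H_\xi]$, and hence so does $v$.

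The key technical step, and what I expect to be the main obstacle, is a reflection principle: any $X \subseteq V_\kappa$ in $V[G][H_\xi]$ already lies in $V[G][H_\eta]$ for some $\eta < \xi$. To prove it I take a nice $\Pbb_\xi$-name $\name{X}$; by Theorem \ref{double-star-and-statcc-itn-thm}, $\Pbb_\xi$ has the $\kappa^+$-stationary chain condition and in particular the $\kappa^+$-chain condition, so each antichain deciding membership of a given element of $V_\kappa$ has size at most $\kappa$. Since $|V_\kappa| = \kappa$ (the iteration is $\lk$-closed and so adds no bounded subsets of $\kappa$), the nice name uses at most $\kappa$ conditions, each with support of size less than $\kappa$; the total support is a subset of $\xi$ of size at most $\kappa$ and is therefore bounded below $\xi$ as $\cf(\xi) \geq \kappa^+$. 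The same argument applies to functions $\mathcal{U}_\kappa \to V_\kappa$ or $\mathcal{U}_\kappa \to \kappa$ and to $\mu$-sequences with $\mu < \kappa$.

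Combining reflection with the fact that $S \cap \xi$ is cofinal in $\xi$, for any $X \in u_\tau \cap V[G][H_\xi]$ I first reflect $X$ to some $V[G][H_\eta]$ with $\eta < \xi$, then pick $\varepsilon \in S \cap \xi$ above $\eta$; Claim \ref{stationary} then gives $X \in u_\tau \cap V[G][H_\varepsilon] = u^\varepsilon_\tau \subseteq v_\tau$, yielding $v_\tau = u_\tau \cap V[G][H_\xi]$. From this identity the ultrafilter property, $\kappa$-completeness, normality, and both coherence conditions of Definition \ref{ultrafilterseq} for $v$ in $V[G][H_\xi]$ descend directly from the corresponding properties of $u$ in $V[G][H]$, since every witness (a subset of $V_\kappa$, a function, or a short sequence) in $V[G][H_\xi]$ reflects to some $V[G][H_\varepsilon]$ and the desired conclusion is transferred along the identity. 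Finally, for $\varepsilon \in S \cap \xi$, Claim \ref{stationary} gives $\bigcup_\rho A^\varepsilon_\rho \in u_\tau$ for every $\tau \in (0,\lambda)$; since $A^\varepsilon \in V[G][H_\varepsilon] \subseteq V[G][H_\xi]$, the identity $v_\tau = u_\tau \cap V[G][H_\xi]$ forces $\bigcup_\rho A^\varepsilon_\rho \in v_\tau$, i.e.\ $A^\varepsilon \in \mathcal{F}(v)$.
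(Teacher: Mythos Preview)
Your proof is correct and follows essentially the same approach as the paper: both rest on the reflection observation that, since $\cf(\xi)\ge\kappa^+$ and the iteration is $\kappa^+$-cc, any subset of $V_\kappa$ or function $\mathcal{U}_\kappa\to V_\kappa$ in $V[G][H_\xi]$ already lies in some $V[G][H_\varepsilon]$ with $\varepsilon\in S\cap\xi$. Your version makes the identity $v_\tau = u_\tau\cap V[G][H_\xi]$ explicit and descends the ultrafilter-sequence properties from $u$ itself, whereas the paper leaves this identity implicit and borrows the properties from the individual $u^\varepsilon$ (already verified in Claim~\ref{stationary}); these are minor presentational differences, not a different route. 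One tiny slip: $A^\varepsilon$ lives in $V[G][H_{\varepsilon+1}]$ rather than $V[G][H_\varepsilon]$, but since $\varepsilon+1<\xi$ this is harmless.
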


\begin{proof}[\ref{measureseq}]  By Claim (\ref{stationary}) we have that $S\cap \xi \in V[G][H_\xi]$, thus $v\in V[G][H_\xi]$.
It is clear that $v$ is a sequence of measures concentrating on 
${\mathcal U}_\kappa^{V[G][H_\xi]}$. In order to see that the normality and coherence 
conditions hold it is enough to observe that since $\cf(\xi)\ge\kappa^+$ we have that 
for each $z\in {\mathcal P}(\kappa)$ and $f:{\mathcal U}_\kappa\longrightarrow V_\kappa$ 
if $z$, $f\in V[G][H_\xi]$ there is some $\varepsilon\in S\cap \xi$ such that $z$, $f\in V[G][H_\varepsilon]$.
Again by Claim (\ref{stationary}) we have that $A^\varepsilon\in {\mathcal F}(v)$.
\end{proof}

\section{Proof that we do get small universal families}\label{small}

Let $S\in V[G][H]$ be as given by Claim (\ref{stationary}).
As in Proposition (\ref{measureseq}),
we choose $\xi$ a limit point of $S$ with $\cf(\xi) = \kappa^{++}$.
Let $G * H_\xi$ be the $\Lbb * \name{\Pbb}_\xi$-generic filter over $V$ induced by $G * H$,
and define an ultrafilter sequence $v \in V[G * H_\xi]$ by setting
$v_\tau=\bigcup\setof{u_\tau^\varepsilon}{\varepsilon\in S\cap \xi}$ for $0 < \tau < \lambda$. 
Let $g^*$ be $\Rad{v}$-generic over $V[G * H_\xi]$.
As we discussed in \S\ref{Radinmat}, by forcing below a suitable condition we may arrange that
\begin{itemize}
\item the generic object induced by $g^*$ is a $\lambda$-sequence $\tupof{u_i}{i < \lambda}$ of ultrafilter sequences,
\item defining $\kappa_i = \kappa_{u_i}$ for $i < \lambda$, the set
$C = \setof{\kappa_i}{i < \lambda}$ is a club subset of $\kappa$, 
\item $\min(C) > \lambda$, and so $\cf(\kappa) = \lambda$ in $V[G * H_\xi * g^*]$.  
\end{itemize}

\begin{notation}\label{notn-for-objects-with-index-varepsilon}
For $\varepsilon\in S\cap \xi$ set $H_\varepsilon$ to be the induced $\Pbb_\varepsilon$-generic filter over $V[G]$,
$Q_\varepsilon= \name{\Qbb}_\varepsilon^{G * H_\varepsilon}$, 
$K_\varepsilon$ to be the $Q_\varepsilon$ generic filter over $V[G][H_\varepsilon]$ induced by $G * H_\xi$,
${
A^{*\varepsilon}= \tupof{A_\rho}{\rho<\kappa \sss\sss\&\sss\sss
\exists p\in K_\varepsilon \sss ( \rho<\rho^{\ssss p} \sss\sss\&\sss\sss A_\rho=A^p_\rho)},
}$
$\kappa^\varepsilon_\rho$ to be the common value of $\kappa_w$ for $w \in A^{* \varepsilon}_\rho$,  
and 
${a^{*\varepsilon}=\setof{\kappa^\varepsilon_\rho}{\rho < \kappa}}$.

As per Observation(\ref{Radin-generics-go-down}), 
the characterisation  of genericity for Radin forcing implies that for every 
$\varepsilon \in S \cap \xi$,
the sequence $\tupof{u_i}{i < \lambda}$  is $\Rad{u^\varepsilon}$-generic  over $V[G][H_\varepsilon]$.
Let $g^{\varepsilon}$ be the $\Rad{u^\varepsilon}$-generic filter over $V[G][H_\varepsilon]$ induced by this sequence,
so that easily $g^{\varepsilon} = g^* \cap \Rad{u^\varepsilon}$. 
\end{notation}

Note that, by the characterisation of Radin-genericity from Theorem
(\ref{Mitchell-characterization}), for each $\varepsilon\in S\cap\xi$, 
since $A^\varepsilon \in {\mathcal F}(v)$ by Proposition (\ref{measureseq}),
we have that $u_j \in \bigcup \setof{A^{*\varepsilon}_\rho}{\rho<\kappa}$ for all large 
$j < \lambda$. Let $i_\varepsilon < \lambda$ be the least successor ordinal
such that $u_j \in \bigcup \setof{A^{*\varepsilon}_\rho}{\rho<\kappa}$ for 
$j \ge i_\varepsilon$, and let $\eta_\varepsilon = \kappa_{u_{i_\varepsilon}}$. 
Since $\eta_\varepsilon$ is a successor point of the generic club $C$,
the sequence  $u_{i_\varepsilon} = \tup{ \eta_\varepsilon }$. 

We note that since $u_{i_\varepsilon} = \tup{ \eta_\varepsilon } \in
\bigcup \setof{A^{*\varepsilon}_\rho}{\rho<\kappa}$, we may define
$\sigma_\varepsilon$ as the unique $\rho$ such that 
$\tup{ \eta_\varepsilon } \in A^{*\varepsilon}_\rho$, and by definition we have 
$\kappa_{u_{i_\varepsilon}} = \eta_\varepsilon = \kappa^\varepsilon_{\sigma_\varepsilon}$.  
It follows that for any $q \in K_{\varepsilon}$ with $\sigma_\varepsilon < \rho^q$ 
we have that $\tup{ \eta_\varepsilon } \in A^q_{\sigma_\varepsilon}$ and
$\eta_\varepsilon \in a^q$.

\begin{definition}
For $\varepsilon \in S\cap \xi$
let $\mathfrak{E}^{\sss\varepsilon}_\alpha= (\name{E}^\varepsilon_\alpha)^{G * H_\varepsilon * g^\varepsilon}$, 
and, for $\alpha<\chi^-$, let $\mathfrak{f}^\varepsilon_\alpha = 
\bigcup\setof{(f^{\eta_\varepsilon}_\alpha)^q}{q\in K_\varepsilon \sss\&\sss 
\alpha \in t^{q,\eta_\varepsilon}}.$
\end{definition}

As we proved in Section \ref{Pm} (see Corollary (\ref{density-for-t-f-for-Q})), 
$\mathfrak{f}^\varepsilon_\alpha$ is a function with domain 
$\kappa^+$
such that $\mathfrak{f}^\varepsilon_\alpha (\zeta) \in \set{ b_\alpha \on \zeta } \times \kappa$
for every $\zeta < \kappa^+$. 

\begin{definition}\label{defn-the-graph-cal-E-varepsilon-alpha}
For each $\varepsilon \in S\cap \xi$ and $\alpha<\chi^-$ define 
${\mathcal E}^{\sss\varepsilon}_\alpha$ on $\rge(\mathfrak{f}^\varepsilon_\alpha)$ by
$z \mathrel{{\mathcal E}^{\sss\varepsilon}_\alpha } z'$ if and only if 
$\height(\pi_0(z)) \mathrel{\mathfrak{E}^{\sss\varepsilon}_\alpha} \height(\pi_0(z'))$. That is, 
${\mathcal E}^{\sss\varepsilon}_\alpha = \mathfrak{f}^\varepsilon_\alpha``\mathfrak{E}^{\sss\varepsilon}_\alpha$.
(For if $\zeta<\kappa^+$ and $\mathfrak{f}^\varepsilon_\alpha(\zeta) = z$ then $\height(\pi_0(z))=\zeta$.)
\end{definition}

We now prove a short technical lemma which will allow us to give an equivalent characterization of 
${\mathcal E}^{\sss\varepsilon}_\alpha$, which in turn facilitates the proof that 
${\mathcal E}^{\sss\varepsilon}_\alpha$ and ${\mathcal E}^{\sss\varepsilon}_\beta$ are coherent for $\alpha\neq \beta$.

\begin{lemma}\label{lemma1-2-james} Let $\varepsilon \in S\cap \xi$.
Suppose $y \concat (u^\varepsilon, D) \in g^{\varepsilon}$ and $q=(A^q,B^q,t^q,f^q)\in K_\varepsilon$
are such that
\begin{itemize}
\item $\rho^q$ is a successor ordinal. 
\item $\eta_\varepsilon, \kappa_y \le \sup(a^q)$.
\item  $B^q \subseteq D$.  
\end{itemize}
Then there is a lower part $y'$ such that
\begin{itemize}
\item$y' \concat (u^\varepsilon, B^q)  \le y \concat (u^\varepsilon, D)$
\item$y' \concat (u^\varepsilon, B^q)  \in g^\varepsilon$. 
\item$y'$ is harmonious with $A^q$ past $\eta_\varepsilon$. 
\end{itemize} 
\end{lemma}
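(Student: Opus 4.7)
The plan is to enlarge $y$ to a lower part $y^{**}$ containing both $\tup{\eta_\varepsilon}$ and the sequence $u_{i^*}$, where $i^*$ is the largest index with $\kappa_{u_{i^*}} \le \max(a^q)$, and then to apply Lemma (\ref{new-harmony-lemma}) to $y^{**}$ with $A = A^q$ and $\eta = \eta_\varepsilon$, producing $y'$ by a harmonious thinning of measure-one sets. The reason for targeting $u_{i^*}$ is to push $\kappa_{y^{**}}$ up to the largest point of the Radin generic club $C$ that lies at or below $\max(a^q)$, so that no $u_i$ with critical point in $(\kappa_{y^{**}}, \max(a^q)]$ survives above $y^{**}$ in the generic sequence; this will be essential for verifying that the final condition $y' \concat (u^\varepsilon, B^q)$ lies in $g^\varepsilon$.

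Both insertions are supplied by the last clause of Lemma (\ref{conforming-and-sequence}), applied twice starting from $y \concat (u^\varepsilon, D) \in g^\varepsilon$: first with $i = i_\varepsilon$ to insert $\tup{\eta_\varepsilon}$ if it is not already present in $y$, then with $i = i^*$ to insert $u_{i^*}$. The resulting $y^{**}$ conforms with $g^\varepsilon$, satisfies $\kappa_{y^{**}} = \kappa_{u_{i^*}}$, and there is $D' \in {\mathcal F}(u^\varepsilon)$ with $y^{**} \concat (u^\varepsilon, D') \le y \concat (u^\varepsilon, D)$. The hypothesis $\eta_\varepsilon, \kappa_y \le \sup(a^q) = \max(a^q)$ guarantees that both $i_\varepsilon$ and the index of the last pair of $y$ are $\le i^*$, and in particular $\kappa_{u_{i^*}} \ge \max(\kappa_y, \eta_\varepsilon)$.

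We now apply Lemma (\ref{new-harmony-lemma}). The hypotheses all hold: $\eta_\varepsilon$ is a successor point of $C$ by the definition of $i_\varepsilon$; $\tup{\eta_\varepsilon}$ appears in $y^{**}$; $\kappa_{y^{**}} \le \max(a^q) < \ssup(a^q)$ because $\rho^q$ is a successor ordinal, so $\ssup(a^q) = \max(a^q) + 1$; and whenever $\eta_\varepsilon \le \kappa_{u_i} < \ssup(a^q)$ we have $i \ge i_\varepsilon$, so $u_i \in A^{*\varepsilon}_{\rho_i}$ for the unique $\rho_i$ with $\kappa^\varepsilon_{\rho_i} = \kappa_{u_i}$; since $\kappa_{u_i} \le \max(a^q) = \kappa^\varepsilon_{\rho^q - 1}$, we get $\rho_i < \rho^q$, so $A^{*\varepsilon}_{\rho_i} = A^q_{\rho_i} \subseteq \bigcup_{\rho < \rho^q} A^q_\rho$. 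The lemma delivers $y'$, a direct extension of $y^{**}$ in $\Rad{y^{**}}$, conforming with $g^\varepsilon$, harmonious with $A^q$ past $\eta_\varepsilon$, and with $\kappa_{y'} = \kappa_{y^{**}}$.

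It remains to verify the three conclusions. The refinement $y' \concat (u^\varepsilon, B^q) \le y \concat (u^\varepsilon, D)$ follows from $y' \le y^{**}$ together with $y^{**} \concat (u^\varepsilon, D') \le y \concat (u^\varepsilon, D)$ and $B^q \subseteq D$; the side requirement $B^q \cap V_{\kappa_{y'}^+} = \emptyset$ is immediate from the clause $\ssup(a^q) \le \min \setof{\kappa_v}{v \in B^q}$ in the definition of a condition in $Q(u^\varepsilon)$. Harmoniousness comes straight from Lemma (\ref{new-harmony-lemma}). For $y' \concat (u^\varepsilon, B^q) \in g^\varepsilon$, Lemma (\ref{filterandsequence3}) requires $u_i \in B^q$ for every $i$ with $\kappa_{u_i} > \kappa_{y'}$; by the choice of $i^*$ every such $i$ satisfies $\kappa_{u_i} > \max(a^q)$, so $\rho_i \ge \rho^q$, and by genericity of $K_\varepsilon$ together with Lemma (\ref{jdensity1}) one picks $q' \le q$ in $K_\varepsilon$ with $\rho^{q'} > \rho_i$; the clause $\bigcup_{\sigma \in [\rho^q, \rho^{q'})} A^{q'}_\sigma \subseteq B^q$ in the definition of the $M(u^\varepsilon)$-ordering then yields $u_i \in A^{*\varepsilon}_{\rho_i} = A^{q'}_{\rho_i} \subseteq B^q$. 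The main obstacle is precisely the correct choice of $i^*$: the argument demands that $\kappa_{y^{**}}$ match the largest point of $C$ at or below $\max(a^q)$ exactly, since any gap would leave some $u_i$ with $\kappa_{u_i} \in (\kappa_{y'}, \max(a^q)]$ falling outside $B^q$ and so outside $g^\varepsilon$.
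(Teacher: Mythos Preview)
Your proof is correct and follows essentially the same approach as the paper's: your $i^*$ is the paper's $j_0$, your $y^{**}$ is the paper's $\bar y$, and both arguments insert $\tup{\eta_\varepsilon}$ and $u_{i^*}$ via Lemma~(\ref{conforming-and-sequence}), appeal to Lemma~(\ref{new-harmony-lemma}) for harmoniousness, and use Lemma~(\ref{filterandsequence3}) together with the observation that every $u_i$ with $\kappa_{u_i} > \max(a^q)$ lies in $B^q$. The only organisational difference is that the paper shrinks the top measure-one set to $B^q$ before invoking Lemma~(\ref{new-harmony-lemma}), whereas you verify $y'\concat(u^\varepsilon,B^q)\in g^\varepsilon$ afterwards; your version spells out the $u_i\in B^q$ step in more detail.
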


\begin{proof}[\ref{lemma1-2-james}]  
Let $j_0 < \lambda$ be the largest ordinal such that $\kappa_{u_{j_0}} \le \max(a^q)$.  
For all $j$ with $j_0 < j < \lambda$
we have that $u_j \in \bigcup_{\sigma < \kappa} A^{*\varepsilon}_\sigma$ and
$\kappa_{u_j} > \max(a^q)$, so that $u_j \in B^q$.

Appealing to the second part of Lemma (\ref{conforming-and-sequence}) and to Lemma (\ref{filterandsequence3}), 
we may now extend  $y \concat (u^\varepsilon, D)$ to a condition
$\obar{y} \concat (u^\varepsilon, B^q) \in g^\varepsilon$ by first adding
in (as necessary) the pair $(\tup{ \eta_\varepsilon },\emptyset)$ and 
a pair with first entry $u_{j_0}$, then shrinking $D$ to $B^q$. Note that
$\kappa_{\obar{\scriptstyle y}} = \max(\set{\eta_\varepsilon,\kappa_y, \kappa_{u_{j_0}}}) \le \max(a^q)$. 

Appealing to Lemma (\ref{new-harmony-lemma}), there is $y'$ directly extending $\bar y$
in $\Rad{\obar{\scriptstyle y}}$ such that $y'$ conforms with $g^\varepsilon$, and $y'$ is harmonious
with $A^q$ past $\eta_\varepsilon$.
\end{proof}

\begin{proposition}\label{equivalent-characteristion-for-graph-cal-E-varepsilon-alpha}
For each $\varepsilon \in S\cap \xi$, $\alpha<\chi^-$ and $z$, $z' \in \rge(\mathfrak{f}^\varepsilon_\alpha)$
we have $z \mathrel{{\mathcal E}^{\sss\varepsilon}_\alpha } z'$ 
if and only if there exist a condition $q \in K_\varepsilon$ and a lower part $y$ harmonious with $A^q$ past $\eta_\varepsilon$
such that $q$ and $y\concat (u^\varepsilon,B^q)$ witness that $z \mathrel{{\mathcal E}^{\sss\varepsilon}_\alpha } z'$, {\em i.e.}, 
letting $\zeta = \height(\pi_0(z))$ and $\zeta' = \height(\pi_0(z'))$, such that  
\begin{itemize} 
\item  $\alpha \in t^{q, \eta_\varepsilon}$,  $\zeta,\sss \zeta'\in d^{q, \eta_\varepsilon}_\alpha$, 
$f^{q, \eta_\varepsilon}_\alpha(\zeta)=z$ and 
$f^{q, \eta_\varepsilon}_\alpha(\zeta')=z'$.
\item  $y$ is  harmonious with $A^q$ past  ${\eta}_\varepsilon$.
\item  $y\concat (u^\varepsilon,B^q) \in g^\varepsilon$.
\item  $y\concat (u^\varepsilon, B^q)  \forces^{V[G][H_\varepsilon]}_{\Rad{u^\varepsilon}} \zeta  \name{E}^\varepsilon_{\alpha} \zeta'$.
\end{itemize}
\end{proposition}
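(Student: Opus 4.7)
The direction $(\Leftarrow)$ is essentially unpacking definitions: given the data $q, y, \zeta, \zeta'$ as specified, the condition $y \concat (u^\varepsilon, B^q) \in g^\varepsilon$ forces $\zeta \name{E}^\varepsilon_\alpha \zeta'$, so $\zeta \mathrel{\mathfrak{E}^\varepsilon_\alpha} \zeta'$ holds in $V[G][H_\varepsilon][g^\varepsilon]$; since $\mathfrak{f}^\varepsilon_\alpha(\zeta) = z$ gives $\pi_0(z) = b_\alpha \on \zeta$ and hence $\height(\pi_0(z)) = \zeta$ (and similarly for $z'$), Definition \ref{defn-the-graph-cal-E-varepsilon-alpha} delivers $z \mathrel{{\mathcal E}^\varepsilon_\alpha} z'$.

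For the substantive direction $(\Rightarrow)$, let $\zeta = \height(\pi_0(z))$ and $\zeta' = \height(\pi_0(z'))$, so that $\zeta \mathrel{\mathfrak{E}^\varepsilon_\alpha} \zeta'$ holds in $V[G][H_\varepsilon][g^\varepsilon]$, and fix a condition $y_0 \concat (u^\varepsilon, D_0) \in g^\varepsilon$ that forces $\zeta \name{E}^\varepsilon_\alpha \zeta'$ over $V[G][H_\varepsilon]$.  The plan is to produce $q \in K_\varepsilon$ satisfying (i) $\alpha \in t^{q,\eta_\varepsilon}$ and $\zeta, \zeta' \in d^{q,\eta_\varepsilon}_\alpha$, (ii) $\rho^q$ is a successor ordinal with $\kappa_{y_0}, \eta_\varepsilon \le \sup(a^q)$, and (iii) $B^q \subseteq D_0$, and then to apply Lemma \ref{lemma1-2-james} with $y = y_0$ and $D = D_0$.

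Conditions (i) and (ii) carve out dense open subsets of $Q_\varepsilon$ via the density lemmas of Section \ref{Pm} (specifically Lemma \ref{jdensity1} together with Corollaries \ref{jdensity2} and \ref{density-for-t-f-for-Q}), so they hold at some $q \in K_\varepsilon$ by genericity.  For (iii), a direct extension in $Q_\varepsilon$ suffices: given any $q_0$ already satisfying the first two, replace $B^{q_0}$ by $B^{q_0} \cap D^*_0$, where $D^*_0 \subseteq D_0$ is a refinement of $D_0$ which lies in ${\mathcal F}(u^\varepsilon)$ and is closed under taking initial segments; the existence of such a $D^*_0$ follows by a standard induction on $\lh(u^\varepsilon)$ using the coherence conditions of Definition \ref{ultrafilterseq}.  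With $q$ in hand, Lemma \ref{lemma1-2-james} supplies a lower part $y$ such that $y \concat (u^\varepsilon, B^q) \le y_0 \concat (u^\varepsilon, D_0)$ in $\Rad{u^\varepsilon}$, $y \concat (u^\varepsilon, B^q) \in g^\varepsilon$, and $y$ is harmonious with $A^q$ past $\eta_\varepsilon$; since extending a condition preserves forced statements, the last bullet holds as well.  The only mildly technical step is the closure-under-initial-segments refinement underlying (iii); the remainder is direct application of the density and alignment results already established in Section \ref{Pm} and in the preceding lemma.
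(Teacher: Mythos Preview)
Your proof is correct and follows essentially the same route as the paper's: fix a Radin condition in $g^\varepsilon$ forcing $\zeta \name{E}^\varepsilon_\alpha \zeta'$, find a suitable $q \in K_\varepsilon$ by genericity, and invoke Lemma \ref{lemma1-2-james} to produce the harmonious lower part. You are in fact more explicit than the paper on two points: you spell out that $q$ must also satisfy condition (i) (the paper leaves this implicit), and you flag the need to pass from $D_0$ to an initial-segment-closed $D_0^* \in {\mathcal F}(u^\varepsilon)$ before intersecting with $B^{q_0}$ (required by Definition \ref{Mforcing} but not by Definition \ref{Radinfor}). One small remark: your appeal to ``a standard induction on $\lh(u^\varepsilon)$ using the coherence conditions'' for the existence of $D_0^*$ is a bit vague; in the present setting the cleanest verification is via the embedding $j$ from which $u$ is derived, since $D_0 \in u_\tau$ for all $\tau$ means $u \on \tau \in j(D_0)$ for all $\tau$, whence $u \on \tau \in j(D_0^*)$ for all $\tau$ and so $D_0^* \in {\mathcal F}(u) \cap V[G][H_\varepsilon] = {\mathcal F}(u^\varepsilon)$.
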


\begin{proof}[\ref{equivalent-characteristion-for-graph-cal-E-varepsilon-alpha}]  
If $q$ and $y$ are as in the equivalent, then $y\concat (u^\varepsilon,B^q) \in g^\varepsilon$
and  $y\concat (u^\varepsilon, B^q)  \forces^{V[G][H_\varepsilon]}_{\Rad{u^\varepsilon}} \zeta  \name{E}^\varepsilon_{\alpha} \zeta'$,
so that $\zeta\mathrel{\mathfrak{E}^{\sss\varepsilon}_\alpha} \zeta'$.

For the converse direction, suppose that $\zeta\mathrel{\mathfrak{E}^{\sss\varepsilon}_\alpha} \zeta'$.
Choose a condition $y \concat (u^\varepsilon, D) \in g^\varepsilon$ such that 
$y\concat (u^\varepsilon, D)  \forces^{V[G][H_\varepsilon]}_{\Rad{u^\varepsilon}} \zeta  \name{E}^\varepsilon_{\alpha} \zeta'$ and 
$\tup{ \eta_\varepsilon }$ appears in $y$.
By the choice of $\eta_\varepsilon$, the sequence $\tup{ \eta_\varepsilon }$ and all subsequent
sequences appearing in $y$ are members of $\bigcup_{\rho < \kappa} A^{* \varepsilon}_\rho$. 

Choose a condition $q \in K^\varepsilon$ such that $\eta_\varepsilon$,  $\kappa_y \in a^q$, $\rho^q$ is a successor ordinal,
and $B^q \subseteq D$.  Appealing to Lemma (\ref{lemma1-2-james}) we find a condition $y' \concat (u^\varepsilon, B^q)$
which is exactly of the kind needed to form a witness (together with $q$) that
$z\mathrel{ {\mathcal E}^{\sss\varepsilon}_\alpha } z'$.
\end{proof} 

Next we show that
${\mathcal E}^{\sss\varepsilon}_\alpha$ and  ${\mathcal E}^{\sss\varepsilon}_\beta$ cohere  for $\alpha \neq \beta$.

\begin{lemma}  \label{coherence-lemma}  Let $\varepsilon \in S\cap \xi$, $\alpha \neq \beta$ and 
$z$ and $z'\in \rge(\mathfrak{f}^\varepsilon_\alpha) \cap  \rge(\mathfrak{f}^\varepsilon_\beta)$.
Then $z \mathrel{{\mathcal E}^{\sss\varepsilon}_\alpha} z' $
if and only $z \mathrel{{\mathcal E}^{\sss\varepsilon}_\beta} z' $.
\end{lemma}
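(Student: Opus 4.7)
The plan is to leverage Clause (5) of the definition of $Q(w)$ together with the equivalent characterization given by Proposition \ref{equivalent-characteristion-for-graph-cal-E-varepsilon-alpha}.

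First, a crucial preliminary observation: because $\mathfrak{f}^\varepsilon_\gamma(\zeta) \in \{b_\gamma \restriction \zeta\} \times \kappa$ for each $\gamma$, the value $\zeta = \height(\pi_0(z))$ is the \emph{unique} ordinal with $\mathfrak{f}^\varepsilon_\alpha(\zeta) = z$, and it equals the unique ordinal with $\mathfrak{f}^\varepsilon_\beta(\zeta) = z$. Similarly for $\zeta' = \height(\pi_0(z'))$. In particular $\mathfrak{f}^\varepsilon_\alpha$ and $\mathfrak{f}^\varepsilon_\beta$ agree at both points. Since the relations $\name{E}^\varepsilon_\gamma$ are graph relations (hence irreflexive), the case $z=z'$ is trivial, so we may assume $z \neq z'$ and hence $\zeta \neq \zeta'$. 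By the definitions of ${\mathcal E}^{\sss\varepsilon}_\alpha$ and ${\mathcal E}^{\sss\varepsilon}_\beta$, the assertion reduces to: $\zeta \mathrel{\mathfrak{E}^{\sss\varepsilon}_\alpha} \zeta'$ iff $\zeta \mathrel{\mathfrak{E}^{\sss\varepsilon}_\beta} \zeta'$.

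I will prove the forward direction; the converse is symmetric. Assume $\zeta \mathrel{\mathfrak{E}^{\sss\varepsilon}_\alpha} \zeta'$. By Proposition \ref{equivalent-characteristion-for-graph-cal-E-varepsilon-alpha}, choose a condition $q \in K_\varepsilon$ and a lower part $y$ harmonious with $A^q$ past $\eta_\varepsilon$ such that $y\concat(u^\varepsilon,B^q) \in g^\varepsilon$ and this condition forces $\zeta \name{E}^\varepsilon_\alpha \zeta'$, with the appropriate $f$-values matching $z$ and $z'$. Using the density lemmas (Lemma \ref{jdensity1}, Corollary \ref{jdensity2}, Corollary \ref{density-for-t-f-for-Q}), the fact that $K_\varepsilon$ is a generic filter and thus directed, and the definition of $\mathfrak{f}^\varepsilon_\beta$, I can find $q' \in K_\varepsilon$ with $q' \le q$, $\rho^{q'}$ a successor, $\eta_\varepsilon \in a^{q'}$ with $\eta_\varepsilon < \sup(a^{q'})$, and such that $\beta \in t^{q',\eta_\varepsilon}$, $\zeta,\zeta' \in d^{q',\eta_\varepsilon}_\beta$, $f^{q',\eta_\varepsilon}_\beta(\zeta) = z$, and $f^{q',\eta_\varepsilon}_\beta(\zeta') = z'$.

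Two checks remain. First, $y$ is still harmonious with $A^{q'}$ past $\eta_\varepsilon$: harmoniousness depends only on the set $\bigcup_\rho A_\rho$, which only grows when passing from $q$ to $q'$. Second, I must locate a condition in $g^\varepsilon$ that is built over $B^{q'}$ rather than $B^q$; this is exactly the purpose of Lemma \ref{lemma1-2-james}. Applied to $y \concat (u^\varepsilon, B^q) \in g^\varepsilon$ and $q'$ (noting $B^{q'} \subseteq B^q$, $\eta_\varepsilon \le \sup(a^{q'})$, and $\kappa_y \le \sup(a^q) \le \sup(a^{q'})$), it yields a lower part $y^*$ that is harmonious with $A^{q'}$ past $\eta_\varepsilon$ and satisfies $y^* \concat (u^\varepsilon, B^{q'}) \in g^\varepsilon$ with $y^* \concat (u^\varepsilon, B^{q'}) \le y \concat (u^\varepsilon, B^q)$. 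Then Clause (5) in the definition of $Q(w)$ applied to $q'$ at the coordinate $\eta_\varepsilon$, using $y^*$, gives
\[ y^* \concat (u^\varepsilon, B^{q'}) \forces \zeta \, \name{E}^\varepsilon_\alpha \, \zeta' \longleftrightarrow \zeta \, \name{E}^\varepsilon_\beta \, \zeta'. \]
Since this condition also forces $\zeta \, \name{E}^\varepsilon_\alpha \, \zeta'$ and lies in $g^\varepsilon$, we conclude $\zeta \mathrel{\mathfrak{E}^{\sss\varepsilon}_\beta} \zeta'$, as required.

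The main obstacle is the bookkeeping in the middle step — replacing $y$ by a new lower part $y^*$ adapted to $q'$ while staying inside $g^\varepsilon$ — but this is precisely the work already packaged in Lemma \ref{lemma1-2-james}, so no new ideas are needed beyond combining the tools already developed.
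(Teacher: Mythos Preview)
Your proof is correct and reaches the same conclusion via a slightly different route than the paper. The paper, after extending the witness $r$ to a condition $p$ carrying the $\beta$-information, passes to the $\rho^r$-weakening $q$ of $p$ (Definition \ref{rho-star-weakening}); since $A^q = A^r$, the original lower part $y$ is automatically harmonious with $A^q$, and a direct argument with Lemma \ref{filterandsequence3} shows $y \concat (u^\varepsilon, B^q) \in g^\varepsilon$, so Clause (5) for $q$ finishes. You instead stay with the extended condition $q'$ and invoke Lemma \ref{lemma1-2-james} to manufacture a new lower part $y^*$ adapted to $A^{q'}$ and $B^{q'}$, then apply Clause (5) to $q'$. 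Your approach reuses the same machinery that already proved Proposition \ref{equivalent-characteristion-for-graph-cal-E-varepsilon-alpha} and avoids the weakening apparatus entirely; the paper's approach has the minor elegance of keeping the same lower part $y$ throughout. One small redundancy: your observation that $y$ remains harmonious with $A^{q'}$ is true but never used, since Lemma \ref{lemma1-2-james} hands you $y^*$ with the needed harmoniousness directly.
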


\begin{proof}[\ref{coherence-lemma}] Choose $r \in K_\varepsilon$ and $y$ a lower part witnessing 
the equivalent conditions listed in Proposition (\ref{equivalent-characteristion-for-graph-cal-E-varepsilon-alpha})
that $z \mathrel{{\mathcal E}^{\sss\varepsilon}_\alpha} z'$. Choose $p \le r$ such that
$p \in K_\varepsilon$, $\rho^p$ is a successor ordinal, and $p$ contains enough information to verify that 
$z$, $z' \in \rge(\mathfrak{f}^\varepsilon_\beta)$, that is to say 
that $\beta \in t^{p, \eta_\varepsilon}$, $\zeta, \zeta' \in d^{p, \eta_\varepsilon}_\beta$,
$f^{p, \eta_\varepsilon}_\beta(\zeta) = z$ and $f^{p, \eta_\varepsilon}_\beta(\zeta') = z'$. 

We now recall the notion of $\rho^*$-weakening from Definition (\ref{rho-star-weakening}).
Let $q$ be the $\rho^r$-weakening of $p$, and note that by definition 
$A^q = A^p \restriction \rho^r = A^r$ and $a^q = a^r$.
It follows from Lemma (\ref{abacus1}) and Corollary (\ref{abacus2}) that $p \le q \le r$, in particular
$q \in K_\varepsilon$ and $B^q \subseteq B^r$.

We claim that $y$ and $B^q$ will serve as witnesses that 
$z\mathrel{{\mathcal E}^{\sss\varepsilon}_\beta} z'$, that is that $y \concat (u^\varepsilon,B^q) \in g^\varepsilon$
and that  $y\concat (u^\varepsilon, B^q)  \forces \zeta  \name{E}^\varepsilon_{\beta} \zeta'$.

By Lemma (\ref{filterandsequence3}), to check that $y \concat (u^\varepsilon,B^q) \in g^\varepsilon$ we must show that
$y$ conforms with $g^\epsilon$ and $u \in B^q$ for all $u$ appearing on the generic sequence with $\kappa_y < \kappa_u$. 
Since $y \concat (u^\epsilon, B^r) \in g^\epsilon$, we see that the lower part $y$ conforms with $g^\epsilon$
and also that $u \in B^r$ for all $u$ appearing on the generic sequence with $\kappa_y < \kappa_u$. 
Fix such a sequence $u$. 
Since $r$ is a condition and $u \in B^r$ it follows that  
$ \ssup(a^q) = \ssup(a^r) \le \kappa_u $. By the choice of $\eta_\varepsilon$ and
$y$, we have $u \in \bigcup \setof{A^{*\varepsilon}_\rho}{\rho<\kappa}$, and
since $q \in K_\varepsilon$ we have that $u \in B^q$.

Since $B^q \subseteq B^r$, we see that $y \concat (u^\epsilon, B^q)$ is a refinement
of $y \concat (u^\epsilon, B^r)$, and in particular 
$y \concat (u^\epsilon, B^q) \forces \zeta  \name{E}^\varepsilon_{\alpha} \zeta'$.
Since $A^q = A^r$ and $q$ is a condition, it follows from Clause (5)
in Definition (\ref{qstar-forcing}) that $y \concat (u^\epsilon, B^q) \forces \zeta  \name{E}^\varepsilon_{\beta} \zeta'$.
\end{proof}

\begin{definition}
Working in the model $V[G][H_\xi][g^*]$, we define for each $\varepsilon\in S \cap \xi$ 
a relation ${\mathcal E}^\varepsilon$ on 
the set  
$\bigcup\setof{\setof{b_\alpha\on\zeta}{\zeta<\kappa^+}\times \kappa}{\alpha< \chi^-}$ by 
\[
z\sss {\mathcal E}^\varepsilon \sss z' \longleftrightarrow \exists \alpha<\chi^- \;
z\sss {\mathcal E}^{\sss\varepsilon}_\alpha \sss z'.
\]
\end{definition}

\begin{proposition}\label{prop5} 
$V[G][H_\xi][g^*]\thinks {\mathcal E}^\varepsilon$ is of size $\kappa^+$ and is universal for 
$\setof{\mathfrak{E}^\varepsilon_\alpha}{\alpha<\chi^-}$.
\end{proposition}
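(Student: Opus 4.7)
The plan is to exhibit $\mathfrak{f}^\varepsilon_\alpha$ itself as a witness to universality, reducing everything to the coherence given by Lemma~(\ref{coherence-lemma}).

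First I would bound the size of ${\mathcal E}^\varepsilon$. By construction the underlying set of ${\mathcal E}^\varepsilon$ is contained in ${\mathcal T} \times \kappa$, since for each $\alpha < \chi^-$ and each $\zeta < \kappa^+$, the vertex $\mathfrak{f}^\varepsilon_\alpha(\zeta) \in \{b_\alpha \restriction \zeta\} \times \kappa$ and $b_\alpha \restriction \zeta \in {\mathcal T}_\zeta$. Since ${\mathcal T}$ is a $\kappa^+$-Kurepa tree, $|{\mathcal T}| = \kappa^+$, so $|{\mathcal E}^\varepsilon| \le \kappa^+$. The lower bound $\kappa^+$ follows because for any fixed $\alpha$ the map $\zeta \mapsto \mathfrak{f}^\varepsilon_\alpha(\zeta)$ is injective (different values of $\zeta$ yield different first coordinates $b_\alpha \restriction \zeta$), so already $\rge(\mathfrak{f}^\varepsilon_\alpha)$ has size $\kappa^+$.

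Next I would show that for each $\alpha < \chi^-$, the map $\mathfrak{f}^\varepsilon_\alpha : \kappa^+ \to \mathrm{dom}({\mathcal E}^\varepsilon)$ is an embedding of $\mathfrak{E}^\varepsilon_\alpha$ into ${\mathcal E}^\varepsilon$ as an induced subgraph. Injectivity was already noted. For the edge-preservation equivalence, fix $\zeta, \zeta' < \kappa^+$ and set $z = \mathfrak{f}^\varepsilon_\alpha(\zeta)$, $z' = \mathfrak{f}^\varepsilon_\alpha(\zeta')$. The forward direction is immediate from Definition~(\ref{defn-the-graph-cal-E-varepsilon-alpha}): if $\zeta \mathrel{\mathfrak{E}^\varepsilon_\alpha} \zeta'$ then $z \mathrel{{\mathcal E}^\varepsilon_\alpha} z'$, whence $z \mathrel{{\mathcal E}^\varepsilon} z'$.

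The reverse direction is where the work is, and it is the step I expect to be the main obstacle — it is precisely what the coherence machinery was set up to handle. Suppose $z \mathrel{{\mathcal E}^\varepsilon} z'$, so there is some $\beta < \chi^-$ with $z \mathrel{{\mathcal E}^\varepsilon_\beta} z'$. In particular $z, z' \in \rge(\mathfrak{f}^\varepsilon_\beta)$. If $\beta = \alpha$ we are done by Definition~(\ref{defn-the-graph-cal-E-varepsilon-alpha}). If $\beta \ne \alpha$, then since $z, z' \in \rge(\mathfrak{f}^\varepsilon_\alpha) \cap \rge(\mathfrak{f}^\varepsilon_\beta)$, Lemma~(\ref{coherence-lemma}) applies and gives $z \mathrel{{\mathcal E}^\varepsilon_\alpha} z'$, hence $\zeta \mathrel{\mathfrak{E}^\varepsilon_\alpha} \zeta'$ by unwinding Definition~(\ref{defn-the-graph-cal-E-varepsilon-alpha}) and using that $\height(\pi_0(z)) = \zeta$ and $\height(\pi_0(z')) = \zeta'$. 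Since $\alpha < \chi^-$ was arbitrary, this shows ${\mathcal E}^\varepsilon$ is universal for $\{\mathfrak{E}^\varepsilon_\alpha : \alpha < \chi^-\}$, completing the proof.
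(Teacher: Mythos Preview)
Your proposal is correct and follows essentially the same approach as the paper: bound the size via the Kurepa tree, use Definition~(\ref{defn-the-graph-cal-E-varepsilon-alpha}) for the forward direction of the embedding, and invoke Lemma~(\ref{coherence-lemma}) for the reverse direction. You have simply spelled out in more detail the two directions of the induced-subgraph equivalence that the paper leaves implicit.
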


\begin{proof}[\ref{prop5}] ${\mathcal E}^\varepsilon$ has size $\kappa^+$ by the choice of 
$\setof{b_\alpha}{\alpha<\chi^-}$, as for $\alpha<\chi^-$ and $\zeta<\kappa^+$ we have that 
$b_\alpha\on\zeta \in {\mathcal T}$, where ${\mathcal T}$ is the $\kappa^+$-Kurepa tree
added by $\Qbb_0$.

The universality follows from the 
definitions of $\mathfrak{E}^\varepsilon_\alpha$, ${\mathcal E}^\varepsilon_\alpha$ and ${\mathcal E}^\varepsilon$, and 
Lemma (\ref{coherence-lemma}) which ensures that each $\mathfrak{f}^\varepsilon_\alpha$ is an embedding of 
$\mathfrak{E}^\varepsilon_\alpha$ into ${\mathcal E}^\varepsilon$: for Definition (\ref{defn-the-graph-cal-E-varepsilon-alpha})
ensures that each $\mathfrak{f}^\varepsilon_\alpha$ is an embedding of 
$\mathfrak{E}^\varepsilon_\alpha$ into ${\mathcal E}^\varepsilon_\alpha\subseteq {\mathcal E}^{\varepsilon}$,
and Lemma (\ref{coherence-lemma}) ensures the compatibility of these embeddings.
\end{proof}

\begin{proposition}\label{prop6} In $V[G][H_\xi][g^{*}]$ 
suppose $\Xi$ ($\in V[G][H_\xi][g^{*}]$) is cofinal in $S\cap\xi$ and 
for all $\varepsilon\in \Xi$ we have that 
$\setof{\name{E}^\varepsilon_\alpha}{\alpha<\chi^-}$ 
is a list of canonical names for all graphs on $\kappa^+$.
Then $\setof{{\mathcal E}^\varepsilon}{\varepsilon\in \Xi}$ is a universal family in
the collection of graphs on $\kappa^+$. 
\end{proposition}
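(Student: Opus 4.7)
The plan is to establish the following capture property: for every graph $\mathcal{G}$ on $\kappa^+$ in $V[G][H_\xi][g^*]$ there exists $\varepsilon\in\Xi$ with $\mathcal{G}\in V[G][H_\varepsilon][g^\varepsilon]$. Once this is shown, the hypothesis that $\setof{\name{E}^\varepsilon_\alpha}{\alpha<\chi^-}$ lists canonical names for all graphs on $\kappa^+$ in $V[G][H_\varepsilon][g^\varepsilon]$ gives $\mathcal{G}=\mathfrak{E}^\varepsilon_\alpha$ for some $\alpha<\chi^-$, and $\mathfrak{f}^\varepsilon_\alpha$ then embeds $\mathcal{G}$ into ${\mathcal E}^{\sss\varepsilon}_\alpha\subseteq{\mathcal E}^\varepsilon$ by Definition (\ref{defn-the-graph-cal-E-varepsilon-alpha}), Lemma (\ref{coherence-lemma}) and Proposition (\ref{prop5}).

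For the capture property, the idea is a two-step reflection. First, the forcing $\Rad{v}$ has the $\kappa^+$-chain condition (by the remarks after Definition (\ref{lowerparts})), so in $V[G][H_\xi]$ the graph $\mathcal{G}$ has a nice $\Rad{v}$-name $\dot{\mathcal{G}}$ of cardinality $\kappa^+$, namely a set of pairs of ordinals below $\kappa^+$ and conditions from $\Rad{v}$. Viewing $\dot{\mathcal{G}}$ as coded by a subset of $\kappa^+$ in $V[G][H_\xi]$, I would use that $\Pbb_\xi$ has the $\kappa^+$-stationary chain condition by Theorem (\ref{double-star-and-statcc-itn-thm}), together with the $\lk$-support structure of the iteration, to reflect: a nice $\Pbb_\xi$-name for this subset involves $\kappa^+$-many antichains each of size at most $\kappa$, and since each condition has support of size $<\kappa$ in $\xi$ the total set of coordinates used has cardinality at most $\kappa^+$. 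As $\cf(\xi)=\kappa^{++}$, these coordinates are bounded by some $\varepsilon_0<\xi$, and by the cofinality of $\Xi$ in $S\cap\xi$ I can choose $\varepsilon\in\Xi$ with $\varepsilon\ge\varepsilon_0$ so that $\dot{\mathcal{G}}\in V[G][H_\varepsilon]$.

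Second, I would reinterpret $\dot{\mathcal{G}}$ as a $\Rad{u^\varepsilon}$-name. By Claim (\ref{stationary}), $v_\tau\cap V[G][H_\varepsilon]=u^\varepsilon_\tau$ for every $0<\tau<\lambda$, hence any set $B\in V[G][H_\varepsilon]$ which lies in $\mathcal{F}(v)$ automatically lies in $\mathcal{F}(u^\varepsilon)$. It follows that every $\Rad{v}$-condition appearing in $\dot{\mathcal{G}}$ (all of which are in $V[G][H_\varepsilon]$) is already a $\Rad{u^\varepsilon}$-condition, so $\dot{\mathcal{G}}$ may be viewed as a $\Rad{u^\varepsilon}$-name in $V[G][H_\varepsilon]$. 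Its realisation by $g^\varepsilon=g^*\cap\Rad{u^\varepsilon}$ equals $\mathcal{G}$, yielding the capture property.

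The main obstacle I anticipate lies in the second paragraph: one must verify cleanly that a $\Pbb_\xi$-name for a $\Rad{v}$-name for $\mathcal{G}$ can be treated as a single object of size $\kappa^+$ amenable to the reflection argument, and in particular that after the reflection the $\Rad{v}$-conditions composing $\dot{\mathcal{G}}$ really do lie in $V[G][H_\varepsilon]$ rather than merely in $V[G][H_\xi]$. This rests on the simultaneous $\kappa^+$-cc of $\Pbb_\xi$ (which is $\kappa^+$-Knaster, by the stationary cc) and of $\Rad{v}$, combined with the $\lk$-support of the iteration and the high cofinality of $\xi$; once these are deployed in the right order, the reinterpretation step is the routine observation that $\mathcal{F}(v)$ restricted to $V[G][H_\varepsilon]$ coincides with $\mathcal{F}(u^\varepsilon)$.
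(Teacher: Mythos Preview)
Your proposal is correct and follows essentially the same strategy as the paper's proof: find $\varepsilon\in\Xi$ such that the given graph already lives in $V[G][H_\varepsilon][g^\varepsilon]$, then invoke Proposition~(\ref{prop5}). The paper compresses your two reflection steps into one by taking a canonical $\Lbb*\name{\Pbb}_\xi*\nameRad{v}$-name of size $\kappa^+$ and observing directly (via $\cf(\xi)=\kappa^{++}$) that it is already an $\Lbb*\name{\Pbb}_\varepsilon*\nameRad{u^\varepsilon}$-name for some $\varepsilon\in\Xi$; your two-stage version (first reflect the nice $\Rad{v}$-name into $V[G][H_\varepsilon]$, then reinterpret over $\Rad{u^\varepsilon}$) is simply an unpacking of that same step.

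One small point of phrasing: saying $\dot{\mathcal G}$ is ``coded by a subset of $\kappa^+$'' is not literally right, since $\Rad{v}$ has size $2^\kappa>\kappa^+$. What you actually need (and what your argument delivers) is that $\dot{\mathcal G}$, together with nice $\Pbb_\xi$-names for the $\kappa^+$-many Radin conditions it mentions, forms a package of total size $\kappa^+$ whose supports are therefore bounded below $\xi$. This is exactly the content of your last paragraph, so the argument goes through; just avoid the literal ``subset of $\kappa^+$'' formulation.
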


\begin{proof}[\ref{prop6}] Let $\mathfrak{E}$ be a graph on $\kappa^+$ 
in $V[G][H_\xi][g^*]$ and let $\name{E}$ be a 
canonical $\Lbb * \name{\Pbb}_\xi * \nameRad{v}$-name such that
$\mathfrak{E}=\name{E}^{G * H_\xi * g^{*}}$. As $\name{E}$ is a canonical name 
it has size at most $\kappa^+$ and hence, as $\cf(\xi)>\kappa^+$,
there is some $\varepsilon \in \Xi$ such that $\name{E}$ is an
$\Lbb * \name{\Pbb}_\varepsilon * \nameRad{u^\varepsilon}$-name and hence
$\mathfrak{E}=\name{E}^{G * H_\varepsilon * {g^{\varepsilon}}}$. By Proposition (\ref{prop5}) we thus 
have that $\mathfrak{E}$ embeds into ${\mathcal E}^\varepsilon$.
\end{proof}

\begin{theorem}\label{universalg} Suppose $\kappa$ is a supercompact cardinal,
$\lambda<\kappa$ is a regular cardinal and 
$\Theta$ is a cardinal with $\cf(\Theta)\ge\kappa^{++}$ and $\kappa^{+3}\le\Theta$.
There is a forcing extension in which cofinally many cardinals below $\kappa$, $\kappa$ itself and 
all cardinals greater than $\kappa$ are preserved, $\cf(\kappa)=\lambda$,  $2^{\kappa}=2^{\kappa^+} =
\Theta$ and there is a universal family of graphs on $\kappa^+$ of size $\kappa^{++}$.
\end{theorem}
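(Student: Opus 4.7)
The plan is to instantiate the framework developed in Sections \ref{maini}--\ref{small} with parameters tailored to the given $\Theta$, and then read off Theorem \ref{universalg} from Propositions \ref{prop5} and \ref{prop6}. Concretely, first I would do preparatory class forcing in $V$ to arrange GCH while preserving the supercompactness of $\kappa$. Then set $\chi^- = \Theta$ and $\chi = \Theta^+$; the hypotheses $\cf(\Theta) \ge \kappa^{++}$ and $\kappa^{+3} \le \Theta$ give exactly what Section \ref{maini} requires of the pair $(\chi, \chi^-)$. Next, perform the Laver preparation $\Lbb$ to obtain $V[G]$ in which $\kappa$ is indestructibly supercompact, and then run the iteration $\name{\Pbb}_\chi$ described in Section \ref{maini}, which at each nontrivial coordinate forces with $Q(u^\xi)$ for the guessed ultrafilter sequence $u^\xi$.

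For the chain condition and closure of $\name{\Pbb}_\chi$, I would combine Proposition \ref{Qhaschaincond} and Lemma \ref{kwcompact-and-glbs} with Theorem \ref{double-star-and-statcc-itn-thm}: each iterand $Q(u^\xi)$ is $\lk$-compact (hence $\lk$-directed closed and countably parallel-closed) and has the stationary $\kappa^+$-cc, so the whole $\lk$-support iteration $\name{\Pbb}_\chi$ has the $\kappa^+$-stationary cc, which preserves $\kappa^+$ and all larger cardinals. Using Proposition \ref{dnotes} to obtain the diamond-style sequence, I would then execute the master-condition construction of Section \ref{maini} and extract the stationary set $S \subseteq S^\chi_{\kappa^+}$ of Claim \ref{stationary}. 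Choose any $\xi \in S$ with $\cf(\xi) = \kappa^{++}$, form $v = \bigcup_{\varepsilon \in S\cap\xi} u^\varepsilon$ as in Proposition \ref{measureseq}, and force over $V[G][H_\xi]$ with $\Rad{v}$ below a condition ensuring that the least element of the generic club exceeds $\lambda$. By Theorem \ref{Radinlth} this makes $\cf(\kappa) = \lambda$ and preserves $\kappa$, by Theorem \ref{Radinsets} the bounded subsets of $\kappa$ in the final extension lie in intermediate extensions along the generic sequence, so cofinally many cardinals below $\kappa$ survive, and since $\Rad{v}$ has the $\kappa^+$-cc cardinals $\ge \kappa^+$ are preserved as well.

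For the cardinal arithmetic, at the intermediate stage $\xi$ the iteration has size $\chi^- = \Theta$, so $V[G][H_\xi] \models 2^\kappa = 2^{\kappa^+} = \Theta$ (using GCH in $V$, the $\kappa^+$-cc, and the bounds on iterand size). Since $\vert \Rad{v} \vert \le \Theta$ and $\Rad{v}$ is $\kappa^+$-cc, these values are preserved into $V[G][H_\xi][g^*]$. Finally, to obtain the universal family, pick $\Xi \subseteq S\cap\xi$ cofinal of order type $\kappa^{++}$. At each $\varepsilon \in \Xi$ the list $\tupof{\name{E}^\varepsilon_\alpha}{\alpha < \chi^-}$ was chosen in Section \ref{maini} as a canonical enumeration of all $\Lbb * \name{\Pbb}_\varepsilon * \nameRad{u^\varepsilon}$-names for graphs on $\kappa^+$; since any graph on $\kappa^+$ in $V[G][H_\xi][g^*]$ is named by some canonical $\Lbb * \name{\Pbb}_\xi * \nameRad{v}$-name of size $\kappa^+$ and $\cf(\xi) = \kappa^{++}$, such a name is already an $\Lbb * \name{\Pbb}_\varepsilon * \nameRad{u^\varepsilon}$-name for some $\varepsilon \in \Xi$. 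Proposition \ref{prop6} then hands us $\setof{{\mathcal E}^\varepsilon}{\varepsilon \in \Xi}$ as a jointly universal family of size $\kappa^{++}$.

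The main obstacle, and the part where the heavy lifting of the paper is really being used, is ensuring that the coherence of ${\mathcal E}^\varepsilon_\alpha$ and ${\mathcal E}^\varepsilon_\beta$ holds, \emph{i.e.}\ Lemma \ref{coherence-lemma}, which in turn relies on the delicate harmoniousness machinery and the weakening arguments of Lemma \ref{abacus1} and Corollary \ref{abacus2}. Once Propositions \ref{prop5} and \ref{prop6} are in hand the theorem is largely a matter of bookkeeping parameters and invoking the preservation results, but everything hinges on the fact that the master condition constructed in Section \ref{maini} genuinely places each $u^\varepsilon_\tau$ inside the filter generated by $v$, which is where the stationary set $S$ from Claim \ref{stationary} is indispensable.
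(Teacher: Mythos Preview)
Your proposal is essentially correct and follows the paper's approach closely: set $\chi^- = \Theta$, $\chi = \Theta^+$, arrange GCH (hence $\diamondsuit_\chi(S^\chi_{\kappa^+})$), run the Laver preparation followed by the iteration of Section~\ref{maini}, invoke Claim~\ref{stationary} and Proposition~\ref{measureseq} to obtain $v$, force with $\Rad{v}$, and read off universality from Propositions~\ref{prop5} and~\ref{prop6}.

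There is one slip you should correct. You write ``choose any $\xi \in S$ with $\cf(\xi) = \kappa^{++}$'', but $S \subseteq S^\chi_{\kappa^+}$, so every element of $S$ has cofinality exactly $\kappa^+$ and no such $\xi$ exists. What you need (and what Proposition~\ref{measureseq} actually requires, and what you implicitly use when forming $v = \bigcup_{\varepsilon \in S\cap\xi} u^\varepsilon$) is that $\xi$ be a \emph{limit point} of $S$ with $\cf(\xi) = \kappa^{++}$. Such $\xi$ exist because $S$ is stationary in $\chi$. This is a local wording error rather than a conceptual gap, since the rest of your argument treats $\xi$ correctly.
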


\begin{proof}[\ref{universalg}] Let $\chi=\Theta^+$. As mentioned in the first paragraph of \S{}6, it is standard that
if $\kappa$ is supercompact there is a forcing extension in which $\kappa$ remains
supercompact and GCH and $\diamondsuit_\chi(S^\chi_{\kappa^+})$ hold. 
Now force with $\Lbb * \Pbb_\xi * \Rad{v}$ and work in $V[G][H_\xi][g^*]$. 
Cofinally many cardinals below $\kappa$,
$\kappa$ itself and all cardinals above $\kappa$ are preserved from $V$ to $V[G]$
and the remaining factors preserve all cardinals. 
The final factor Radin forcing makes $\lambda$ the cofinality of the previously regular
(indeed supercompact) cardinal $\kappa$. The second factor of the forcing makes  $2^{\kappa}=2^{\kappa^+} =
\Theta$ and the final Radin factor does not increase these values as it has size $2^{\kappa}$. By Proposition (\ref{prop6}) 
$\setof{{\mathcal E}^\varepsilon}{\varepsilon\in \Xi}$ is a universal family in
the collection of graphs on $\kappa^+$ and has size $\card{\Xi}=\kappa^{++}$. 
$\vphantom{www}$
\end{proof}

\bibliography{5authorsbib}
\bibliographystyle{plain}

\end{document}